\newtheorem{thm}{Theorem} [section]
\newtheorem{cor}[thm]{Corollary}
\newtheorem{lem}[thm]{Lemma}
\newtheorem{prop}[thm]{Proposition}
\newtheorem{conjecture}[thm]{Conjecture}
\theoremstyle{definition}
\newtheorem{definition}[thm]{Definition}
\newtheorem{example}[thm]{Example}
\theoremstyle{remark}
\newtheorem{rem}[thm]{Remark}
\numberwithin{equation}{section}
\begin{document}

\newcommand{\thmref}[1]{Theorem~\ref{#1}}
\newcommand{\secref}[1]{Section~\ref{#1}}
\newcommand{\lemref}[1]{Lemma~\ref{#1}}
\newcommand{\propref}[1]{Proposition~\ref{#1}}
\newcommand{\corref}[1]{Corollary~\ref{#1}}
\newcommand{\remref}[1]{Remark~\ref{#1}}
\newcommand{\eqnref}[1]{(\ref{#1})}
\newcommand{\exref}[1]{Example~\ref{#1}}

\newcommand{\blue}[1]{{\color{blue}#1}}
\newcommand{\red}[1]{{\color{red}#1}}
\newcommand{\green}[1]{{\color{green}#1}}

\newcommand{\nc}{\newcommand}
 \nc{\Z}{{\mathbb Z}}
 \nc{\C}{{\mathbb C}}
 \nc{\N}{{\mathbb N}}
 \nc{\F}{{\mf F}}
 \nc{\Q}{\mathbb{Q}}
 \nc{\la}{\lambda}
 \nc{\ep}{\epsilon}
 \nc{\h}{\mathfrak h}
 \nc{\ka}{\kappa}
 \nc{\n}{\mf n}
 \nc{\G}{{\mathfrak g}}
 \nc{\DG}{\widetilde{\mathfrak g}}
 \nc{\SG}{\breve{\mathfrak g}}
 \nc{\La}{\Lambda}
 \nc{\is}{{\mathbf i}}
 \nc{\V}{\mf V}
 \nc{\bi}{\bibitem}
 \nc{\NS}{\mf N}
 \nc{\E}{\mc E}
 \nc{\ba}{\tilde{\pa}}
 \nc{\half}{\frac{1}{2}}
 \nc{\mc}{\mathcal}
 \nc{\mf}{\mathfrak} \nc{\hf}{\frac{1}{2}}
 \nc{\hgl}{\widehat{\mathfrak{gl}}}
 \nc{\gl}{{\mathfrak{gl}}}
 \nc{\hz}{\hf+\Z}

\nc{\U}{\mathfrak u}
\nc{\SU}{\overline{\mathfrak u}}
\nc{\ov}{\overline}
\nc{\ul}{\underline}
\nc{\wt}{\widetilde}
\nc{\I}{\mathbb{I}}
\nc{\X}{\mathbb{X}}
\nc{\Y}{\mathbb{Y}}
\nc{\hh}{\widehat{\mf{h}}}
\nc{\aaa}{{\mf A}}
\nc{\xx}{{\mf x}}
\nc{\wty}{\widetilde{\mathbb Y}}
\nc{\ovy}{\overline{\mathbb Y}}
\nc{\vep}{\bar{\epsilon}}
\nc{\wotimes}{\widehat{\otimes}}
\nc{\Omn}{\mc{O}^{m|n}}
\nc{\OO}{\mc{O}}

\newcommand{\wgv}{\Lambda^{\infty} \mathbb V}
\newcommand{\wgw}{\Lambda^{\infty} \mathbb V^*}

\nc{\ch}{\text{ch}}
\nc{\glmn}{\mf{gl}(m|n)}
\nc{\Uq}{{\mathcal U}_q}
\nc{\VV}{\mathbb V}
\nc{\WW}{\mathbb W}
 \nc{\TL}{{\mathbb T}_{\mathbb L}}
 \nc{\TU}{{\mathbb T}_{\mathbb U}}
 \nc{\TLhat}{\widehat{\mathbb T}_{\mathbb L}}
 \nc{\TUhat}{\widehat{\mathbb T}_{\mathbb U}}
 \nc{\TLwt}{\widetilde{\mathbb T}_{\mathbb L}}
 \nc{\TUwt}{\widetilde{\mathbb T}_{\mathbb U}}
 \nc{\TLC}{\ddot{\mathbb T}_{\mathbb L}}
 \nc{\TUC}{\ddot{\mathbb T}_{\mathbb U}}
 \nc{\wtl}{X(m|n)}

\nc{\sm}{{\rm sm}}

\advance\headheight by 2pt

\title[Brundan-Kazhdan-Lusztig conjecture]
{Brundan-Kazhdan-Lusztig conjecture for general linear Lie superalgebras}

\author[Cheng]{Shun-Jen Cheng}
\address{Institute of Mathematics, Academia Sinica, Taipei,
Taiwan 10617} \email{chengsj@math.sinica.edu.tw}

\author[Lam]{Ngau Lam}
\address{Department of Mathematics, National Cheng-Kung University, Tainan, Taiwan 70101}
\email{nlam@mail.ncku.edu.tw}

\author[Wang]{Weiqiang Wang}
\address{Department of Mathematics, University of Virginia, Charlottesville, VA 22904}
\email{ww9c@virginia.edu}

\begin{abstract}
In the framework of canonical and dual canonical bases of Fock
spaces, Brundan in 2003 formulated a Kazhdan-Lusztig-type conjecture
for the characters of the irreducible and tilting modules in the BGG
category for the general linear Lie superalgebra for the first time.
In this paper, we prove Brundan's conjecture and its variants
associated to all Borel subalgebras in full generality.
\end{abstract}


\maketitle

\let\thefootnote\relax\footnotetext{{\em 2010 Mathematics Subject Classification.} Primary 17B10.}

\tableofcontents

\section{Introduction}

\subsection{Background}

In the classical papers \cite{Kac1, Kac2}, Kac in 1970's initiated
the study of representations of Lie superalgebras, including the
general linear Lie superalgebra $\glmn$ and other basic Lie
superalgebras. Realizing the difficulty of generalizing the Weyl
character formula for finite-dimensional irreducible modules to the Lie
superalgebra setting, Kac found a Weyl type character formula for a
class of so-called {\em typical} finite-dimensional simple
modules. Since then, there have been numerous attempts to obtain
further results toward the irreducible character problem for
Lie superalgebras (see the bibliography of the book
\cite{CWbook} for a partial list).

Around 1996, Serganova \cite{Se} developed a mixed algebraic and
geometric approach to provide an algorithm for obtaining the
finite-dimensional irreducible characters for $\glmn$. For lack of a
general conceptual framework, there was virtually no serious attempt
in addressing the irreducible character problem in a
Bernstein-Gelfand-Gelfand (BGG) category $\OO$ for Lie superalgebras
such as $\glmn$, until the work of Brundan.

\subsection{BKL conjecture}

In his 2003 seminal paper \cite{Br} Brundan formulated an elegant
conjecture on the characters for the irreducible modules and tilting
modules in the full BGG category $\Omn$ of $\glmn$-modules for the
first time (also see \cite{Br3} for a similar formulation for the
Lie superalgebra $\mathfrak q(n)$). Brundan's formulation was in
terms of canonical and dual canonical bases of Lusztig and Kashiwara
\cite{Lu1, Ka} in a Fock space $\mathbb T^{{\bf b}_{\text{st}}}
:=\VV^{\otimes m} \otimes \WW^{\otimes n}$ (or rather in some
suitable completion $\widehat{\mathbb T}^{{\bf b}_{\text{st}}}$ of
$\mathbb T^{{\bf b}_{\text{st}}}$), where $\VV$ is the natural
module of the quantum group $U_q({\mathfrak g\mathfrak l}_\infty)$
and $\WW$ is the restricted dual of $\VV$. In this formulation, the
Verma modules correspond to the standard monomial basis, the
irreducible modules to the dual canonical basis, and the
tilting modules to the canonical basis in
$\widehat{\mathbb T}^{{\bf b}_{\text{st}}}$.

In other words, the entries of the transition matrices between the
standard monomial and (dual) canonical bases define the
Brundan-Kazhdan-Lusztig (BKL) polynomials, whose values at $q=1$
solve the multiplicity problem of irreducible and tilting characters
when expressed in terms of Verma characters. In a nutshell, the
category $\Omn$ categorifies the Fock space $\mathbb T^{{\bf
b}_{\text{st}}}$ and its (dual) canonical bases. (In the
Introduction below, we will ignore completely the issues of
completions of various Fock spaces, though it will take some
considerable portion of this paper to take care of such issues
properly.)

Brundan's conjecture can be adapted for various parabolic categories
of $\glmn$-modules, where an even Levi subalgebra of $\glmn$ in the
block matrix form of size $(m_1,m_2,\ldots,m_\ell | n_1, \ldots,
n_r)$ gives rise to a Fock space in terms of $q$-wedge subspaces as
follows:
$$
\wedge^{m_1}\VV\otimes\ldots\otimes\wedge^{m_\ell}\VV
\otimes\wedge^{n_1}\WW\otimes\ldots\otimes\wedge^{n_r}\WW.
$$
The validity of Brundan's conjecture on the full BGG category
implies the validity of all such parabolic versions. In the same
paper Brundan \cite{Br} proved a maximal parabolic version of his
general conjecture, which can be phrased that the category $\mc
F^{m|n}$ of finite-dimensional $\glmn$-modules categorifies
$\wedge^m \VV \otimes \wedge^n \WW$ and its (dual) canonical bases.

In the case when either $m$ or $n$ is zero, Brundan's formulation
reduces to the by now well-known reformulation of the classical
Kazhdan-Lusztig conjecture \cite{KL, KL2} (and a tilting module
version \cite{So, So2}) for category $\OO$ of general linear Lie
algebras, which takes advantage of the Schur-Jimbo duality
\cite{Jim}. The formulation of Brundan's conjecture in terms of
quantum groups and canonical basis is a conceptual way of getting
around the well-known difficulty that the Weyl group $W=\mf S_m
\times \mf S_n$ and its associated Hecke algebra are insufficient to
control the linkage principle for $\glmn$.

As is well known, there exists simple systems for $\glmn$ which are
not conjugate under the action of the Weyl group $W$, for $m,n\ge
1$. The $W$-conjugacy classes of simple systems are in bijection
with what we call $0^m1^n$-sequences $\bf b$ (there are ${m+n\choose
n}$ of them in total). To each such $\bf b$ is associated a Borel
subalgebra $\mf b$ of $\glmn$, and these Borel subalgebras are not
conjugate to each other. It has been expected (cf. Kujawa's thesis
\cite{Ku}) that Brundan's conjecture affords variants in terms of
Fock spaces $\mathbb T^{\bf b}$, which is a $q$-tensor space with
$m$ tensor factors isomorphic to $\VV$ and $n$ factors isomorphic to
$\WW$, determined by the sequence $\bf b$ (see \eqref{eq:Fock} for a
precise definition), for each $\bf b$. We will refer to all these
$\bf b$-variants as {\em Brundan-Kazhdan-Lusztig (BKL) conjecture}.
Kujawa's work \cite{Ku} provided a first supporting evidence on the
crystal basis level for the BKL conjecture. In this paper, we shall
need and hence formulate such $\bf b$-variants of Brundan's
conjecture precisely, which requires some suitable completions of
the corresponding Fock space $\mathbb T^{\bf b}$ in order to
construct the (dual) canonical bases. The original Brundan's
conjecture is associated to the standard sequence ${\bf
b}_{\text{st}}=(0,\ldots,0,1,\ldots,1)$.

It is clear from the beginning that Brundan's conjecture is a
central problem in representation theory of Lie superalgebras. As
the proof of the classical KL conjecture \cite{KL, KL2} was
completed in \cite{BB, BK} independently using deep geometric
machinery and the BKL conjecture includes the type $A$ KL conjecture
as a special case, there seemed to be little hope of proving
Brundan's conjecture directly due to the inadequate development of
the geometric approach in super representation theory. However,
Brundan's work convinced the authors that a general and conceptual
approach, though likely completely novel, to the representation
theory of Lie superalgebras might still be possible.

\subsection{Goal}

The goal of this paper is to prove the BKL conjecture for the BGG
category $\Omn$ in full generality. The proof consists of two major
steps. First, via an extension of the super duality approach
developed earlier by the authors, we establish an inductive
procedure on $n$ of proving the BKL conjecture for $\gl(m|n+1)$
based on the validity of {\em one} $\bf b$-variant of the BKL
conjecture for $\glmn$, for every $m$. Our second main step is to
show that the $\bf b$-variants of BKL conjecture for all
$0^m1^n$-sequences $\bf b$ are equivalent to each other.

\subsection{Super duality}

Let us explain the super duality in some detail,  which was already
used to solve some distinguished parabolic versions of the BKL
conjecture.

A precise connection, which was christened as Super Duality, between
representation theory of $\glmn$ and that of $\gl(m+n)$ was
formulated in \cite{CWZ} and in full generality in \cite{CW} for the
first time. Super duality is a (conjectural at that time) category
equivalence between suitable parabolic module categories of $\glmn$
and $\gl(m+n)$ at the $n\mapsto \infty$ limit. Such a connection in
the most special case \cite{CWZ} was supported by, and in turn
implies Brundan's solution for the irreducible and tilting
characters in the category $\mc F^{m|n}$. The conjectured general
super duality was shown in \cite{CW} to imply the distinguished
parabolic versions of BKL conjecture whose corresponding Fock spaces
are of the form
$\wedge^{m_1}\VV\otimes\ldots\otimes\wedge^{m_\ell}\VV
\otimes\wedge^{n}\WW$. One bonus consequence of the super duality
approach is that the BKL polynomials in these distinguished
parabolic categories are shown to be exactly the classical parabolic
Kazhdan-Lusztig polynomials.

In \cite{CL}, a powerful yet elementary approach was developed to
prove the super duality conjecture of \cite{CW}. One notable feature
of \cite{CL} is that it does not rely on the results of \cite{Br} a
priori, and hence the parabolic versions of the BKL conjecture as
formulated in \cite{CW} followed. There is yet another independent
and  complete solution by Brundan and Stroppel \cite{BS} toward the
irreducible and tilting character problem for the maximal parabolic
category $\mc F^{m|n}$. As far as the full BGG category $\Omn$ is
concerned, almost nothing is known so far. The super duality
approach has been further developed in \cite{CLW1} for the
ortho-symplectic Lie superalgebras, where the irreducible and
tilting characters were shown to be expressible in terms of Verma
characters via classical Kazhdan-Lusztig polynomials.

\subsection{Strategy of proof}\label{stra:of:proof}

Now we explain in more detail the outline of our proof of the BKL
conjecture. The BKL conjecture for the BGG category $\Omn$ with
respect to the standard Borel subalgebra will be abbreviated as
$\texttt{BKL}(m|n)$. Let $\h_{m|n}$ be the Cartan subalgebra of
diagonal matrices in $\glmn$. We shall denote by
$\texttt{BKL}(m|n+\underline{k})$ the (parabolic) BKL conjecture
for the parabolic category of $\mf{gl}(m|n+k)$-modules with Levi
subalgebra $\mf h_{m|n} \oplus \mf{gl}(k)$, for $0\le k \leq \infty$. Furthermore, we let $\texttt{BKL}(m|n|k)$ denote the BKL conjecture
for the BGG category of $\mf{gl}(m+k|n)$-modules associated
to the $0^{m+k}1^n$-sequence $(0^m,1^n,0^k)$, for $0\le k < \infty$. Finally, we let $\texttt{BKL}(m|n|\underline{k})$ denote the BKL conjecture
for the parabolic BGG category of $\mf{gl}(m+k|n)$-modules associated
to the same $0^{m+k}1^n$-sequence $(0^m,1^n,0^k)$ and with Levi
subalgebra $\mf h_{m|n} \oplus \mf{gl}(k)$, for $0\le k \leq \infty$.

The overall strategy for establishing the BKL conjecture is by
induction on $n$. The inductive procedure, denoted by
 $
\texttt{BKL}(m|n), \, \forall m \Longrightarrow \texttt{BKL}(m|n+1), \, \forall m$,
 is divided into the following steps:
{\allowdisplaybreaks
\begin{align}
 \texttt{BKL}(m+k|n) \;\; \forall k,m
 & \Longrightarrow  \texttt{BKL}(m|n|k) \;\; \forall k,m, \text{ by changing Borels}
  \label{ind:oddref} \\
 &\Longrightarrow \texttt{BKL}(m|n|\underline{k})
  \;\; \forall k,m,  \text{ by passing to parabolic}
  \label{ind:para} \\
 &\Longrightarrow \texttt{BKL}(m|n|\underline{\infty}),  \forall m, \text{ by taking $k\mapsto \infty$}
  \label{ind:infty} \\
 & \Longrightarrow \texttt{BKL}(m|n+\underline{\infty}),\forall m, \text{ by super duality}
  \label{ind:SD}  \\
 & \Longrightarrow \texttt{BKL}(m|n+1)\;\; \forall m, \text{ by truncation}.
  \label{ind:trunc}
\end{align}
 }

It is instructive to write down the Fock spaces corresponding to the
steps above:
{\allowdisplaybreaks
\begin{align*}
 \VV^{\otimes (m+k)} \otimes \WW^{\otimes n}\;\;  \forall k,m
 &\Longrightarrow  \VV^{\otimes m} \otimes \WW^{\otimes n} \otimes
 \VV^{\otimes k}\;\;  \forall k,m
  \\
 &\Longrightarrow \VV^{\otimes m} \otimes \WW^{\otimes n}\otimes
 \wedge^k\VV\;\; \forall k,m
   \\
 &\Longrightarrow \VV^{\otimes m} \otimes \WW^{\otimes n}\otimes
 \wedge^\infty \VV\;\;\forall m
   \\
 & \Longrightarrow \VV^{\otimes m} \otimes \WW^{\otimes n}\otimes
 \wedge^\infty\WW\;\;\forall m
   \\
 & \Longrightarrow
 \VV^{\otimes m} \otimes \WW^{\otimes (n+1)}\;\;\; \forall m.
\end{align*}
}

While our proof is purely algebraic, it is ultimately based on the
geometric proof of the original KL conjecture. The base case for the
induction, $ \texttt{BKL}(m|0)$, is equivalent to the original
Kazhdan-Lusztig conjecture \cite{KL} for $\mf{gl}(m)$, which is a
theorem of \cite{BB} and \cite{BK} (also see \cite{BGS, V}; the tilting module characters were due to \cite{So, So2}).
Step~\eqref{ind:para} can be regarded as a generalization of Deodhar
\cite{Deo}, Soergel \cite{So} and Brundan \cite{Br}.
Steps~\eqref{ind:infty}--\eqref{ind:trunc} are generalizations of our
earlier work in \cite{CWZ, CW, CL}, in which we establish the
compatibilities between various constructions on the categorical
level and their counterparts on the Fock space level.

Step~\eqref{ind:oddref} is a special case of a key new result of
this paper, which states that all $\bf b$-variants of the BKL
conjecture (for fixed $m,n$) are equivalent. We are reduced to
compare the (dual) canonical bases of the Fock spaces $\mathbb
T^{\bf b}$ and $\mathbb T^{\bf b'}$ associated to adjacent
$0^m1^n$-sequences $\bf b$ and $\bf b'$. Here being adjacent
corresponds to differing by an odd reflection on the corresponding
Borel subalgebras. To that end, {\em parabolic monomial bases} for
$\mathbb T^{\bf b}$ and $\mathbb T^{\bf b'}$ are introduced, and
they are used to match the (dual) canonical bases in $\mathbb T^{\bf
b}$ and $\mathbb T^{\bf b'}$. Note that parabolic monomial bases
admit counterparts in the category $\Omn$. We also show how
tilting modules transform under odd reflections, and establish a
remarkable property that every tilting module in $\Omn$ has $\bf
b$-Verma flags for all $0^m1^n$-sequences $\bf b$. Now (1.1) follows
from such constructions and results.
\subsection{Applications}\label{sec:intro:app}

We show in Proposition~\ref{prop:sameO} that the categories
$\Omn_{\bf b}$ are identical, for all $0^m1^n$-sequences ${\bf b}$
(note that the even subalgebras of the Borels corresponding to
various $\bf b$ are fixed to be the same). On the other hand, we
show in Section~\ref{sec:adjCB} that the canonical and dual
canonical bases in $\mathbb T^{\bf b}$ for different $\bf b$ are
compatible and hence they are unique in a suitable sense; but the
standard bases in $\mathbb T^{\bf b}$ do depend on $\bf b$. These
results strongly suggest the existence of a graded
lift in the sense of Soergel and others on the BGG category
$\Omn_{\bf b}$ (cf. \cite{BGS}). For the maximal parabolic category
$\mc F^{m|n}$, the $\Z$-grading was first established in \cite{BS}.

In another direction, a new approach to the full BGG category  for
the ortho-symplectic Lie superalgebras is being developed by
Huanchen Bao and one of the authors (see \cite{BW}).
While fresh new ideas are needed in the new setting, the approach
and the results established in the current paper have served as an
inspiration and played a fundamental role.

\subsection{Organization}

The paper is organized as follows. It is divided into two parts.
Part~1, which consists of
Sections~\ref{sec:Fock}--\ref{sec:adjacent}, deals with the
combinatorics of Fock spaces, canonical bases, and BKL polynomials.
Part~2, which consists of
Sections~\ref{sec:repn:prep}--\ref{sec:BKL}, concerns the
representation theory of $\glmn$. The results of Part~1 are used in
the final Section~\ref{sec:BKL}.

In Section~\ref{sec:Fock}, we review the quantum group, Hecke
algebra of type $A$, and Jimbo duality. The Bruhat orderings on the
Fock spaces $\mathbb T^{\bf b}$ are introduced.

In Section~\ref{sec:BKLpol}, we introduce the $A$- and
$B$-completions of Fock spaces. Adapting from Lusztig \cite{Lu} and
generalizing Brundan \cite{Br}, we formulate a bar involution on the
$B$-completions of Fock spaces, and establish the existence of
canonical and dual canonical bases. The transition matrices between
(dual) canonical bases and the standard monomial basis allows us to
define the BKL polynomials.

In Section~\ref{sec:compareCB}, via the notion of truncation maps,
we compare the (dual) canonical bases on Fock spaces involving
$\wedge^k\VV$ or $\wedge^k\WW$ for varying $k$. We formulate a
combinatorial version of super duality, which is an isomorphism of
Fock spaces preserving the (dual) canonical bases, when replacing
$\wedge^\infty \VV$ by $\wedge^\infty \WW$. The presentation here
generalizes and improves the special cases treated in \cite{CWZ,
CW}. We then formulate a precise relationship between (dual)
canonical bases of a Fock space and those of its various $q$-wedge
subspaces, and thus a relationship between their BKL polynomials.

In Section~\ref{sec:adjacent}, we develop a new approach to compare
precisely the canonical as well as dual canonical bases in two Fock
spaces associated to adjacent $0^m1^n$-sequences. For that purpose,
we introduce two kinds of parabolic monomial bases denoted by $N$'s
and $U$'s, the $N$'s being adapted to dual canonical basis while the
$U$'s to the canonical basis. All these are built on computations on
$\VV\otimes \WW$ and $\WW\otimes \VV$.

In Section~\ref{sec:repn:prep}, we show that the BGG category
$\Omn_{\bf b}$ of $\glmn$-modules are independent of $\bf b$, and so
denoted by $\Omn$. The $\bf b$-Verma modules and $\bf b$-tilting
modules are introduced. We show that a $\bf b$-tilting module is
always a $\bf b'$-tilting module for any other sequence $\bf b'$,
and provide a precise identification when $\bf b$ and $\bf b'$ are
adjacent. We also introduce some auxiliary modules denoted by $N$'s
and $U$'s in $\Omn$, which correspond to the parabolic monomial
bases with same the letters of Section~\ref{sec:adjacent}. The results
of this section are valid for any basic Lie superalgebra.

In Section~\ref{sec:SD}, we develop the super duality machinery in
the generality we need (compare \cite{CL, CLW1}). We establish an
equivalence of module categories for two infinite-rank Lie
superalgebras $\G$ and $\SG$, which match the corresponding
Kazhdan-Lusztig-Vogan polynomials in terms of Kostant $\mf
u$-homology groups. The category $\Omn$ (for varying finite $m,n$)
and related parabolic categories are recovered via truncation
functors.

In Section~\ref{sec:BKL}, we put together all the pieces from
previous sections. We establish the compatibilities of the BKL
conjectures for adjacent sequences \eqref{ind:oddref}, between full
and parabolic BGG categories \eqref{ind:para}, as well as all
remaining steps \eqref{ind:infty}, \eqref{ind:SD} and
\eqref{ind:trunc}. The BKL conjecture follows.

\subsection{Notations}

Let $\mc{P}$ denote the set of partitions. For $\la\in\mc{P}$ we denote its conjugate and length by $\la'$ and $\ell(\la)$, respectively. We let $\Z$, $\Z_+$,
$\Z_-$, and $\N$ denote the sets of all, non-negative, non-positive,
and positive integers, respectively. Denote by
$\Z_2=\{\bar{0},\bar{1}\}$. For $k\in\N$ we denote by $[k]$ the set
$\{1,2,\ldots,k\}$. Similarly, we set
$[\ul{k}]:=\{\ul{1},\ul{2},\ldots,\ul{k}\}$ and also write
$[\ul{\infty}]$ for $\{\ul{1},\ul{2},\ul{3},\ldots\}$. The symmetric group on a set of $k$ elements is denoted by $\mf{S}_k$.

 \vspace{.3cm}

 {\bf Acknowledgments.} The first
author is partially supported by an NSC grant, and he thanks NCTS/TPE and the Department of
Mathematics of University of Virginia for support. The second author
is partially supported by an NSC grant, and he thanks NCTS/South for support. The third author is
partially supported by an NSF grant DMS--1101268, and he thanks the
Institute of Mathematics of Academia Sinica in Taiwan for providing
excellent working environment and support.

 \vspace{.3cm}

\noindent {\bf Notes added.}
After the present paper arxiv:1203.0092 was posted, a second and completely different proof of our main Theorem \ref{th:BKL}
was obtained by Brundan, Losev, and Webster in ``Tensor product categorifications and the super Kazhdan-Lusztig conjecture", arxiv:1310.0349.
Their argument is based on the uniqueness of certain categorifications,
and this allows them to further establish the Koszul graded lifts on $\mc{O}^{m|n}_{\bf b}$ as mentioned in Section \ref{sec:intro:app}.
A proof of our positivity Conjecture~ \ref{con:positive} (see also Remark~\ref{rem:pos}) of this paper is also found in that paper.

\part{Combinatorics}

\section{Fock spaces and Bruhat orderings}
\label{sec:Fock}

In this section, we introduce the Fock space ${\mathbb T}^{\bf b}$,
which is a tensor space of copies of the natural $U_q({\mathfrak
g\mathfrak l}_\infty)$-module and its restricted dual, associated to a
$0^m1^n$-sequence $\bf b$. We define the Bruhat ordering on $\mathbb
T^{\bf b}$. Some $q$-wedge spaces are also introduced.

\subsection{Quantum group}

Let $q$ be an indeterminate. The quantum group $U_q({\mathfrak
g\mathfrak l}_\infty)$ is defined to be the associative algebra over
$\Q(q)$ generated by $E_a, F_a, K_a, K^{-1}_a, a \in \Z$, subject to
the following relations ($a,b\in\Z$):
\begin{eqnarray*}
 K_a K_a^{-1} &=& K_a^{-1} K_a =1, \\
 K_a K_b &=& K_b K_a, \\
 K_a E_b K_a^{-1} &=& q^{\delta_{a,b} -\delta_{a,b+1}} E_b, \\
 K_a F_b K_a^{-1} &=& q^{\delta_{a,b+1}-\delta_{a,b}}
 F_b, \\
 E_a F_b -F_b E_a &=& \delta_{a,b} \frac{K_{a,a+1}
 -K_{a+1,a}}{q-q^{-1}}, \\
 E_a^2 E_b +E_b E_a^2 &=& (q+q^{-1}) E_a E_b E_a,  \quad \text{if } |a-b|=1, \\
 E_a E_b &=& E_b E_a,  \,\qquad\qquad\qquad \text{if } |a-b|>1, \\
 F_a^2 F_b +F_b F_a^2 &=& (q+q^{-1}) F_a F_b F_a,  \quad\, \text{if } |a-b|=1,\\
 F_a F_b &=& F_b F_a,  \qquad\ \qquad\qquad \text{if } |a-b|>1.
\end{eqnarray*}
Here $K_{a,a+1} :=K_aK_{a+1}^{-1}$. For $r\ge 1$, we introduce the
divided powers $E_a^{(r)} =E_a^{r}/[r]!$ and
$F_a^{(r)}=F_a^{r}/[r]!$, where $[r] =(q^r-q^{-r})/(q-q^{-1})$ and
$[r]!=[1][2]\cdots [r].$

Setting $\ov{q}=q^{-1}$ induces an automorphism on $\Q(q)$ denoted
by $^{-}$. Define the bar involution on $U_q({\mathfrak g\mathfrak
l}_\infty)$ to be the anti-linear automorphism ${}^-: U_q({\mathfrak
g\mathfrak l}_\infty)\rightarrow U_q({\mathfrak g\mathfrak
l}_\infty)$ determined by $\ov{E_a}= E_a$, $\ov{F_a}=F_a$, and
$\ov{K_a}=K_a^{-1}$. Here {\em anti-linear} means that
$\ov{fu}=\ov{f}\ov{u}$, for $f\in\Q(q)$ and $u\in U_q(\gl_\infty)$.
The quantum group $U_q(\mf{sl}_\infty)$ is the subalgebra generated
by $\{E_a,F_a,K_{a,a+1}\vert a\in\Z\}$.

Let $\mathbb V$ be the natural $U_q({\mathfrak g\mathfrak
l}_\infty)$-module with basis $\{v_a\}_{a\in\Z}$ and $\mathbb W
:=\mathbb V^*$, the restricted dual module with basis
$\{w_a\}_{a\in\Z}$ such that $\langle w_a ,v_b\rangle = (-q)^{-a}
\delta_{a,b}$.  The actions of $U_q(\gl_\infty)$ on $\mathbb V$ and
$\mathbb W$ are given by the following formulas:
\begin{align*}
&K_av_b=q^{\delta_{ab}}v_b,\qquad E_av_b=\delta_{a+1,b}v_a,\ \ \quad
F_av_b=\delta_{a,b}v_{a+1},\\
&K_aw_b=q^{-\delta_{ab}}w_b,\quad E_aw_b=\delta_{a,b}w_{a+1},\quad
F_aw_b=\delta_{a+1,b}w_{a}.
\end{align*}
We shall use the co-multiplication $\Delta$ on $U_q(\gl_\infty)$
defined by:
\begin{eqnarray*}
 \Delta (E_a) &=& 1 \otimes E_a + E_a \otimes K_{a+1, a}, \\
 \Delta (F_a) &=& F_a \otimes 1 +  K_{a, a+1} \otimes F_a,\\
 \Delta (K_a) &=& K_a \otimes K_{ a},
\end{eqnarray*}
which restricts to a co-multiplication on $U_q(\mf{sl}_\infty)$. Our
$\Delta$ here is consistent with the one used by Kashiwara, but
differs from \cite{Lu}.

For $m, n\in\Z_+$, recall that $[m]=\{1,2,\cdots,m\}$, and denote
the set of integer-valued functions on $[m+n]$ by $\Z^{m+n}$. We
shall also identify $f$ with the $(m+n)$-tuple
$\left(f(1),f(2),\ldots,f(m+n)\right)$ when convenient. Also, for a
subset $I\subseteq [m+n]$, we shall denote the restriction of $f$ to
$I$ by $f_I$. For example, if $I=\{i,i+1\}$, then the restriction of
$f$ to $I$ will be denoted by $f_I=f_{i,i+1}$. This notation remains
valid for functions defined on different domains as well.

\subsection{Fock spaces}

For $m,n\in\Z_+$, we let ${\bf b}=(b_1,b_2,\ldots,b_{m+n})$ be a
sequence of $m+n$ integers such that $m$ of the $b_i$'s are equal to
${0}$ and $n$ of them are equal to ${1}$. We call such a sequence a
{\em $0^m1^n$-sequence}.

We associate to such a ${0^m1^n}$-sequence ${\bf b}$ the following
tensor space over $\Q(q)$, called the {\em $\bf b$-Fock space} or simply
{\em Fock space}:
\begin{equation}  \label{eq:Fock}
{\mathbb T}^{\bf b} :={\mathbb V}^{b_1}\otimes {\mathbb
V}^{b_2}\otimes\cdots \otimes{\mathbb V}^{b_{m+n}},
\end{equation}
where we denote
$${\mathbb V}^{b_i}:=\begin{cases}
{\mathbb V}, &\text{ if }b_i={0},\\
{\mathbb W}, &\text{ if }b_i={1}.
\end{cases}$$
The tensors here and in similar settings later on are understood
to be over the field $\Q(q)$.
Note that the algebra $U_q(\gl_\infty)$ acts on $\mathbb T^{\bf b}$ via the
co-multiplication $\Delta$.

\begin{example}
Let $m=3$ and $n=2$. Then $({0},{0},{1},{1},{0})$,
$({0},{0},{0},{1},{1})$, and $({0},{1},{0},{1},{0})$ are
$0^31^2$-sequences. The associated $U_q(\gl_\infty)$-modules
${\mathbb T}^{\bf b}$ are respectively
\begin{align*}
{\mathbb V}\otimes{\mathbb V}\otimes{\mathbb W}
 \otimes{\mathbb W}\otimes{\mathbb V},
  \quad
{\mathbb V}\otimes{\mathbb V}\otimes{\mathbb V}
 \otimes{\mathbb W}\otimes{\mathbb W},
 \quad
{\mathbb V}\otimes{\mathbb W}\otimes{\mathbb V}
 \otimes{\mathbb W}\otimes{\mathbb V}.
\end{align*}
\end{example}

For $f\in \Z^{m+n}$ we define
\begin{equation}  \label{eq:Mf}
M^{\bf b}_f :=\texttt{v}^{b_1}_{f(1)}\otimes
\texttt{v}^{b_2}_{f(2)}\otimes\cdots\otimes
\texttt{v}^{b_{m+n}}_{f(m+n)},
\end{equation}
where we use the notation
$\texttt{v}^{b_i}:=\begin{cases}v,\text{ if }b_i={0},\\
w,\text{ if }b_i={1}.\end{cases}$ We refer to $\{M^{\bf b}_f |f\in
\Z^{m+n}\}$ as the {\em standard monomial basis} for ${\mathbb
T}^{\bf b}$.

\subsection{Bruhat ordering}\label{super:Bruhat}

Let $\texttt{P}$ be the free abelian group with orthonormal basis
$\{\varepsilon_r\vert r\in\Z\}$ with respect to a bilinear form
$(\cdot | \cdot)$. We define a partial order on $\texttt{P}$ by
declaring $\nu\ge\mu$, for $\nu,\mu\in \texttt{P}$, if $\nu-\mu$ is
a non-negative integral linear combination of
$\varepsilon_r-\varepsilon_{r+1}$, $r \in \Z$.

Fix a ${0^m1^n}$-sequence ${\bf b}=(b_1,\ldots,b_{m+n})$. For
$f\in\Z^{m+n}$ and $j\le m+n$, we define
\begin{align*}
\text{wt}^j_{\bf b}(f)
 :=\sum_{j\le i}(-1)^{b_i}\varepsilon_{f(i)}\in \texttt{P},\quad
\text{wt}_{\bf b}(f):=\text{wt}^{1}_{\bf b}(f)\in \texttt{P}.
\end{align*}

We define the {\em Bruhat ordering of type ${\bf b}$} on $\Z^{m+n}$,
denoted by $\preceq_{\bf b}$, in terms of the partially ordered set
$(\texttt{P}, \leq)$ as follows: $g\preceq_{\bf b}f$ if and only if
$\text{wt}_{\bf b}(g)=\text{wt}_{\bf b}(f)$ and $\text{wt}^j_{\bf
b}(g)\le\text{wt}^j_{\bf b}(f)$, for all $j$. Note that when $n=0$
this is simply the usual Bruhat ordering on the weight lattice
$\Z^m$ of $\gl(m)$.

Following \cite[\S2-b]{Br} we introduce the following notation in
our general setting:
\begin{align}\label{aux:sharp:Bruhat}
\sharp_{\bf b}(f,a,j):=\sum_{j\le i\in [m+n],f(i)\le a}(-1)^{b_i}, \quad
\text{  for } f\in\Z^{m+n}.
\end{align}
It is easy to see the following characterization of $\preceq_{\bf
b}$ holds: for $g,f\in\Z^{m+n}$,
\begin{equation}  \label{eq:sharpBr}
g\preceq_{\bf b}f\; \Leftrightarrow\; \sharp_{\bf b}(g,a,j)\le
\sharp_{\bf b}(f,a,j),\;
 \forall a\in\Z,j\in [m+n], \text{
with equality for $j=1$.}
\end{equation}

For $i\in [m+n]$ we let $d_i\in\Z^{m+n}$ be determined by
\begin{equation}  \label{eq:di}
d_i(j)=(-1)^{b_i}\delta_{ij}, \quad \text{ for } j\in [m+n].
\end{equation}
Let $f,g\in\Z^{m+n}$. We write 
$f\downarrow_{\bf b} g$ if one of the following holds:
\begin{align*}
\begin{cases}
g=f\cdot (i,j),&\text{ for }b_i=b_j={0},i<j, f(i)>f(j),\\
g=f\cdot (i,j),&\text{ for }b_i=b_j={1},i<j, f(i)<f(j),\\
g=f-d_i+d_j,&\text{ for }b_i\not=b_j, i<j, f(i)=f(j).
\end{cases}
\end{align*}
Here and further we denote the natural right action of $\mf S_{m+n}$ on
$\Z^{m+n}$ by $f\cdot \sigma:=f\circ\sigma$, for $f\in \Z^{m+n}$ and
$\sigma \in \mf S_{m+n}$. The following lemma is clear.

\begin{lem}\label{lem:super:Bru}
Let $f,g\in\Z^{m+n}$. If there exists a sequence of elements
$h_1,h_2,\ldots,h_k\in\Z^{m+n}$ such that $h_i\downarrow_{\bf
b}h_{i+1}$, for $1\le i\le k-1$, with $h_1=f$ and $h_k=g$, then
$f\succeq_{\bf b}g$.
\end{lem}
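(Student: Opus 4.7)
The plan is a direct verification using the $\sharp$-characterization \eqref{eq:sharpBr}. That characterization makes transitivity of $\preceq_{\bf b}$ immediate, so by induction on the length $k$ of the chain it suffices to prove the single-step statement: whenever $f\downarrow_{\bf b}g$, one has $g\preceq_{\bf b}f$. Fix $a\in\Z$ and $\ell\in[m+n]$; the goal is to show $\sharp_{\bf b}(g,a,\ell)\le \sharp_{\bf b}(f,a,\ell)$, with equality when $\ell=1$. Throughout I write $[P]\in\{0,1\}$ for the Iverson bracket of a proposition $P$.

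In all three cases of the definition of $\downarrow_{\bf b}$, the functions $f$ and $g$ agree outside $\{i,j\}$ with $i<j$. For $\ell\le i$ or $\ell>j$, positions $i$ and $j$ are either jointly included in or jointly excluded from the sum defining $\sharp_{\bf b}(\cdot,a,\ell)$, and one checks that their joint contribution is the same for $f$ and $g$: in the two swap cases the multiset $\{f(i),f(j)\}$ is unchanged, and in the mixed case the signs $(-1)^{b_i}$ and $(-1)^{b_j}$ are opposite while $f(i)=f(j)$ and $g(i)=g(j)$, so both joint contributions vanish. In particular, taking $\ell=1$ gives the equality demanded by \eqref{eq:sharpBr} for free.

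The substantive range is $i<\ell\le j$, where only position $j$ is included. Here $\sharp_{\bf b}(g,a,\ell)-\sharp_{\bf b}(f,a,\ell)$ reduces to the change in the single contribution from position $j$, and I would compute it case by case. For $b_i=b_j=0$ with $f(i)>f(j)$, position $j$ carries value $f(j)$ in $f$ and $f(i)$ in $g$, giving $[f(i)\le a]-[f(j)\le a]\le 0$. For $b_i=b_j=1$ with $f(i)<f(j)$, the flipped sign yields $[f(j)\le a]-[f(i)\le a]\le 0$. For $b_i\neq b_j$ with $f(i)=f(j)=c$, the value at position $j$ moves by $\pm 1$ (determined by $b_j$), so the change equals $-[a=c-1]$ or $-[a=c]$, in either event $\le 0$. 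Combining with the previous paragraph, $g\preceq_{\bf b}f$.

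The only real care is sign bookkeeping, and in hindsight the three hypotheses ``$f(i)>f(j)$'', ``$f(i)<f(j)$'', ``$f(i)=f(j)$'' built into the definition of $\downarrow_{\bf b}$ are exactly what is needed to make all three inequalities point in the correct direction. I do not expect any genuine obstacle, consistent with the lemma being stated as ``clear''.
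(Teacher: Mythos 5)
Your proof is correct: the reduction to a single step of $\downarrow_{\bf b}$ via transitivity of the $\sharp$-characterization \eqref{eq:sharpBr}, the observation that positions $i,j$ contribute jointly and identically when $\ell\le i$ or $\ell>j$ (giving the required equality at $\ell=1$), and the three sign computations for $i<\ell\le j$ all check out. The paper declares this lemma ``clear'' and gives no proof, so there is nothing to compare against; your verification is the natural one the authors had in mind.
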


\begin{rem}
In the case of standard $0^m1^n$-sequence ${\bf b}_{\text{st}}
=(0,\ldots,0,1,\ldots,1)$ as well as the opposite sequence
$(1,\ldots,1,0,\ldots,0)$, the converse of Lemma~\ref{lem:super:Bru}
holds, according to \cite[Lemma~2.5]{Br}. However the converse is no
longer true in general.
 Take the $0^31^2$-sequence ${\bf b}=(0,1,0,1,0)$.
Take $f=(4,3,5,2,1)$ and $g=(1,2,4,3,5)$. Then $f\succ_{\bf b}g$.
But there exists no such sequence $\{h_i\}$ as in the above lemma
moving $f$ down to $g$.

The Bruhat ordering $\preceq_{\bf b}$ on $\Z^{m+n}$ is defined to
fit with the definition on the weight lattice of $\glmn$ coming from
central characters (see \cite[Section~2.2]{CWbook}). But in this
paper it would work if we have defined $\preceq_{\bf b}$ to be the
transitive closure of $\downarrow_{\bf b}$.
\end{rem}

The following lemma will be useful in the sequel.

\begin{lem}\label{lem:f:to:g:finite}
The poset $(\Z^{m+n}, \preceq_{\bf b})$ satisfies the finite
interval property. That is, given $f, g$ with $g \preceq_{\bf b}f$,
the set $\{h\in\Z^{m+n}\vert g\preceq_{\bf b} h\preceq_{\bf b} f\}$
is finite.
\end{lem}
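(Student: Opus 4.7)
The plan is to show that every $h$ with $g \preceq_{\bf b} h \preceq_{\bf b} f$ takes values in a bounded set of integers, depending only on $f$ and $g$; since $h$ is a function on the finite set $[m+n]$, this at once yields finiteness of the interval. Choose $N \in \N$ large enough so that $f(i), g(i) \in [-N, N]$ for every $i \in [m+n]$. I claim that every $h$ in the interval also satisfies $h(i) \in [-N, N]$ for all $i$.

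To prove the claim, I would apply the $\sharp_{\bf b}$-characterization \eqref{eq:sharpBr} of $\preceq_{\bf b}$ together with the elementary telescoping identity
$$
\sharp_{\bf b}(h, a, j) - \sharp_{\bf b}(h, a, j+1) \,=\, (-1)^{b_j}\,[h(j) \leq a],
$$
which follows immediately from \eqref{aux:sharp:Bruhat}, with the convention $\sharp_{\bf b}(h, a, m+n+1) := 0$. For any $a \geq N$, all values of $f$ and of $g$ are $\leq a$, so
$$
\sharp_{\bf b}(f, a, j) \,=\, \sharp_{\bf b}(g, a, j) \,=\, \sum_{j \leq i \leq m+n}(-1)^{b_i},
$$
and the sandwich forces $\sharp_{\bf b}(h, a, j)$ to equal this common value for every $j$. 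Subtracting the expressions for consecutive $j$ and $j+1$ then yields $(-1)^{b_j}[h(j) \leq a] = (-1)^{b_j}$, so $h(j) \leq a$; specializing $a = N$ gives $h(j) \leq N$. Symmetrically, for $a \leq -N - 1$ we have $\sharp_{\bf b}(f, a, j) = \sharp_{\bf b}(g, a, j) = 0$, so the same telescoping shows $[h(j) \leq a] = 0$, i.e.\ $h(j) > a$; taking $a = -N-1$ gives $h(j) \geq -N$.

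Since $h : [m+n] \to [-N, N]$, the interval $\{h \in \Z^{m+n} \mid g \preceq_{\bf b} h \preceq_{\bf b} f\}$ is contained in a finite set, proving the lemma. The only slightly subtle point is that individual values $\sharp_{\bf b}(h, a, j)$ may fluctuate with $a$, so one cannot bound $h$ coordinatewise directly from the $\sharp$-inequalities; the differencing across adjacent $j$'s is what turns the global inequalities into pointwise bounds on $h(j)$. No other obstacle is expected.
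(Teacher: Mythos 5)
Your proof is correct and follows essentially the same route as the paper's: both establish the coordinatewise bound $h(i)\in[-N,N]$ by evaluating the $\sharp_{\bf b}$-characterization \eqref{eq:sharpBr} at values of $a$ beyond the range of $f$ and $g$, where the two outer terms of the sandwich coincide and pin down $\sharp_{\bf b}(h,a,j)$. The paper extracts the pointwise conclusion by contradiction at an extremal index, whereas you telescope over consecutive $j$; this is a cosmetic reorganization of the same argument, and your version is, if anything, slightly cleaner.
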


\begin{proof}
We shall prove the precise and stronger statement that if $g
\preceq_{\bf b}h\preceq_{\bf b}f$, then
\begin{align}\label{aux:eq:h:max}
|h(i)|\le \max\{|f(j)|,|g(j)|\text{ with } j\in [m+n]\},\quad\forall
i\in [m+n].
\end{align}
We prove \eqref{aux:eq:h:max} by contradiction. Suppose that
$|h(t)|>\max\{|f(j)|,|g(j)|\mid j\in [m+n]\}$,  for some $t\in
[m+n]$. Among the $t$'s with $|h(t)|=N$ maximal, we choose $i$ as
large as possible so that $h(i) =\pm N$. First suppose that $h(i)=N$.
Then $\sharp_{\bf b}(f,N-1,i)=\sharp_{\bf b}(g,N-1,i)$. But clearly
$\sharp_{\bf b}(h,N-1,i)\not=\sharp_{\bf b}(f,N-1,i)$. Now suppose that $h(i)=-N$.
Then $0=\sharp_{\bf b}(f,-N,i)=\sharp_{\bf b}(g,-N,i)$. But clearly
$\sharp_{\bf b}(h,-N,i)=(-1)^{b_i}$. So in either case we cannot have
$g\preceq_{\bf b} h\preceq_{\bf b} f$.
\end{proof}

\subsection{$q$-wedge spaces}

For $k\in\N$, denote by $\mf{S}_{k}$ the symmetric group of
permutations on $\{\ul{1},\ul{2},\ldots,\ul{k}\}$. Let
$\mf{S}_\infty=\bigcup_{k}\mf{S}_k$. Then $\mf{S}_{k}$ is generated
by the simple transpositions $s_{1} =(\ul{1},\ul{2}), s_{2}
=(\ul{2},\ul{3}), \ldots, s_{k-1} =(\ul{k-1},\ul{k})$.

The Iwahori-Hecke algebra associated to $\mf{S}_k$ (for
$k\in\N\cup\{\infty\}$) is the associative $\mathbb Q(q)$-algebra
$\mathcal H_{k}$ generated by $H_i$, $1 \le i \le k-1$, subject to
the relations
\begin{align*}
&(H_i -q^{-1})(H_i +q) = 0,\\
&H_i H_{i+1} H_i = H_{i+1} H_i H_{i+1},\\
&H_i H_j = H_j H_i, \quad\text{for } |i-j| >1.
\end{align*}
Associated to $\sigma \in \mf{S}_{k}$ with a reduced
expression $\sigma=s_{i_1} \cdots s_{i_r}$, we define
 $
H_\sigma :=H_{i_1} \cdots H_{i_r}. $ The bar involution on $\mathcal
H_{k}$ is the unique anti-linear automorphism defined by
$\overline{H_\sigma} =H_{\sigma^{-1}}^{-1}, \overline{q} =q^{-1},$
for all $\sigma \in \mf{S}_{k}$. Set for $k\in\N$
\begin{align}\label{formula:H0}
H_0:= \sum_{\sigma \in \mf{S}_{k}}
(-q)^{\ell(\sigma)-\ell(w^{(k)}_0)} H_\sigma,
\end{align}
where $w_0^{(k)}$ denotes the longest element in $\mf{S}_k$. It is
well known that (cf. \cite{KL}, \cite[Proposition~2.9]{So})
\begin{equation}  \label{eq:H0inv}
\ov{H_0}=H_0.
\end{equation}

Now consider the tensor spaces $\mathbb V^{\otimes k}$ and $\mathbb
W^{\otimes k}$, respectively. In either case we index the tensor
factors by $[\ul{k}]:=\{\ul{1},\ul{2},\ldots,\ul{k}\}$. Now for an
integer-valued function $f: [\ul{k}]\rightarrow\Z$, recall from
\eqref{eq:Mf} that
$M_f = \texttt{v}_{f(\ul{1})} \otimes \cdots \otimes
\texttt{v}_{f(\ul{k})},$ where $\texttt{v}=v$ for $\mathbb
V^{\otimes k}$ and $\texttt{v}=w$ for $\mathbb W^{\otimes k}$. The
algebra $\mathcal H_{k}$ acts on $\mathbb V^{\otimes k}$ and
respectively on $\mathbb W^{\otimes k}$ on the right by
\begin{eqnarray} \label{eq:heckeaction}
 M_f H_i = \left\{
 \begin{array}{ll}
 M_{f\cdot s_i}, & \text{if } f \prec_{\bf b} f \cdot s_i,  \\
 q^{-1} M_{f}, & \text{if } f = f \cdot s_i, \\
 M_{f \cdot s_i} - (q-q^{-1}) M_f, & \text{if } f \succ_{\bf b} f \cdot
 s_i.
 \end{array}
 \right.
\end{eqnarray}
Here ${\bf b}=({0}^k)$ for $\mathbb V^{\otimes k}$, and ${\bf
b}=({1^k})$ for $\mathbb W^{\otimes k}$.

\begin{lem} \cite{Jim} \label{lem:commute}
The actions of $U_q(\gl_\infty)$ and $\mathcal H_{k}$ on the tensor
space $\mathbb V^{\otimes k}$ (and respectively, on $\mathbb
W^{\otimes k}$) commute with each other.
\end{lem}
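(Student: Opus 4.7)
The plan is to reduce the commutativity to the case $k=2$ and then verify it by a direct computation on basis vectors. The reduction is immediate: $H_i$ acts as the identity on all tensor factors except the $i$-th and $(i+1)$-th, and by coassociativity of $\Delta$ the action of any generator $X\in U_q(\gl_\infty)$ on $\VV^{\otimes k}$ may be computed by applying the coproduct in positions $(i,i+1)$ last. Hence the claim reduces to showing that the operator $R:=H_1$ on $\VV\otimes\VV$ commutes with $\Delta(X)$ for every generator $X\in\{E_a,F_a,K_a^{\pm 1}\mid a\in\Z\}$, and similarly on $\WW\otimes\WW$.

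For $\VV\otimes\VV$, formula \eqref{eq:heckeaction} with ${\bf b}=(0,0)$ gives
\[
R(v_a\otimes v_b)=\begin{cases} v_b\otimes v_a, & a<b,\\ q^{-1}v_a\otimes v_a, & a=b,\\ v_b\otimes v_a-(q-q^{-1})v_a\otimes v_b, & a>b.\end{cases}
\]
Commutativity with each $K_a$ is clear because $R$ preserves weights. For $E_a$ and $F_a$, I would observe that $\Delta(E_a)$ annihilates $v_b\otimes v_c$ unless $a+1\in\{b,c\}$, so the nontrivial verification involves only vectors in the span of $\{v_a\otimes v_a,\ v_a\otimes v_{a+1},\ v_{a+1}\otimes v_a,\ v_{a+1}\otimes v_{a+1}\}$ together with mixed vectors of the form $v_{a+1}\otimes v_c$ or $v_c\otimes v_{a+1}$ for $c\notin\{a,a+1\}$. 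In each case one directly compares $R\Delta(E_a)$ and $\Delta(E_a)R$ applied to the vector, and the scalar $K_{a+1,a}$ appearing in $\Delta(E_a)=1\otimes E_a+E_a\otimes K_{a+1,a}$ accounts exactly for the $q$-twists introduced by $R$ when indices are swapped. The case of $F_a$ is parallel using $\Delta(F_a)=F_a\otimes 1+K_{a,a+1}\otimes F_a$.

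The case $\WW\otimes\WW$ is entirely analogous: formula \eqref{eq:heckeaction} with ${\bf b}=(1,1)$ gives the same trichotomy but with $<$ and $>$ interchanged, and the action rules $E_aw_b=\delta_{a,b}w_{a+1}$, $F_aw_b=\delta_{a+1,b}w_a$ replace their $\VV$-counterparts. Alternatively this case can be deduced from the $\VV$-case by transporting along the dual pairing $\langle w_a,v_b\rangle=(-q)^{-a}\delta_{a,b}$, which intertwines both the Hecke and the quantum group actions up to $K$-scalar twists.

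The main obstacle is purely bookkeeping: one must track the three regimes of \eqref{eq:heckeaction} under the index shifts $b\mapsto b\pm 1$ produced by $E_a,F_a$, and, in the $\WW$-case, keep the $(-q)^{-a}$ factor in the pairing aligned with the twist by $K_{a+1,a}$ in the coproduct. There is no genuine conceptual obstruction; this is the classical $q$-Schur duality of Jimbo \cite{Jim}.
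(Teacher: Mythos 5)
The paper offers no proof of this lemma at all: it is stated as a citation to Jimbo \cite{Jim}, so there is nothing internal to compare against. Your direct verification is the standard and correct route, and it is arguably the honest thing to do here, since the paper's conventions are not literally Jimbo's: the comultiplication differs from Lusztig's, and the Hecke action \eqref{eq:heckeaction} differs from the one in \cite{KMS} (the paper itself flags $T_i = -qH_i$ when comparing). A reader wanting to confirm the lemma in these exact conventions has to do precisely the computation you outline. The reduction to $k=2$ is sound provided you note (as you implicitly do via the $K_a$ remark) that the terms of the iterated coproduct in which $E_a$ or $F_a$ acts in a position $j<i$ carry a factor $K_{a+1,a}\otimes K_{a+1,a}$ on positions $i,i+1$, and these commute with $H_i$ because $H_i$ preserves the total weight of that pair of factors. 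I spot-checked the core rank-$2$ cases (e.g.\ $\Delta(E_a)$ versus $H_1$ on $v_{a+1}\otimes v_{a+1}$ and on the mixed vectors $v_{a+1}\otimes v_c$) and the identities close up exactly as you describe, with the $K_{a+1,a}$ twist absorbing the $-(q-q^{-1})$ correction term. The only loose spot is the suggested shortcut for $\WW^{\otimes k}$ via the pairing $\langle w_a,v_b\rangle=(-q)^{-a}\delta_{a,b}$ ``up to $K$-scalar twists''; as written this is not a proof, but your primary route of repeating the trichotomy check with ${\bf b}=(1^k)$ and the $\WW$-action rules is complete and does not need it.
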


Different commuting actions of $U_q(\gl_\infty)$ and $\mc H_k$ on
$\mathbb V^{\otimes k}$ were used in \cite{KMS} to construct the
space $\wedge^k \mathbb V$ of finite $q$-wedges and then the space
of infinite $q$-wedges by taking an appropriate limit $k\to\infty$.
These spaces carry the action of $U_q(\gl_\infty)$ (as a limiting
case). The constructions in {\em loc.~cit.}~carry over using the
above actions of $U_q(\gl_\infty)$ and $\mc H_k$ as we shall sketch
below.

First consider the case when $k\in\N$. Following \cite{KMS}, we will
regard $\wedge^k \mathbb V$ as the quotient of $\mathbb V^{\otimes
k}$ by $\ker H_0$. Indeed $\ker H_0$ equals the sum of the kernels
of the operators $H_i -q^{-1}$, $1 \le i \le k-1$, by
\cite[Proposition 1.2]{KMS} (note that the Hecke algebra generator
$T_i$ used in {\em loc.~cit.}~corresponds to $-q H_i$.) We further
denote by the {\em $q$-wedges} $v_{a_1} \wedge \cdots \wedge
v_{a_k}$ the image in $\wedge^k \mathbb V$ of $v_{a_1} \otimes
\cdots \otimes v_{a_k}$ under the canonical map. We have
\begin{eqnarray}
\cdots \wedge v_{a_i} \wedge v_{a_{i+1}} \wedge \cdots
 &=& -q^{-1} (\cdots \wedge v_{a_{i+1}} \wedge v_{a_i} \wedge
\cdots), \quad \text{if } a_i < a_{i+1}; \nonumber\\
\cdots \wedge v_{a_i} \wedge v_{a_{i+1}} \wedge \cdots
 &=& 0, \quad \text{if } a_i = a_{i+1}.  \label{eq:straighten}
\end{eqnarray}
It follows that the elements $v_{a_1} \wedge \cdots \wedge
v_{a_k}$, where $a_1 > \cdots > a_k$ form a basis for $\wedge^k
\mathbb V$. By Lemma~\ref{lem:commute}, $U_q(\gl_\infty)$ acts
naturally on $\wedge^k \mathbb V$.

\begin{rem}
For finite $k$, recalling that $\mathbb V^{\otimes k} = \ker H_0
\oplus \text{Im} H_0$, cf.~\cite[Proposition~ 1.1]{KMS}, we may
regard equivalently $\wedge^k\mathbb V$ as the subspace $\text{Im}
H_0$ of $\mathbb V^{\otimes k}$.
\end{rem}

Now consider the limit $k\to\infty$.  Let $\mathbb{V}^{\infty}$ be
the subspace of $\mathbb V^{\otimes\infty}$ spanned by vectors of
the form
\begin{align}\label{semi:infinite:vp2}
v_{p_1}\otimes v_{p_2}\otimes v_{p_3}\otimes\cdots,
\end{align}
with $p_i=1-i$, for $i\gg 0$. Note that $U_q(\gl_\infty)$ and the
Hecke algebra act on this space. We define $\wedge^{\infty}\mathbb
V$ to be the quotient of $\mathbb{V}^{\infty}$ by the sum of the
kernels of $H_i-q^{-1}$, for all $i \ge 1$. The quantum group
$U_q(\gl_\infty)$ acts on $\wedge^{\infty}\mathbb V$ and this space
has a basis given by the (formal) infinite $q$-wedges
\begin{align*}
v_{p_1} \wedge v_{p_2} \wedge & v_{p_3} \wedge \cdots,
\end{align*}
where  $p_1 > p_2  >p_3> \cdots,$ and $p_i =1-i$ for $i \gg0$.
Here the infinite $q$-wedge is defined to be the image of the
corresponding vector in \eqref{semi:infinite:vp2} under the
canonical quotient map. Alternatively, the space
$\wedge^\infty\mathbb V$ has a basis indexed by partitions given by
$$
|\la\rangle := v_{\la_1} \wedge v_{\la_2-1} \wedge v_{\la_3-2}\wedge
\cdots,
$$
where $\la =(\la_1, \la_2, \cdots)$ runs over the set $\mc P$ of all
partitions.

Let
\begin{align}  \label{eq:Zk+}
\begin{split}
\Z^{k}_+ &=\{f:[\ul{k}] \rightarrow \Z \mid f(\ul{1})> f(\ul{2})>
\cdots> f(\ul{k})\},  \text{ for } k\in\N,
  \\
\Z^{\infty}_+ &=\{f:[\ul{\infty}] \rightarrow \Z \mid f(\ul{1})>
f(\ul{2})> \cdots; f(\ul{t})=1-t \text{ for }t\gg 0\}.
\end{split}
\end{align}
%
For $f\in\Z^{k}_+$, we denote
\begin{align*}
\mathcal{V}_f=v_{f(\ul{1})}\wedge v_{f(\ul{2})}\wedge\cdots \wedge
v_{f(\ul{k})}.
\end{align*}
Then $\{\mathcal{V}_f|f\in\Z^k_+\}$ is a basis for $\wedge^k\mathbb
V$, for $k\in\N\cup\{\infty\}$.

For $\ul{i}\in [\ul{k}]$, define $d_{\ul{i}}: [\ul{k}]\rightarrow\Z$
by letting $d_{\ul{i}}(\ul{j})=\delta_{ij}$, for $1\le j \le k$, in
the case of $\mathbb V^{\otimes k}$. Then $\wedge^k \mathbb V$ is
naturally a $U_q(\mf{gl}_\infty)$-module, where the action of the
Chevalley generators $E_a,F_a,K_a$, for $a\in\Z$, is given as
follows:

\begin{align}\label{action:wedgeV}
\begin{split}
&E_a \mathcal{V}_f =\begin{cases}
\sum_{\ul{i}}\delta_{a+1, f(\ul{i})}
\mathcal{V}_{f-d_{\ul{i}}},&\text{ if }f-d_{\ul{i}}\in\Z^k_+,\\
0,&\text{ otherwise}.
\end{cases}\\
&F_a \mathcal{V}_f=\begin{cases}
\sum_{\ul{i}}\delta_{a, f(\ul{i})}
\mathcal{V}_{f+d_{\ul{i}}},&\text{ if }f+d_{\ul{i}}\in\Z^k_+,\\
0,&\text{ otherwise}.
\end{cases}\\
&K_a \mathcal{V}_f=
q^{\sum_{\ul{i}}\delta_{a, f(\ul{i})}} \mathcal{V}_{f}.
\end{split}
\end{align}

A similar construction gives rise to $\wedge^k\mathbb W$, for
$k\in\N\cup\{\infty\}$. The space $\wedge^\infty\mathbb W$ has a
basis consisting of
\begin{align*}
w_{p_1}\wedge w_{p_2}\wedge  w_{p_3} & \wedge\cdots,
 \end{align*}
where $p_1<p_2<p_3<\cdots$, and $p_i=i$, for $i\gg 0$.
Alternatively,
the space $\wedge^\infty\mathbb W$ has a basis indexed by partitions
given by
$$
|\la_*\rangle := w_{1-\la_1} \wedge w_{2-\la_2} \wedge
w_{3-\la_3}\wedge \cdots, $$ where $\la =(\la_1, \la_2, \cdots)$
runs over the set of all partitions.

Let
\begin{align}  \label{eq:Zk-}
\begin{split}
\Z^{k}_- &=\{f:[\ul{k}] \rightarrow \Z \mid f(\ul{1})<f(\ul{2})<
\cdots< f(\ul{k})\},  \text{ for }  k\in\N,
  \\
\Z^{\infty}_- &=\{f:[\ul{\infty}] \rightarrow \Z \mid f(\ul{1})<
f(\ul{2})< \cdots; f(\ul{t})=t \text{ for }t\gg 0\}.
\end{split}
\end{align}
For $f\in\Z^{k}_-$ we write
\begin{align*}
\mathcal{W}_{f}=w_{f(\ul{1})}\wedge w_{f(\ul{2})}\wedge\cdots\wedge
w_{f(\ul{k})}.
\end{align*}
Then $\{\mathcal{W}_f|f\in\Z^k_-\}$ is a basis for $\wedge^k\mathbb
W$, for $k\in\N\cup\{\infty\}$.

For $\ul{i}\in [\ul{k}]$, define $d_{\ul{i}}: [\ul{k}]\rightarrow\Z$
by letting $d_{\ul{i}}(\ul{j})=-\delta_{ij}$, for $1\le j \le k$, in
the case of $\mathbb W^{\otimes k}$. Then $\wedge^k \mathbb W$ is
naturally a $U_q(\mf{gl}_\infty)$-module, where the action of the
Chevalley generators $E_a,F_a,K_a$, for $a\in\Z$, on $\wedge^k
\mathbb W$ is given as follows:
\begin{align}\label{action:wedgeW}
\begin{split}
& E_a \mathcal{W}_f=\begin{cases}
\sum_{\ul{i}}\delta_{a, f(\ul{i})}
 \mathcal{W}_{f-d_{\ul{i}}},&\text{ if }f-d_{\ul{i}}\in\Z^k_-,\\
0,&\text{ otherwise}.
\end{cases}\\
&F_a \mathcal{W}_f=\begin{cases}
\sum_{\ul{i}}\delta_{a+1, f(\ul{i})}
 \mathcal{W}_{f+d_{\ul{i}}},&\text{ if }f+d_{\ul{i}}\in\Z^k_-,\\
0,&\text{ otherwise}.
\end{cases}\\
&K_a \mathcal{W}_f=
q^{-\sum_{\ul{i}}\delta_{a, f(\ul{i})}} \mathcal{W}_{f}.
\end{split}
\end{align}

We define a $\Q(q)$-linear isomorphism
$\natural:\wedge^\infty\mathbb V\rightarrow \wedge^\infty\mathbb W$
by
\begin{align*}
\natural(|\la\rangle):=|\la'_*\rangle,\quad  \text{ for }
\la\in\mc{P}.
\end{align*}

\begin{prop}\label{wedgeV:isom:wedgeW}
\cite[Theorem 6.3]{CWZ} The map $\natural:\wedge^\infty\mathbb
V\rightarrow \wedge^\infty\mathbb W$ is an isomorphism of
$U_q(\mf{sl}_\infty)$-modules (both are isomorphic to the basic
module of $U_q(\mf{sl}_\infty)$).
\end{prop}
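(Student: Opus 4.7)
The map $\natural$ is manifestly a $\Q(q)$-linear bijection of the partition-indexed bases, so the content of the statement is the intertwining of $\natural$ with the Chevalley generators $E_a, F_a, K_{a,a+1}$ of $U_q(\mf{sl}_\infty)$, for all $a\in\Z$. The plan is to translate \eqref{action:wedgeV} and \eqref{action:wedgeW} into the combinatorics of addable and removable boxes, and then invoke conjugation of partitions.

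On the $\VV$-side, writing $f(\ul{i}) = \la_i - i + 1$ for $|\la\rangle$, the function $f$ is strictly decreasing; hence for each $a\in\Z$ there is at most one $i$ with $f(\ul{i})=a$, and $F_a|\la\rangle$ is either zero or the single basis element $|\mu\rangle$ obtained by adding an addable box of content $a$ to $\la$. Parallel analysis of \eqref{action:wedgeW}, using $f(\ul{i})= i-\mu_i$ for $|\mu_*\rangle$, shows that $F_a|\mu_*\rangle$ either vanishes or adds an addable box of content $-a$ to $\mu$; the $E_a$-actions admit analogous descriptions as box-removals, again with coefficient $1$ when nonzero. The transposition $(i,j)\leftrightarrow(j,i)$ furnishes a bijection between addable (resp.\ removable) $a$-boxes of $\la$ and addable (resp.\ removable) $(-a)$-boxes of $\la'$, so on each basis vector $\natural F_a|\la\rangle$ and $F_a\natural|\la\rangle$ produce the same (transposed) partition, and similarly for $E_a$.

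For $K_{a,a+1}=K_aK_{a+1}^{-1}$, the key input is the Maya-diagram identity that the sets $\{\la_i-i+1\mid i\ge 1\}$ and $\{j-\la'_j\mid j\ge 1\}$ are complementary subsets of $\Z$. Writing the $K_a$-eigenvalues as $q^{n_a(\la)}$ on $|\la\rangle$ and $q^{-m_a(\mu)}$ on $|\mu_*\rangle$ with $n_a(\la),m_a(\mu)\in\{0,1\}$, this gives $n_a(\la)+m_a(\la')=1$ and hence $n_a(\la)-n_{a+1}(\la)=m_{a+1}(\la')-m_a(\la')$, matching the $K_{a,a+1}$-eigenvalues on $|\la\rangle$ and on $|\la'_*\rangle$. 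The identification of each space with the basic $U_q(\mf{sl}_\infty)$-module then follows by observing that $|\emptyset\rangle$ is annihilated by every $E_a$ (direct from $f(\ul{i})=1-i$), has $K_{a,a+1}$-eigenvalue $q^{\delta_{a,0}}$, and is cyclic under the $F_a$'s since every partition is built from $\emptyset$ by successively adding boxes. The main obstacle is purely bookkeeping: the addable/removable box correspondence under conjugation together with the Maya diagram complementarity, both routine but requiring careful verification.
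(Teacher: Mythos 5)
Your proposal is correct. Note that the paper itself does not prove this proposition --- it simply cites \cite[Theorem 6.3]{CWZ} --- so your direct verification is a self-contained substitute for that reference, and it is essentially the standard argument: translating \eqref{action:wedgeV} and \eqref{action:wedgeW} into addition/removal of a single box of content $a$ (resp.\ $-a$) with coefficient $1$, observing that conjugation negates contents and exchanges addable with addable and removable with removable boxes, and using the Maya-diagram complementarity $\{\la_i-i+1\}\sqcup\{j-\la'_j\}=\Z$ (which is exactly \eqref{cwzlem}, i.e.\ \cite[Lemma 6.2]{CWZ}) to match the $K_{a,a+1}$-eigenvalues. The only soft spot is the parenthetical identification with the basic module: cyclicity of $|\emptyset\rangle$ under the $F_a$'s shows the module is a highest-weight quotient of the Verma module of highest weight $\Lambda_0$, but irreducibility needs one more observation --- e.g.\ that the $\mf{sl}_\infty$-weight spaces are one-dimensional (the weight of $|\la\rangle$ recovers the set $\{\la_i-i+1\}$ and hence $\la$), so any nonzero submodule contains some $|\la\rangle$ and then, by removing boxes via the $E_a$'s, contains $|\emptyset\rangle$ and therefore everything. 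With that added sentence the argument is complete.
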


\section{Canonical bases and Brundan-Kazhdan-Lusztig polynomials}
\label{sec:BKLpol}

In this section, we introduce the $A$- and $B$-completions of
$\mathbb T^{\bf b}$. Then we define bar-involution, canonical and
dual canonical bases in the $B$-completion  of $\mathbb T^{\bf b}$.
The Brundan-Kazhdan-Lusztig polynomials are also introduced.

\subsection{Quasi-$\mc R$-matrix}

Let $M$ be a $U_q(\gl_\infty)$-module equipped with a
$\Q(q)$-anti-linear bar involution $\bar{\ }:M\rightarrow M$, such
that $\ov{um}=\bar{u}\ov{m}$, for all $u\in U_q(\gl_\infty)$ and
$m\in M$. Suppose furthermore that $M$ has a basis $B$ consisting of
bar-invariant weight vectors. We shall refer to $(M,B)$ or simply
$M$ as a {\em weakly based module}. We note that Lusztig introduced
the notion of a based module in \cite[27.1.2]{Lu}, which is a weakly
based module satisfying additional conditions.

\begin{example}\label{ex:inv:basis}
The $U_q(\gl_\infty)$-modules $\mathbb V$ and $\mathbb W$ have bar
involutions defined by $\ov{v_a}=v_a$ and $\ov{w_a}=w_a$,
respectively, that are compatible with the actions of the quantum
group. Thus, $(\mathbb V,\mathbb B^{0})$ and $(\mathbb W,\mathbb
B^{1})$ are weakly based modules, where we denote $\mathbb
B^{0}=\{v_a|a\in\Z\}$ and $\mathbb B^{1}=\{w_a|a\in\Z\}$. It follows
from \eqref{action:wedgeV} and \eqref{action:wedgeW} that the
$U_q(\gl_\infty)$-modules $\wedge^k\mathbb V$ and $\wedge^k\mathbb
W$ are also weakly based modules with basis given by
$\{\mathcal{V}_f|f\in\Z^{k}_+\}$, and
$\{\mathcal{W}_{f}|f\in\Z^k_-\}$, respectively, for $k\in\N$. The
same is true for $k=\infty$ so that $(\wedge^\infty\mathbb
V,\{|\la\rangle|\la\in\mc{P}\})$ and $(\wedge^\infty\mathbb
W,\{|\la_*\rangle|\la\in\mc{P}\})$ are also weakly based modules.
(Actually these are all examples of based modules in the sense of
Lusztig, but we will not need this fact.)
\end{example}

In what follows we shall apply results from \cite{Lu} and
\cite{Jan}. To translate their results to our setting, we need to
replace $q^{-1}$ therein by $q$, and interchange $E_a$ with $F_a$,
for all $a\in\Z$, in order to match our co-multiplication with
theirs. From Lusztig's theory of based modules \cite[Chapter
27]{Lu}, using the quasi-$\mc R$-matrix $\Theta$, one can construct
from $k$ weakly based modules $(M_i,B_i)$ two distinguished bases of
the $U_q(\gl_\infty)$-module $M_1\otimes M_2\otimes\cdots\otimes
M_k$, called {canonical} and {dual canonical basis}, respectively.
We shall review and extend these constructions below, as strictly
speaking Lusztig's construction was carried out for finite-rank
quantum groups.

In order to construct a bar involution on the tensor product of two
weakly based modules, we will first define the quasi-$\mc R$-matrix
$\Theta$, which in turn is based on the existence of a PBW-type
basis.

Denote by $\Phi^+$ the standard positive root system of
$U_q(\gl_\infty)$, and set
$$
P^+ =\sum_{\alpha\in\Phi^+}\Z_+\alpha.
$$
For $k\in\N$, let $U_q(\gl_{|k|})$ be the subalgebra of
$U_q(\gl_\infty)$ generated by $\{E_a,F_a,K_a^{\pm 1},K_{a+1}^{\pm
1}\}$ for $-k\le a\le k-1$. Then we have $U_q(\gl_{|k|})\subseteq
U_q(\gl_{|k+1|})$ and $\bigcup_{k}U_q(\gl_{|k|})=U_q(\gl_\infty)$.
Furthermore, $\mc{U}^\pm_{|k|}\subseteq \mc{U}^\pm_{|k+1|}$ and
$\bigcup_k\mc{U}^\pm_{|k|}=\mc{U}^\pm$, where $\mc{U}^\pm_{|k|}$ and
$\mc{U}^\pm$ denote the positive and negative parts of
$U_q(\gl_{|k|})$ and $U_q(\gl_\infty)$, respectively.

For $k\in\N$, let $\mf{S}_{|k|}$ denote the symmetric group on the
set $\{-k,-k+1,\ldots,0,1,\ldots,k\}$, and let $w_0^{|k|}$ denote
the longest element in $\mf S_{|k|}$. Then there exists a reduced
expression $w'\in\mf{S}_{|k+1|}$ such that
\begin{align*}
w_0^{|k+1|}=w_0^{|k|}w', \quad \text{ where }
\ell(w_0^{|k+1|})=\ell(w_0^{|k|}) + \ell(w').
\end{align*}
Hence there exists an infinite sequence of simple roots $\alpha_1,
\alpha_2, \alpha_3, \ldots$ such that for each $k$ we have a reduced
expression for $w_0^{|k|}$ as
\begin{align}  \label{wn:wn+1}
w^{|k|}_0=s_{\alpha_1}s_{\alpha_2}\cdots s_{\alpha_{N}},  \quad
\text{ where } N =k(2k+1).
\end{align}
Associated to a simple root $\alpha$ one can define an automorphism
$T_{\alpha}:U_q(\gl_\infty)\rightarrow U_q(\gl_\infty)$
\cite[8.14]{Jan}.
For a sequence of non-negative integers $(a_i)$ indexed by $[N]$, we
define the element
\begin{align}\label{PBW:basis}
T_{\alpha_1}T_{\alpha_2}\ldots
T_{\alpha_{N-1}}(E^{a_N}_{\alpha_N})\cdots
T_{\alpha_1}T_{\alpha_2}(E^{a_{3}}_{\alpha_{3}}) \cdot
T_{\alpha_1}(E^{a_2}_{\alpha_2}) \cdot E_{\alpha_1}^{a_1}\in
U_q(\gl_{|k|}).
\end{align}
Then, the set of all such elements form a basis for the positive
part $\mc U^+_{|k|}$ \cite[Theorem~ 8.24]{Jan}. Taking the limit
$k\to\infty$ we obtain a basis for $\mc U^+$ consisting of
elements \eqref{PBW:basis} with $N$ arbitrarily large.
Replacing the $E_{\alpha_i}$'s in \eqref{PBW:basis} by the
corresponding $F_{\alpha_i}$'s, we obtain a basis for $\mc U^-$. For
$\mu\in P^+$, denote by $\mc U^+_\mu$ the corresponding $\mu$-weight
space of $\mc U^+$, and by $\mc U^-_{-\mu}$ the corresponding
$(-\mu)$-weight space of $\mc U^-$.

The quasi-$\mc R$-matrix $\Theta$ is an element in some suitable
completion of $\mc U^+\otimes \mc U^-$. For later use let us write
down an explicit formula for $\Theta$ by mimicking the construction
in \cite[8.30(2)]{Jan}. Associated to the positive root
$s_{\alpha_1}s_{\alpha_2}\cdots s_{\alpha_{t-1}}(\alpha_t)$ for
$t\in \N$, we define
\begin{align}  \label{theta:t}
\Theta_{[t]}:=\sum_{r\ge 0}q^{r(r-1)/2}\frac{(q-q^{-1})^r}{[r]!}
T_{\alpha_1}\cdots T_{\alpha_{t-1}}(E^r_{\alpha_t})\otimes
T_{\alpha_1}\cdots T_{\alpha_{t-1}}(F^r_{\alpha_t}).
\end{align}
Now let $\mu\in P^+$. We choose $k$ large enough so that $\mu$ is a
weight of $\gl_{|k|}$. We set $\Theta_\mu\otimes\Theta_{-\mu}$ to be
the $\mc U^+_{\mu}\otimes\mc U^-_{-\mu}$-component of the product
$\Theta_{[N]}\cdots\Theta_{[2]}\Theta_{[1]}$. This definition is
independent of sufficiently large $k$ and $N$, and we define the
quasi-$\mc R$-matrix $\Theta$  for $U_q(\gl_\infty)$ as
\begin{align}\label{quasi:R:matrix}
\Theta=\sum_{\mu\in P^+}\Theta_\mu\otimes\Theta_{-\mu}.
\end{align}
Formally, we have just made sense of the infinite product
$\Theta=\cdots\Theta_{[3]}\Theta_{[2]}\Theta_{[1]}$. Similarly, the
quasi-$\mc R$-matrix $\Theta^{(k)}$ for $U_q(\gl_{|k|})$ is defined
as \cite[Chapter~8]{Jan}
\begin{align}  \label{eq:thetan}
\Theta^{(k)}=
\Theta_{[N]}\cdots\Theta_{[3]}\Theta_{[2]}\Theta_{[1]},  \quad
\text{ where } N =k(2k+1).
\end{align}

\subsection{Completions}
\label{sec:completions}

Let ${\bf b}$ be a fixed ${0^m1^n}$-sequence. Let $k\in\N$ and let
$\mathbb T^{\bf b}_{\le |k|}$ be the (truncated) $\Q(q)$-subspace of
$\mathbb T^{\bf b}$, spanned by the elements $M_f^{\bf b}$ defined
in \eqref{eq:Mf}, with $-k\le f(i)\le k$, for all $i\in [m+n]$. Let
\begin{align}\label{def:pi:lek}
\pi_k:\mathbb T^{\bf b}\longrightarrow \mathbb T^{\bf b}_{\le |k|}
\end{align}
be the natural projection map with respect to the basis $\{M_f^{\bf
b}\}$ for $\mathbb T^{\bf b}$. The kernels of the $\pi_k$'s define a
linear topology on the vector space $\mathbb T^{\bf b}$. We then let
$\wt{\mathbb T}^{\bf b}$ be the completion of $\mathbb T^{\bf b}$
with respect to this topology. Formally, every element in
$\wt{\mathbb T}^{\bf b}$ is a possibly infinite linear combination
of $M_f$, for $f\in\Z^{m+n}$. We let $\widehat{\mathbb T}^{\bf b}$
denote the subspace of $\wt{\mathbb T}^{\bf b}$ spanned by elements
of the form
\begin{align}\label{vec:in:comp}
M_f+\sum_{g\prec_{\bf b}f}r_g M_g, \quad \text{ for } r_g\in\Q(q).
\end{align}

\begin{definition}\label{def:completion:T}
The $\Q(q)$-vector spaces $\wt{\mathbb T}^{\bf b}$ and
$\widehat{\mathbb T}^{\bf b}$ are called the $A$-{\em completion}
and $B$-{\em completion} of $\mathbb T^{\bf b}$, respectively.
\end{definition}

\begin{rem}
A similar completion was introduced by Brundan \cite[\S 2-d]{Br} for
the standard $0^m1^n$-sequence $ {\bf
b}_{\text{st}}=(\underbrace{0,\ldots,0}_m,\underbrace{1,\ldots,1}_n)
$.
\end{rem}

\subsection{Bar involution} 

For two finite-dimensional weakly based modules $(M,B)$ and $(N,C)$
of a finite-rank quantum group, Lusztig \cite[27.3.1]{Lu} defined a
bar map $\psi$ on the tensor space $M\otimes N$ via the quasi-$\mc
R$-matrix $\Theta$ by
\begin{align}\label{def:bar:map}
\psi(m\otimes n) :=\Theta(\ov{m}\otimes\ov{n}),\quad \forall m\in M,
n\in N.
\end{align}
Then, the bar map $\psi$ is an involution by \cite[Corollary
4.1.3]{Lu} and furthermore is compatible with the action on
$M\otimes N$ induced by the co-multiplication $\Delta$ by
\cite[Lemma~ 24.1.2]{Lu}. We shall adapt this construction to
$U_q(\gl_\infty)$-modules below. However, because our modules are
not finite dimensional, we shall need to deal with completion
issues.


Consider the weakly based modules $(\mathbb V,\mathbb B^{0})$ and
$(\mathbb W,\mathbb B^{1})$. Let ${\bf b}=(b_1,b_2,\ldots,b_{m+n})$
be a fixed ${0^m1^n}$-sequence. Let $\mathbb T^{\bf b}$ be as in
\eqref{eq:Fock} and recall its $A$-completion $\wt{\mathbb T}^{\bf
b}$ from Definition~\ref{def:completion:T}. We first construct a
$\Q(q)$-anti-linear bar map $\ \bar{}: \mathbb T^{\bf
b}\rightarrow\wt{\mathbb T}^{\bf b}$. To be definite for now, we
regard $\mathbb T^{\bf b}$ in \eqref{eq:Fock} as taking tensor
product successively from left to right; that is,
${\mathbb T}^{\bf b} =((({\mathbb V}^{b_1}\otimes {\mathbb
V}^{b_2})
\otimes {\mathbb V}^{b_3})
\otimes\cdots \otimes{\mathbb V}^{b_{m+n}})$.
By \eqref{def:bar:map} we
can use the quasi-$\mc R$-matrix $\Theta^{(k)}$ to construct an involution $\psi^{(k)}$ on
$\mathbb T^{\bf b}_{\le |k|}$, which is a tensor product of
$U_q(\gl_{|k|})$-modules, for $k\in \N$. Recall the projection map
$\pi_k: \mathbb T^{\bf b} \rightarrow \mathbb T^{\bf b}_{\le |k|}$.

\begin{lem}\label{lem:stab:bar:map}
Let $f\in\Z^{m+n}$ and $k\in\N$ be such that $|f(i)|\le k$, for all
$i\in [m+n]$. Then we have  $M_f\in\mathbb T^{\bf b}_{\le|k|}$, and
\begin{align}\label{aux:eq:stab10}
\psi^{(k)}({M_f})=\pi_k\left(\psi^{(\ell)}({M_f})\right), \quad
\text{for } \ell\ge k.
\end{align}
\end{lem}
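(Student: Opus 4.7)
The first claim that $M_f \in \mathbb{T}^{\bf b}_{\le |k|}$ is immediate from the hypothesis $|f(i)| \le k$ for every $i \in [m+n]$ and the definition of $\mathbb{T}^{\bf b}_{\le |k|}$.

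For the stability claim, my plan is to exploit the factorization $\Theta^{(\ell)} = \Theta_{[N_\ell]} \cdots \Theta_{[N_k+1]} \, \Theta^{(k)}$ arising from the nested reduced expressions in \eqref{wn:wn+1}, with $N_k=k(2k+1)$. Since the bar involution fixes each basis vector $\texttt{v}^{b_i}_c$, one has $\psi^{(k)}(M_f) = \Theta^{(k)}(M_f)$ and $\psi^{(\ell)}(M_f) = \Theta^{(\ell)}(M_f)$, interpreted as iterated 2-fold actions through the $m+n$ tensor factors. Because $\Theta^{(k)}$ is assembled entirely from $U_q(\gl_{|k|})$-root vectors, which preserve $\mathbb{T}^{\bf b}_{\le |k|}$, we obtain $\psi^{(k)}(M_f) \in \mathbb{T}^{\bf b}_{\le |k|}$; the problem then reduces to showing that $\pi_k$ annihilates the difference $\Theta^{(\ell)}(M_f) - \Theta^{(k)}(M_f)$.

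Rather than tracking the extra factors $\Theta_{[t]}$ term-by-term, which would be complicated by the possibility that later factors transport ``outside'' contributions back inside $\mathbb{T}^{\bf b}_{\le |k|}$, I would reorganize $\Theta^{(\ell)}$ according to its $P^+$-grading. By the uniqueness of each $\Theta_\mu$ asserted after \eqref{theta:t}, we have $\Theta^{(\ell)} = \sum_\mu \Theta_\mu \otimes \Theta_{-\mu}$ and $\Theta^{(k)} = \sum_\mu \Theta_\mu \otimes \Theta_{-\mu}$ over the respective index sets of $\mu \in P^+$. The summands indexed by $\mu$ supported entirely on simple roots $\alpha_a$ with $a \in [-k, k-1]$ appear identically in both expansions, so they cancel in the difference. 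It therefore suffices to show that, for each $\mu \in P^+$ involving some $\alpha_a$ with $a \notin [-k, k-1]$, the operator $\Theta_\mu \otimes \Theta_{-\mu}$ sends $M_f$ into $\ker \pi_k$.

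The main technical step is this last claim. Such a $\mu$ necessarily has nontrivial $\varepsilon_c$-coefficient for some $|c| > k$ in its expansion $\mu = \sum_b d_b \varepsilon_b$. At each 2-fold step of the iterative construction, the coproduct distributes the weight shift $\mu$ across the tensor factors so that each factor absorbs a single-root shift (or zero), and the individual shifts must sum to $\mu$. Since $\mu$ carries $\varepsilon_c$-support with $|c| > k$, at least one of these individual shifts must involve the index $c$. As every input factor of $M_f$ has index in $[-k,k]$, a direct inspection of the actions of $E_\alpha, F_\alpha$ on $\mathbb V$ and $\mathbb W$ shows that such a shift either annihilates the relevant factor or moves its index outside $[-k,k]$, placing the output in $\ker \pi_k$. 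Applying $\pi_k$ therefore kills every such contribution, yielding $\pi_k(\psi^{(\ell)}(M_f)) = \Theta^{(k)}(M_f) = \psi^{(k)}(M_f)$, as required.
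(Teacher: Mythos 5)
Your first claim and your overall strategy---factor the iterated quasi-$\mathcal R$-matrix, keep the components common to $\Theta^{(k)}$ and $\Theta^{(\ell)}$, and show $\pi_k$ kills the rest---are sound, and the stability of the components $\Theta_\mu\otimes\Theta_{-\mu}$ for $\mu$ in the root lattice of $\gl_{|k|}$ is indeed available. The gap is in the main technical step. In the iterated construction an \emph{intermediate} tensor factor (any factor other than the first and the last) first receives a piece of some $\Theta_{-\mu_j}\in\mc{U}^-$ from an inner level and afterwards receives pieces of $\Delta(\Theta_{\mu_{j'}})\in\Delta(\mc{U}^+)$ from outer levels. These move its index in opposite directions, so a factor can be expelled from $[-k,k]$ by a bad inner component and then pulled back by an outer one: for example $E_kF_kv_k=E_kv_{k+1}=v_k$, with $E_k,F_k\notin U_q(\gl_{|k|})$. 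Consequently the net shift of that factor need not involve the offending index $c$ even though the component does, and the inference ``placing the output in $\ker\pi_k$'' fails, because $\ker\pi_k$ is not preserved by the subsequent applications of $E_a$ with $|a|\ge k$: lying outside the truncation at an intermediate stage does not imply lying outside at the end. Relatedly, if you decompose only at the outermost level, the summands with good $\mu$ do \emph{not} appear identically in the two expansions, since the inner iterated bar maps $\psi^{(k)}$ and $\psi^{(\ell)}$ still differ there.

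The lemma is true and there are two ways to close the gap. The paper argues by induction on the number $m+n$ of tensor factors: at each stage only the last factor is analyzed directly---it receives the pure $\mc{U}^-$-element $\Theta_{-\mu}$, so its total shift is exactly $-\mu$ and a bad $\mu$ either annihilates it or expels it permanently---while the first $m+n-1$ factors are handled by the induction hypothesis combined with the observation that $\Theta^{(k)}_\mu$ and its coproducts preserve $\ker\pi_k$. Alternatively, your direct argument can be repaired by locating the \emph{innermost} level whose grading weight $\mu_{j_0}$ fails to lie in the root lattice of $\gl_{|k|}$ and focusing on a factor in the \emph{left} tensor slot of that level absorbing a piece of $\mu_{j_0}$ with bad support: such a factor receives only elements of $\mc{U}^+$ from then on, so its index moves monotonically away from $[-k,k]$ and can never return. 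Either way, an argument identifying which factors behave monotonically under the remaining operators is indispensable; the one-shot support count as written does not suffice.
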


\begin{proof}
It is clear that  $M_f\in\mathbb T^{\bf b}_{\le|k|}$. We prove
\eqref{aux:eq:stab10} by induction on $m+n$. The case for $m+n\le 2$
is easily checked directly. For $\ell\ge k$ we compute that
{\allowdisplaybreaks
\begin{align*}
\pi_{k}&(\psi^{(\ell)}{M_f})\\
&=\pi_{k}\left(\Theta^{(\ell)}\left(\psi^{(\ell)}({\texttt{v}^{b_1}_{f(1)}\otimes
 \texttt{v}^{b_2}_{f(2)}\otimes\cdots\otimes
 \texttt{v}^{b_{m+n-1}}_{f(m+n-1)}})
 \otimes \texttt{v}^{b_{m+n}}_{f(m+n)}\right)\right)
 \\
&=\pi_{k}\left(\sum_{\mu}\Theta^{(\ell)}_{\mu}\left(\psi^{(\ell)}({\texttt{v}^{b_1}_{f(1)}
 \otimes \texttt{v}^{b_2}_{f(2)}\otimes\cdots\otimes
 \texttt{v}^{b_{m+n-1}}_{f(m+n-1)}})\right)\otimes
 \Theta^{(\ell)}_{-\mu}\left(\texttt{v}^{b_{m+n}}_{f(m+n)}\right)\right)
 \\
&=\pi_{k}\left(\sum_{\mu}\Theta^{(k)}_{\mu}\left(\psi^{(\ell)}({\texttt{v}^{b_1}_{f(1)}
 \otimes \texttt{v}^{b_2}_{f(2)}\otimes\cdots\otimes
 \texttt{v}^{b_{m+n-1}}_{f(m+n-1)}})\right)\otimes
 \Theta^{(k)}_{-\mu}\left(\texttt{v}^{b_{m+n}}_{f(m+n)}\right)\right)
\\
&=\pi_{k}\left(\sum_{\mu}\Theta^{(k)}_{\mu}\pi_{k}
 \left(\psi^{(\ell)}({\texttt{v}^{b_1}_{f(1)}\otimes \texttt{v}^{b_2}_{f(2)}
 \otimes\cdots\otimes \texttt{v}^{b_{m+n-1}}_{f(m+n-1)}})\right)\otimes
 \Theta^{(k)}_{-\mu}\left(\texttt{v}^{b_{m+n}}_{f(m+n)}\right)\right)
 \\
&\stackrel{(*)}{=}\pi_{k}\left(\sum_{\mu}\Theta^{(k)}_{\mu}
 \left(\psi^{(k)}({\texttt{v}^{b_1}_{f(1)}\otimes \texttt{v}^{b_2}_{f(2)}
 \otimes\cdots\otimes \texttt{v}^{b_{m+n-1}}_{f(m+n-1)}})\right)\otimes
 \Theta^{(k)}_{-\mu}\left(\texttt{v}^{b_{m+n}}_{f(m+n)}\right)\right)
 \\
&=\pi_{k}\left(\psi^{(k)}\left(({\texttt{v}^{b_1}_{f(1)}\otimes
 \texttt{v}^{b_2}_{f(2)}\otimes\cdots\otimes \texttt{v}^{b_{m+n-1}}_{f(m+n-1)}})
 \otimes \texttt{v}^{b_{m+n}}_{f(m+n)}\right)\right)
 \\
&=\psi^{(k)}({M_f}).
\end{align*}}
The third identity above uses the fact that
$\pi_k(y\otimes\Theta^{(\ell)}(\texttt{v}^{b_{m+n}}_{t}))
=\pi_k(y\otimes\Theta^{(k)}(\texttt{v}^{b_{m+n}}_{t}))$, for any
$y\in \mathbb T^{\bf b'}$ with ${\bf b'}=(b_1,\ldots,b_{m+n-1})$ and
$|t|\le k$, and the induction hypothesis is used in the identity
$(*)$.
\end{proof}

It follows that the element
$\lim_{\ell\to\infty}\psi^{(\ell)}(M_f)$, for any
$f\in\Z^{m+n}$, is a well-defined element in $\wt{\mathbb T}^{\bf
b}$. We define
\begin{align}  \label{eq:barMf}
\psi({M}_f) :=\lim_{\ell\to\infty}\psi^{(\ell)} (M_f).
\end{align}
It follows immediately from \eqref{aux:eq:stab10} that
$\psi({M}_f)\in\wt{\mathbb T}^{\bf b}$ and
$\psi^{(k)}(M_f)=\pi_k(\psi({M}_f))$.

A different tensor order on $\mathbb T^{\bf b}$ and hence on
$\mathbb T^{\bf b}_{\le |k|}$ would give a different inductive way
to define a map $'\psi^{(k)}$, similar to $\psi^{(k)}$, for
$k\in\N$. For $f\in\Z^{m+n}$, choose $\ell$ so that $\ell \geq
\max_i\{|f(i)|\}$. By \cite[4.2.4]{Lu} we have
$\psi^{(\ell)}(M_f)={'\psi}^{(\ell)}(M_f)$, and hence
$\pi_k(\psi^{(\ell)}(M_f))=\pi_k({'\psi}^{(\ell)}(M_f))$ whenever
$\ell \geq k$. Thus, $\lim_{\ell\to\infty}{'\psi}^{(\ell)}M_f$ is a
well-defined element in $\wt{\mathbb T}^{\bf b}$, and it coincides
with $\lim_{\ell\to\infty}\psi^{(\ell)}M_f$. Hence we have proved
the following.

\begin{prop}\label{bar:indep:order}
The bar map $\ \bar{}:\mathbb T^{\bf b}\rightarrow\wt{\mathbb
T}^{\bf b}$, given by $\ov{M}_f =\psi(M_f)$ (see \eqref{eq:barMf}),
is well-defined, namely it is independent of the tensor order on
$\mathbb T^{\bf b}$.
\end{prop}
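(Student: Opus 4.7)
The plan is to reduce the claim to Lusztig's coassociativity theorem \cite[4.2.4]{Lu} for quasi-$\mathcal{R}$-matrices in the finite-rank setting, and then propagate the equality through the truncations $\pi_k$ and the limit $\ell \to \infty$ that defines $\psi$. The key observation is that once we pass to the truncation $\mathbb{T}^{\bf b}_{\le|\ell|}$, we are working with a tensor product of finite-dimensional weakly based modules over the finite-rank quantum group $U_q(\gl_{|\ell|})$, where Lusztig's theory applies verbatim.

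First, given any alternative bracketing of the tensor product defining $\mathbb{T}^{\bf b}$, I would define ${'\psi}^{(\ell)}$ on $\mathbb{T}^{\bf b}_{\le|\ell|}$ in exact analogy with $\psi^{(\ell)}$ but following the new bracketing, using the quasi-$\mathcal{R}$-matrix $\Theta^{(\ell)}$ at each stage. By \cite[4.2.4]{Lu}, the quasi-$\mathcal{R}$-matrix $\Theta^{(\ell)}$ satisfies the coassociativity identities that force any two such recursively-defined involutions on $\mathbb{T}^{\bf b}_{\le|\ell|}$ to coincide; in particular $\psi^{(\ell)}(M_f) = {'\psi}^{(\ell)}(M_f)$ for every $f \in \Z^{m+n}$ with $M_f \in \mathbb{T}^{\bf b}_{\le|\ell|}$.

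Next, fix $f \in \Z^{m+n}$ and choose $\ell \geq \max_i |f(i)|$ so that $M_f \in \mathbb{T}^{\bf b}_{\le|\ell|}$. Applying $\pi_k$ for any $k \leq \ell$ to the equality above yields $\pi_k(\psi^{(\ell)}(M_f)) = \pi_k({'\psi}^{(\ell)}(M_f))$. The stability lemma \lemref{lem:stab:bar:map} gives $\psi^{(k)}(M_f) = \pi_k(\psi^{(\ell)}(M_f))$, and the identical inductive argument produces the analogous statement ${'\psi}^{(k)}(M_f) = \pi_k({'\psi}^{(\ell)}(M_f))$ for the alternative bracketing (the induction on $m+n$ only uses the coproduct structure of $\Theta^{(\ell)}$ and not the specific order). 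Hence the sequences $\{\psi^{(\ell)}(M_f)\}_\ell$ and $\{{'\psi}^{(\ell)}(M_f)\}_\ell$ converge to the same element of $\wt{\mathbb{T}}^{\bf b}$, and so $\psi(M_f) = \lim_{\ell\to\infty} \psi^{(\ell)}(M_f) = \lim_{\ell\to\infty} {'\psi}^{(\ell)}(M_f)$.

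The main obstacle to watch out for is that Lusztig's \cite[4.2.4]{Lu} is stated for finite-rank quantum groups, whereas $U_q(\gl_\infty)$ is of infinite rank; this is precisely why the truncation step is indispensable. Once one is on $\mathbb{T}^{\bf b}_{\le|\ell|}$, viewed as a $U_q(\gl_{|\ell|})$-module, everything is finite-dimensional and Lusztig's result applies without modification. The only other subtlety is checking that the analogue of \lemref{lem:stab:bar:map} does indeed hold for the alternative bracketing, which follows by the same telescoping computation that establishes the original lemma.
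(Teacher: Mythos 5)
Your proposal is correct and follows essentially the same route as the paper: truncate to the finite-rank setting $\mathbb{T}^{\bf b}_{\le|\ell|}$, invoke Lusztig's coassociativity result \cite[4.2.4]{Lu} to get $\psi^{(\ell)}(M_f)={'\psi}^{(\ell)}(M_f)$, and then pass through $\pi_k$ and the limit using the stability Lemma~\ref{lem:stab:bar:map}. The extra care you take in noting that the stability argument must also be checked for the alternative bracketing is a reasonable refinement of the same argument, not a departure from it.
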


For $m=2$, $n=2$ and ${\bf b}=({0},{1},{0},{1})$ we may regard
$\mathbb T^{\bf b}=\left((\mathbb V\otimes \mathbb W)\otimes\mathbb
V\right)\otimes\mathbb W$, and apply the quasi-$\mc R$-matrix
$\Theta$ repeatedly from left to right and get a bar map on
${\mathbb T}^{\bf b}$ as above. We can also regard $\mathbb T^{\bf
b}$ as $\left(\mathbb V\otimes \mathbb (\mathbb W\otimes\mathbb
V)\right)\otimes\mathbb W$, and use this order to get a bar map.
Proposition~\ref{bar:indep:order} says that the two bar maps
coincide. Recall the $B$-completion $\widehat{\mathbb T}^{\bf b}$
from Definition \ref{def:completion:T}.

\begin{prop}\label{bar:inv:Mf}
Let $f\in\Z^{m+n}$ and $M_f\in\mathbb T^{\bf b}$. We have
\begin{align*}
\ov{M}_f=M_f+\sum_{g\prec_{\bf b}f}r_{gf}(q) M_g,
\end{align*}
where $r_{gf}(q) \in\Z[q,q^{-1}]$ and the sum is possibly infinite.
Hence, we have $\ \bar{}:\mathbb T^{\bf
b}\rightarrow\widehat{\mathbb T}^{\bf b}$.
\end{prop}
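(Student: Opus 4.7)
The plan is to proceed by induction on $m+n$, using Proposition~\ref{bar:indep:order} to compute $\ov{M_f}$ iteratively via the quasi-$\mc R$-matrix. The base case $m+n=1$ follows immediately from Example~\ref{ex:inv:basis}, since $\ov{v_a}=v_a$ and $\ov{w_a}=w_a$, so the stated expansion holds with no lower terms.

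For the inductive step, write ${\bf b'}=(b_1,\ldots,b_{m+n-1})$ and let $f'$ denote the restriction of $f$ to $[m+n-1]$, so that $M_f=M_{f'}\otimes \texttt{v}^{b_{m+n}}_{f(m+n)}$. By Proposition~\ref{bar:indep:order} combined with Example~\ref{ex:inv:basis}, we have $\ov{M_f}=\Theta\bigl(\ov{M_{f'}}\otimes \texttt{v}^{b_{m+n}}_{f(m+n)}\bigr)$. Invoking the induction hypothesis $\ov{M_{f'}}=M_{f'}+\sum_{g'\prec_{\bf b'}f'}r_{g'f'}(q)\,M_{g'}$ with $r_{g'f'}(q)\in\Z[q,q^{-1}]$ together with the expansion $\Theta=1\otimes 1+\sum_{\mu>0}\Theta_\mu\otimes\Theta_{-\mu}$ from \eqref{quasi:R:matrix}, I split the resulting sum according to $\mu$. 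The $\mu=0$ part equals $M_f+\sum_{g'\prec_{\bf b'}f'}r_{g'f'}(q)\,M_{(g',f(m+n))}$, and each $(g',f(m+n))\prec_{\bf b}f$ by a direct check against \eqref{eq:sharpBr}: for $j\leq m+n-1$, $\sharp_{\bf b}$ differs from $\sharp_{\bf b'}$ only by the common contribution of position $m+n$, and for $j=m+n$ both sides coincide.

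For the $\mu>0$ contribution, each summand $\Theta_\mu M_{g'}\otimes\Theta_{-\mu}\texttt{v}^{b_{m+n}}_{f(m+n)}$ (with $g'\preceq_{\bf b'}f'$) expands as a $\Z[q,q^{-1}]$-linear combination of monomials $M_h$, where $h=(g_1',c)$ is such that $g_1'$ arises from $M_{g'}$ by an $E$-monomial of total weight $\mu\in P^+$ and $c$ arises from $f(m+n)$ by an $F$-monomial of weight $-\mu$. Using that $E_a$ raises and $F_a$ lowers the natural weight by $\alpha_a$ on both $\mathbb V$ and $\mathbb W$, a direct computation yields, for $j\leq m+n-1$,
\begin{align*}
\text{wt}^j_{\bf b}(h)-\text{wt}^j_{\bf b}(f)\;=\;\bigl(\mu_j-\mu\bigr)+\bigl[\text{wt}^j_{\bf b'}(g')-\text{wt}^j_{\bf b'}(f')\bigr],
\end{align*}
where $\mu_j\in P^+$ satisfies $\mu\geq\mu_j$ in $\texttt{P}$ and records the portion of $\mu$ contributed by $E$-factors acting at positions $i\geq j$. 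Both summands lie in $-P^+$, and both vanish at $j=1$ by total weight preservation; at $j=m+n$ one computes directly that $\text{wt}^{m+n}_{\bf b}(h)-\text{wt}^{m+n}_{\bf b}(f)=-\mu$, which is strictly negative in $\texttt{P}$ whenever $\mu>0$. Hence $h\prec_{\bf b}f$.

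Combining the two contributions yields the stated expansion of $\ov{M_f}$; the coefficients lie in $\Z[q,q^{-1}]$ because \eqref{theta:t} has $\Z[q,q^{-1}]$-coefficients and the Chevalley-generator actions preserve $\Z[q,q^{-1}]$-linear combinations. The main obstacle is the Bruhat-order bookkeeping in the $\mu>0$ step: the sign $(-1)^{b_i}$ in the tail weight $\text{wt}^j_{\bf b}$ depends on the parity $b_i$, so the direction in which $E$- and $F$-actions shift each partial weight is sensitive to where the $\mathbb V$'s and $\mathbb W$'s sit in the tensor factorization of $\mathbb T^{\bf b}$; handling this uniformly for arbitrary $0^m1^n$-sequences ${\bf b}$ is what distinguishes the present argument from the standard-sequence case treated in \cite{Br}.
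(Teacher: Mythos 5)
Your proposal is correct and shares the paper's overall skeleton (induction on $m+n$, peeling off the last tensor factor and expanding via the quasi-$\mc R$-matrix $\Theta=\sum_\mu\Theta_\mu\otimes\Theta_{-\mu}$), but it handles the crucial Bruhat-order triangularity by a genuinely different mechanism. The paper shows that applying a root-vector term $T_{\alpha_1}\cdots(E^r)\otimes T_{\alpha_1}\cdots(F^r)$ to a standard monomial produces monomials reachable by a chain of elementary moves $\downarrow_{\bf b}$, and then invokes Lemma~\ref{lem:super:Bru} (together with the explicit rank-$2$ computations) to conclude $h\preceq_{\bf b}f$. You instead verify the defining inequalities \eqref{eq:sharpBr}/$\text{wt}^j_{\bf b}$ of $\preceq_{\bf b}$ directly, via the identity $\text{wt}^j_{\bf b}(h)-\text{wt}^j_{\bf b}(f)=(\mu_j-\mu)+[\text{wt}^j_{{\bf b}'}(g')-\text{wt}^j_{{\bf b}'}(f')]$ with $\mu-\mu_j\in P^+$; this packages the inductive comparison $(g',f(m+n))\preceq_{\bf b}f$ and the effect of $\Theta_\mu\otimes\Theta_{-\mu}$ into one weight computation and correctly uses that $E_a$ (resp.\ $F_a$) shifts the $\gl_\infty$-weight by $+\alpha_a$ (resp.\ $-\alpha_a$) on both $\VV$ and $\WW$, so the parity-dependence of $(-1)^{b_i}\varepsilon_{f(i)}$ is automatically absorbed. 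What your route buys is that you avoid the somewhat delicate claim about $\downarrow_{\bf b}$-chains and the separate appeal to the $m+n=2$ base cases in the inductive step; what the paper's route buys is the finer information that the lower terms are reachable by elementary moves, which is in the spirit of its remark that $\preceq_{\bf b}$ could have been defined as the transitive closure of $\downarrow_{\bf b}$.

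One imprecision to fix: your justification of $r_{gf}(q)\in\Z[q,q^{-1}]$ asserts that \eqref{theta:t} has $\Z[q,q^{-1}]$-coefficients, but the coefficient $q^{r(r-1)/2}(q-q^{-1})^r/[r]!$ is not in $\Z[q,q^{-1}]$ for $r\ge 2$. The correct statement is that $E^r/[r]!$ is the divided power $E^{(r)}$, so \eqref{theta:t} is a $\Z[q,q^{-1}]$-combination of (braid-conjugated) \emph{divided powers}, and it is the divided powers $E_a^{(j)},F_a^{(j)}$ (not merely the Chevalley generators) that one must check preserve the $\Z[q,q^{-1}]$-span of the standard monomial basis --- exactly the observation the paper makes. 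This is a routine repair, not a gap in the structure of your argument.
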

\begin{proof}
By \eqref{aux:eq:stab10} we have $\ov{M}_f\in\wt{\mathbb T}^{\bf
b}$. Making use of the explicit form \eqref{quasi:R:matrix} of the
quasi-$\mc R$-matrix $\Theta$, we first observe that the proposition
holds in the cases when $m+n \le 2$. We now proceed by induction on
$m+n$.

Let $f\in\Z^{m+n}$ and set $f'=f_{[m+n-1]}$.  Furthermore, for ${\bf
b}=(b_1,b_2,\ldots,b_{m+n})$, we set $'{\bf
b}=(b_1,b_2,\ldots,b_{m+n-1})$. We have \begin{align*}
M_f=\texttt{v}^{b_1}_{f(1)}\otimes
\texttt{v}^{b_2}_{f(2)}\otimes\cdots\otimes
\texttt{v}^{b_{m+n}}_{f(m+n)}.
\end{align*}
By the inductive assumption we compute that
\begin{align*}
\ov{M_f}
 &=\Theta\left(\ov{\texttt{v}^{b_1}_{f(1)}\otimes
\texttt{v}^{b_2}_{f(2)}  \otimes\cdots\otimes
\texttt{v}^{b_{m+n-1}}_{f(m+n-1)}}\otimes
 \ov{\texttt{v}^{b_{m+n}}_{f(m+n)}}\right)
 \\
 &=\sum_{\mu}\Theta_{\mu}\left(\ov{\texttt{v}^{b_1}_{f(1)}\otimes
 \texttt{v}^{b_2}_{f(2)}\otimes\cdots\otimes \texttt{v}^{b_{m+n-1}}_{f(m+n-1)}}\right)
 \otimes \Theta_{-\mu}\left(\texttt{v}^{b_{m+n}}_{f(m+n)}\right)
 \\
&=\sum_{\mu}\Theta_{\mu}\left(M_{f'}+\sum_{g'\prec_{'{\bf b}}
f'}s_{g'f'}(q) M_{g'}\right) \otimes
\Theta_{-\mu}\left(\texttt{v}^{b_{m+n}}_{f(m+n)}\right),
\end{align*}
where $s_{g'f'}(q) \in\Z[q,q^{-1}]$. Now recall that
$\Theta_{\mu}\otimes\Theta_{-\mu}$ is a $\Q(q)$-linear combination
of products of the form $T_{\alpha_1}\cdots
T_{\alpha_{k-1}}(E^r_{\alpha_k})\otimes T_{\alpha_1}\cdots
T_{\alpha_{k-1}}(F^r_{\alpha_k})$. From the explicit formulas for
these expressions in \cite[8.14(7)]{Jan} and the cases with $m+n=2$,
it follows that such an element, when applied to an element of the
form $M_h\otimes \texttt{v}^{b_{m+n}}_{b}$, for $h\in\Z^{m+n-1}$ and
$b\in\Z$, gives a $\Q(q)$-linear combination of elements of the form
$M_t\otimes \texttt{v}^{b_{m+n}}_c$, for $t\in\Z^{m+n-1}$ and
$c\in\Z$; moreover we have a sequence of weights $(h_i,c_i)$,
$i=1,\cdots,k$, such that
$$
(h,b)=(h_1,c_1)\downarrow_{{\bf b}}(h_2,c_2)\downarrow_{\bf
b}\cdots\downarrow_{{\bf b}}(h_k,c_k)=(t,c).$$ By
Lemma~\ref{lem:super:Bru}, we have $(t,c)\preceq_{\bf b}(h,b)$, and
hence $\ov{M}_f=M_f+\sum_{g\prec_{\bf b}f}r_{gf}(q) M_g$, for
$r_{gf}(q) \in\Q(q)$.

It remains to show that  $r_{gf}(q)\in \Z[q,q^{-1}]$. For this we
first observe that the $\Z[q,q^{-1}]$-span of the standard monomial
basis elements in $\mathbb T^{\bf b}$ is invariant under the action
of $K_a^{\pm 1}$, and the divided powers $E^{(j)}_a$ and
$F_a^{(j)}$, for $a\in\Z$ and $j\in\N$. From this observation,
\cite[8.14(7)]{Jan}, and  formula \eqref{theta:t} for
$\Theta_{[t]}$, it follows that $r_{gf}(q)\in\Z[q,q^{-1}]$.
\end{proof}

As we have already noted, due to the infinite-dimensionality of $\mathbb
V$ and $\mathbb W$, the bar involution does not preserve the space
${\mathbb T}^{\bf b}$. However, we have the following.

\begin{lem}   \label{lem:barTb}
The bar map $\ \bar{}:\mathbb T^{\bf b}\rightarrow\widehat{\mathbb
T}^{\bf b}$ extends to $\ \bar{}:\widehat{\mathbb T}^{\bf
b}\rightarrow\widehat{\mathbb T}^{\bf b}$. Furthermore, the bar map
on $\widehat{\mathbb T}^{\bf b}$ is an involution.
\end{lem}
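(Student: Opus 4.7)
My plan is to extend the bar map to $\widehat{\mathbb T}^{\bf b}$ by anti-linearity coefficient-by-coefficient, using the finite-interval property (Lemma \ref{lem:f:to:g:finite}) of the Bruhat order to make the resulting infinite sums convergent, and then to verify the involution property by truncating each coefficient computation to a finite-rank situation where Lusztig's classical identity $\ov\Theta\Theta=1$ applies.

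For the extension, let $v = M_f + \sum_{g\prec_{\bf b}f} r_g M_g \in \widehat{\mathbb T}^{\bf b}$, set $r_f := 1$, and define formally
$$\ov v \;:=\; \sum_{g\preceq_{\bf b}f} \ov{r_g}\,\ov{M_g}.$$
Expanding each $\ov{M_g}$ via Proposition \ref{bar:inv:Mf} and regrouping, the coefficient of $M_h$ in $\ov v$ (for $h\preceq_{\bf b}f$) becomes
$$[M_h]\,\ov v \;=\; \sum_{g:\, h\preceq_{\bf b}g\preceq_{\bf b}f} \ov{r_g}\,r_{hg}(q),$$
with the convention $r_{hh}(q)=1$. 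The indexing set is finite by Lemma \ref{lem:f:to:g:finite}, so each coefficient lies in $\Q(q)$. Since $M_f$ appears with coefficient $1$ and the support is bounded above by $f$, we have $\ov v \in \widehat{\mathbb T}^{\bf b}$, giving the desired extension.

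For the involution property, I fix $h\preceq_{\bf b}f$; iterating the formula above shows that $[M_h]\,\ov{\ov v}$ depends only on the finite data $\{r_g,\, r_{g'g''}(q)\}$ with $g,g',g''$ ranging over the interval $[h,f]_{\bf b} := \{g \mid h\preceq_{\bf b}g\preceq_{\bf b}f\}$. By the bound \eqref{aux:eq:h:max}, any $k \ge \max\{|f(j)|,|h(j)| \mid j \in [m+n]\}$ places $[h,f]_{\bf b}$ entirely inside $\mathbb T^{\bf b}_{\le|k|}$. Setting $w := \sum_{g\in [h,f]_{\bf b}} r_g M_g \in \mathbb T^{\bf b}_{\le|k|}$, the formula for $[M_h]\,\ov{\ov w}$ reuses exactly the same data, so $[M_h]\,\ov{\ov v} = [M_h]\,\ov{\ov w}$.

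Finally, Lemma \ref{lem:stab:bar:map} applied to each term gives $\pi_k(\ov w) = \psi^{(k)}(w)$ and therefore $\pi_k(\ov{\ov w}) = \psi^{(k)}\circ\psi^{(k)}(w)$. The map $\psi^{(k)}$ was built from the quasi-$\mc R$-matrix $\Theta^{(k)}$ of the finite-rank quantum group $U_q(\gl_{|k|})$ acting on the tensor product of the finite-dimensional truncated weakly based modules $\mathbb V^{b_i}_{\le|k|}$, so Lusztig's identity $\ov{\Theta^{(k)}}\Theta^{(k)}=1$ from \cite[Corollary 4.1.3]{Lu} implies $\psi^{(k)}\circ\psi^{(k)} = \mathrm{id}$ on $\mathbb T^{\bf b}_{\le|k|}$. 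Hence $[M_h]\,\ov{\ov v} = [M_h]\,\ov{\ov w} = [M_h]\,w = r_h$, giving $\ov{\ov v} = v$. The principal technical step is the identification $[M_h]\,\ov{\ov v} = [M_h]\,\ov{\ov w}$, which depends crucially on Lemma \ref{lem:f:to:g:finite}.
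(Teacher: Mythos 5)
Your proof is correct and follows essentially the same route as the paper: the extension step uses the finite interval property (Lemma \ref{lem:f:to:g:finite}) to make each coefficient of $\ov v$ a finite sum, and the involution step reduces, via the stability statement of Lemma \ref{lem:stab:bar:map} together with the bound \eqref{aux:eq:h:max}, to the finite-dimensional truncation $\mathbb T^{\bf b}_{\le|k|}$ where Lusztig's \cite[Corollary 4.1.3]{Lu} applies. The only cosmetic difference is that you verify $\ov{\ov v}=v$ coefficient-by-coefficient for a general element, whereas the paper records the equivalent identity $\sum_{h}r_{gh}(q)\ov{r_{hf}(q)}=\delta_{fg}$ on the matrix entries of the bar map.
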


\begin{proof}
To show that the bar map extends to $\widehat{\mathbb T}^{\bf b}$ we
need to show that if $y=M_f+\sum_{g\prec_{\bf b}f}r_{g}(q) M_g\in
\widehat{\mathbb T}^{\bf b}$, $r_{g}(q) \in\Q(q)$, then
$\ov{y}\in\widehat{\mathbb T}^{\bf b}$. By Proposition~
\ref{bar:inv:Mf} and the definition of $\widehat{\mathbb T}^{\bf
b}$, it remains to show that $\ov{y}\in\wt{\mathbb T}^{\bf b}$. To
see this, we note that if the coefficient of $M_h$ in $\ov{y}$ is
nonzero, then there exists $g\preceq_{\bf b}f$ such that $r_{hg}(q)\not=0$. Thus we have $h\preceq_{\bf b
}g\preceq_{\bf b}f$.  However, by Lemma \ref{lem:f:to:g:finite} there
are only finitely many such $g$'s. Thus, only finitely many $g$'s
can contribute to the coefficient of $M_h$ in $\ov{y}$, and hence
$\ov{y}\in\wt{\mathbb T}^{\bf b}$.

To show that $\ \bar{ }\ $ is an involution, we need to show that
for fixed $f,g\in\Z^{m+n}$ with $g\preceq_{\bf b}f$ we have
\begin{align}\label{aux:107}
\sum_{g\preceq_{\bf b}h\preceq_{\bf b}f} r_{gh}(q)\ov{r_{hf}(q)}=\delta_{fg}.
\end{align}
By Lemma~\ref{lem:f:to:g:finite}, there are only finitely many such
$h$'s with $g\preceq_{\bf b}h\preceq_{\bf b}f$. This together with
Lemma \ref{lem:stab:bar:map} implies that \eqref{aux:107} is
equivalent to the same identity on the finite-dimensional space
$\mathbb T^{\bf b}_{\le|k|}$, for $k\gg 0$. But in this case
\cite[Corollary 4.1.3]{Lu} is applicable. So we conclude that
\eqref{aux:107} holds and so the bar map is an involution.
\end{proof}

\subsection{Canonical basis}

For $r(q)\in\Q(q)$ recall that $\ov{r(q)} =r(q^{-1})$. A version of
the following lemma goes back to \cite{KL}. We note that \cite[Lemma 24.2.1]{Lu} is stated in a slightly different form, and also that, although
(vi)--(viii) are not listed there, the same
proof therein can be used to establish them.

\begin{lem}\label{lem:lusztig} \cite[Lemma 24.2.1]{Lu}
Let $(I,\preceq)$ be a partially ordered set satisfying the finite
interval property. Assume that for every $i\preceq j$ we are given
elements $r_{ij}\in\Z[q,q^{-1}]$ such that
\begin{itemize}
\item[(i)] $r_{ii}=1$, for all $i\in I$,
\item[(ii)] $\sum_{h,i\preceq h\preceq j}r_{ih}\ov{r}_{hj}=\delta_{ij}$.
\end{itemize}
Then there exists a unique family of elements $t_{ij}\in\Z[q]$ for
all $i\preceq j$ such that
\begin{itemize}
\item[(iii)] $t_{ii}=1$, for all $i\in I$,
\item[(iv)] $t_{ij}\in q\Z[q]$, for all $i\prec j$,
\item[(v)]
$t_{ij}=\sum_{h,i\preceq h\preceq j}r_{ih}\ov{t}_{hj}$, for all
$i\preceq j$.
\end{itemize}
Furthermore, there exists a unique family of elements
$\ell_{ij}\in\Z[q^{-1}]$ for all $i\preceq j$ such that
\begin{itemize}
\item[(vi)] $\ell_{ii}=1$, for all $i\in I$,
\item[(vii)] $\ell_{ij}\in q^{-1}\Z[q^{-1}]$, for all $i\prec j$,
\item[(viii)]
$\ell_{ij}=\sum_{h,i\preceq h\preceq j}r_{ih}\ov{\ell}_{hj}$, for
all $i\preceq j$.
\end{itemize}
\end{lem}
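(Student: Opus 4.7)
The plan is to argue by induction on the (finite) size of the interval $[i,j]$ with $j$ fixed, reducing each inductive step to a uniqueness/existence problem for a Laurent polynomial satisfying a bar-antisymmetry condition. Both families $t_{ij}$ and $\ell_{ij}$ are constructed by the same induction, differing only in the final choice of representative.

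First I would record the elementary decomposition fact: a Laurent polynomial $A\in\Z[q,q^{-1}]$ satisfies $\bar A=-A$ if and only if $A=\sum_{k>0}a_k(q^k-q^{-k})$ with $a_k\in\Z$, and such an $A$ can then be written uniquely as $t-\bar t$ with $t\in q\Z[q]$ (take $t=\sum_{k>0}a_k q^k$) and uniquely as $\ell-\bar\ell$ with $\ell\in q^{-1}\Z[q^{-1}]$. This observation is the source of uniqueness in both (iii)--(v) and (vi)--(viii), and also delivers the required integrality.

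For existence, fix $j$ and induct on the cardinality of $[i,j]$, which is finite by hypothesis. The base case $i=j$ is forced: $t_{jj}=1$ trivially satisfies (iii)--(v). In the inductive step, assume $t_{hj}$ has been constructed for every $h$ with $i\prec h\preceq j$. Using $r_{ii}=1$, the desired identity (v) at the pair $(i,j)$ becomes $t_{ij}-\bar t_{ij}=A_{ij}$, where
\[
A_{ij}:=\sum_{i\prec h\preceq j}r_{ih}\,\bar t_{hj}\in\Z[q,q^{-1}].
\]
By the decomposition fact, a unique $t_{ij}\in q\Z[q]$ solving this exists provided $\bar A_{ij}=-A_{ij}$, and then $t_{ij}\in\Z[q]$ automatically.

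The main obstacle, and the only nontrivial computation, is to verify $A_{ij}+\bar A_{ij}=0$. Expanding $t_{hj}$ via the inductive version of (v) and interchanging the order of summation, I expect to obtain
\[
\bar A_{ij}=\sum_{i\prec h'\preceq j}\Bigl(\sum_{i\prec h\preceq h'}\bar r_{ih}\,r_{hh'}\Bigr)\bar t_{h'j}.
\]
For each $h'\succ i$, applying the bar of hypothesis (ii) on the interval $[i,h']$ and isolating the $h=i$ contribution $\bar r_{ii}\,r_{ih'}=r_{ih'}$ shows that the bracketed sum equals $-r_{ih'}$. Substituting back gives $\bar A_{ij}=-A_{ij}$ as required, and the induction closes. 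The parallel construction of $\ell_{ij}\in q^{-1}\Z[q^{-1}]$ uses the other half of the decomposition fact to select the unique representative in $q^{-1}\Z[q^{-1}]$; all other steps are verbatim the same.
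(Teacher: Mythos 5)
Your proposal is correct, and it is essentially the standard argument behind the result the paper simply cites from Lusztig (\cite[Lemma 24.2.1]{Lu}): induction on the size of the finite interval $[i,j]$, with existence and uniqueness at each step reduced to the fact that an anti-bar-invariant element of $\Z[q,q^{-1}]$ has a unique expression $t-\bar t$ with $t\in q\Z[q]$ (resp.\ $\ell-\bar\ell$ with $\ell\in q^{-1}\Z[q^{-1}]$), the anti-invariance of $A_{ij}$ being exactly the consequence of hypothesis (ii) you compute. The verification that the bracketed sum equals $-r_{ih'}$ is the only point requiring care and you have it right, so nothing is missing.
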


We shall now apply Lemma \ref{lem:lusztig} to the partially ordered
set $(\Z^{m+n},\preceq_{\bf b})$. Note first that the finite
interval condition in Lemma \ref{lem:lusztig} is satisfied due to
Lemma \ref{lem:f:to:g:finite}. Recall from
Proposition~\ref{bar:inv:Mf} that $\ov{M}_f=M_f+\sum_{g\prec_{\bf
b}f}r_{gf}(q) M_g$. So Property~(i) is clear, and (ii) follows
readily by applying the anti-linear bar-involution $\ \bar{}\ $ in
Lemma~\ref{lem:barTb} to the above identity. Hence we have
established the following.

\begin{prop}\label{prop:existence:can1}
The $\Q(q)$-vector space $\widehat{\mathbb T}^{\bf b}$ has unique
bar-invariant topological bases
\begin{align*}
\{T^{\bf b}_f|f\in\Z^{m+n}\}\text{ and }\{L^{\bf b}_f|f\in\Z^{m+n}\}
\end{align*}
such that
\begin{align*}
T^{\bf b}_f=M^{\bf b}_f+\sum_{g\prec_{\bf b}f}t_{gf}^{\bf b}(q)
M^{\bf b}_g,
 \qquad
L^{\bf b}_f=M^{\bf b}_f+\sum_{g\prec_{\bf b}f}\ell_{gf}^{\bf b}(q)
M^{\bf b}_g,
\end{align*}
with $t_{gf}^{\bf b}(q)\in q\Z[q]$, and $\ell_{gf}^{\bf b}(q)\in
q^{-1}\Z[q^{-1}]$, for $g\prec_{\bf b}f$. (We will also write
$t_{ff}^{\bf b}(q)=\ell_{ff}^{\bf b}(q)=1$, $t_{gf}^{\bf
b}=\ell_{gf}^{\bf b}=0$ for $g\npreceq_{\bf b}f$.)
\end{prop}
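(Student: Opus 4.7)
The plan is to apply Lemma~\ref{lem:lusztig} directly to the partially ordered set $(\Z^{m+n},\preceq_{\bf b})$, with the input data $r_{gf}=r_{gf}(q)\in\Z[q,q^{-1}]$ provided by Proposition~\ref{bar:inv:Mf}. The hypotheses of the lemma can be checked as follows. The finite interval property is exactly the content of Lemma~\ref{lem:f:to:g:finite}. Condition (i) of Lemma~\ref{lem:lusztig}, namely $r_{ff}=1$, is immediate from the formula $\ov{M_f}=M_f+\sum_{g\prec_{\bf b}f}r_{gf}(q)M_g$ in Proposition~\ref{bar:inv:Mf}. Condition (ii) is obtained by applying the bar involution a second time: since the bar map is a genuine involution on $\widehat{\mathbb T}^{\bf b}$ (Lemma~\ref{lem:barTb}) and the sums involved are controlled by Lemma~\ref{lem:f:to:g:finite}, expanding $\ov{\ov{M_f}}=M_f$ and comparing coefficients of $M_g$ yields precisely $\sum_{g\preceq_{\bf b}h\preceq_{\bf b}f}r_{gh}(q)\ov{r_{hf}(q)}=\delta_{gf}$.

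With hypotheses in hand, Lemma~\ref{lem:lusztig} produces unique families $t^{\bf b}_{gf}(q)\in\Z[q]$ (with $t^{\bf b}_{gf}(q)\in q\Z[q]$ for $g\prec_{\bf b}f$) and $\ell^{\bf b}_{gf}(q)\in\Z[q^{-1}]$ (with $\ell^{\bf b}_{gf}(q)\in q^{-1}\Z[q^{-1}]$ for $g\prec_{\bf b}f$), satisfying properties (iii)--(v) and (vi)--(viii) respectively. I then define $T^{\bf b}_f$ and $L^{\bf b}_f$ by the formulas in the statement. These are legitimate elements of $\widehat{\mathbb T}^{\bf b}$ because $\widehat{\mathbb T}^{\bf b}$ is by definition the space of formal expressions of the form \eqref{vec:in:comp}, allowing possibly infinite tails below $f$ in $\preceq_{\bf b}$.

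For bar-invariance of $T^{\bf b}_f$, I would compute $\ov{T^{\bf b}_f}$ termwise using Proposition~\ref{bar:inv:Mf}, regroup by collecting the coefficient of each $M^{\bf b}_g$ (the regrouping being legal by the finite-interval Lemma~\ref{lem:f:to:g:finite}), and recognize the result as $\sum_g\bigl(\sum_{g\preceq_{\bf b}h\preceq_{\bf b}f}r_{gh}(q)\ov{t^{\bf b}_{hf}(q)}\bigr)M^{\bf b}_g$, which equals $T^{\bf b}_f$ by property (v) of Lemma~\ref{lem:lusztig}. The computation for $L^{\bf b}_f$ is identical with (viii) in place of (v). Uniqueness of the bar-invariant families is inherited from the uniqueness assertions in Lemma~\ref{lem:lusztig}. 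Finally, the transition matrix between $\{T^{\bf b}_f\}$ (resp.\ $\{L^{\bf b}_f\}$) and $\{M^{\bf b}_f\}$ is unitriangular with respect to $\preceq_{\bf b}$, and the finite interval property ensures that its inverse also has entries supported in finite intervals; hence each family is a topological basis of $\widehat{\mathbb T}^{\bf b}$ in the sense that every element of $\widehat{\mathbb T}^{\bf b}$ has a unique (possibly infinite) convergent expansion.

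The hard part of the whole story has already been handled in the preceding subsections: constructing the bar map on the completion and proving it is an involution preserving $\Z[q,q^{-1}]$-integrality of coefficients, together with the finite interval property of $(\Z^{m+n},\preceq_{\bf b})$ which is non-obvious because $\preceq_{\bf b}$ is strictly stronger than the transitive closure of $\downarrow_{\bf b}$ for general $\bf b$. Once these are in place, the present proposition is essentially a formal consequence of Lusztig's lemma, with the only subtlety being that one is working in the $B$-completion rather than in $\mathbb T^{\bf b}$ itself, a subtlety that is resolved precisely by Lemma~\ref{lem:f:to:g:finite}.
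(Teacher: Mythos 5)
Your proposal is correct and follows exactly the route of the paper: verify the finite interval property via Lemma~\ref{lem:f:to:g:finite}, obtain condition (i) of Lemma~\ref{lem:lusztig} from the unitriangular form of $\ov{M}_f$ in Proposition~\ref{bar:inv:Mf}, obtain condition (ii) from the involution property of the bar map (Lemma~\ref{lem:barTb}), and then invoke Lusztig's lemma. The only difference is that you spell out the bar-invariance and topological-basis verifications that the paper leaves implicit, which is fine.
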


\begin{definition}  \label{def:BKLpolyn}
$\{T^{\bf b}_f|f\in\Z^{m+n}\}\text{ and }\{L^{\bf
b}_f|f\in\Z^{m+n}\}$ are called the {\em canonical basis} and {\em
dual canonical basis} for $\widehat{\mathbb T}^{\bf b}$,
respectively. Also, $t_{gf}^{\bf b}(q)$ and $\ell_{gf}^{\bf b}(q)$
are called {\em Brundan-Kazhdan-Lusztig (BKL) polynomials}.
\end{definition}

Recall $d_i \in \Z^{m+n}$ from \eqref{eq:di}. We define
\begin{equation} \label{eq:1mn}
\texttt{1}_{m|n} :=\sum_{i=1}^{m+n}(-1)^{b_i}d_i \in\Z^{m+n}.
\end{equation}

\begin{prop}\label{prop:shift:can:p}
For each $p\in\Z$ and $f,g\in\Z^{m+n}$, we have
\begin{align*}
t^{\bf b}_{gf}=t^{\bf
b}_{g+p\texttt{1}_{m|n},f+p\texttt{1}_{m|n}},\quad \ell^{\bf
b}_{gf}=\ell^{\bf b}_{g+p\texttt{1}_{m|n},f+p\texttt{1}_{m|n}}.
\end{align*}
\end{prop}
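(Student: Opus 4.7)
The plan is to exhibit a shift automorphism on $\widehat{\mathbb T}^{\bf b}$ that intertwines all the ingredients used to characterize $T^{\bf b}_f$ and $L^{\bf b}_f$, and then invoke the uniqueness in Proposition~\ref{prop:existence:can1}. First observe that $\texttt{1}_{m|n}(j)=(-1)^{b_j}\cdot(-1)^{b_j}=1$ for every $j$, so $f+p\texttt{1}_{m|n}$ is the vector obtained by adding $p$ to every coordinate. Define the $\Q(q)$-linear shift map $\mathrm{sh}_p:\mathbb T^{\bf b}\to\mathbb T^{\bf b}$ by $\mathrm{sh}_p(M^{\bf b}_f):=M^{\bf b}_{f+p\texttt{1}_{m|n}}$, equivalently $v_a\mapsto v_{a+p}$ in every $\VV$-slot and $w_a\mapsto w_{a+p}$ in every $\WW$-slot. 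It extends continuously to $\mathrm{sh}_p:\widehat{\mathbb T}^{\bf b}\to\widehat{\mathbb T}^{\bf b}$.

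Next I would check two compatibilities. For the Bruhat order, note from the definition of $\sharp_{\bf b}(f,a,j)$ that $\sharp_{\bf b}(f+p\texttt{1}_{m|n},a+p,j)=\sharp_{\bf b}(f,a,j)$, so the characterization \eqref{eq:sharpBr} gives
\[
g\preceq_{\bf b} f\iff g+p\texttt{1}_{m|n}\preceq_{\bf b}f+p\texttt{1}_{m|n}.
\]
For the bar involution, let $\sigma_p$ denote the diagram shift automorphism of $U_q(\gl_\infty)$ sending $E_a,F_a,K_a$ to $E_{a+p},F_{a+p},K_{a+p}$. Then $\mathrm{sh}_p(X\cdot v)=\sigma_p(X)\cdot\mathrm{sh}_p(v)$ for all $X\in U_q(\gl_\infty)$ and $v\in\mathbb T^{\bf b}$. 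Because $\sigma_p$ commutes with the bar involution of $U_q(\gl_\infty)$ and permutes the positive roots by translation, the PBW expressions \eqref{PBW:basis} and the tensor factors \eqref{theta:t} satisfy $(\sigma_p\otimes\sigma_p)\Theta_{[t+p]}=\Theta_{[t]}$ when we simultaneously relabel the fixed reduced expression; summing over all weights in \eqref{quasi:R:matrix} the effect cancels and one obtains
\[
\Theta\cdot(\mathrm{sh}_p\otimes\mathrm{sh}_p)=(\mathrm{sh}_p\otimes\mathrm{sh}_p)\cdot\Theta
\]
as operators on (completions of) tensor products of $\VV$'s and $\WW$'s. Since the generators $v_a,w_a$ are bar-invariant (Example~\ref{ex:inv:basis}) and $\overline{\mathrm{sh}_p(v_a)}=v_{a+p}=\mathrm{sh}_p(\overline{v_a})$, the defining formula \eqref{def:bar:map} applied inductively to the tensor order used in Proposition~\ref{bar:indep:order} yields
\[
\overline{\mathrm{sh}_p(M^{\bf b}_f)}=\mathrm{sh}_p(\overline{M^{\bf b}_f}),\qquad f\in\Z^{m+n}.
\]

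With these two facts in hand, apply $\mathrm{sh}_p$ to the defining expression
$L^{\bf b}_f=M^{\bf b}_f+\sum_{g\prec_{\bf b}f}\ell^{\bf b}_{gf}(q)M^{\bf b}_g$: the result is a bar-invariant element of $\widehat{\mathbb T}^{\bf b}$ of the form
\[
M^{\bf b}_{f+p\texttt{1}_{m|n}}+\sum_{h\prec_{\bf b}f+p\texttt{1}_{m|n}}\ell^{\bf b}_{h-p\texttt{1}_{m|n},\,f}(q)\,M^{\bf b}_h,
\]
with all nontrivial coefficients lying in $q^{-1}\Z[q^{-1}]$. By the uniqueness part of Proposition~\ref{prop:existence:can1}, this element must equal $L^{\bf b}_{f+p\texttt{1}_{m|n}}$, giving $\ell^{\bf b}_{gf}=\ell^{\bf b}_{g+p\texttt{1}_{m|n},f+p\texttt{1}_{m|n}}$; the same argument with $q\Z[q]$ in place of $q^{-1}\Z[q^{-1}]$ gives the identity for $t^{\bf b}_{gf}$. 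The main obstacle is the middle step: justifying that $\mathrm{sh}_p$ commutes with $\overline{\ \phantom{x}\ }$ despite the infinite nature of $\Theta$; the cleanest route is to use the truncated involutions $\psi^{(k)}$ from Lemma~\ref{lem:stab:bar:map}, observe that $\mathrm{sh}_p$ conjugates $\psi^{(k)}$ into $\psi^{(k+|p|)}$ restricted to the shifted truncation $\mathbb T^{\bf b}_{\leq|k+p|}$, and take the limit $k\to\infty$ using Proposition~\ref{bar:indep:order}.
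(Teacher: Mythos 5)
Your proposal is correct and follows essentially the same route as the paper's proof: define the shift map on the standard monomial basis, observe it preserves the Bruhat ordering and hence extends to the $B$-completion, note it commutes with the bar involution because the quasi-$\mathcal R$-matrix is invariant under an overall index shift, and conclude by uniqueness of the (dual) canonical basis. The only cosmetic difference is that you treat the shift by $p$ directly (and spell out the $\sharp_{\bf b}$ computation and the truncation argument for the bar map), whereas the paper works with the shift by $1$ and leaves these verifications implicit.
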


\begin{proof}
Define a $\Q(q)$-linear shift map $\texttt{sh}:{\mathbb T}^{\bf
b}\rightarrow {\mathbb T}^{\bf b}$ by
\begin{align*}
\texttt{sh}(M^{\bf b}_f):=M^{\bf b}_{f+\texttt{1}_{m|n}}.
\end{align*}
Since for $f\succeq_{\bf b}g$ if and only if
$f+\texttt{1}_{m|n}\succeq_{\bf b}g+\texttt{1}_{m|n}$, the map
$\texttt{sh}$ extends to a $\Q(q)$-linear map on the $B$-completion
$\widehat{\mathbb T}^{\bf b}$. Now $\texttt{sh}$ also commutes with
the bar map, since the quasi-$\mc R$-matrix is invariant under an
overall index shift by $1$. Thus, we conclude that
$\texttt{sh}(T^{\bf b}_f)=T^{\bf b}_{f+\texttt{1}_{m|n}}$ and
$\texttt{sh}(L^{\bf b}_f)=L^{\bf b}_{f+\texttt{1}_{m|n}}$. The
proposition follows.
\end{proof}

\subsection{Positivity}

For $r\ge 1$ and $a\in\Z$,  recall the divided powers $E_a^{(r)}$
and $F_a^{(r)}$. The following was conjectured in \cite[Conjecture
2.28(iii),(iv)]{Br}, in the case of the standard $0^m1^n$-sequence
${\bf b}_{\text{st}}$. Part (1) is a variant of  \cite[Theorem
3.3.6(3)]{Zh}.

\begin{thm}  \label{th:positivity}
Let ${\bf b}$ be a $0^m1^n$-sequence and $f\in\Z^{m+n}$. Let
$a\in\Z$, $r\ge 1$.
\begin{enumerate}
\item
The elements $E_a^{(r)}T^{\bf b}_f$ and $F_a^{(r)}T^{\bf b}_f$ can
be written as (possibly infinite) sums of $\{T^{\bf b}_g\vert
g\in\Z^{m+n}\}$ with coefficients in $\N[q,q^{-1}]$.

\item
The elements $E_a^{(r)}L^{\bf b}_f$ and $F_a^{(r)}L^{\bf b}_f$ can
be written as (possibly infinite) sums of $\{L^{\bf b}_g\vert
g\in\Z^{m+n}\}$ with coefficients in $\N[q,q^{-1}]$.
\end{enumerate}
\end{thm}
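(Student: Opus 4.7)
The plan is to reduce the statement to a known positivity result for tensor products of ``natural'' and ``dual natural'' representations of finite-rank type $A$ quantum groups, and then lift it to $\widehat{\mathbb T}^{\bf b}$ via the truncation machinery of Section~\ref{sec:Fock}--\ref{sec:BKLpol}. Fix $f\in\Z^{m+n}$, $a\in\Z$, and $r\ge 1$. Choose $k\in\N$ large enough that $|f(i)|\le k$ for all $i$ and that $\{a,a+1\}\subset[-k,k]$. The truncated space $\mathbb T^{\bf b}_{\le|k|}$ is naturally identified with a tensor product of finite-dimensional $U_q(\gl_{|k|})$-modules (truncated copies of $\VV$ and $\WW$). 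Lusztig's based-module formalism (\cite[Ch.~24,27]{Lu}), combined with Zhang's positivity for such finite-rank type $A$ tensor products \cite[Theorem~3.3.6]{Zh}, gives the positivity of divided powers on the canonical and dual canonical bases at the finite-rank truncated level.

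The main step is to identify the truncated canonical/dual canonical bases with the restrictions of the global ones. Concretely, I would show that for $k$ sufficiently large,
\[
\pi_k(T^{\bf b}_f) = T^{\bf b,[k]}_f, \qquad \pi_k(L^{\bf b}_f) = L^{\bf b,[k]}_f,
\]
where $T^{\bf b,[k]}_f$ and $L^{\bf b,[k]}_f$ denote the (finite-dimensional) canonical and dual canonical basis elements of $\mathbb T^{\bf b}_{\le|k|}$ constructed via the quasi-$\mc R$-matrix $\Theta^{(k)}$. This follows from Lemma~\ref{lem:stab:bar:map}: the bar involution stabilizes under $\pi_k$, and the characterization of $T^{\bf b}_f$ and $L^{\bf b}_f$ in Proposition~\ref{prop:existence:can1} via bar-invariance and upper/lower-triangularity is preserved by $\pi_k$ once the interval $\{h : g\preceq_{\bf b} h\preceq_{\bf b} f\}$ (finite by Lemma~\ref{lem:f:to:g:finite}) is contained in $\mathbb T^{\bf b}_{\le|k|}$.

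Next I would verify that the action of $E_a^{(r)}$ and $F_a^{(r)}$ is compatible with truncation for the relevant range of weights: enlarging $k$ by a bounded amount absorbs the shifts these operators induce on indices, so for every target basis vector $T^{\bf b}_g$ (resp.\ $L^{\bf b}_g$) appearing with nonzero coefficient in $E_a^{(r)}T^{\bf b}_f$ (resp.\ in the $L$-versions), the coefficient can be computed inside a sufficiently large $\mathbb T^{\bf b}_{\le |k'|}$. Hence the finite-rank positivity transfers to $\widehat{\mathbb T}^{\bf b}$ basis-element by basis-element, yielding the required $\N[q,q^{-1}]$ expansions (allowing infinite sums because $\widehat{\mathbb T}^{\bf b}$ is a completion). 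Part~(2) for the dual canonical basis follows from an entirely parallel argument, or alternatively by dualising via the sesquilinear pairing induced by $\langle w_a,v_b\rangle=(-q)^{-a}\delta_{a,b}$, which exchanges the canonical and dual canonical bases up to the appropriate sign/twist.

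The main obstacle I foresee is the mixed $\VV/\WW$ character of $\mathbb T^{\bf b}$: Zhang's positivity is stated for pure tensor powers of the natural module, so the first technical job is to confirm that the positivity result applies to tensor products mixing $\VV$ and its restricted dual $\WW$ in any order ${\bf b}$. A clean way around this is to use the equivalence of all ${\bf b}$-variants established in Section~\ref{sec:adjacent} (parabolic monomial basis comparisons for adjacent sequences) to move from an arbitrary ${\bf b}$ to the standard sequence ${\bf b}_{\text{st}}$, for which the positivity at the finite-rank level is the classical one; the adjacent-sequence dictionary matches canonical bases with canonical bases and dual canonical with dual canonical, so the $\N[q,q^{-1}]$ property is preserved under the change.
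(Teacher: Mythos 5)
Your overall skeleton coincides with the paper's proof: truncate to $\mathbb T^{\bf b}_{\le|k|}$, show $\pi_k(T^{\bf b}_f)=T^{(k)}_f$ and $\pi_k(L^{\bf b}_f)=L^{(k)}_f$ via the stability of the bar maps (Lemma~\ref{lem:stab:bar:map}) and the finite-interval bound \eqref{aux:eq:h:max}, check that $\pi_k$ commutes with $E_a^{(r)},F_a^{(r)}$ for $k>|a|+1$ so that the structure constants stabilize, and import positivity from Zheng \cite[Theorem~3.3.6(3)]{Zh} at the finite-rank level. Up to that point your argument matches the paper's.

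The gap is in how you dispose of your own ``main obstacle.'' First, for Part~(1) the obstacle is smaller than you think: Zheng's positivity applies to canonical bases of tensor products of integrable $U_q(\gl_{|k|})$-modules, and the truncated $\WW$ is such a module, so mixed tensors are not a problem for the $T$-basis. The genuine difficulty is Part~(2), the dual canonical basis, where Zheng's theorem does not apply directly. Your first suggested fix --- moving from $\bf b$ to ${\bf b}_{\rm st}$ via the Section~\ref{sec:adjacent} dictionary --- does not help, because ${\bf b}_{\rm st}=(0^m,1^n)$ still produces the mixed space $\VV^{\otimes m}\otimes\WW^{\otimes n}$; the positivity there is not ``the classical one'' (that would require $m=0$ or $n=0$) and is exactly as hard as for general $\bf b$. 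Your second suggestion, dualizing via the pairing $\langle w_a,v_b\rangle=(-q)^{-a}\delta_{a,b}$, is the right idea but is left unproved: you need a bilinear form on $\mathbb T^{\bf b}_{\le|k|}$ under which $\{T^{(k)}_f\}$ and $\{L^{(k)}_f\}$ are dual and which satisfies the adjunction $(E_a^{(r)}u\,|\,v)=(u\,|\,F_a^{(r)}v)$. The paper obtains this by observing that the dual of the natural $U_q(\mf{sl}_{|k|})$-module $\WW_{\le|k|}$ is isomorphic to $\wedge^{2k}\VV_{\le|k|}$, which makes \cite[Theorem~11, Lemma~3]{Br4} applicable; only then does Zheng's positivity for the $T$-basis transfer to the $L$-basis. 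This exterior-power identification is the missing idea, and without it your Part~(2) does not close.
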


\begin{proof}
For $k\in \N$, consider the quantum group $U(\mf{gl}_{|k|})$ acting
on the finite-dimensional module $\mathbb T^{\bf b}_{\le|k|}$. Let
us denote the canonical and dual canonical basis elements of the
$U(\gl_{|k|})$-module $\mathbb T^{\bf b}_{\le|k|}$ by $T^{(k)}_f$
and $L^{(k)}_f$, respectively, for $f\in\Z^{m+n}_{\le |k|}:=\{f\in\Z^{m+n}\mid|f(i)|\le k$, $\forall
i\in[m+n]\}$. The proof of \cite[Lemma 24.2.1]{Lu} (cf.~our Lemma~\ref{lem:lusztig}) implies that the coefficients $t^{\bf
b}_{gf}$ and $\ell^{\bf b}_{gf}$ are uniquely determined by the
coefficients $r_{hf}(q)$ coming from the bar-involution with
$g\preceq_{\bf b}h\preceq_{\bf b}f$. Recall that such an $h$
satisfies $|h(i)|\le\max\{|f(j)|,|g(j)|\text{ with } j\in[m+n]\}$,
$\forall i\in[m+n]$, by \eqref{aux:eq:h:max}.
This together with the stability  \eqref{aux:eq:stab10} of the bar
involutions for varying $k$ implies that
\begin{align}\label{aux:eq:stab12}
\pi_k({T^{\bf b}_f})=T^{(k)}_f \quad
 \text{and}\quad \pi_k({L^{\bf
b}_f})=L^{(k)}_f,\quad\forall f\in\Z^{m+n}_{\le|k|}.
\end{align}

Now, we let $a\in\Z$ be fixed, and let $Y=T,L$. Observe that for $k>
|a|+1$ the map $\pi_k$ commutes with the action of $X=E_a,F_a$.
Thus, the map $X:\mathbb T^{\bf b}\rightarrow\mathbb T^{\bf b}$
given by letting $X$ act on the left extends uniquely to a
continuous map $X:\wt{\mathbb T}^{\bf b}\rightarrow\wt{\mathbb
T}^{\bf b}$, and hence the expression $X^{(r)}Y^{\bf b}_f$ is a
well-defined element in the $A$-completion $\wt{\mathbb T}^{\bf b}$.

Let $f\in\Z^{m+n}_{\le|k|}$ and choose $k>|a|+1$.
We write
\begin{align*}
X^{(r)}Y^{(k)}_f=\sum_{g\in\Z^{m+n}_{\le|k|}}b_g^{(k)}(q)Y^{(k)}_g,\quad
\text{ for } b_g^{(k)}(q)\in\Q(q).
\end{align*}
We compute, for $\ell\ge k$,
\begin{align*}
\pi_k\circ\pi_{\ell}(X^{(r)}Y^{\bf b}_f) =\pi_k(X^{(r)}Y^{(\ell)}_f)
=\pi_k(\sum_{g\in\Z^{m+n}_{\le|\ell|}}b^{(\ell)}_g(q)Y^{(\ell)}_g) =
\sum_{g\in\Z^{m+n}_{\le|k|}}b^{(\ell)}_g(q) Y^{(k)}_g.
\end{align*}
On the other hand, we compute
\begin{align*}
\pi_k\circ\pi_{\ell}(X^{(r)}Y^{\bf b}_f)=\pi_k(X^{(r)}Y^{\bf
b}_f)=X^{(r)}Y^{(k)}_f=\sum_{g\in\Z^{m+n}_{\le|k|}}b^{(k)}_g(q)
Y^{(k)}_g.
\end{align*}
It follows that $b^{(k)}_g(q)=b^{(\ell)}_g(q)$, for all
$g\in\Z^{m+n}_{\le|k|}$ and $\ell\ge k$. Thus, we obtain
\begin{align*}
X^{(r)}Y^{\bf b}_f=\sum_{g}b_{g}(q)Y^{\bf b}_g,
\end{align*}
where $b_{g}(q)=b^{(k)}_g(q)$, for $g\in\Z^{m+n}_{\le|k|}$. It
remains to show that $b_g(q)$ lie in $\N[q,q^{-1}]$.

We first prove Part (1). In the case when $Y=T$, Zheng \cite[Theorem
~3.3.6(3)]{Zh} proved that $b^{(k)}_g(q)\in\N[q,q^{-1}]$,  $\forall
g\in\Z^{m+n}_{\le|k|}$ and $\forall k$. Thus, we conclude that in
the case of $Y=T$ we have $b_g(q)\in\N[q,q^{-1}]$, for all
$g\in\Z^{m+n}$, which proves (1).

Since the dual of the natural $U(\mf{sl}_{|k|})$-module $\mathbb W_{\le |k|}$ is
isomorphic to the exterior power $\wedge^{2k}(\mathbb V_{\le|k|})$,
\cite[Theorem~ 11]{Br4} is applicable to $\mathbb T^{\bf b}_{\le
|k|}$. By \cite[Theorem~ 11]{Br4} there exists a symmetric bilinear
form $(\cdot\mid\cdot)$ on $\mathbb T^{\bf b}_{\le |k|}$ for which
the bases $\{T^{(k)}_f\}$ and $\{L^{(k)}_f\}$ are dual to each other
up to some change of labeling. Furthermore, for $u,v\in \mathbb
T^{\bf b}_{\le|k|}$ we have $(E^{(r)}_a u|v)=(u|F^{(r)}_av)$, for
all $k>|a|+1$, by \cite[Lemma 3]{Br4}. From this we conclude by
\cite[Theorem 3.3.6(3)]{Zh} again that the positivity in (2) holds
in the setting of $\mathbb T^{\bf b}_{\le |k|}$ for dual canonical
basis elements $L^{(k)}_f$. Now the same argument as in (1) proves
(2) as well.
\end{proof}

The following conjecture is a generalization of Brundan
\cite[Conjecture~ 2.28(i),(ii)]{Br}, who conjectured it for the
standard $0^m1^n$-sequence ${\bf b}_{\text{st}}$.

\begin{conjecture} \label{con:positive}
Let ${\bf b}$ be a $0^m1^n$-sequence. For $f,g\in\Z^{m+n}$, we have
 $t^{\bf b}_{gf}(q)\in\N[q]$, and
 $\ell^{\bf b}_{gf}(-q^{-1})\in\N[q]$.
\end{conjecture}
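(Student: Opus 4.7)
The plan is to reduce to the finite-rank, finite-dimensional setting and then appeal to a geometric categorification, following closely the template used to prove Theorem~\ref{th:positivity}.

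First, as in the proof of Theorem~\ref{th:positivity}, invoke the stability identity $\pi_k(T^{\bf b}_f)=T^{(k)}_f$ and $\pi_k(L^{\bf b}_f)=L^{(k)}_f$ from \eqref{aux:eq:stab12}, together with the interval bound \eqref{aux:eq:h:max} on elements $h$ with $g\preceq_{\bf b}h\preceq_{\bf b}f$. These combine to give the stability of the BKL polynomials themselves: for any fixed pair $f,g\in\Z^{m+n}$, one has $t^{\bf b}_{gf}(q)=t^{(k)}_{gf}(q)$ and $\ell^{\bf b}_{gf}(q)=\ell^{(k)}_{gf}(q)$ for all sufficiently large $k$, where $t^{(k)}_{gf}$ and $\ell^{(k)}_{gf}$ are the transition coefficients between the standard monomial basis and the (dual) canonical basis of the finite-dimensional $U_q(\gl_{|k|})$-module $\mathbb T^{\bf b}_{\le|k|}$. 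Thus it suffices to prove $t^{(k)}_{gf}(q)\in\N[q]$ and $\ell^{(k)}_{gf}(-q^{-1})\in\N[q]$ for every $k$.

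Next, I would try to realize the finite-dimensional tensor product $\mathbb T^{\bf b}_{\le|k|}$ geometrically. Since over $U_q(\gl_{|k|})$ the restricted dual $\mathbb W_{\le|k|}$ is (up to a one-dimensional twist) $\wedge^{2k}\mathbb V_{\le|k|}$, the space $\mathbb T^{\bf b}_{\le|k|}$ is a tensor product of fundamental representations, hence fits naturally into the Mirkovi\'c--Vilonen / Nakajima / Zheng framework of perverse sheaves on partial flag varieties or on cyclic quiver varieties. In such a categorification, the canonical basis elements $T^{(k)}_f$ correspond to (shifted) simple perverse sheaves (equivalently, to indecomposable projectives in a graded highest weight category), the dual canonical basis elements $L^{(k)}_f$ correspond to simple modules, and the standard monomial basis $M^{(k)}_f$ corresponds to standard (Verma-like) objects. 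Positivity of $t^{(k)}_{gf}$ would then express a graded Jordan--H\"older multiplicity $[P(f):\Delta(g)\langle j\rangle]$, and positivity of $\ell^{(k)}_{gf}(-q^{-1})$ would be an analogous multiplicity $[\Delta(f):L(g)\langle -j\rangle]$; both follow, in standard form, from the Decomposition Theorem (or purity of standard objects) as in \cite{KL2, BGS}. The same identification already underlies Zheng's positivity result \cite[Theorem~3.3.6(3)]{Zh} used in Theorem~\ref{th:positivity}, so the two sides of the argument would be entirely compatible.

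The main obstacle, which is precisely why we leave this as a conjecture, is to verify that a geometric (or diagrammatic/categorical) realization of the \emph{mixed} tensor product $\mathbb V^{b_1}\otimes\cdots\otimes\mathbb V^{b_{m+n}}$ really identifies the particular standard monomial basis $\{M^{\bf b}_f\}$ of \eqref{eq:Mf} with a class of objects whose multiplicities in simples and projectives are graded dimensions of stalks of perverse sheaves. Naively replacing each $\mathbb W$ by $\wedge^{2k}\mathbb V_{\le|k|}$ transports $M^{\bf b}_f$ to a linear combination of standard basis vectors in a product of fundamental representations, rather than to a single one, so some care (and possibly a change of basis matching the Shapovalov-like pairing of \cite[Theorem~11]{Br4}) is needed to keep track of signs and of the $\N[q]$-positivity through this identification. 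Once this matching is established cleanly at the level of one sequence $\bf b$, the equivalence of BKL conjectures for adjacent $\bf b$'s (Section~\ref{sec:adjacent}) should propagate positivity to every $0^m1^n$-sequence.
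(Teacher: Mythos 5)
Your overall route --- reduce to the finite-rank space $\mathbb T^{\bf b}_{\le|k|}$ via the stability \eqref{aux:eq:stab12} together with the interval bound \eqref{aux:eq:h:max}, then replace $\WW_{\le|k|}$ by $\wedge^{2k}\VV_{\le|k|}$ so as to land in a purely type $A$ (parabolic) setting where positivity is known --- is exactly the argument sketched in Remark~\ref{rem:pos}, which in turn reuses the mechanism of the proof of Theorem~\ref{th:positivity}. Two corrections are in order.

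First, the ``main obstacle'' you isolate is not actually there. Under the $U_q(\mf{sl}_{|k|})$-isomorphism $\WW_{\le|k|}\cong\wedge^{2k}\VV_{\le|k|}$, each basis vector $w_a$ is sent to a \emph{single} decreasing $q$-wedge monomial (the wedge of all $v_b$, $b\ne a$, suitably ordered), up to a power of $-q$; it is only upon further embedding $\wedge^{2k}\VV_{\le|k|}$ into $\VV_{\le|k|}^{\otimes 2k}$ that one would see a linear combination, and that embedding is not needed. Hence $M^{\bf b}_f$ corresponds to a single standard monomial of a tensor product of $q$-wedge spaces, the bar involutions match, and the transition matrices to the (dual) canonical bases are literally preserved --- this is the content of \cite[Theorem~11]{Br4}, which the paper invokes for precisely this purpose. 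In particular $t^{\bf b}_{gf}(q)$ is identified outright with a type $A$ parabolic Kazhdan--Lusztig polynomial, whose positivity is known, and no residual ``matching problem'' remains; nor is the adjacent-sequence machinery of Section~\ref{sec:adjacent} needed to propagate positivity across different $\bf b$'s, since the reduction works for each $\bf b$ separately.

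Second, your treatment of $\ell^{\bf b}_{gf}(-q^{-1})$ is too quick. In the parabolic ($q$-wedge) setting the dual canonical basis coefficients are \emph{not} the graded Jordan--H\"older multiplicities $[\Delta(f):L(g)]$ of the same highest weight category, and they are not in general inverse parabolic KL polynomials, so ``purity of standard objects'' in the naive sense does not apply. The paper's remark deals with this by citing Backelin's interpretation of these coefficients via $\text{Ext}$-groups in \emph{singular} blocks of a parabolic category $\mathcal O$ \cite{Ba}; some Koszul-duality-type input of this kind is genuinely required to get positivity on the dual canonical side.
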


\begin{rem}
\label{rem:pos}
As mentioned in ``Notes added" at the end of Introduction, this conjecture has been established by
Brundan, Losev, and Webster (BLW).
Conjecture \ref{con:positive} can indeed be derived from the proof of Theorem \ref{th:positivity} above
directly without using categorification, as suggested by one referee (which should be known to BLW too).
Namely, the stability conditions \eqref{aux:eq:stab12} allow us to
regard these polynomials as the coefficients of canonical and dual canonical basis elements in $\mathbb T^{\bf b}_{\le|k|}$.
But then as in the proof above we may identify the truncated Fock space $\mathbb T^{\bf b}_{\le |k|}$ with another using $\VV_{\le |k|}$ alone,
i.e., replacing $\WW_{\le |k|}$ by $\wedge^{2k}\VV_{\le |k|}$.
Since the latter provides a reformulation of parabolic KL conjectures for type A Lie algebras,
the polynomials $t^{\bf b}_{gf}(q)$ are identified with certain type $A$ parabolic KL polynomials (cf. e.g. \cite[Remark 14]{Br4}),
which are known to be positive. On the other hand, the polynomials $\ell^{\bf b}_{gf}(-q^{-1})$ are not necessarily inverse parabolic KL polynomials,
but since they can be interpreted in terms of $\text{Ext}$-groups 
in singular blocks of a parabolic category $\mathcal O$ (see Backelin \cite{Ba}) they are positive. 

The observation in the proof above
on identifying the truncated Fock space $\mathbb T^{\bf b}_{\le |k|}$ with the one using $\VV_{\le |k|}$ alone
played a fundamental role in the BLW approach to the BKL conjecture.
\end{rem}
\section{Comparisons of canonical and dual canonical bases}
\label{sec:compareCB}

In this section, we introduce truncation maps to compare the (dual)
canonical bases on Fock spaces involving $\wedge^k\VV$ or
$\wedge^k\WW$ for varying $k$. We formulate a combinatorial version
of super duality. A precise relationship between (dual) canonical
bases of a Fock space and those of its various $q$-wedge subspaces
is then established.

\subsection{Truncation map}

Let $k\in\N\cup\{\infty\}$. We introduce the following notations.
For $f=(f_{[m+n]},f_{[\ul{k}]})\in\Z^{m+n}\times\Z^k_+$, set
\begin{align*}
{M}^{{\bf b},0}_f :=M^{\bf b}_{f_{[m+n]}}\otimes
\mc{V}_{f_{[\ul{k}]}}.
\end{align*}
Then $\{{M}^{{\bf b},0}_f\}$ forms a basis, called the {\em standard
monomial basis}, for the $\Q(q)$-vector space ${\mathbb T}^{\bf
b}\otimes\wedge^k\mathbb V$. Similarly, ${\mathbb T}^{\bf
b}\otimes\wedge^k\mathbb W$ admits a {\em standard monomial basis} given
by
\begin{align*}
M^{{\bf b},1}_g:=M^{\bf b}_{g_{[m+n]}}\otimes \mc{W}_{g_{[\ul{k}]}},
\end{align*}
where $g=(g_{[m+n]},g_{[\ul{k}]})\in\Z^{m+n}\times\Z^k_-$. Since
$\wedge^k\mathbb V$ and $\wedge^k\mathbb W$ are weakly based
modules, we can define bar maps for ${\mathbb T}^{\bf
b}\otimes\wedge^k\mathbb V$ and ${\mathbb T}^{\bf
b}\otimes\wedge^k\mathbb W$ by means of the quasi-$\mc R$-matrix as
in \eqref{def:bar:map} and \eqref{eq:barMf}. In this subsection we
prove the existence of canonical and dual canonical bases in the
$B$-completions of these vector spaces. We shall give the details
only for ${\mathbb T}^{\bf b}\otimes\wedge^k\mathbb W$, as the case
of ${\mathbb T}^{\bf b}\otimes\wedge^k\mathbb V$ is analogous.

First suppose that $k\in\N$. Since $H_0$ is bar-invariant by
\eqref{eq:H0inv}, we may embed $\wedge^k\mathbb W$ into $\mathbb
W^{\otimes k}$ as weakly based modules by sending $\mathcal W_h$ to
$M^{({1}^k)}_{h\cdot w^{(k)}_0} H_0$, for $h\in\Z^k_-$. Thus, we
have $\mathbb T^{\bf b}\otimes\wedge^k\mathbb W\subseteq \mathbb
T^{\bf b}\otimes\mathbb W^{\otimes k}$, and hence $\mathbb T^{\bf
b}\wotimes\wedge^k\mathbb W\subseteq \mathbb T^{\bf
b}\wotimes\mathbb W^{\otimes k}{\equiv}\widehat{{\mathbb T}}^{({\bf b},1^k)}$, where $\mathbb T^{\bf
b}\wotimes\wedge^k\mathbb W$ is a similarly defined $B$-completion
with respect to the Bruhat ordering of type $({\bf b},1^k)$,
following Definition \ref{def:completion:T}. Expanding
$M^{({1}^k)}_{h\cdot w^{(k)}_0} H_0$ in terms of the
$M^{({1}^k)}_e$'s, and using Propositions~\ref{bar:indep:order} and
\ref{bar:inv:Mf}, we conclude that
\begin{align}\label{bar:for:wedge}
\ov{M^{{\bf b},1}_f}=M^{{\bf b},1}_f+\sum_{g\prec_{({\bf
b},{1^k})}f}r_{gf}(q) M^{{\bf b},1}_g,
\end{align}
where $r_{gf}(q)\in\Z[q,q^{-1}]$, and the sum running over
$g\in\Z^{m+n}\times\Z^k_-$ is possibly infinite. It follows that we
have obtained a bar-involution $\ \bar{} : \mathbb T^{\bf
b}\wotimes\wedge^k\mathbb W \rightarrow \mathbb T^{\bf
b}\wotimes\wedge^k\mathbb W$, exactly as in Lemma~\ref{lem:barTb}.

Now let $k=\infty$. For $d\in\N$, let $\left[\mathbb T^{\bf
b}\otimes\wedge^{\infty}\mathbb W\right]_{\le |d|}$ be the subspace
of $\mathbb T^{\bf b}\otimes\wedge^{\infty}\mathbb W$ spanned by
vectors $M^{{\bf b},1}_f$, for $f\in\Z^{m+n}\times\Z^\infty_-$ with
$|f(i)|\le d$, for $i\in[m+n]\sqcup[\ul{d}]$. We let
$$
\pi'_d:\mathbb T^{\bf b}\otimes\wedge^{\infty}\mathbb W\rightarrow
\left[\mathbb T^{\bf b}\otimes\wedge^{\infty}\mathbb W\right]_{\le
|d|}
$$
be the natural projection map. Then we may use the $\ker\pi'_d$'s to
define the $A$-completion $\mathbb T^{\bf
b}\wt{\otimes}\wedge^{\infty}\mathbb W$ of $\mathbb T^{\bf
b}\otimes\wedge^{\infty}\mathbb W$, following Definition
\ref{def:completion:T}.

Let $M^{{\bf b},1}_f\in \left[\mathbb T^{\bf
b}\otimes\wedge^{\infty}\mathbb W\right]_{\le |d|}$. Using
\eqref{aux:eq:stab10} and an argument similar to its proof we can
show that, for $\ell\ge d$,
\begin{align}\label{aux:eq:stab11}
\pi'_d( \psi^{(\ell)}(M^{{\bf b},1}_f)) = \pi'_d(\psi^{(d)}(M^{{\bf
b},1}_f)).
\end{align}
It follows that the expression $\ov{M^{{\bf b},1}_f}$, defined as in
\eqref{def:bar:map} and \eqref{eq:barMf} on the tensor product of
the two weakly based modules $\mathbb T^{\bf b}$ and
$\wedge^\infty\mathbb W$, lies in $\mathbb T^{\bf
b}\wt{\otimes}\wedge^{\infty}\mathbb W$. Let $\mathbb T^{\bf
b}\wotimes\wedge^{\infty}\mathbb W$ be the $B$-completion of
$\mathbb T^{\bf b}\otimes\wedge^{\infty}\mathbb W$, following
Definition \ref{def:completion:T}.

For $h\in\Z^\infty_-$ and $k\in\N$ recall that $h_{[\ul{k}]}$
denotes the restriction of $h$ to $[\ul{k}]$. We define the
$\Q(q)$-linear {\em truncation map} $\texttt{Tr}: {\mathbb T}^{\bf
b}\otimes \wedge^\infty{\mathbb W}\rightarrow {\mathbb T}^{\bf
b}\otimes \wedge^k \mathbb W$, for $k\in\N$, as follows.  For $m\in
{\mathbb T}^{\bf b}$ and $h\in\Z^\infty_{-}$ we set
\begin{align*}
\texttt{Tr}(m\otimes \mc{W}_{h})=
\begin{cases}
m\otimes \mc{W}_{h_{[\ul{k}]}}, &\text{ if }h(\ul{i})=i,\text{ for }i\ge k+1,\\
0,&\text{ otherwise}.
\end{cases}
\end{align*}

\begin{lem}\label{trunc:comp:bruhat}
Let $k\in\N$. The truncation map extends naturally to a
$\Q(q)$-linear map $\texttt{Tr}:{\mathbb T}^{\bf
b}\wotimes\wedge^\infty\mathbb W \rightarrow{\mathbb T}^{\bf
b}\wotimes\wedge^k\mathbb W$.
\end{lem}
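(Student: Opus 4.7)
The plan is to extend $\texttt{Tr}$ by linearity to all of $\mathbb T^{\bf b}\wotimes\wedge^\infty\mathbb W$ and then verify that each generator $y = M^{{\bf b},1}_f + \sum_{g \prec_{({\bf b},1^\infty)} f} r_g M^{{\bf b},1}_g$ of the form \eqref{vec:in:comp} is sent into $\mathbb T^{\bf b}\wotimes\wedge^k\mathbb W$. Observe first that $\texttt{Tr}$ is injective on those standard monomials which survive it, since the only preimage of $M^{{\bf b},1}_{h'}$ is $M^{{\bf b},1}_{\iota(h')}$ where $\iota(h') := (h'_0, h'_1, k+1, k+2, \ldots)$; consequently the coefficient of each $M^{{\bf b},1}_{h'}$ in the termwise image $\texttt{Tr}(y)$ is unambiguously defined.

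The argument splits according to whether the second factor $f_1 \in \Z^\infty_-$ of the leading index $f = (f_0, f_1)$ has stabilized tail at level $k+1$, namely whether $f_1(\ul{i}) = i$ for all $i > k$. First I would establish a Bruhat compatibility: whenever both $f$ and $g$ have stabilized tails at $k+1$, the relation $g \preceq_{({\bf b},1^\infty)} f$ holds if and only if $g' \preceq_{({\bf b},1^k)} f'$, where $h' := (h_0, h_1|_{[\ul{k}]})$. This is immediate from the definition of Bruhat via partial weights: identical stabilized tails contribute equally to $\text{wt}^j$ for every index $j$, so the inequalities for $j \leq m+n+k$ transfer verbatim to the truncated setting, while for $j > m+n+k$ both sides agree and the condition is automatic.

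The main step, and the anticipated obstacle, is the following vanishing statement: if $f$ does not have stabilized tail at level $k+1$ (i.e.\ the smallest stabilization point $s_f$ of $f_1$ exceeds $k+1$), then no $g \preceq_{({\bf b},1^\infty)} f$ with stabilized tail at $k+1$ can exist. Indeed, testing $\text{wt}^j$ at the position $j = \ul{s_f - 1}$ and using $g_1(\ul{s_f - 1}) = s_f - 1$ together with $f_1(\ul{s_f - 1}) < s_f - 1$ (by minimality of $s_f$), one computes
\begin{align*}
\text{wt}^{\ul{s_f-1}}_{({\bf b},1^\infty)}(f) - \text{wt}^{\ul{s_f-1}}_{({\bf b},1^\infty)}(g) = \varepsilon_{s_f-1} - \varepsilon_{f_1(\ul{s_f-1})},
\end{align*}
which is the negative of a nonzero element of the positive cone generated by $\{\varepsilon_r - \varepsilon_{r+1}\}$, contradicting $g \preceq_{({\bf b},1^\infty)} f$.

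Combining these ingredients, in the stabilized case $\texttt{Tr}(y)$ reduces to $M^{{\bf b},1}_{f'} + \sum_{g' \prec_{({\bf b},1^k)} f'} r_{\iota(g')} M^{{\bf b},1}_{g'}$, which is itself a generator of $\mathbb T^{\bf b}\wotimes\wedge^k\mathbb W$ of the form \eqref{vec:in:comp}, while in the non-stabilized case $\texttt{Tr}(y) = 0$. Linearity then produces the desired extension. Without the vanishing assertion of the previous paragraph, the formal termwise sum in the non-stabilized case could a priori contain infinitely many surviving terms with no obvious common upper bound in the target Bruhat order, so this step is indispensable for well-definedness.
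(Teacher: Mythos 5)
Your proof is correct and follows essentially the same route as the paper's: reduce to showing that the Bruhat orderings on the two spaces are compatible under truncation, treating separately the case where both indices have stabilized tails (where the orderings transfer directly, tail contributions to $\text{wt}^j$ cancelling) and the vanishing case where $f$ does not survive. The only cosmetic difference is that for the vanishing step you test the weight inequality at the last non-stabilized position of $f$, whereas the paper argues at the first position where $f$ and $g$ differ; both produce the same contradiction with $g\preceq_{({\bf b},1^\infty)}f$.
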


\begin{proof}
By definition of the $B$-completions, it is enough to prove that the
two Bruhat orderings on ${\mathbb T}^{\bf
b}\wotimes\wedge^\infty\mathbb W$ and ${\mathbb T}^{\bf
b}\wotimes\wedge^k\mathbb W$ are compatible under the truncation
map.

Let $f,g\in\Z^{m+n}\times\Z^\infty_{-}$ with $f\succ_{({\bf
b},{1}^\infty)} g$. Suppose first that $\texttt{Tr}(M^{{\bf
b},1}_f)=M^{{\bf b},1}_{f'}\not=0$ and $\texttt{Tr}(M^{{\bf
b},1}_g)=M^{{\bf b},1}_{g'}\not=0$. Then we have $f_{[m+n]\sqcup
[\ul{k}]}=f'$, $g_{[m+n]\sqcup [\ul{k}]}=g'$, and
$f(\ul{i})=g(\ul{i})=i$, for all $i\ge k+1$. It follows from the
very definition of the Bruhat ordering that $f'\succ_{({\bf
b},{1^k})}g'$.

Now suppose that $\texttt{Tr}(M^{{\bf b},1}_f)=0$ and $f\succ_{({\bf
b},{1}^\infty)}g$. If $f_{[\ul{\infty}]}=g_{[\ul{\infty}]}$, then
clearly $\texttt{Tr}(M^{{\bf b},1}_g)=0$. If not, then let $\ul{i}$
with $i$ minimal so that $f(\ul{i})\not=g(\ul{i})$. If $i\le k$,
then again $\texttt{Tr}(M^{{\bf b},1}_g)=0$. So suppose that $i>k$.
Since $f\succ_{({\bf b},{1}^\infty)}g$, we must have $g(\ul{i})<
f(\ul{i})\le i$, and so again we have $\texttt{Tr}(M^{{\bf
b},1}_g)=0$. Thus, we have shown that $\texttt{Tr}(M^{{\bf
b},1}_f)=0$ implies that $\texttt{Tr}(M^{{\bf b},1}_g)=0$.
\end{proof}

\begin{lem}\label{lem:trunc:bar}
$\texttt{Tr}:{\mathbb T}^{\bf b}\wotimes\wedge^\infty\mathbb W
\rightarrow{\mathbb T}^{\bf b}\wotimes\wedge^k\mathbb W$ commutes
with the bar maps, that is,
$$
\ov{\texttt{Tr}(M^{{\bf b},1}_f)}=\texttt{Tr}\left(\ov{M^{{\bf
b},1}_f}\right), \text{ for } f\in\Z^{m+n}\times\Z^\infty_-.
$$
Moreover, we have
\begin{equation}  \label{eq:barMfb1}
\ov{M^{{\bf b},1}_f} =M^{{\bf b},1}_f+\sum_{g\prec_{({\bf
b},1^\infty)}f} r_{gf}(q)M^{{\bf b},1}_g, \text{ for }
r_{gf}(q)\in\Z[q,q^{-1}].
\end{equation}
\end{lem}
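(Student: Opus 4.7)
The plan is to establish both assertions by reducing to the finite-rank setting through the stability \eqref{aux:eq:stab11} of the bar involution across truncations, and then tracking the compatibility of the quasi-$\mc R$-matrix with the truncation map.

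First I would prove the explicit formula \eqref{eq:barMfb1}, in parallel with Proposition~\ref{bar:inv:Mf}. Fix $f \in \Z^{m+n}\times\Z^\infty_-$ and choose $d$ large enough that $M^{{\bf b},1}_f \in [\mathbb T^{\bf b}\otimes \wedge^\infty\mathbb W]_{\le|d|}$. By \eqref{aux:eq:stab11}, the bar image $\ov{M^{{\bf b},1}_f}$ in the $A$-completion is determined by the finite-rank action $\psi^{(d)}(M^{{\bf b},1}_f)$ up to $\pi'_d$. Under the weakly based embedding $\wedge^d\mathbb W\hookrightarrow \mathbb W^{\otimes d}$ via $\mathcal{W}_h\mapsto M^{(1^d)}_{h\cdot w^{(d)}_0}H_0$ (which is bar-invariant by \eqref{eq:H0inv}), the vector $M^{{\bf b},1}_f$ becomes a $\Z[q,q^{-1}]$-linear combination of standard monomials in $\mathbb T^{\bf b}\otimes\mathbb W^{\otimes d}$. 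Applying Proposition~\ref{bar:inv:Mf} there and rewriting the output in the basis $\{M^{{\bf b},1}_g\}$ using bar-invariance of $H_0$ and the $\Z[q,q^{-1}]$-stability of the standard span under the divided powers $E_a^{(j)},F_a^{(j)}$ and $K_a^{\pm1}$, one obtains the desired expansion with $r_{gf}(q)\in\Z[q,q^{-1}]$ and $g\prec_{({\bf b},1^d)} f$. Passing to the limit $d\to\infty$ gives \eqref{eq:barMfb1}, noting that the type-$({\bf b},1^d)$ Bruhat relation on vectors lying in $[\mathbb T^{\bf b}\otimes\wedge^\infty\mathbb W]_{\le|d|}$ coincides with the type-$({\bf b},1^\infty)$ relation.

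Next I would deduce the commutativity of $\texttt{Tr}$ with the bar map. Applying $\texttt{Tr}$ to \eqref{eq:barMfb1} gives
\begin{equation*}
\texttt{Tr}(\ov{M^{{\bf b},1}_f}) = \texttt{Tr}(M^{{\bf b},1}_f)+\sum_{g\prec_{({\bf b},1^\infty)}f} r_{gf}(q)\,\texttt{Tr}(M^{{\bf b},1}_g),
\end{equation*}
where by definition the nonzero terms correspond to those $g$ whose tail satisfies $g(\ul{i})=i$ for $i\ge k+1$; and by Lemma~\ref{trunc:comp:bruhat} these $\texttt{Tr}(M^{{\bf b},1}_g)$ are Bruhat-smaller of type $({\bf b},1^k)$ than $\texttt{Tr}(M^{{\bf b},1}_f)$. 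On the other hand, the formula \eqref{bar:for:wedge} applied to $\texttt{Tr}(M^{{\bf b},1}_f)$ produces $\ov{\texttt{Tr}(M^{{\bf b},1}_f)}$ as a similar expansion with some coefficients $r'_{g'f'}(q)\in\Z[q,q^{-1}]$. The key step is to identify these two families of coefficients, which I would carry out by computing both in the common finite-dimensional truncation $\mathbb T^{\bf b}_{\le|\ell|}\otimes \wedge^k\mathbb W_{\le|\ell|}$ for $\ell\ge k$ sufficiently large, where the bar involutions on both sides are computed by the same finite-rank quasi-$\mc R$-matrix $\Theta^{(\ell)}$.

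The main obstacle is precisely this last identification: one must verify that applying $\Theta$ to a vector $m\otimes \mathcal{W}_h$ in $\mathbb T^{\bf b}\otimes\wedge^\infty\mathbb W$ whose tail is stable (i.e.~$h(\ul{i})=i$ for $i>k$), and then truncating, yields the same result as first truncating and then applying $\Theta$ on $\mathbb T^{\bf b}\otimes\wedge^k\mathbb W$. Using the explicit formula \eqref{theta:t} for $\Theta_{[t]}$ together with the actions \eqref{action:wedgeW} of $E_a,F_a$ on wedges, one checks that the operators $T_{\alpha_1}\cdots T_{\alpha_{t-1}}(F^r_{\alpha_t})$ acting on $\mathcal{W}_h$ through positions $\ul{i}$ with $i>k$ only produce vectors that either preserve the stable tail (and thus descend compatibly under $\texttt{Tr}$) or contain factors which $\texttt{Tr}$ kills; an entirely symmetric analysis applies to the $E$-factors acting on $\mathbb T^{\bf b}$. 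Combined with the stability of $\psi^{(\ell)}$ as in \eqref{aux:eq:stab11}, this produces the required identity $\ov{\texttt{Tr}(M^{{\bf b},1}_f)}=\texttt{Tr}(\ov{M^{{\bf b},1}_f})$.
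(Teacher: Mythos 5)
Your proposal is correct and rests on the same key fact as the paper's proof, namely that the quasi-$\mc R$-matrix interacts compatibly with the truncation on the wedge factor; but it is organized in the reverse order and handles that key fact more laboriously. The paper first proves the commutativity $\ov{\texttt{Tr}(\cdot)}=\texttt{Tr}(\ov{\,\cdot\,})$ by the one-line observation that the map $\texttt{Tr}_{>}:\wedge^{\infty}\WW\rightarrow\wedge^{k}\WW$ is a homomorphism of $\mc{U}^-$-modules, which suffices because in $\Theta=\sum_\mu\Theta_\mu\otimes\Theta_{-\mu}$ only the $\mc U^-$-components $\Theta_{-\mu}$ ever act on the wedge factor; the formula \eqref{eq:barMfb1} is then read off from the finite-rank formula \eqref{bar:for:wedge} via this commutativity. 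You instead prove \eqref{eq:barMfb1} first, by reducing to finite rank through \eqref{aux:eq:stab11} and the $H_0$-embedding, and then verify the truncation compatibility by an explicit check on the PBW-type elements $T_{\alpha_1}\cdots T_{\alpha_{t-1}}(F^r_{\alpha_t})$ — this check is exactly the verification that $\texttt{Tr}_>$ commutes with $\mc U^-$, so your ``main obstacle'' is resolved, just less economically. Two small points: your ``symmetric analysis for the $E$-factors acting on $\mathbb T^{\bf b}$'' is vacuous, since $\texttt{Tr}$ is the identity on the first tensor factor; and it is worth noting explicitly that $\texttt{Tr}_>$ is \emph{not} a $\mc U^+$-homomorphism (raising an entry of value $k$ to $k+1$ is killed in $\wedge^\infty\WW$ but not in $\wedge^k\WW$), which is precisely why the argument must exploit that only $\mc U^-$ hits the wedge factor.
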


\begin{proof}
Recall from \eqref{def:bar:map} and \eqref{eq:barMf} that the bar
maps on the tensor spaces ${\mathbb T}^{\bf b}\otimes\wedge^\infty\mathbb W$ and ${\mathbb T}^{\bf b}\otimes\wedge^k\mathbb W$ are defined by the formula $\ov{m\otimes
n}=\Theta(\ov{m}\otimes\ov{n})$, where
$\Theta=\sum_{\mu\in\sum\Z_+\Phi^+}\Theta_{\mu}\otimes\Theta_{-\mu}$
is given in \eqref{quasi:R:matrix} with $\Theta_{-\mu}\in\mc{U}^-$.
It is easily verified that the map
$\texttt{Tr}_{>}:\wedge^{\infty}\WW\rightarrow \wedge^k\WW$ defined
by
\begin{align*}
\texttt{Tr}_{>}(\mc{W}_h)=
\begin{cases}
\mc{W}_{h_{[\ul{k}]}}, &\text{ if }h(\ul{i})=i\;(\text{ for }i\ge k+1),\\
0,&\text{ otherwise},
\end{cases}, \quad \text{ for } h\in\Z^\infty_{-},
\end{align*}
is a $\mc{U}^-$-module homomorphism. Therefore we have
\begin{align*}
\sum_\mu &\texttt{Tr}\left(\Theta_{\mu}\ov{m}
\otimes\Theta_{-\mu} \mc{W}_h \right) =\sum_\mu\Theta_{\mu}\ov{m}\otimes\Theta_{-\mu}
 \texttt{Tr}_{>}(\mc{W}_h).
\end{align*}
From this, it follows that $\texttt{Tr}: {\mathbb T}^{\bf b}\wotimes
\wedge^\infty{\mathbb W}\rightarrow {\mathbb T}^{\bf b}\wotimes
\wedge^k \mathbb W$ commutes with the bar maps.
Now \eqref{eq:barMfb1} follows from \eqref{bar:for:wedge}.
\end{proof}
From Lemma~\ref{lem:trunc:bar} and the bar-involutions on $\mathbb
T^{\bf b}\wotimes\wedge^k\mathbb W$ for $k\in \N$, we obtain a
bar-involution $\ \bar{} :{\mathbb T}^{\bf
b}\wotimes\wedge^\infty\mathbb W \rightarrow {\mathbb T}^{\bf
b}\wotimes\wedge^\infty\mathbb W$. Summarizing the above, and
applying Lemmas \ref{lem:lusztig} and \ref{lem:trunc:bar} we have
proved the following.

\begin{prop}  \label{prop:TbkW}
Let $k\in\N\cup\{\infty\}$. The bar map $\ \bar{} :{\mathbb T}^{\bf
b}\wotimes\wedge^k\mathbb W \rightarrow {\mathbb T}^{\bf
b}\wotimes\wedge^k\mathbb W$ is an involution. The space ${\mathbb
T}^{\bf b}\wotimes\wedge^k\mathbb W$ has unique bar-invariant topological bases
\begin{align*}
\{T^{{\bf b},1}_f|f\in\Z^{m+n}\times\Z^k_-\}\text{ and }\{L^{{\bf
b},1}_f|f\in\Z^{m+n}\times\Z^k_-\}
\end{align*}
such that
\begin{align*}
T^{{\bf b},1}_f=M^{{\bf b},1}_f+\sum_{g\prec_{({\bf b},{1^k})} f}
t^{{\bf b},1}_{gf}(q) M^{{\bf b},1}_g,
  \qquad
L^{{\bf b},1}_f=M^{{\bf b},1}_f+\sum_{g\prec_{({\bf b},{1^k})} f}
\ell^{{\bf b},1}_{gf}(q) M^{{\bf b},1}_g,
\end{align*}
with $t^{{\bf b},1}_{gf}(q)\in q\Z[q]$, and $\ell^{{\bf
b},1}_{gf}(q)\in q^{-1}\Z[q^{-1}]$. (We will write $t_{ff}^{{\bf
b},1}(q)=\ell_{ff}^{{\bf b},1}(q)=1$, $t_{gf}^{{\bf
b},1}=\ell_{gf}^{{\bf b},1}=0$, for $g \npreceq_{({\bf
b},{1^k})}f$.)
\end{prop}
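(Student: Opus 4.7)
The plan is to imitate closely the argument used for Proposition~\ref{prop:existence:can1}, treating the cases $k \in \mathbb{N}$ and $k = \infty$ in parallel but with the extra care that infinite $q$-wedges require. The three ingredients needed to invoke Lemma~\ref{lem:lusztig} on the poset $(\Z^{m+n} \times \Z^k_-, \preceq_{({\bf b},1^k)})$ are (a) the finite interval property, (b) the unitriangular bar expansion with coefficients in $\Z[q,q^{-1}]$, and (c) the involutivity of the bar map, together with the identity $\sum_{g\preceq h\preceq f} r_{gh}\overline{r_{hf}} = \delta_{gf}$.

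First I would verify (a). For $k \in \mathbb{N}$ this is an immediate extension of Lemma~\ref{lem:f:to:g:finite}, since the proof there bounds the values $|h(i)|$ in terms of those of $f$ and $g$, and the argument carries over verbatim when the last $k$ coordinates are constrained to lie in $\Z^k_-$. For $k = \infty$, given $g \preceq_{({\bf b},1^\infty)} h \preceq_{({\bf b},1^\infty)} f$, a given entry $h(\ul{i})$ for $i \gg 0$ is pinned to the value $i$ by the comparison with $f$ and $g$ (both of which equal $i$ eventually); only finitely many entries can deviate, and those are bounded as in the finite case. Step (b) is exactly \eqref{eq:barMfb1}, already verified in Lemma~\ref{lem:trunc:bar}.

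For step (c), the involutivity for $k \in \mathbb{N}$ follows by the same argument as in Lemma~\ref{lem:barTb}: the bar map is defined on the tensor product of the two weakly based modules $\mathbb{T}^{\bf b}$ and $\wedge^k \mathbb{W}$ via the quasi-$\mathcal{R}$-matrix, and using Lemma~\ref{lem:stab:bar:map} together with \cite[Corollary 4.1.3]{Lu} applied inside each truncation $\mathbb{T}^{\bf b}_{\le|d|} \otimes \wedge^k \mathbb{W}$, the identity $\sum r_{gh}\overline{r_{hf}} = \delta_{gf}$ reduces to a finite-dimensional statement. For $k=\infty$, I would use Lemma~\ref{lem:trunc:bar}: since the truncation maps $\texttt{Tr}:\mathbb{T}^{\bf b} \wotimes \wedge^\infty \mathbb{W} \to \mathbb{T}^{\bf b} \wotimes \wedge^k \mathbb{W}$ are compatible with the bar maps for all finite $k$, and since $\bigcap_k \ker\texttt{Tr} = 0$ on the $B$-completion, involutivity in the infinite case follows from involutivity in all finite cases.

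Finally, with (a), (b), (c) in hand, Lemma~\ref{lem:lusztig} applied to the matrix $(r_{gf}(q))$ produces uniquely the families $t^{{\bf b},1}_{gf}(q) \in q\Z[q]$ and $\ell^{{\bf b},1}_{gf}(q) \in q^{-1}\Z[q^{-1}]$ determined by the recursions (v) and (viii), and the bar-invariant elements $T^{{\bf b},1}_f$ and $L^{{\bf b},1}_f$ are defined by the stated formulas; their bar-invariance is guaranteed by (v) and (viii), and uniqueness is part of the lemma's conclusion. The main obstacle I expect is the handling of completions in the $k=\infty$ case, specifically making sure that the infinite sums appearing in $\ov{M^{{\bf b},1}_f}$ and in the identity $\sum_h r_{gh}\overline{r_{hf}} = \delta_{gf}$ really make sense in $\mathbb{T}^{\bf b} \wotimes \wedge^\infty \mathbb{W}$; the finite interval property and the compatibility \eqref{aux:eq:stab11} are what keep everything well-defined.
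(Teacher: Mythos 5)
Your proposal is correct and follows essentially the same route as the paper: unitriangularity of the bar expansion (via \eqref{bar:for:wedge} for finite $k$ and \eqref{eq:barMfb1} for $k=\infty$), involutivity obtained by reduction to the finite-dimensional/finite-$k$ situations through the stability and truncation lemmas, and then Lemma~\ref{lem:lusztig}. The only cosmetic difference is that for finite $k$ the paper gets involutivity by realizing $\mathbb T^{\bf b}\wotimes\wedge^k\mathbb W$ as a bar-stable subspace of $\widehat{\mathbb T}^{({\bf b},1^k)}$ via the $H_0$-embedding rather than by truncating levels directly, and your explicit check of the finite interval property for $k=\infty$ fills in a step the paper leaves implicit.
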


We call $\{T^{{\bf b},1}_f\}$ and $\{L^{{\bf b},1}_f\}$ the {\em
canonical and dual canonical bases} of ${\mathbb T}^{\bf
b}\wotimes\wedge^k\mathbb W$, respectively. We shall use
$f^{\ul k}\in\Z^{m+n}\times\Z^k_-$ as a short-hand notation for the
restriction $f_{[m+n]\sqcup[\ul{k}]}$ of a function $f \in
\Z^{m+n}\times \Z_-^\infty$.

\begin{prop}\label{prop:can:trunc}
Let $k\in\N$. The truncation map $\texttt{Tr}:\mathbb T^{\bf
b}\wotimes\wedge^\infty\mathbb W\rightarrow \mathbb T^{\bf
b}\wotimes\wedge^k\mathbb W$ preserves the standard, canonical, and
dual canonical bases in the following sense: for $Y=M,L,T$ and
$f\in\Z^{m+n}\times\Z^\infty_{-}$ we have
\begin{align*}
\texttt{Tr}\left( Y^{{\bf b},1}_{f} \right)=
\begin{cases}
Y^{{\bf b},1}_{f^{\ul k}},
&\text{ if }f(\ul{i})=i,\text{ for }i\ge k+1,\\
0,&\text{ otherwise}.
\end{cases}
\end{align*}
Consequently, we have $t^{{\bf b},1}_{gf}(q)=t^{{\bf
b},1}_{g^{\ul k}f^{\ul k}}(q)$ and $\ell^{{\bf b},1}_{gf}(q)=\ell^{{\bf
b},1}_{g^{\ul k}f^{\ul k}}(q)$, for $g, f\in \Z^{m+n}\times \Z_-^\infty$ such
that $f(\ul{i})=g(\ul{i})=i,$ for $i\ge k+1$.
\end{prop}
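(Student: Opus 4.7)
The plan is to handle the standard basis assertion by definition and then derive the canonical and dual canonical cases from uniqueness (Proposition~\ref{prop:TbkW}), using the compatibility of $\texttt{Tr}$ with the bar involution (Lemma~\ref{lem:trunc:bar}) and with the Bruhat ordering (Lemma~\ref{trunc:comp:bruhat}).

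First I would treat the standard basis: the formula $\texttt{Tr}(M^{{\bf b},1}_f) = M^{{\bf b},1}_{f^{\ul k}}$ or $0$ is immediate from the definition of $\texttt{Tr}$. Next, for $Y = T$ (the canonical basis case), I would apply $\texttt{Tr}$ to the defining expansion
\[
T^{{\bf b},1}_f = M^{{\bf b},1}_f + \sum_{g \prec_{({\bf b},1^\infty)} f} t^{{\bf b},1}_{gf}(q)\, M^{{\bf b},1}_g.
\]
In the subcase $f(\ul{i}) = i$ for all $i \ge k+1$, the leading term becomes $M^{{\bf b},1}_{f^{\ul k}}$; for each surviving $g$ we have $g(\ul{i}) = i$ for $i \ge k+1$, and Lemma~\ref{trunc:comp:bruhat} gives $g^{\ul k} \prec_{({\bf b},1^k)} f^{\ul k}$. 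Since $\texttt{Tr}$ commutes with the bar involution (Lemma~\ref{lem:trunc:bar}) and $T^{{\bf b},1}_f$ is bar-invariant, $\texttt{Tr}(T^{{\bf b},1}_f)$ is a bar-invariant element whose expansion against the standard basis on $\mathbb T^{\bf b}\wotimes\wedge^k\mathbb W$ is of the required triangular form with coefficients in $q\Z[q]$. The uniqueness clause of Proposition~\ref{prop:TbkW} then forces $\texttt{Tr}(T^{{\bf b},1}_f) = T^{{\bf b},1}_{f^{\ul k}}$.

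For the complementary subcase when $f(\ul{i}) \neq i$ for some $i \ge k+1$, I would argue directly that every term in the expansion of $T^{{\bf b},1}_f$ is killed by $\texttt{Tr}$. Indeed, the second half of the proof of Lemma~\ref{trunc:comp:bruhat} actually shows that $\texttt{Tr}(M^{{\bf b},1}_f) = 0$ together with $g \prec_{({\bf b},1^\infty)} f$ implies $\texttt{Tr}(M^{{\bf b},1}_g) = 0$. Hence $\texttt{Tr}(T^{{\bf b},1}_f) = 0$. The argument for $Y = L$ is verbatim the same, using the triangular characterization with coefficients in $q^{-1}\Z[q^{-1}]$.

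The stability of BKL polynomials $t^{{\bf b},1}_{gf}(q) = t^{{\bf b},1}_{g^{\ul k} f^{\ul k}}(q)$ and $\ell^{{\bf b},1}_{gf}(q) = \ell^{{\bf b},1}_{g^{\ul k} f^{\ul k}}(q)$ then drops out by comparing coefficients of $M^{{\bf b},1}_{g^{\ul k}}$ on both sides of $\texttt{Tr}(T^{{\bf b},1}_f) = T^{{\bf b},1}_{f^{\ul k}}$ (respectively for $L$). I do not anticipate a real obstacle here; the only subtle point is ensuring that $\texttt{Tr}$ is a well-defined continuous map on the $B$-completion, but this is precisely Lemma~\ref{trunc:comp:bruhat}, together with the fact that each fiber of the truncation is finite when restricted to $\{h : h \preceq_{({\bf b},1^\infty)} f\}$, guaranteed by the finite interval property (Lemma~\ref{lem:f:to:g:finite}).
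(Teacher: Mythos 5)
Your proposal is correct and follows essentially the same route as the paper, whose own proof is just a two-line appeal to the $Y=M$ case being definitional plus Lemmas~\ref{trunc:comp:bruhat} and \ref{lem:trunc:bar}; you have simply spelled out the triangularity-plus-bar-invariance-plus-uniqueness argument that those citations encode. The only cosmetic slip is your appeal to the finite interval property for well-definedness of $\texttt{Tr}$ on the $B$-completion: the real point is that $\texttt{Tr}$ is injective on the standard monomials it does not annihilate, so no infinite collision of coefficients can occur.
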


\begin{proof}
By definition the statement is true for $Y=M$. By Lemmas
\ref{trunc:comp:bruhat} and \ref{lem:trunc:bar} the map
$\texttt{Tr}:{\mathbb T}^{\bf b}\wotimes\wedge^\infty\mathbb
W\rightarrow{\mathbb T}^{\bf b}\wotimes\wedge^k\mathbb W$ is
compatible with canonical and dual canonical bases of these two
spaces.
\end{proof}

The constructions and statements when replacing $\wedge^k\mathbb W$
by $\wedge^k\mathbb V$, for $k\in\N\cup\{\infty\}$ are entirely
analogous, and so we will skip the analogous proofs. We construct a
$B$-completion ${\mathbb T}^{\bf b}\wotimes\wedge^k\mathbb V$. For
$k\in\N$, the truncation map $\texttt{Tr}:{\mathbb T}^{\bf
b}\otimes\wedge^{\infty}\mathbb V\rightarrow {\mathbb T}^{\bf
b}\otimes\wedge^k\mathbb V$, is defined by
\begin{align*}
\texttt{Tr}(m\otimes \mc{V}_h)=
\begin{cases}
m\otimes \mc{V}_{h_{[\ul{k}]}},
&\text{ if }h(\ul{i})=1-i,\text{ for }i\ge k+1,\\
0,&\text{ otherwise},
\end{cases}
\end{align*}
where $m\in {\mathbb T}^{\bf b}$ and $h\in\Z^\infty_{+}$. The map
$\texttt{Tr}$ extends to the $B$-completions. The following is a
$\wedge^k\mathbb V$-analogue of Proposition~\ref{prop:TbkW}.

\begin{prop}  \label{prop:CBdcb}
Let $k\in\N\cup\{\infty\}$. We have a bar-involution $\ \bar{}
:{\mathbb T}^{\bf b}\wotimes\wedge^k\mathbb V \rightarrow {\mathbb
T}^{\bf b}\wotimes\wedge^k\mathbb V$. The space ${\mathbb T}^{\bf
b}\wotimes\wedge^k\mathbb V$ has unique bar-invariant topological bases
\begin{align*}
\{T^{{\bf b},0}_f|f\in\Z^{m+n}\times\Z^k_+\}\text{ and }\{L^{{\bf
b},0}_f|f\in\Z^{m+n}\times\Z^k_+\}
\end{align*}
such that
\begin{align*}
T^{{\bf b},0}_f=M^{{\bf b},0}_f+\sum_{g\prec_{({\bf b},{0^k})} f}
t^{{\bf b},0}_{gf}(q) M^{{\bf b},0}_g,
 \qquad
L^{{\bf b},0}_f=M^{{\bf b},0}_f+\sum_{g\prec_{({\bf b},{0^k})} f}
\ell^{{\bf b},0}_{gf}(q) M^{{\bf b},0}_g,
\end{align*}
with $t^{{\bf b},0}_{gf}(q)\in q\Z[q]$, and $\ell^{{\bf
b},0}_{gf}(q)\in q^{-1}\Z[q^{-1}]$. (We will  write $t_{ff}^{{\bf
b},0}(q)=\ell_{ff}^{{\bf b},0}(q)=1$, $t_{gf}^{{\bf
b},0}=\ell_{gf}^{{\bf b},0}=0$, for $g\npreceq_{({\bf
b},{0^k})}f$.)
\end{prop}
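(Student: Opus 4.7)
The plan is to mirror, step by step, the treatment of ${\mathbb T}^{\bf b}\wotimes\wedge^k\mathbb W$ carried out in Propositions~\ref{prop:TbkW} and \ref{prop:can:trunc}, replacing $\mathbb W$ by $\mathbb V$ throughout. First I would treat the finite case $k\in\N$. The element $H_0$ is bar-invariant by \eqref{eq:H0inv}, so I embed $\wedge^k\mathbb V$ into $\mathbb V^{\otimes k}$ as a weakly based module by sending $\mc V_h$ to $M^{({0}^k)}_{h\cdot w_0^{(k)}} H_0$ for $h\in\Z^k_+$. This produces an embedding $\mathbb T^{\bf b}\otimes\wedge^k\mathbb V\hookrightarrow \mathbb T^{\bf b}\otimes \mathbb V^{\otimes k}$, which in turn yields $\mathbb T^{\bf b}\wotimes\wedge^k\mathbb V\subseteq \widehat{\mathbb T}^{({\bf b},0^k)}$. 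Expanding $M^{({0}^k)}_{h\cdot w_0^{(k)}} H_0$ in the standard monomial basis and applying Propositions~\ref{bar:indep:order}--\ref{bar:inv:Mf} gives the $\wedge^k\mathbb V$-analogue of \eqref{bar:for:wedge}: $\ov{M^{{\bf b},0}_f}= M^{{\bf b},0}_f + \sum_{g\prec_{({\bf b},0^k)} f} r_{gf}(q) M^{{\bf b},0}_g$ with $r_{gf}(q)\in\Z[q,q^{-1}]$, and hence a well-defined bar involution on $\mathbb T^{\bf b}\wotimes\wedge^k\mathbb V$ as in Lemma~\ref{lem:barTb}.

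For $k=\infty$, I would introduce the obvious projections $\pi'_d:\mathbb T^{\bf b}\otimes\wedge^\infty\mathbb V\to [\mathbb T^{\bf b}\otimes\wedge^\infty\mathbb V]_{\le|d|}$ (using the tail condition $f(\ul i)=1-i$ for $i\gg 0$ from \eqref{eq:Zk+}), and use their kernels to define the $A$-completion $\mathbb T^{\bf b}\wt{\otimes}\wedge^\infty\mathbb V$ and the $B$-completion $\mathbb T^{\bf b}\wotimes\wedge^\infty\mathbb V$. The stability statement $\pi'_d(\psi^{(\ell)}(M^{{\bf b},0}_f)) = \pi'_d(\psi^{(d)}(M^{{\bf b},0}_f))$ for $\ell\ge d$ follows by the same inductive computation as in \eqref{aux:eq:stab10}--\eqref{aux:eq:stab11}, so that $\ov{M^{{\bf b},0}_f}$ is a well-defined element of $\mathbb T^{\bf b}\wt\otimes\wedge^\infty\mathbb V$. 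The analogue of the truncation map is then
\[
\texttt{Tr}(m\otimes \mc V_h)=\begin{cases} m\otimes \mc V_{h_{[\ul k]}},& \text{if } h(\ul i)=1-i\text{ for }i\ge k+1,\\ 0,& \text{otherwise}. \end{cases}
\]
The same Bruhat-compatibility argument as in Lemma~\ref{trunc:comp:bruhat} (now using that $f\succ_{({\bf b},0^\infty)}g$ forces $g(\ul i)>f(\ul i)\ge 1-i$ whenever $f_{[\ul\infty]}\ne g_{[\ul\infty]}$ above the truncation level) shows that $\texttt{Tr}$ extends to the $B$-completions, and the intertwining with the quasi-$\mc R$-matrix via the $\mc U^-$-module map $\texttt{Tr}_>:\wedge^\infty\VV\to\wedge^k\VV$ (the key point being $\Theta_{-\mu}\in\mc U^-$) shows that $\texttt{Tr}$ commutes with bar, exactly as in Lemma~\ref{lem:trunc:bar}. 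Consequently the bar-involutions on the finite-level spaces assemble into a bar-involution on $\mathbb T^{\bf b}\wotimes\wedge^\infty\mathbb V$.

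Finally, I would invoke Lemma~\ref{lem:lusztig} applied to the poset $(\Z^{m+n}\times\Z^k_+,\preceq_{({\bf b},0^k)})$: the finite interval property is inherited from Lemma~\ref{lem:f:to:g:finite} (the tail condition trapping all entries for $i>k$), and conditions (i)--(ii) are immediate from the analogue of \eqref{bar:for:wedge} combined with the fact that bar is an involution, established just above. This yields the unique $T^{{\bf b},0}_f$ and $L^{{\bf b},0}_f$ with the stated triangularity and the prescribed coefficient ranges in $q\Z[q]$ and $q^{-1}\Z[q^{-1}]$, respectively. No new technical difficulty is expected; the only points requiring a bit of care are the correct tail convention $f(\ul i)=1-i$ when passing $k\to\infty$ (which is what makes Bruhat compatibility of $\texttt{Tr}$ work in this case), and the observation that $\texttt{Tr}_>$ is genuinely a $\mc U^-$-module map, as is needed so that $\texttt{Tr}$ intertwines the quasi-$\mc R$-matrices on the two spaces.
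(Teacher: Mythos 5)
Your proposal is correct and follows exactly the route the paper intends: the paper itself omits the proof of this proposition, stating only that the constructions are ``entirely analogous'' to the $\wedge^k\mathbb W$ case of Proposition~\ref{prop:TbkW}, and you have carried out that analogy faithfully (the embedding via $H_0$ for finite $k$, the stability and truncation arguments for $k=\infty$, the $\mc U^-$-equivariance of $\texttt{Tr}_>$, and the final appeal to Lemma~\ref{lem:lusztig}). The two points you flag as needing care --- the tail convention $f(\ul i)=1-i$ and the inequality $g(\ul i)>f(\ul i)\ge 1-i$ in the Bruhat-compatibility of $\texttt{Tr}$ --- are indeed the only places where the $\VV$-case differs in sign or direction from the $\WW$-case, and you have both of them right.
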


We call $\{T^{{\bf b},0}_f\}$ and $\{L^{{\bf b},0}_f\}$ the {\em
canonical and dual canonical bases} of ${\mathbb T}^{\bf
b}\wotimes\wedge^k\mathbb V$, respectively.

We shall also use $f^{\ul k}\in\Z^{m+n}\times\Z^k_+$ as a short-hand
notation for the restriction $f_{[m+n]\sqcup[\ul{k}]}$ of a function
$f \in \Z^{m+n}\times \Z_+^\infty$. The following is a
$\wedge^k\mathbb V$-analogue of Lemma~\ref{lem:trunc:bar} and
Proposition~\ref{prop:can:trunc}.

\begin{prop}\label{prop:trunc:fock}
The truncation map $\texttt{Tr}:{\mathbb T}^{\bf
b}\wotimes\wedge^{\infty}\mathbb V\rightarrow {\mathbb T}^{\bf
b}\wotimes\wedge^k\mathbb V$ commutes with the bar involutions.
Moreover, the truncation map $\texttt{Tr}$ preserves the standard,
canonical, and dual canonical bases; that is, for $Y=M,L,T$, and for
$f\in\Z^{m+n}\times\Z^\infty_{+}$, we have
\begin{align*}
\texttt{Tr}\left( Y^{{\bf b},0}_{f} \right)=
\begin{cases}
Y^{{\bf b},0}_{f^{\ul k}},
&\text{ if }f(\ul{i})=1-i,\text{ for }i\ge k+1,\\
0,&\text{ otherwise}.
\end{cases}
\end{align*}
Consequently, we have $t^{{\bf b},0}_{gf}(q)=t^{{\bf
b},0}_{g^{\ul k}f^{\ul k}}(q)$ and $\ell^{{\bf b},0}_{gf}(q)=\ell^{{\bf
b},0}_{g^{\ul k}f^{\ul k}}(q)$, for $g, f\in \Z^{m+n}\times \Z_+^\infty$ such
that $f(\ul{i})=g(\ul{i})=1-i,$ for $i\ge k+1$.
\end{prop}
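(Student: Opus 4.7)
The plan is to mirror the proof of Lemma~\ref{lem:trunc:bar} and Proposition~\ref{prop:can:trunc}, substituting $\wedge^\infty\mathbb V$ for $\wedge^\infty\mathbb W$ throughout. First, I would verify the analogue of Lemma~\ref{trunc:comp:bruhat} for $\mathbb V$: the truncation map is compatible with the Bruhat orderings of types $({\bf b},0^\infty)$ and $({\bf b},0^k)$. The argument is the same routine case check, but with the ground-state condition $h(\ul i)=1-i$ for $i\ge k+1$ replacing $h(\ul i)=i$, and using the fact that $f\succ_{({\bf b},0^\infty)}g$ combined with $\texttt{Tr}(M^{{\bf b},0}_f)=0$ forces $\texttt{Tr}(M^{{\bf b},0}_g)=0$, so that $\texttt{Tr}$ extends to the $B$-completions.

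Next, I would establish that $\texttt{Tr}$ commutes with the bar maps. As in Lemma~\ref{lem:trunc:bar}, this reduces by the formula $\ov{m\otimes n}=\Theta(\ov m\otimes\ov n)$ with $\Theta=\sum_\mu\Theta_\mu\otimes\Theta_{-\mu}$ and $\Theta_{-\mu}\in\mc U^-$ to showing that the map $\texttt{Tr}_+:\wedge^\infty\mathbb V\to\wedge^k\mathbb V$ defined by
\begin{align*}
\texttt{Tr}_+(\mc V_h)=\begin{cases} \mc V_{h_{[\ul k]}}, &\text{if }h(\ul i)=1-i\text{ for }i\ge k+1,\\ 0,&\text{otherwise,}\end{cases}
\end{align*}
is a $\mc U^-$-module homomorphism. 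Using the formula \eqref{action:wedgeV} for $F_a$ on $\wedge^k\mathbb V$, one checks that when $h$ satisfies the ground-state condition past position $k$, the action of $F_a$ on $\mc V_h$ either takes place entirely at positions in $[\ul k]$ (in which case the resulting state again satisfies the ground-state condition and $\texttt{Tr}_+$ commutes with $F_a$) or acts at some position $\ul j$ with $j\ge k+1$ producing a vector killed by $\texttt{Tr}_+$; parallel analysis on the target $\wedge^k\mathbb V$-side, invoking strict decrease of indices together with $h(\ul k)\ge 1-k$, shows the corresponding $F_a\mc V_{h_{[\ul k]}}$ vanishes. Hence $\texttt{Tr}_+$ is $\mc U^-$-linear, and consequently $\texttt{Tr}=\mathrm{id}\otimes\texttt{Tr}_+$ intertwines the bar involutions on the two tensor spaces.

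Finally, I would deduce the claim about preservation of standard, canonical, and dual canonical bases. By definition the statement holds for $Y=M$. For $Y=T,L$, note that $\texttt{Tr}(T^{{\bf b},0}_f)$ (resp.\ $\texttt{Tr}(L^{{\bf b},0}_f)$) is bar-invariant by the preceding step, and expanding in the standard monomial basis via the triangular formula in Proposition~\ref{prop:CBdcb} shows that when $f(\ul i)=1-i$ for $i\ge k+1$ the image lies in $M^{{\bf b},0}_{f^{\ul k}}+\sum_{g\prec}q\Z[q]M^{{\bf b},0}_g$ (resp.\ with $q^{-1}\Z[q^{-1}]$), while in the opposite case the Bruhat compatibility from the first step kills every term. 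The uniqueness in Lemma~\ref{lem:lusztig} then identifies $\texttt{Tr}(T^{{\bf b},0}_f)$ with $T^{{\bf b},0}_{f^{\ul k}}$ and $\texttt{Tr}(L^{{\bf b},0}_f)$ with $L^{{\bf b},0}_{f^{\ul k}}$ in the former case, and gives $0$ in the latter. The equalities of BKL polynomials follow by comparing coefficients.

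The main obstacle, and the only nontrivial point not already present in the $\mathbb W$-case, is the verification that $\texttt{Tr}_+$ is $\mc U^-$-linear; this is a direct but slightly delicate check because the ground-state sequence $1-i$ is shifted by $+1$ from the natural ``fermionic vacuum'' conventions, and one must be careful that the positions $\ul i$ with $i=k+1$ interact correctly with the strict-decrease condition $f(\ul k)>f(\ul{k+1})$ at the boundary.
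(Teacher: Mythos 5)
Your proposal is correct and follows essentially the same route as the paper, which explicitly declares the $\wedge^k\mathbb V$ case to be "entirely analogous" to the $\wedge^k\mathbb W$ case treated in Lemma~\ref{trunc:comp:bruhat}, Lemma~\ref{lem:trunc:bar} and Proposition~\ref{prop:can:trunc}, and skips the details. Your identification of the $\mc U^-$-linearity of $\texttt{Tr}_+$ (with the boundary check at position $\ul{k+1}$) as the one point requiring verification is exactly the content being suppressed.
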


\begin{definition}
The polynomials $\ell^{{\bf b},0}_{gf}(q)$, $\ell^{{\bf
b},1}_{gf}(q)$, $t^{{\bf b},0}_{gf}(q)$, $t^{{\bf b},1}_{gf}(q)$ are
called {\em Brundan-Kazhdan-Lusztig (BKL) polynomials}. (They
include $\ell^{\bf b}_{gf}(q)$ and $t^{\bf b}_{gf}(q)$ in
Definition~\ref{def:BKLpolyn} as special cases).
\end{definition}
Note that the BKL polynomials reduce to the usual (parabolic)
Kazhdan-Lusztig polynomials when the underlying Fock spaces involve
only $\mathbb V$ (or only $\WW$).

\subsection{Combinatorial super duality}

Recall from Proposition~\ref{wedgeV:isom:wedgeW} the isomorphism of
$U_q(\mf{sl}_\infty)$-modules $\natural:\wedge^\infty\mathbb
V\rightarrow\wedge^\infty\mathbb W$ defined by
$\natural(\vert\la\rangle)={\vert\la'_*\rangle}$.  This map induces
an isomorphism
\begin{align*}
\natural_{\bf b}\stackrel{\text{def}}{=} {1_{{\bf
b}}\otimes\natural}:
 {{\mathbb T}^{\bf b}}\otimes\wedge^\infty\mathbb V {\longrightarrow}
{{\mathbb T}^{\bf b}}\otimes\wedge^\infty\mathbb W,
\end{align*}
where $1_{{\bf b}}$ denotes the identity map on ${{\mathbb T}^{\bf
b}}$. Let $f\in\Z^{m+n}\times\Z^\infty_{+}$. There exists a unique
$\la\in\mc{P}$ such that $|\la\rangle=\mc V_{f_{[\ul{\infty}]}}$. We
define $f^\natural$ to be the unique element in
$\Z^{m+n}\times\Z^\infty_{-}$ determined by $f^\natural(i)=f(i)$,
for $i\in[m+n]$, and $\mc
W_{f^\natural_{[\ul{\infty}]}}=|\la'_*\rangle$. The assignment
$f\mapsto f^\natural$ gives a bijection (cf. \cite{CWZ})
\begin{equation}  \label{naturalbi}
\natural: \Z^{m+n}\times\Z^\infty_+ \longrightarrow
\Z^{m+n}\times\Z^\infty_-.
\end{equation}

The following is the combinatorial counterpart of the super duality
in Theorem~\ref{thm:SD} later on representation theory.

\begin{thm}\label{TwedgeV:isom:TwedgeW}
\begin{enumerate}
\item
The isomorphism $\natural_{\bf b}$ respects the Bruhat orderings,
and hence extends to an isomorphism of the $B$-completions
$\natural_{\bf b}:{{\mathbb T}^{\bf b}}\wotimes\wedge^\infty\mathbb
V\rightarrow {{\mathbb T}^{\bf b}}\wotimes\wedge^\infty\mathbb W$.

\item
 $\natural_{\bf b}$ commutes with the bar involutions.

 \item
$\natural_{\bf b}$ preserves the canonical and dual canonical bases.
More precisely, we have $ \natural_{\bf b}(M^{{\bf b},0}_f)=M^{{\bf
b},1}_{f^\natural},
 \;\;
\natural_{\bf b}(T^{{\bf b},0}_f)=T^{{\bf b},1}_{f^\natural}
 \text{ and }\
\natural_{\bf b}(L^{{\bf b},0}_f)=L^{{\bf b},1}_{f^\natural}, \text{
for }f\in \Z^{m+n}\times\Z^\infty_+. $

\item
We have the following identifications of BKL polynomials:
$\ell^{{\bf b},0}_{gf}(q)=\ell^{{\bf
b},1}_{{g^\natural}{f^\natural}}(q)$, and $t^{{\bf
b},0}_{gf}(q)=t^{{\bf b},1}_{{g^\natural}{f^\natural}}(q)$, for all
$g, f\in \Z^{m+n}\times\Z^\infty_+. $
\end{enumerate}
\end{thm}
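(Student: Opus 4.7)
The plan is to reduce parts (3) and (4) to parts (1) and (2) via the uniqueness of canonical and dual canonical bases, so that the substantive work lies in (1) and (2). Note that $\natural_{\bf b}(M^{{\bf b},0}_f) = M^{{\bf b},1}_{f^\natural}$ is immediate from the definitions of $\natural$ and of the bijection \eqref{naturalbi}: the first $m+n$ tensor factors are untouched, and on the tail $\natural$ sends $\mathcal{V}_{f_{[\ul\infty]}} = |\la\rangle$ to $|\la'_*\rangle = \mathcal{W}_{f^\natural_{[\ul\infty]}}$ by construction.

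For (1), I would verify directly that the bijection \eqref{naturalbi} converts $\preceq_{({\bf b},0^\infty)}$ into $\preceq_{({\bf b},1^\infty)}$. Using the characterization \eqref{eq:sharpBr}, this reduces to a combinatorial identity relating $\sharp_{({\bf b},0^\infty)}(f,a,j)$ to $\sharp_{({\bf b},1^\infty)}(f^\natural,a,j)$. For $j \le m+n$ the identity is immediate since the $\bf b$-components are unaltered; for $j > m+n$ it reduces to a statement about partitions arising from the shifts $f(\ul i) = \la_i - i + 1$ versus $f^\natural(\ul i) = i - \la'_i$ and the classical Young-diagram duality between $\la$ and $\la'$. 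An alternative, perhaps more transparent, approach is to show that $\natural$ is compatible with the $\downarrow$-relation on the tail part and invoke Lemma~\ref{lem:super:Bru}. With (1) in hand, the isomorphism extends to the $B$-completions formally.

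For (2), the key observation is that, by Proposition~\ref{wedgeV:isom:wedgeW}, both $\wedge^\infty\VV$ and $\wedge^\infty\WW$ realize the basic $U_q(\mathfrak{sl}_\infty)$-module, and $\natural$ is an $\mathfrak{sl}_\infty$-isomorphism that sends the bar-invariant basis $\{|\la\rangle\}$ to the bar-invariant basis $\{|\la'_*\rangle\}$. Since the positive and negative parts $\mc U^{\pm}$ of $U_q(\gl_\infty)$ and of $U_q(\mf{sl}_\infty)$ coincide, the quasi-$\mc R$-matrix $\Theta$ of \eqref{quasi:R:matrix} lies in a completion of $\mc U^+ \otimes \mc U^-$ on which $1 \otimes \natural$ acts $\mathfrak{sl}_\infty$-equivariantly. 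Combined with $\natural(\ov{|\la\rangle}) = |\la'_*\rangle = \ov{\natural(|\la\rangle)}$, this shows that $\natural_{\bf b}$ commutes with the bar maps on the (uncompleted) tensor products; the completion issues are then handled exactly as in the proof of Lemma~\ref{lem:stab:bar:map} and its extension, using a truncation analogous to $\pi'_d$ from Section~\ref{sec:completions}.

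Given (1) and (2), the image $\natural_{\bf b}(T^{{\bf b},0}_f)$ is a bar-invariant element of $\mathbb T^{\bf b} \wotimes \wedge^\infty\WW$ whose expansion in the standard monomial basis has leading term $M^{{\bf b},1}_{f^\natural}$ and all other coefficients in $q\Z[q]$ supported on $g^\natural \prec_{({\bf b},1^\infty)} f^\natural$. By the uniqueness in Proposition~\ref{prop:TbkW}, it must equal $T^{{\bf b},1}_{f^\natural}$; the same argument with $q\Z[q]$ replaced by $q^{-1}\Z[q^{-1}]$ handles $L^{{\bf b},0}_f$. Part (4) is then immediate by comparing coefficients. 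The main obstacle I expect is the careful combinatorial verification in (1), where the interplay between partition conjugation and the partial-sum conditions defining $\preceq_{\bf b}$ requires precise bookkeeping with the index shifts.
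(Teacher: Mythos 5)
Your overall architecture matches the paper's: reduce (3) and (4) to (1) and (2) via uniqueness of the (dual) canonical bases, and obtain (2) from the fact that $\natural$ is an isomorphism of $U_q(\mf{sl}_\infty)$-modules sending the bar-invariant basis $\{|\la\rangle\}$ to $\{|\la'_*\rangle\}$, so that $\natural_{\bf b}$ intertwines the quasi-$\mc R$-matrices. Those parts are fine. The problem is in part (1), which you correctly identify as the substantive step but then dispose of incorrectly.

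Your claim that for $j\in[m+n]$ the comparison of $\sharp_{({\bf b},0^\infty)}(f,d,j)$ with $\sharp_{({\bf b},1^\infty)}(f^\natural,d,j)$ is ``immediate since the $\bf b$-components are unaltered'' is wrong: by \eqref{aux:sharp:Bruhat} these partial sums run over \emph{all} indices $\geq j$, so for $j\in[m+n]$ they include the contribution of the entire semi-infinite tail, which changes drastically when $f$ is replaced by $f^\natural$ (a sum of $+1$'s over $\{\ul i: f(\ul i)\le d\}$ versus a sum of $-1$'s over $\{\ul i: f^\natural(\ul i)\le d\}$). The point that makes the inequalities transfer is that, after truncating to $[\ul N]$ with $N\gg 0$, the difference $\sharp_{(0^N)}(f_{[\ul N]},d,\ul 1)-\sharp_{(1^N)}(f^\natural_{[\ul N]},d,\ul 1)$ equals the constant $N+d$ \emph{independently of $f$} (for $-N+1\le d\le N$; the cases $d<-N+1$ and $d>N$ are handled separately). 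This constancy is not formal: it rests on the complementarity $\{f(\ul j)\mid j\in\N\}\sqcup\{f^\natural(\ul j)\mid j\in\N\}=\Z$, i.e.\ \cite[Lemma 6.2]{CWZ}, which encodes the duality between a partition and its conjugate on the level of the shifted entries. Without this input your argument does not close; only the case $j\in[\ul N]$ genuinely reduces to $\la\subseteq\mu\Leftrightarrow\la'\subseteq\mu'$. Your proposed alternative via Lemma~\ref{lem:super:Bru} also fails as stated, since that lemma gives only one implication and the paper's remark after it notes the converse is false for general $\bf b$, so compatibility with the $\downarrow$-moves cannot by itself establish the equivalence of the two Bruhat orderings as defined via $\sharp$.
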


\begin{proof}
We first prove (2)--(4) by assuming that (1) holds. Since
$\wedge^\infty\mathbb V$ and $\wedge^\infty\mathbb W$ are isomorphic
$U_q(\mf{sl}_\infty)$-modules under $\natural$, $\natural_{\bf b}$
is compatible with the quasi-$\mc{R}$-matrices. Thus, $\natural_{\bf
b}$ commutes with the bar involutions (see \eqref{def:bar:map}),
whence (2). It follows by definition that $\natural_{\bf b}(M^{{\bf
b},0}_f)=M^{{\bf b},1}_{f^\natural}$. The commutativity of
$\natural_{\bf b}$ with the bar-involutions implies that canonical
and dual canonical bases are sent by $\natural_{\bf b}$ to bar
invariant elements. Now by the compatibility of the two Bruhat
orderings in (1), $\natural_{\bf b}(T^{{\bf b},0}_f)$ and $\natural_{\bf
b}(L^{{\bf b},0}_f)$ satisfy the same characterization properties as
the canonical and dual canonical basis elements $T^{{\bf
b},1}_{f^\natural}$ and $L^{{\bf b},1}_{f^\natural}$ respectively,
and hence (3) follows. Now (3) clearly implies (4).

It remains to show (1). To this end, let
$f,g\in\Z^{m+n}\times\Z^\infty_{+}$. We shall show that $f\succeq_{({\bf
b},{0^\infty})}g$ is equivalent to $f^\natural\succeq_{({\bf
b},{1}^\infty)}g^\natural$. Recall that we denote the restriction of $f$
to a subset $I$ by $f_I$.

\cite[Lemma 6.2]{CWZ} says that
\begin{equation} \label{cwzlem}
\{f(\ul{j})\vert j\in\N\}\sqcup\{f^\natural(\ul{j})\vert
j\in\N\}=\Z,
\end{equation}
(and similar claim when replacing $f$ by $g$). Choose $N\gg 0$ so
that $f(\ul{t})=g(\ul{t})=-t+1$,
$f^\natural(\ul{t})=g^\natural(\ul{t})=t$, for all $t> N$. By our
choice of $N$ and \eqref{cwzlem}, we have
\begin{align*}
\{f(\ul{j})\vert1\le j\le N \}\sqcup\{f^\natural(\ul{j})\vert1\le
j\le N \}
 &=\{g(\ul{j})\vert1\le j\le N\}\sqcup\{g^\natural(\ul{j})\vert1\le j\le N \}
  \\
 &=\{-N+1,-N+2,\ldots,N-1,N\}.
\end{align*}
From this we conclude that, for $-N+1 \le d\le N$,
\begin{align}\label{aux:201}
\sharp_{(0^N)}(f_{[\ul{N}]},d,\ul{1})-\sharp_{(1^N)}(f_{[\ul{N}]}^\natural,d,\ul{1})=N+d=
\sharp_{(0^N)}(g_{[\ul{N}]},d,\ul{1})-\sharp_{(1^N)}(g_{[\ul{N}]}^\natural,d,\ul{1}).
\end{align}
Here we recall the notation $\sharp_{\bf b}$ from \eqref{aux:sharp:Bruhat}
and that a Bruhat ordering $\succeq_{\bf b}$ is characterized in terms of $\sharp_{\bf b}$ by
\eqref{eq:sharpBr}.

The condition $f\succeq_{({\bf b},{0^\infty})}g$ is equivalent
to $f_{[m+n]\cup[\ul{N}]}\succeq_{({\bf
b},{0}^N)}g_{[m+n]\cup[\ul{N}]}$, which is equivalent by
\eqref{eq:sharpBr} to
\begin{align}  \label{eq:mnN}
& \sharp_{({\bf b},0^N)}(f_{[m+n]\cup[\ul{N}]},d,j)\ge \sharp_{({\bf
b},0^N)}(g_{[m+n]\cup[\ul{N}]},d,j), \quad \forall j\in[m+n] \cup
[\ul{N}], \forall d\in\Z, \\
 & \text{with equality holding for $j=1$.} \notag
\end{align}
On the other hand, $f^\natural\succeq_{({\bf
b},{1}^\infty)}g^\natural$ is equivalent to
$f^\natural_{[m+n]\cup[\ul{N}]}\succeq_{({\bf
b},{1}^N)}g^\natural_{[m+n]\cup[\ul{N}]}$, which in turn, via
\eqref{eq:sharpBr}, is equivalent to
\begin{align}  \label{eq:mnN:1}
& \sharp_{({\bf b},1^N)}(f^\natural_{[m+n]\cup[\ul{N}]},d,j)\ge
\sharp_{({\bf b},1^N)}(g^\natural_{[m+n]\cup[\ul{N}]},d,j), \quad
\forall j\in[m+n] \cup [\ul{N}], \forall d\in\Z, \\
 & \text{with equality holding for $j=1$.} \notag
\end{align}
Thus, we are reduced to show the equivalence between \eqref{eq:mnN}
and \eqref{eq:mnN:1}. We separate into several cases below.

(a) Consider the case for $j\in[\ul{N}]$ (and $d$ arbitrary). Denote
by $\la=(\la_1,\la_2,\ldots)$ the partition defined by letting
$\la_i =f(\ul{i}) +i-1$, and by $\mu=(\mu_1,\mu_2,\ldots)$ the
partition defined by letting $\mu_i=g(\ul{i})+i-1$, for $i\ge 1$.
Then \eqref{eq:mnN} in the case for $j\in[\ul{N}]$ is equivalent to
$\la\subseteq\mu$. This is equivalent to $\la'\subseteq\mu'$, which
in turn is equivalent to \eqref{eq:mnN:1} in the case for $j\in[\ul{N}]$.

(b) Consider the case for $d<-N+1$ and $j \in [m+n]$. Then
$\sharp_{({\bf b},0^N)}(f_{[m+n]\cup[\ul{N}]},d,j)=\sharp_{({\bf b},1^N)}(f^\natural_{[m+n]\cup[\ul{N}]},d,j)$,
and similarly, $\sharp_{({\bf b},0^N)}(g_{[m+n]\cup[\ul{N}]},d,j)=
\sharp_{({\bf b},1^N)}(g^\natural_{[m+n]\cup[\ul{N}]},d,j)$. Thus, it follows that
\eqref{eq:mnN} and \eqref{eq:mnN:1} are equivalent in this case.

(c) Now consider the case for $d> N$ and $j \in [m+n]$. We have
\begin{align*}
\sharp_{({\bf b},0^N)}(f_{[m+n]\cup[\ul{N}]},d,j)=\sharp_{{\bf b}}(f_{[m+n]},d,j)+N,\\
\sharp_{({\bf b},1^N)}(f^\natural_{[m+n]\cup[\ul{N}]},d,j)=\sharp_{{\bf b}}(f_{[m+n]},d,j)-N,\\
\sharp_{({\bf b},0^N)}(g_{[m+n]\cup[\ul{N}]},d,j)=\sharp_{{\bf b}}(g_{[m+n]},d,j)+N,\\
\sharp_{({\bf b},1^N)}(g^\natural_{[m+n]\cup[\ul{N}]},d,j)=\sharp_{{\bf b}}(g_{[m+n]},d,j)-N.
\end{align*}
Thus, \eqref{eq:mnN} and \eqref{eq:mnN:1} are equivalent in this case as well.

(d) Finally, consider the case for $-N+1 \le d\le N$ and
$j\in[m+n]$. We have the following equivalent statements:
 {\allowdisplaybreaks
\begin{align*}
\sharp_{({\bf b},0^N)}& (f_{[m+n]\cup[\ul{N}]}, d,j)\ge
 \sharp_{({\bf b},0^N)}(g_{[m+n]\cup[\ul{N}]},d,j)
  \\ \Longleftrightarrow \;
&\sharp_{{\bf b}}(f_{[m+n]},d,j)+\sharp_{(0^N)}(f_{[\ul{N}]},d,\ul{1})\ge
 \sharp_{{\bf b}}(g_{[m+n]},d,j)+\sharp_{(0^N)}(g_{[\ul{N}]},d,\ul{1})
 \\ \stackrel{\eqref{aux:201}}{\Longleftrightarrow}\;
&\sharp_{{\bf b}}(f_{[m+n]},d,j)+\sharp_{(1^N)}(f_{[\ul{N}]}^\natural,d,\ul{1})\ge
 \sharp_{{\bf b}}(g_{[m+n]},d,j)+\sharp_{(1^N)}(g_{[\ul{N}]}^\natural,d,\ul{1})
 \\ \Longleftrightarrow\;
&\sharp_{({\bf b},1^N)}(f_{[m+n]\cup[\ul{N}]}^\natural,d,j)\ge
\sharp_{({\bf b},1^N)}(g_{[m+n]\cup[\ul{N}]}^\natural,d,j).
\end{align*}}
We observe that in (b)--(d) when $j=1$ we have equality in \eqref{eq:mnN} if and only if we have equality in \eqref{eq:mnN:1}.
Summarizing (a)--(d), we have shown the equivalence of
\eqref{eq:mnN} and \eqref{eq:mnN:1}, and hence, by
\eqref{eq:sharpBr}, the equivalence between $f\succeq_{({\bf
b},{0^\infty})}g$ and $f^\natural\succeq_{({\bf
b},{1}^\infty)}g^\natural$.

The compatibility of $\natural_{\bf b}$ with the two Bruhat
orderings implies that $\natural_{\bf b}$ extends to the respective
$B$-completions, $\natural_{\bf b}:{{\mathbb T}^{\bf
b}}\wotimes\wedge^\infty\mathbb V\longrightarrow {{\mathbb T}^{\bf
b}}\wotimes\wedge^\infty\mathbb W$. This completes the proof of (1)
and hence the proof of the theorem.
\end{proof}

\subsection{Tensor versus $q$-wedges}
 \label{sec:comp:para:tensor}

Let ${\bf b}$ be a fixed $0^m1^n$-sequence and let $k\in\N$. We
shall compare canonical and dual canonical bases of the space
$\mathbb T^{\bf b}\wotimes\wedge^k\mathbb V$ (respectively, $\mathbb
T^{\bf b}\wotimes\wedge^k\mathbb W$) with those of $\mathbb T^{\bf
b}\wotimes\mathbb V^{\otimes k}$ (respectively, $\mathbb T^{\bf
b}\wotimes\mathbb W^{\otimes k}$).  We shall only do this comparison
for $\mathbb T^{\bf b}\wotimes\wedge^k\mathbb V$ and $\mathbb T^{\bf
b}\wotimes\mathbb V^{\otimes k}$ in detail, the other case being
analogous.

For $k\in\N$, recall that we may regard $\mathbb T^{\bf
b}\wotimes\wedge^k\mathbb V\subseteq \mathbb T^{\bf b}\wotimes
\mathbb V^{\otimes k}$ with compatible bar involutions via the
identification $\mathcal V_h$ with $M^{({0}^k)}_{h\cdot w^{(k)}_0}
H_0$, for $h\in\Z^k_+$.
%
%

Let $f\in\Z^{m+n}\times\Z^k_+$. As before, we write the dual
canonical basis element $L_f^{({\bf b},0^k)}$ in ${\mathbb T}^{\bf
b}\wotimes\mathbb V^{\otimes k}$ and the corresponding dual
canonical basis element $L_f^{{\bf b},0}$ in ${\mathbb T}^{\bf
b}\wotimes\wedge^k\mathbb V$ as
\begin{align}
L_f^{({\bf b},0^k)} &=\sum_{g\in\Z^{m+n}\times \Z^k}\ell^{{({\bf
b},{0^k})}}_{gf}(q) M^{{({\bf b},{0^k})}}_{g},
 \label{aux:108a} \\
 \qquad
L_f^{{\bf b},0}&= \sum_{g\in\Z^{m+n}\times \Z_+^k}\ell^{{{\bf
b},{0}}}_{gf}(q) M^{{{\bf b},0}}_{g}.
 \label{aux:108b}
\end{align}
The following proposition states that the BKL-polynomials $\ell$'s
in $\mathbb T^{\bf b}\wotimes\mathbb\wedge^k \mathbb V$ coincide
with their counterparts in $\mathbb T^{\bf b}\wotimes\mathbb
V^{\otimes k}$.

\begin{prop}\label{thm:aux:11}
Let $f,g \in\Z^{m+n}\times\Z^k_+$. Then $\ell^{{{\bf
b},{0}}}_{gf}(q) =\ell^{{({\bf b},{0^k})}}_{gf}(q)$.
\end{prop}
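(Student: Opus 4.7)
The plan is to compare the two dual canonical bases via the embedding of weakly based modules
$\iota : {\mathbb T}^{\bf b}\wotimes\wedge^k\mathbb V \hookrightarrow {\mathbb T}^{\bf b}\wotimes\mathbb V^{\otimes k}$
given by $m\otimes \mathcal V_h \mapsto m\otimes M^{(0^k)}_{h\cdot w_0^{(k)}} H_0$, and to apply the uniqueness of dual canonical bases (Proposition~\ref{prop:existence:can1}) in the ambient space $\mathbb T^{\bf b}\wotimes \mathbb V^{\otimes k}\equiv \widehat{\mathbb T}^{({\bf b},0^k)}$.

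First I would compute $\iota(M^{{\bf b},0}_f)$ explicitly. Since $f_{[\ul k]}\in\Z^k_+$ is strictly decreasing, $f_{[\ul k]}\cdot w_0^{(k)}$ is strictly increasing, hence antidominant with respect to $\preceq_{(0^k)}$. Consequently, the Hecke action rules~\eqref{eq:heckeaction} yield $M^{(0^k)}_{f_{[\ul k]}\cdot w_0^{(k)}} H_\sigma = M^{(0^k)}_{f_{[\ul k]}\cdot w_0^{(k)}\cdot \sigma}$ for every $\sigma\in \mf S_k$, and hence, by the definition \eqref{formula:H0} of $H_0$,
\begin{equation*}
\iota(M^{{\bf b},0}_f) = M^{({\bf b},0^k)}_f + \sum_{\sigma\neq w_0^{(k)}} (-q)^{\ell(\sigma)-\ell(w_0^{(k)})}\, M^{({\bf b},0^k)}_{(f_{[m+n]},\, f_{[\ul k]}\cdot w_0^{(k)}\cdot \sigma)}.
\end{equation*}
All off-diagonal coefficients lie in $q^{-1}\Z[q^{-1}]$, and each lower index $g'$ appearing satisfies $g'_{[m+n]}=f_{[m+n]}$ with $g'_{[\ul k]}$ a (necessarily distinct-entries) permutation of $f_{[\ul k]}$, in particular $g'\prec_{({\bf b},0^k)} f$. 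The key combinatorial observation is that if such a $g'$ happens to lie in $\Z^{m+n}\times\Z^k_+$ (i.e.\ $g'_{[\ul k]}$ is strictly decreasing), then $g'_{[\ul k]}=f_{[\ul k]}$, forcing $g'=f$. Equivalently, writing $\iota(M^{{\bf b},0}_g)=\sum_{g'}a_{g'g}(q)M^{({\bf b},0^k)}_{g'}$, we have $a_{g'g}(q)=\delta_{g'g}$ whenever $g'\in\Z^{m+n}\times\Z^k_+$.

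Next I would apply $\iota$ to the expansion \eqref{aux:108b} of $L^{{\bf b},0}_f$. Since $\iota$ commutes with the bar involutions (which is built into the embedding of weakly based modules using $\overline{H_0}=H_0$), $\iota(L^{{\bf b},0}_f)$ is bar-invariant. The finite interval property (Lemma~\ref{lem:f:to:g:finite}) together with the finiteness of each individual expansion $\iota(M^{{\bf b},0}_g)$ guarantees that
\begin{equation*}
\iota(L^{{\bf b},0}_f) = M^{({\bf b},0^k)}_f + \sum_{g'\prec_{({\bf b},0^k)}f} c_{g'f}(q)\, M^{({\bf b},0^k)}_{g'},\qquad c_{g'f}(q) = \sum_{g\in\Z^{m+n}\times\Z^k_+} \ell^{{\bf b},0}_{gf}(q)\, a_{g'g}(q),
\end{equation*}
is a well-defined element of $\mathbb T^{\bf b}\wotimes\mathbb V^{\otimes k}$. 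Each product $\ell^{{\bf b},0}_{gf}(q)\,a_{g'g}(q)$ lies in $q^{-1}\Z[q^{-1}]$ whenever $g'\neq f$ (using $\ell^{{\bf b},0}_{ff}(q)=1$, $a_{gg}(q)=1$, and both $\ell^{{\bf b},0}_{gf}(q),\, a_{g'g}(q)\in q^{-1}\Z[q^{-1}]$ off the diagonal), so $c_{g'f}(q)\in q^{-1}\Z[q^{-1}]$ for all $g'\prec_{({\bf b},0^k)}f$.

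By the characterization of dual canonical bases in Proposition~\ref{prop:existence:can1} applied to $\widehat{\mathbb T}^{({\bf b},0^k)}$, this forces $\iota(L^{{\bf b},0}_f) = L^{({\bf b},0^k)}_f$, hence $c_{g'f}(q) = \ell^{({\bf b},0^k)}_{g'f}(q)$ for every $g'\in\Z^{m+n}\times\Z^k$. Restricting to $g'=g\in\Z^{m+n}\times\Z^k_+$ and using $a_{gg''}(q)=\delta_{gg''}$ for $g''\in\Z^{m+n}\times\Z^k_+$ from the key observation above, the defining sum collapses to $c_{gf}(q)=\ell^{{\bf b},0}_{gf}(q)$, yielding the desired equality $\ell^{{\bf b},0}_{gf}(q)=\ell^{({\bf b},0^k)}_{gf}(q)$. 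The only delicate points are the completion/convergence issue for $\iota$ on $B$-completions, which is controlled by Lemma~\ref{lem:f:to:g:finite}, and verifying the positivity-of-exponents claim $a_{g'g}(q)\in q^{-1}\Z[q^{-1}]$ from the explicit $H_0$-expansion; both are essentially bookkeeping once the embedding is set up correctly.
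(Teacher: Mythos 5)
Your proposal is correct and follows essentially the same route as the paper's proof: identify $\mathcal V_h$ with $M^{(0^k)}_{h\cdot w_0^{(k)}}H_0$, observe that the resulting expansion of $M^{{\bf b},0}_g$ is $M^{({\bf b},0^k)}_g$ plus a $q^{-1}\Z[q^{-1}]$-combination of standard monomials indexed outside $\Z^{m+n}\times\Z^k_+$, invoke uniqueness of the dual canonical basis to get $L^{{\bf b},0}_f=L^{({\bf b},0^k)}_f$, and compare coefficients on indices in $\Z^{m+n}\times\Z^k_+$. Your version merely spells out the Hecke-algebra bookkeeping and the convergence check that the paper leaves implicit.
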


\begin{proof}
The argument below is adapted from the one given in \cite[Page
205]{Br}.

Via the identification $\mathcal V_{g_{[\ul{k}]}}\equiv M^{({0}^k)}_{g_{[\ul{k}]}\cdot
w^{(k)}_0} H_0$, \eqref{formula:H0} and \eqref{eq:heckeaction}, we
write $M^{{{\bf b},0}}_{g}=M^{{({\bf b},{0^k})}}_{g}+(*)$, where
$(*)$ is a $q^{-1}\Z[q^{-1}]$-linear combination of $M^{({\bf
b},0^k)}_h$, with $h$ satisfying $h\prec_{({\bf b},0^k)}g$ and
$h\not\in \Z^{m+n}\times\Z^k_+$. When combining this with
\eqref{aux:108b}, we can write $L_f^{{\bf b},0}=M^{({\bf
b},0^k)}_f+(**)$, where $(**)$ is a $q^{-1}\Z[q^{-1}]$-linear
combination of $M^{({\bf b},0^k)}_g$, with $g$ satisfying
$g\prec_{({\bf b},0^k)}f$. Since $L_f^{{\bf b},0}$ is also
bar-invariant, this expression equals $L_f^{({\bf b},0^k)}$, by the
uniqueness of the dual canonical basis. The proposition now follows
by comparing the coefficients of $M^{({\bf b},0^k)}_g$, for
$g\in\Z^{m+n}\times\Z^k_+$, in this expression and in
\eqref{aux:108a}.
\end{proof}

Let $f\in\Z^{m+n}\times\Z^k_+$. Similarly as before we write the canonical basis element $T^{({\bf
b},{0^k})}_f$ in ${\mathbb T}^{\bf b}\wotimes\mathbb V^{\otimes k}$
and the canonical basis element $T^{{\bf b},{0}}_f$ in ${\mathbb
T}^{\bf b}\wotimes\wedge^k\mathbb V$ respectively as
\begin{align}
T^{({\bf b},{0^k})}_f
 & =\sum_{g\in\Z^{m+n}\times \Z^k}t^{{({\bf
b},{0^k})}}_{gf}(q) M^{{({\bf b},{0^k})}}_{g},
   \label{TTtta}\\ \quad
T^{{\bf b},{0}}_f
 & =\sum_{g\in\Z^{m+n}\times \Z_+^k}t^{{{\bf
b},{0}}}_{gf}(q) M^{{{\bf b},0}}_{g}.
  \label{TTttb}
\end{align}
Recall $w^{(k)}_0$ is the longest element in $\mf S_k$.

\begin{prop}\label{thm:aux:12}
For $f, g\in\Z^{m+n}\times\Z^k_+$, we have
$$
t^{{{\bf b},{0}}}_{gf}(q)
=\sum_{\tau\in\mf{S}_k}(-q)^{\ell(w^{(k)}_0\tau )}t^{({\bf
b},0^k)}_{g\cdot\tau,f\cdot w^{(k)}_0}(q).
$$
\end{prop}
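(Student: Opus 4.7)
The plan is to prove the identity
$$\iota(T^{{\bf b},0}_f) = T^{({\bf b},0^k)}_{f\cdot w_0^{(k)}}\cdot H_0$$
as elements of $\mathbb T^{\bf b}\wotimes\VV^{\otimes k}$, where $\iota$ denotes the embedding $\iota(\mathcal V_h) = M^{(0^k)}_{h\cdot w_0^{(k)}}\cdot H_0$ already used in the proof of \propref{thm:aux:11} and $H_0$ acts by right multiplication on the final $k$ tensor factors. The formula of the proposition will then follow by expanding both sides in the standard monomial basis and reading off the coefficients of $\iota(M^{{\bf b},0}_g) = M^{({\bf b},0^k)}_{g\cdot w_0^{(k)}}\cdot H_0$.

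The key computational input is the identity
$$M^{({\bf b},0^k)}_h\cdot H_0 = \begin{cases}(-q)^{\ell(w_0^{(k)}\tau)}\,\iota(M^{{\bf b},0}_g)&\text{if $h_{[\ul k]}$ has distinct entries, with $h = g\cdot\tau$, $g\in\Z^{m+n}\times\Z^k_+$, $\tau\in\mf S_k$,}\\ 0&\text{if $h_{[\ul k]}$ has repeated entries,}\end{cases}$$
which rests on the antisymmetrizer property $H_\sigma\cdot H_0 = (-q)^{\ell(\sigma)}H_0$, the Hecke action \eqref{eq:heckeaction}, and the observation that the orbit minimum $h^{\min} := g\cdot w_0^{(k)}$ satisfies $M^{({\bf b},0^k)}_h = M^{({\bf b},0^k)}_{h^{\min}}\cdot H_{w_0^{(k)}\tau}$ (by repeated use of the first case of \eqref{eq:heckeaction}, going up from the Bruhat-minimum). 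Substituting this into the standard expansion of $T^{({\bf b},0^k)}_{f\cdot w_0^{(k)}}$ and reindexing $h = g\cdot\tau$ produces
$$T^{({\bf b},0^k)}_{f\cdot w_0^{(k)}}\cdot H_0 = \iota(Y_f),\qquad Y_f := \sum_{g\in\Z^{m+n}\times\Z^k_+}\Bigl(\sum_{\tau\in\mf S_k}(-q)^{\ell(w_0^{(k)}\tau)}\,t^{({\bf b},0^k)}_{g\cdot\tau,\,f\cdot w_0^{(k)}}(q)\Bigr)M^{{\bf b},0}_g.$$

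It remains to show that $Y_f = T^{{\bf b},0}_f$, which I will deduce from the uniqueness clause of \propref{prop:CBdcb}. The coefficient of $M^{{\bf b},0}_f$ in $Y_f$ receives a unique nonzero contribution from $\tau = w_0^{(k)}$, since $f\cdot w_0^{(k)}$ is the unique Bruhat-minimum of the $\mf S_k$-orbit of $f$; this contribution equals $t^{({\bf b},0^k)}_{f\cdot w_0^{(k)},\,f\cdot w_0^{(k)}}(q) = 1$. For $g\ne f$, a brief orbit argument shows that $g\cdot\tau\ne f\cdot w_0^{(k)}$ for every $\tau$, so each $t^{({\bf b},0^k)}_{g\cdot\tau,\,f\cdot w_0^{(k)}}(q)$ lies in $q\Z[q]$ (or vanishes); multiplication by $(-q)^{\ell(w_0^{(k)}\tau)}\in\Z[q]$ preserves membership in $q\Z[q]$, and the sum inherits the property.

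The principal obstacle is establishing bar-invariance of $Y_f$, equivalently of $\iota(Y_f) = T^{({\bf b},0^k)}_{f\cdot w_0^{(k)}}\cdot H_0$, because right multiplication by $H_0$ on $\mathbb T^{\bf b}\wotimes\VV^{\otimes k}$ does not commute with the bar involution in general. I will exploit that $f\cdot w_0^{(k)}$ is Bruhat-minimum in its $\mf S_k$-orbit: this forces $T^{({\bf b},0^k)}_{f\cdot w_0^{(k)}}\cdot H_0$ to lie in the bar-stable image $\iota(\mathbb T^{\bf b}\wotimes\wedge^k\VV) = \mathbb T^{\bf b}\wotimes\mathrm{Im}(H_0)$, and, after tracking how each lower-order standard monomial in the expansion of $T^{({\bf b},0^k)}_{f\cdot w_0^{(k)}}$ absorbs the antisymmetrizer, yields a bar-invariant element within that subspace by the bar-equivariance of $\iota$. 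With bar-invariance of $Y_f$ in hand, \propref{prop:CBdcb} forces $Y_f = T^{{\bf b},0}_f$, and reading off the coefficient of $M^{{\bf b},0}_g$ gives the claimed identity.
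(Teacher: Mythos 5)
Your proposal is correct and follows essentially the same route as the paper: identify $\wedge^k\VV$ with $\mathrm{Im}\,H_0\subseteq\VV^{\otimes k}$, use $M^{({\bf b},0^k)}_{g\cdot\tau}H_0=(-q)^{\ell(w_0^{(k)}\tau)}\iota(M^{{\bf b},0}_g)$ (and $=0$ on repeated entries), and read off coefficients from $T^{({\bf b},0^k)}_{f\cdot w_0^{(k)}}H_0$. The one genuine difference is that the paper simply imports the identity $T^{{\bf b},0}_f=T^{({\bf b},0^k)}_{f\cdot w_0^{(k)}}H_0$ from (a variation of) Brundan's Lemma~3.8, whereas you reprove it from scratch via the uniqueness characterization of \propref{prop:CBdcb}; your verification of unitriangularity and of the $q\Z[q]$ condition via the orbit-minimum argument is exactly right and makes the step self-contained. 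The only soft spot is your treatment of what you call the ``principal obstacle'': bar-invariance of $T^{({\bf b},0^k)}_{f\cdot w_0^{(k)}}H_0$ is not actually an obstacle, since the embedding $\iota$ intertwines the two bar involutions (this is the ``compatible bar involutions'' statement at the start of \S\ref{sec:comp:para:tensor}, resting on $\ov{uH_\sigma}=\ov{u}\,\ov{H_\sigma}$ and \eqref{eq:H0inv}), so right multiplication by $H_0$ \emph{does} commute with the bar maps and bar-invariance is immediate from that of $T^{({\bf b},0^k)}_{f\cdot w_0^{(k)}}$; your vaguer ``tracking how each monomial absorbs the antisymmetrizer'' can be replaced by this one-line observation.
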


\begin{proof}
We write $w_0=w_0^{(k)}$ in this proof. We identify $\mathcal V_h$
in $\wedge^k\mathbb V$ with $M^{({0}^k)}_{h\cdot w_0} H_0\in\mathbb
V^{\otimes k}$, for $h\in\Z^k_+$, so that the ${\mathbb T}^{\bf
b}\wotimes\wedge^k\mathbb V$ may be identified with a subspace of
${\mathbb T}^{\bf b}\wotimes\mathbb V^{\otimes k}$. Then, as in
\cite[Lemma~3.8]{Br}, we have
\begin{align*}
T^{{\bf b},0}_f= T^{({\bf b},0^k)}_{f\cdot w_0} H_0.
\end{align*}
A straightforward variation of \cite[Lemma 3.4]{Br} using
\eqref{TTtta} gives us
\begin{align*}
T^{{\bf b},0}_f &= T^{({\bf b},0^k)}_{f\cdot w_0} H_0
 = \sum_{g} t^{({\bf b},0^k)}_{g,f\cdot w_0}M^{({\bf b},0^k)}_{g} H_0
 \\
&= \sum_{\tau\in \mf{S}_k}\sum_{g\in\Z^{m+n}\times\Z^k_+}
t^{({\bf b},0^k)}_{g\cdot\tau,f\cdot w_0} M^{({\bf b},0^k)}_{g\cdot\tau} H_0
 \\
&=\sum_{\tau\in \mf{S}_k}\sum_{g\in\Z^{m+n}\times\Z^k_+}
 t^{({\bf b},0^k)}_{g\cdot\tau,f\cdot w_0}
 (-q)^{\ell(\tau^{-1}w_0)} M^{{\bf b},0}_{g}
  \\
&=\sum_{g\in\Z^{m+n}\times\Z^k_+} \left(\sum_{\tau\in \mf{S}_k}
t^{({\bf b},0^k)}_{g\cdot\tau,f\cdot w_0}
(-q)^{\ell(w_0\tau)}\right) M^{{\bf b},0}_{g}.
\end{align*}
The proposition now follows by comparing with \eqref{TTttb}.
\end{proof}

\begin{rem}
Entirely analogous statements as Propositions \ref{thm:aux:11} and
\ref{thm:aux:12} hold when comparing the canonical and dual
canonical bases in ${\mathbb T}^{\bf b}\wotimes\mathbb W^{\otimes
k}$ with their counterparts in ${\mathbb T}^{\bf b}\wotimes
\wedge^k\mathbb W$. Such relations between Kazhdan-Lusztig
polynomials and their parabolic versions were due to \cite{Deo} (see
\cite[Proposition~3.4]{So}).
 \end{rem}

\section{Canonical bases for adjacent Fock spaces}
\label{sec:adjacent}

In this section, we develop a new approach to compare the canonical
as well as dual canonical bases in Fock spaces ${\mathbb T}^{\bf b}$
and ${\mathbb T}^{\bf b'}$, for adjacent $0^m1^n$-sequences $\bf b$
and $\bf b'$.

\subsection{Rank $2$ cases}
 \label{subsec:VW-WV}

Consider the Fock space $\mathbb T^{({0},{1})}=\mathbb
V\otimes\mathbb W$ and its $B$-completion $\mathbb V\wotimes\mathbb
W$ with respect to the Bruhat ordering $\succeq_{({0},{1})}$. It has
the standard monomial basis $M_f :=M^{({0},{1})}_f =v_{f(1)}\otimes
w_{f(2)}$, canonical basis $T_f :=T^{({0},{1})}_f$, and dual
canonical basis $L_f:= L^{({0},{1})}_f$, for $f\in\Z^{1+1}$. For
$f\in \Z^{1+1}$ with $f(1)=f(2)$ we introduce the following elements
in $\Z^{1+1}$:
\begin{align}\label{eq:updown}
\begin{split}
f^{\downarrow}(1)=f^{\downarrow}(2)=f(1)-1,\\
f^{\uparrow}(1)=f^{\uparrow}(2)=f(1)+1.
\end{split}
\end{align}
Define $f^{\downarrow
k}=(((f^{\downarrow})^{\downarrow})\cdots{}^\downarrow)$,
i.e.,~applying the operation $\downarrow$ $k$ times to $f$.
Similarly, we introduce the notation $f^{\uparrow k}$.

\begin{lem}\label{lem:formula:can:VW}  \cite[Example~2.19]{Br}
We have the following formulas for the canonical and dual canonical
bases in $\mathbb V\wotimes\mathbb W$:
\begin{align*}
&L_f=\begin{cases} M_f+\sum_{k=1}^\infty (-q)^{-k} M_{f^{\downarrow
k}},&\text{ if }f(1)=f(2),
  \\
M_f,&\text{ if }f(1)\not=f(2);
\end{cases}
  \\
&T_f=\begin{cases}
 M_f+q M_{f^\downarrow},&\text{ if }f(1)=f(2),
  \\
 M_f,&\text{ if }f(1)\not=f(2).
\end{cases}
\end{align*}
\end{lem}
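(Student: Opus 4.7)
The plan is to verify that the proposed formulas satisfy the characterizing properties of \propref{prop:existence:can1}: bar-invariance together with the triangular form and coefficient bounds ($q\Z[q]$ for $T_f$, $q^{-1}\Z[q^{-1}]$ for $L_f$). The latter two are manifest from inspection of the formulas, so the heart of the matter is bar-invariance, which reduces to an explicit description of $\bar M_f$ via the quasi-$\mathcal R$-matrix $\Theta=\sum_\mu \Theta_\mu\otimes\Theta_{-\mu}$.

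I would begin with a weight argument. Since $\Theta_\mu$ carries weight $\mu$, a non-zero contribution of $\Theta_\mu(v_{f(1)})\otimes\Theta_{-\mu}(w_{f(2)})$ forces $\mu=\varepsilon_c-\varepsilon_{f(1)}$ with $c\le f(1)$ (so that $\mu\in P^+$) and $\varepsilon_d=\varepsilon_{f(2)}+\varepsilon_c-\varepsilon_{f(1)}$ for some $d$. Linear independence of the $\varepsilon_r$'s then shows: when $f(1)\ne f(2)$, the only solution is $\mu=0$, so $\bar M_f=M_f$, whence $T_f=L_f=M_f$ by uniqueness; and when $f(1)=f(2)=a$, one obtains
$$
\bar M_f=\sum_{k\ge 0}c_k(q)\,M_{f^{\downarrow k}}
$$
with $c_0=1$ and $c_k(q)\in\Z[q,q^{-1}]$ by \propref{bar:inv:Mf}.

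I would next compute the $c_k$ explicitly. For $k=1$, only the simple-root factor in $\Theta$ corresponding to $\alpha_{a-1}$ contributes (higher $r$-powers of $E_{a-1}$ or $F_{a-1}$ annihilate $v_a$ and $w_a$ respectively), yielding $c_1=q-q^{-1}$. For general $k$, I would combine the product formula $\Theta=\cdots\Theta_{[2]}\Theta_{[1]}$ with the action of Lusztig's braid automorphisms on $E_{\alpha_t}^r,F_{\alpha_t}^r$ and an induction on $k$ to obtain $c_k=(-q^{-1})^{k-1}(q-q^{-1})$. A cleaner alternative, exploiting \lemref{lem:stab:bar:map}, is to pass to the finite-rank truncation $\mathbb T^{(0,1)}_{\le|N|}$ for $N\gg k+|a|$ and further restrict to the $U_q(\gl_{k+1})$-subalgebra supported on the indices $\{a-k,\ldots,a\}$, where Lusztig's classical finite-rank theory applies and the computation collapses to a manageable weight-zero calculation on the $(k+1)$-dimensional span of the $v_{a-j}\otimes w_{a-j}$.

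Once the $c_k$ are in hand, bar-invariance of $L_f$ and $T_f$ follows by telescoping. For $T_f$, the coefficient of $M_{f^\downarrow}$ in $\bar T_f$ is $c_1+q^{-1}=q$, while the coefficient of $M_{f^{\downarrow m}}$ for $m\ge 2$ is $c_m+q^{-1}c_{m-1}=0$ by the recursion $c_m=-q^{-1}c_{m-1}$. For $L_f$ a slightly longer expansion, using shift-invariance of the form of $\bar M_{f^{\downarrow j}}$ from \propref{prop:shift:can:p}, shows that the coefficient of $M_{f^{\downarrow m}}$ in $\bar L_f$ equals $c_m+(-q)^m+\sum_{j=1}^{m-1}(-q)^j c_{m-j}$, which simplifies to $(-q)^{-m}$ upon substituting $c_k=(-q^{-1})^{k-1}(q-q^{-1})$, matching $L_f$. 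The main obstacle is thus the explicit determination of $c_k$ for $k\ge 2$; the truncation to finite rank followed by restriction to the relevant $U_q(\gl_{k+1})$-subalgebra is the most transparent way to handle the bookkeeping.
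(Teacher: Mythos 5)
The paper offers no proof of this lemma; it is imported verbatim from Brundan's Example~2.19, so there is no internal argument to compare against, and what you propose is essentially a reconstruction of Brundan's computation. Your outline is correct as far as it goes: the weight argument for $f(1)\neq f(2)$ is sound (for $\mu=\varepsilon_c-\varepsilon_{f(1)}\in P^+$ one needs $c\le f(1)$, and $\varepsilon_d-\varepsilon_c=\varepsilon_{f(2)}-\varepsilon_{f(1)}$ with $f(1)\neq f(2)$ then forces $c=f(1)$, i.e.\ $\mu=0$); the value $c_1=q-q^{-1}$ is right; and both telescoping verifications check out against $c_k=(-q^{-1})^{k-1}(q-q^{-1})$ --- I confirmed the $L_f$ sum collapses to $(-q)^{-m}$. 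The one step you leave unexecuted, the determination of $c_k$ for $k\ge 2$, which you correctly flag as the main obstacle, would indeed succeed by either of your two methods, but both involve avoidable bookkeeping with root vectors or finite-rank reductions. A shorter route: the bar map commutes with the $U_q(\gl_\infty)$-action, and $\overline{v_{a-1}\otimes w_a}=v_{a-1}\otimes w_a$ by your weight argument, so the element
\begin{align*}
F_{a-1}(v_{a-1}\otimes w_a)=v_a\otimes w_a+q\,v_{a-1}\otimes w_{a-1}
\end{align*}
is bar-invariant. Writing $\overline{v_b\otimes w_b}=\sum_{k\ge0}c_k\,v_{b-k}\otimes w_{b-k}$ and equating coefficients in $\overline{v_a\otimes w_a}+q^{-1}\,\overline{v_{a-1}\otimes w_{a-1}}=v_a\otimes w_a+q\,v_{a-1}\otimes w_{a-1}$ gives $c_0=1$, $c_1=q-q^{-1}$, and $c_m=-q^{-1}c_{m-1}$ for $m\ge2$ in one stroke, with no braid operators or truncations; it simultaneously exhibits $T_f=F_{a-1}M_{(a-1,a)}$ as a bar-invariant element of the required triangular form. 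Your verification of $L_f$ then goes through verbatim.
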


Therefore we have the inversion formula
\begin{align}  \label{eq:inv}
M_f=\begin{cases}
 L_f+q^{-1}L_{f^{\downarrow}},&\text{ if
}f(1)=f(2),
  \\
L_f,&\text{ if }f(1)\not=f(2).
\end{cases}
\end{align}

\begin{definition} \label{def:UL}
Let $\mathbb L$ be the $\Q(q)$-subspace of $\mathbb V\wotimes\mathbb
W$ spanned by $\{L_f\vert f\in\Z^{1+1}\}$. Also, let $\mathbb U$ be
the $\Q(q)$-subspace of $\mathbb V{\otimes}\mathbb W$ spanned by
$\{T_f\vert f\in\Z^{1+1}\}$.
\end{definition}
It can be checked directly that applying the Chevalley generators
$E_a$ and $F_a$ to $T_f$ produces a finite linear combination of
$T_g$'s. This implies that $\mathbb U$ is a
$U_q(\gl_\infty)$-module, and hence $(\mathbb U,\{T_f\vert
f\in\Z^{1+1}\})$ is a weakly based module. Similarly, applying the
Chevalley generators $E_a$ and $F_a$  to $L_f$ produces a finite
linear combination of $M_g$'s. This implies by the inversion formula
\eqref{eq:inv} that $(\mathbb L,\{L_f\vert f\in\Z^{1+1}\})$ is also
a weakly based $U_q(\gl_\infty)$-module.

Next consider the $\Q(q)$-space $\mathbb T^{({1},{0})}{=}\mathbb
W{\otimes}\mathbb V$ and its $B$-completion $\mathbb
W\wotimes\mathbb V$ with respect to the Bruhat ordering
$\succeq_{({1},{0})}$. It has the standard monomial basis $M'_f:=
M^{({1},{0})}_f =w_{f(1)}\otimes v_{f(2)}$, canonical basis $T'_f
:=T^{(1,0)}_f$, and dual canonical basis $L'_f :=L^{(1,0)}_f$, for
$f\in\Z^{1+1}$. We have the following formulas in $\mathbb
W\wotimes\mathbb V$ similar to Lemma~\ref{lem:formula:can:VW}:
\begin{align*}
&L'_f=
\begin{cases}
M'_f+\sum_{k=1}^\infty (-q)^{-k}
M'_{(f^{\uparrow k})},&\text{ if }f(1)=f(2),
  \\
M'_f,&\text{ if }f(1)\not=f(2);
\end{cases}
  \\
&T'_f=
\begin{cases}
M'_f+q M'_{f^\uparrow},&\text{ if }f(1)=f(2),
  \\
M'_f,&\text{ if }f(1)\not=f(2).
\end{cases}
\end{align*}

\begin{definition} \label{def:UL'}
Let $\mathbb L'$ and $\mathbb U'$ be the $\Q(q)$-subspaces of
$\mathbb W\wotimes\mathbb V$ spanned by the sets $\{L'_f\vert
f\in\Z^{1+1}\}$ and $\{T'_f\vert f\in\Z^{1+1}\}$, respectively.
\end{definition}
It can be checked similarly as before that $\mathbb L'$ and $\mathbb
U'$ are also weakly based $U_q(\gl_\infty)$-modules. For $f\in
\Z^{1+1}$, define $f\cdot\tau\in \Z^{1+1}$ by $(f\cdot\tau)(1)=f(2)$
and $(f\cdot\tau)(2)=f(1)$. Define a $\Q(q)$-vector space isomorphism
$\mc R_L:\mathbb L\rightarrow \mathbb L'$ by
\begin{align*}
\mc{R}_L(L_f)=
\begin{cases}
L'_{f^{\uparrow}}, &\text{ if }f(1)=f(2),
 \\
L'_{f\cdot\tau},&\text{ if }f(1)\not=f(2).
\end{cases}
\end{align*}
Similarly, define a $\Q(q)$-vector space isomorphism $\mc
R_U:\mathbb U\rightarrow \mathbb U'$ by
\begin{align*}
\mc{R}_U(T_f)=
\begin{cases}
T'_{f^{\downarrow}}, &\text{ if }f(1)=f(2),
 \\
T'_{f\cdot\tau},&\text{ if }f(1)\not=f(2).
\end{cases}
\end{align*}

\begin{lem}  \label{lem:RTL}
The maps $\mc R_L:\mathbb L\rightarrow \mathbb L'$ and $\mc
R_U:\mathbb U\rightarrow \mathbb U'$ are isomorphisms of weakly
based $U_q(\gl_\infty)$-modules.
\end{lem}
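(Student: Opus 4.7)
The map $\mc R_L$ (and likewise $\mc R_U$) is a $\Q(q)$-linear bijection by construction, since it is defined via a bijection of the distinguished bases. Because $L_f$ and $\mc R_L(L_f) = L'_{\mc R_L(f)}$ (resp.~$T_f$ and $T'_{\mc R_U(f)}$) are bar-invariant and the bar involutions are $\Q(q)$-anti-linear, commutativity with bar is automatic from the $\Q(q)$-linearity of $\mc R_L, \mc R_U$. Compatibility with the designated bases is built into the definition. Thus the substantive content is that $\mc R_L$ and $\mc R_U$ intertwine the $U_q(\gl_\infty)$-action, which reduces (by generators) to checking compatibility with $K_a^{\pm 1}$, $E_a$, $F_a$ for each $a \in \Z$.

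The $K_a$-compatibility is a weight check. For $f \in \Z^{1+1}$, the weight of $L_f$ in $\VV \otimes \WW$ equals $\varepsilon_{f(1)} - \varepsilon_{f(2)}$; when $f(1) = f(2) = c$ this is $0$, and the weight of $L'_{f^\uparrow}$ in $\WW \otimes \VV$ is $-\varepsilon_{c+1} + \varepsilon_{c+1} = 0$. In the off-diagonal case, the weight of $L'_{f\cdot\tau}$ is $-\varepsilon_{f(2)} + \varepsilon_{f(1)}$, which matches. The same verification applies to $\mc R_U$.

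For $E_a$ and $F_a$, I carry out a direct case-by-case computation, indexed by whether $f(1) = f(2)$ and by the relative position of $a$ and $a+1$ with respect to $f(1), f(2)$. On the $\mathbb U$-side, Lemma~\ref{lem:formula:can:VW} shows that $T_f$ is either a single monomial or a sum of two monomials, and applying $\Delta(E_a) = 1 \otimes E_a + E_a \otimes K_{a+1,a}$ and $\Delta(F_a) = F_a \otimes 1 + K_{a,a+1} \otimes F_a$ yields at most a handful of standard monomials. Each result is repackaged as a finite combination of $T_g$'s via the formulas in Lemma~\ref{lem:formula:can:VW}, and the parallel computation of $E_a T'_{\mc R_U(f)}$ in $\WW \otimes \VV$ produces the expression that agrees term by term under $\mc R_U$.

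For $\mc R_L$ the same strategy applies, but with the complication that in the diagonal case $L_f = M_f + \sum_{k \ge 1}(-q)^{-k} M_{f^{\downarrow k}}$ is an infinite series. The paragraph preceding the lemma guarantees that $E_a L_f$ and $F_a L_f$ are \emph{finite} $\Q(q)$-linear combinations of standard monomials, and then by the inversion formula \eqref{eq:inv} they are finite combinations of $L_g$'s. The main obstacle will be the infinite-sum bookkeeping: one must identify the telescoping cancellation in the series obtained from $E_a\bigl(\sum_{k \ge 1}(-q)^{-k} M_{f^{\downarrow k}}\bigr)$ (and the analogous series from $E_a L'_{f^\uparrow}$ in $\WW \otimes \VV$), and verify that the two finite residues match under $\mc R_L$. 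Once the cases are separated according to whether $a \in \{f(1)-1, f(1)\}$ and matched with the corresponding cases on the $\WW \otimes \VV$ side (where $\downarrow$ is replaced by $\uparrow$, consistent with the reversed role of $\Delta(E_a)$ vs.~$\Delta(F_a)$ on $\WW$ vs.~$\VV$), the intertwining identities are forced by the rank-$2$ formulas of Lemma~\ref{lem:formula:can:VW} applied on both sides. The same analysis with $F_a$ replacing $E_a$ completes the check, yielding the claim.
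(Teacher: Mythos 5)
Your proposal is correct and follows essentially the same route as the paper: the paper's proof is likewise a direct case-by-case verification of the intertwining property on the Chevalley generators using the explicit rank-$2$ formulas of Lemma~\ref{lem:formula:can:VW} (it writes out only two illustrative computations, e.g.\ $\mc R_L(F_aL_{a,a+1})=F_a\mc R_L(L_{a,a+1})$, and leaves the rest as "fairly easy"). Your additional remarks on the $K_a$-weight check, the automatic bar-compatibility, and the finiteness of $E_aL_f$, $F_aL_f$ via the inversion formula \eqref{eq:inv} are all consistent with what the paper establishes in the paragraphs preceding the lemma.
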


\begin{proof}
This can be checked by considering the cases one by one. The
calculations are fairly easy, and as an illustration, we shall do
this only for two non-trivial cases for $\mc R_L$ below. Let us
identify $f\in\Z^{1+1}$ with the tuple $(f(1),f(2))$.

In the first case, for any $a\in\Z$ we have
\begin{align*}
\mc R_L\left(E_a L_{a,a}\right)=\mc R_L\left(v_a\otimes
w_{a+1}\right)=w_{a+1}\otimes v_a = E_a L'_{a+1,a+1} = E_a \mc
R_L\left(L_{a,a}\right).
\end{align*}

In the second case, for any $a\in\Z$,
\begin{align*}
\mc R_L\left(F_a L_{a,a+1}\right)
&=\mc R_L\left(\Delta(F_a) (v_a\otimes w_{a+1})\right)
 = \mc R_L\left(v_{a+1}\otimes w_{a+1} + q v_a\otimes w_a\right)
 \\
&= \mc R_L\left(L_{a+1,a+1}+q^{-1} L_{a,a}+ qL_{a,a}+L_{a-1,a-1}\right)
 \\
&= L'_{a+2,a+2}+q^{-1} L'_{a+1,a+1}+ qL'_{a+1,a+1}+L'_{a,a}.
\end{align*}
On the other hand, we compute
\begin{align*}
F_a \mc R_L\left(L_{a,a+1}\right)
&=F_a \mc R_L\left(v_a\otimes w_{a+1}\right)
 = \Delta(F_a) (w_{a+1}\otimes v_a)
= w_a\otimes v_a + q w_{a+1}\otimes v_{a+1}
 \\
&= L'_{a,a}+q^{-1} L'_{a+1,a+1}+q L'_{a+1,a+1}+ L'_{a+2,a+2}.
\end{align*}
So $\mc R_L\left(F_a L_{a,a+1}\right)= F_a \mc
R_L\left(L_{a,a+1}\right)$.
\end{proof}

\begin{rem}\label{rem:VWWV}
The definitions of the maps $\mc R_L$ and $\mc R_U$ are motivated by
and compatible in a suitable sense with the map from $\mathbb
V\otimes\mathbb W$ to $\mathbb W\wotimes\mathbb V$ given by the $\mc
R$-matrix. Explicitly, the map on the standard monomial basis is
given as follows:
$$
M_f \mapsto \left \{
 \begin{array}{ll}
  q^{-1} M_f' - (q-q^{-1}) \sum_{k \geq 1}
 (-q)^{-k} M'_{f^{\uparrow k}},  & \text{ if } f(1) =f(2),
 \\
  M_f', & \text{ if } f(1) \neq f(2).
 \end{array}
 \right.
$$
However, such an $\mc R$-map does not extend to the $B$-completion
$\mathbb V\wotimes\mathbb W$. Indeed one sees that the dual canonical bases of $\mathbb V\widehat{\otimes} \mathbb W$ and $\mathbb W\widehat{\otimes} \mathbb V$ are not compatible under such a map.  This is why we have to work with $\mc
R_L$ and $\mc R_U$ instead, and introduce suitable completions using
$\mathbb L$ and $\mathbb U$ in the next subsections.
\end{rem}

\subsection{Canonical basis revisited}
\label{sec:CB2}

In this subsection, we shall give another description of canonical
and dual canonical bases in the Fock spaces.

Fix a $0^m1^n$-sequence of the form ${\bf b}=({\bf b}^1,{0},{1},{\bf
b}^2)$, where ${\bf b}^1$ and ${\bf b}^2$ are ${0}^{m_1}{1}^{n_1}$-
and ${0}^{m_2}{1}^{n_2}$-sequences satisfying $m=m_1+m_2+1$ and
$n=n_1+n_2+1$. Set
\begin{equation}  \label{kappa}
\ka=m_1+n_1+1.
\end{equation}
Recalling the $U_q(\gl_\infty)$-module $\mathbb L$ from
Definition~\ref{def:UL}, we form a $\Q(q)$-vector space
\begin{align}\label{def:T:N}
\TL:=\mathbb T^{{\bf b}^1}\otimes \mathbb L\otimes \mathbb T^{{\bf
b}^2},
\end{align}
which admits the following basis
\begin{align*}
N_f :=\texttt{v}^{b_1}_{f(1)}\otimes\cdots\otimes
\texttt{v}^{b_{\ka-1}}_{f(\ka-1)}\otimes L_{f_{\ka,\ka+1}}\otimes
\texttt{v}^{b_{\ka+2}}_{f(\ka+2)}\otimes\cdots\otimes
\texttt{v}^{b_{m+n}}_{f(m+n)},\quad \text{ for } f\in\Z^{m+n}.
\end{align*}
Extending the definition \eqref{eq:updown} for $m=n=1$, to $f\in
\Z^{m+n}$ with $f(\ka)=f(\ka+1)$, we define $f^{\downarrow},
f^\uparrow\in\Z^{m+n}$ such that
\begin{align}  \label{eq:updown2}
\begin{split}
f^{\uparrow}(i)=f^{\downarrow}(i)=f(i), & \; \;\text{ for }
i\not=\ka,\ka+1,
  \\
f^{\uparrow}(\ka)=f^{\uparrow}(\ka+1)=f(\ka)+1, &\;\;\text{ and }
 f^{\downarrow}(\ka)=f^{\downarrow}(\ka+1)=f(\ka)-1.
\end{split}
\end{align}
It follows by definition and \eqref{eq:inv} that
\begin{align} \label{eq:MNf}
M_f^{\bf b} =
 \begin{cases}
 N_f +q^{-1}N_{f^\downarrow}, & \text{ if }f(\ka)=f(\ka+1),
  \\
 N_f, & \text{ if }f(\ka)\neq f(\ka+1).
 \end{cases}
\end{align}
As in Lemma~ \ref{lem:formula:can:VW}, we can write $N_f\in
\wt{\mathbb T}^{\bf b}$ as
\begin{align}\label{aux:N:in:M}
N_f=
 \begin{cases}
 M^{\bf b}_f+\sum_{k=1}^\infty (-q)^{-k} M^{\bf b}_{f^{\downarrow
k}},  & \text{ if }f(\ka)=f(\ka+1),
  \\
 M^{\bf b}_f, & \text{ if }f(\ka)\neq f(\ka+1).
 \end{cases}
\end{align}
Since $\mathbb L$ is a $U_q(\gl_\infty)$-module, we can write
$\ov{N}_f=\sum_{g} a_{gf} N_g$ as an element in the $A$-completion
$\TLwt$, which is defined similarly as in Definition \ref{def:completion:T}. Recalling that $\ov{M^{\bf b}_g}$ is of the form
$\ov{M^{\bf b}_g}=\sum_{h\preceq_{\bf b}g}r_{hg}M^{\bf b}_h$, for
$r_{hg}\in \Z[q,q^{-1}]$, we conclude from \eqref{eq:MNf} and \eqref{aux:N:in:M} that
\begin{align}\label{aux:N:Bruhat}
\ov{N}_f= N_f +\sum_{g\prec_{\bf b}f} a_{gf} N_g,\quad \text{ for }
a_{gf}\in\Z[q,q^{-1}].
\end{align}
We form the $B$-completion $\TLhat$ of $\TL$, which is the
$\Q(q)$-vector space spanned by elements of the form
$N_f+\sum_{g\prec_{\bf b} f}d_{gf}N_g$, following Definition
\ref{def:completion:T}. Note that $\ov{N}_f\in \TLhat$ by
\eqref{aux:N:Bruhat}.

The following lemma follows directly from \eqref{aux:N:Bruhat} and
Lemma~ \ref{lem:lusztig}.

\begin{lem}  \label{lem:DCBTLhat}
There exists a unique bar-invariant topological basis $\{\texttt{L}_f\vert f\in\Z^{m+n}\}$ in $\TLhat$
such that
\begin{align}\label{dual:canonical:in:N}
\texttt{L}_f=\sum_{g\preceq_{\bf b}f}\check{\ell}_{gf}(q) N_g,
\end{align}
where $\check{\ell}_{ff}=1$ and $\check{\ell}_{gf}(q) \in
q^{-1}\Z[q^{-1}]$, for $g\prec_{\bf b}f$.
\end{lem}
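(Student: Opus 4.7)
The plan is to apply Lusztig's Lemma (our Lemma \ref{lem:lusztig}) directly to the partially ordered set $(\Z^{m+n}, \preceq_{\bf b})$, using the basis $\{N_f \mid f \in \Z^{m+n}\}$ of $\TLhat$ together with the bar-involution coefficients $a_{gf}$ from \eqref{aux:N:Bruhat}. The finite interval property of $(\Z^{m+n}, \preceq_{\bf b})$ is already furnished by Lemma \ref{lem:f:to:g:finite}. Property (i) of Lemma \ref{lem:lusztig}, namely $a_{ff}=1$, is immediate from \eqref{aux:N:Bruhat}, and the fact that $a_{gf} \in \Z[q,q^{-1}]$ is also stated there.

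The main technical point is to verify Property (ii), i.e., $\sum_{h : g\preceq_{\bf b} h\preceq_{\bf b} f} a_{gh}\,\overline{a_{hf}}=\delta_{gf}$, which is equivalent to the statement that the bar map on $\TLhat$ is an involution. Since $\mathbb L$ is a weakly based $U_q(\gl_\infty)$-module (as noted following Definition \ref{def:UL}), the quasi-$\mc R$-matrix machinery of Lusztig applies to the threefold tensor product $\TL = \mathbb T^{{\bf b}^1}\otimes \mathbb L\otimes \mathbb T^{{\bf b}^2}$ exactly as it did to $\mathbb T^{\bf b}$. Thus I would repeat, mutatis mutandis, the arguments of Proposition \ref{bar:indep:order} and Lemma \ref{lem:barTb}: first construct a bar map $\bar{\ }: \TL \to \widetilde{\mathbb T}_{\mathbb L}$ via finite-rank truncations and take a limit; then use the finite interval property of Lemma \ref{lem:f:to:g:finite} (together with the fact that each $\ov{N}_f$ is a Bruhat-lower combination by \eqref{aux:N:Bruhat}) to conclude that $\bar{\ }$ lands in $\TLhat$ and extends to an involution there.

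Once properties (i) and (ii) are in place, Lemma \ref{lem:lusztig}(vi)--(viii) produces a unique family $\{\check{\ell}_{gf}(q)\}$ with $\check{\ell}_{ff}=1$, $\check{\ell}_{gf} \in q^{-1}\Z[q^{-1}]$ for $g\prec_{\bf b} f$, and $\check{\ell}_{gf}=\sum_{g\preceq_{\bf b} h\preceq_{\bf b} f} a_{gh}\,\overline{\check{\ell}_{hf}}$. Defining $\texttt{L}_f$ by \eqref{dual:canonical:in:N}, the displayed recursion is exactly the bar-invariance equation $\overline{\texttt{L}_f}=\texttt{L}_f$ once expanded in the basis $\{N_g\}$, and uniqueness of the family $\{\texttt{L}_f\}$ with the stated triangularity and bar-invariance properties transfers from the uniqueness statement in Lemma \ref{lem:lusztig}.

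I do not expect any serious obstacle: the only genuinely new ingredient beyond the material already developed in Sections \ref{sec:Fock}--\ref{sec:BKLpol} is the weakly based structure on $\mathbb L$, and this has been recorded after Definition \ref{def:UL}. The extension of the bar map to $\TLhat$ and its involutivity, while they do require a short verification, are direct adaptations of the corresponding statements for $\widehat{\mathbb T}^{\bf b}$, so no new ideas are needed.
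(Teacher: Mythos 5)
Your proposal is correct and follows essentially the same route as the paper, whose proof consists precisely of invoking \eqref{aux:N:Bruhat} together with Lemma~\ref{lem:lusztig}. The only place you do more work than necessary is in verifying condition (ii) of Lemma~\ref{lem:lusztig}: rather than rebuilding the bar map on $\TL$ from finite-rank truncations, one can note that $\TLhat\subseteq\widehat{\mathbb T}^{\bf b}$ (expand each $N_g$ via \eqref{aux:N:in:M} and use the finite interval property of Lemma~\ref{lem:f:to:g:finite}), so the bar map on $\TLhat$ is the restriction of the involution already established in Lemma~\ref{lem:barTb}.
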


We call $\{\texttt{L}_f\vert f\in\Z^{m+n}\}$ the {\em dual
canonical basis} of $\TLhat$.

Recalling now  $\mathbb U$ from Definition~\ref{def:UL}, we form the
$\Q(q)$-vector space
\begin{align}\label{def:T:U}
\TU:=\mathbb T^{{\bf b}^1}\otimes \mathbb U\otimes \mathbb T^{{\bf
b}^2},
\end{align}
which has the following basis
\begin{align*}
U_f:=\texttt{v}^{b_1}_{f(1)}\otimes\cdots\otimes
\texttt{v}^{b_{\ka-1}}_{f(\ka-1)}\otimes T_{f_{\ka,\ka+1}}\otimes
\texttt{v}^{b_{\ka+2}}_{f(\ka+2)}\otimes\cdots\otimes
\texttt{v}^{b_{m+n}}_{f(m+n)},\quad \text{ for }f\in\Z^{m+n}.
\end{align*}
It follows by definition and Lemma~\ref{lem:formula:can:VW} that
\begin{align} \label{eq:UMf}
U_f =
 \begin{cases}
 M_f^{\bf b} +q M_{f^\downarrow}^{\bf b}, & \text{ if }f(\ka)=f(\ka+1),
  \\
 M_f^{\bf b}, & \text{ if }f(\ka)\neq f(\ka+1).
 \end{cases}
\end{align}

By a similar argument as for \eqref{aux:N:Bruhat} we have
\begin{align}\label{aux:U:Bruhat}
\ov{U}_f=\sum_{g\preceq_{\bf b}f} d_{gf} U_g,\quad
d_{gf}\in\Z[q,q^{-1}].
\end{align}
Let $\TUhat$ denote the $B$-completion of $\TU$, following
Definition \ref{def:completion:T}. Then $\ov{U}_f \in \TUhat$ by
\eqref{aux:U:Bruhat}.

The following lemma is immediate from \eqref{aux:U:Bruhat} and
Lemma~ \ref{lem:lusztig}.

\begin{lem} \label{lem:CBTUhat}
There exists a unique bar-invariant topological basis $\{\texttt{T}_f\vert
f\in\Z^{m+n}\}$ in $\TUhat$ such that
\begin{align}\label{dual:canonical:in:U}
\texttt{T}_f=\sum_{g\preceq_{\bf b}f}\check{t}_{gf}(q) U_g,
\end{align}
where $\check{t}_{ff}=1$ and $\check{t}_{gf}(q) \in q\Z[q]$, for
$g\prec_{\bf b}f$.
\end{lem}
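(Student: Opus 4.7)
The plan is to apply Lemma~\ref{lem:lusztig} (Lusztig's abstract construction) to the poset $(\Z^{m+n}, \preceq_{\bf b})$, which has the finite interval property by Lemma~\ref{lem:f:to:g:finite}, with input data the matrix of bar-coefficients $(d_{gf})$ from \eqref{aux:U:Bruhat}. The argument is in direct parallel with Proposition~\ref{prop:existence:can1} and with the companion dual statement Lemma~\ref{lem:DCBTLhat}; only the sign convention in the triangular condition (coefficients in $q\Z[q]$ rather than $q^{-1}\Z[q^{-1}]$) is different, and this is handled by invoking parts (iii)--(v) of Lemma~\ref{lem:lusztig} instead of (vi)--(viii).

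Before invoking Lemma~\ref{lem:lusztig}, two numerical hypotheses must be checked for $(d_{gf})$: first $d_{ff}=1$ for every $f\in\Z^{m+n}$, and second the involutive identity $\sum_{g\preceq_{\bf b} h\preceq_{\bf b} f} d_{gh}\,\ov{d}_{hf} = \delta_{gf}$. To obtain $d_{ff}=1$, I plan to invert \eqref{eq:UMf} to write each $M^{\bf b}_g$ as a (possibly infinite) combination of $U_h$'s with $h\preceq_{\bf b} g$ and leading coefficient $1$ at $h=g$ --- explicitly $M^{\bf b}_g = \sum_{k\ge 0}(-q)^k U_{g^{\downarrow k}}$ when $g(\ka)=g(\ka+1)$, and $M^{\bf b}_g = U_g$ otherwise --- substitute this into $\ov{U}_f$ using the known action of the bar map on $M^{\bf b}_f$'s from Proposition~\ref{bar:inv:Mf}, and observe that the coefficient of $U_f$ survives as $1$. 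For the involutive identity, I first extend $\ \bar{}\,:\TU\to\TUwt$ to an involution on $\TUhat$ by mimicking Lemma~\ref{lem:barTb}: given $y = U_f + \sum_{g\prec_{\bf b} f} r_g\, U_g \in \TUhat$, only finitely many $g$ can contribute to the coefficient of a prescribed $U_h$ in $\ov y$ by the finite interval property, so $\ov y\in\TUhat$; the relation $\ov{\ov{U}_f}=U_f$ on $\TUhat$ then encodes the required identity on the $d_{gf}$'s.

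With these two hypotheses in hand, parts (iii)--(v) of Lemma~\ref{lem:lusztig} produce a unique family $\{\check{t}_{gf}(q)\}\subset\Z[q]$ satisfying $\check{t}_{ff}=1$, $\check{t}_{gf}\in q\Z[q]$ for $g\prec_{\bf b} f$, and the recursion $\check{t}_{gf}=\sum_{g\preceq_{\bf b} h\preceq_{\bf b} f} d_{gh}\,\ov{\check{t}}_{hf}$. Setting $\texttt{T}_f := \sum_{g\preceq_{\bf b} f} \check{t}_{gf}(q)\, U_g\in\TUhat$, this recursion is equivalent to the bar-invariance $\ov{\texttt{T}}_f=\texttt{T}_f$, and the uniqueness of $\{\check{t}_{gf}\}$ in Lemma~\ref{lem:lusztig} translates directly to the uniqueness of $\{\texttt{T}_f\}$ subject to bar-invariance and the triangularity requirement in \eqref{dual:canonical:in:U}.

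The main obstacle is the careful verification that the bar map extends to an involution on the $B$-completion $\TUhat$. This is a technical point but is a direct adaptation of the argument already carried out for $\widehat{\mathbb T}^{\bf b}$ in Lemma~\ref{lem:barTb}, and runs in parallel with the dual setup in Lemma~\ref{lem:DCBTLhat}; only cosmetic changes are required, using \eqref{eq:UMf} in place of \eqref{eq:MNf}. Once this extension is established, the remainder of the proof is formal bookkeeping via Lemma~\ref{lem:lusztig}.
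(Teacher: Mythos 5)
Your proposal is correct and follows essentially the same route as the paper, which derives the lemma directly from the triangular bar-expansion $\ov{U}_f=\sum_{g\preceq_{\bf b}f}d_{gf}U_g$ of \eqref{aux:U:Bruhat} together with Lusztig's Lemma~\ref{lem:lusztig}(iii)--(v); you have merely spelled out the two hypotheses (unitriangularity via the inversion $M^{\bf b}_g=\sum_{k\ge 0}(-q)^kU_{g^{\downarrow k}}$, and the involutive identity via the bar-involution on the $B$-completion) that the paper leaves implicit. Note that the involution property could also be inherited directly from Lemma~\ref{lem:barTb}, since $\TUhat\subseteq\widehat{\mathbb T}^{\bf b}$ and \eqref{aux:U:Bruhat} shows $\TUhat$ is bar-stable, so no separate re-run of that argument is strictly needed.
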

We call $\{\texttt{T}_f\vert f\in\Z^{m+n}\}$ the {\em
canonical basis} of $\TUhat$.

Recall that $\{L^{\bf b}_f | f\in\Z^{m+n}\}$ denotes the dual
canonical basis and $\{T^{\bf b}_f | f\in\Z^{m+n}\}$ denotes the
canonical basis in $\widehat{\mathbb T}^{\bf b}$. Note that
$\TLhat\subseteq\widehat{\mathbb T}^{\bf b}$ by definition and
\eqref{aux:N:in:M}. Similarly, we have
$\TUhat\subseteq\widehat{\mathbb T}^{\bf b}$. Hence we may naturally
regard $\texttt{L}_f\in\TLhat$ as an element in $\widehat{\mathbb
T}^{\bf b}$ and $\texttt{T}_f \in\TUhat$ as an element in
$\widehat{\mathbb T}^{\bf b}$.

\begin{prop}\label{LTsame}
We have the following identification of canonical and dual canonical
bases:
$\texttt{L}_f=L^{\bf b}_f\in \TLhat\subseteq\widehat{\mathbb T}^{\bf
b}$ and $ \texttt{T}_f=T^{\bf b}_f\in
\TUhat\subseteq\widehat{\mathbb T}^{\bf b}$, for $f\in\Z^{m+n}$.
\end{prop}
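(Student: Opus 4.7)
The plan is to apply the uniqueness part of Proposition~\ref{prop:existence:can1}: it suffices to verify that $\texttt{L}_f$ (respectively, $\texttt{T}_f$), viewed as elements of $\widehat{\mathbb T}^{\bf b}$, are bar-invariant and expand as $M^{\bf b}_f + \sum_{g\prec_{\bf b}f} c_{gf}(q) M^{\bf b}_g$ with $c_{gf}(q)\in q^{-1}\Z[q^{-1}]$ (respectively, $q\Z[q]$).

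First I would verify that the bar involution on $\TLhat$ is simply the restriction of the bar involution on $\widehat{\mathbb T}^{\bf b}$ (and similarly for $\TUhat$). Since $\mathbb L\subseteq \mathbb V\wotimes\mathbb W$ is a $U_q(\gl_\infty)$-submodule spanned by the bar-invariant basis $\{L_h\}$, the bar on $\TL$ built from the quasi-$\mc R$-matrix with $\mathbb L$ as a weakly based tensor factor agrees with the bar on $\mathbb T^{{\bf b}^1}\otimes(\mathbb V\otimes\mathbb W)\otimes\mathbb T^{{\bf b}^2}$ restricted to $\TL$; this uses \propref{bar:indep:order} (associativity of the bar) and the fact that bar-involution respects the embedding $\mathbb L\hookrightarrow\mathbb V\wotimes\mathbb W$. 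The extension to $B$-completions then follows as in \lemref{lem:barTb}, since the Bruhat triangularity of $\ov{N_f}$ and $\ov{U_f}$ is already recorded in \eqref{aux:N:Bruhat} and \eqref{aux:U:Bruhat}.

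Next, I would compute the expansion of $\texttt{L}_f$ in the standard basis. Substituting \eqref{aux:N:in:M} into \eqref{dual:canonical:in:N} yields
\begin{align*}
\texttt{L}_f=\sum_{g\preceq_{\bf b}f}\check{\ell}_{gf}(q)M^{\bf b}_g
 +\sum_{\substack{g\preceq_{\bf b}f\\ g(\ka)=g(\ka+1)}}\sum_{k\ge 1}(-q)^{-k}\check{\ell}_{gf}(q)M^{\bf b}_{g^{\downarrow k}}.
\end{align*}
For the coefficient of $M^{\bf b}_f$: only $g=f$ contributes (any $g^{\downarrow k}$ satisfies $g^{\downarrow k}\prec_{\bf b}g\preceq_{\bf b}f$, ruling out the double sum), giving $\check{\ell}_{ff}=1$. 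For $h\prec_{\bf b}f$, the coefficient is $\check{\ell}_{hf}(q)+\sum_{k\ge 1}(-q)^{-k}\check{\ell}_{h^{\uparrow k},f}(q)$, where only those $k$ with $h(\ka)=h(\ka+1)$ and $h^{\uparrow k}\preceq_{\bf b}f$ contribute; this is a finite sum by \lemref{lem:f:to:g:finite} and lies in $q^{-1}\Z[q^{-1}]$ since every $\check{\ell}_{\bullet,f}$ with lower index is in $q^{-1}\Z[q^{-1}]$ (with $\check{\ell}_{ff}=1$ multiplied by $(-q)^{-k}$ still in $q^{-1}\Z[q^{-1}]$). Hence $\texttt{L}_f$ satisfies the characterizing properties of $L^{\bf b}_f$, so $\texttt{L}_f=L^{\bf b}_f$ by uniqueness.

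The argument for $\texttt{T}_f$ is entirely parallel but simpler, since \eqref{eq:UMf} only has two terms. Substituting into \eqref{dual:canonical:in:U} gives
\begin{align*}
\texttt{T}_f=\sum_{g\preceq_{\bf b}f}\check{t}_{gf}(q)M^{\bf b}_g
 +\sum_{\substack{g\preceq_{\bf b}f\\ g(\ka)=g(\ka+1)}} q\,\check{t}_{gf}(q)M^{\bf b}_{g^{\downarrow}}.
\end{align*}
The coefficient of $M^{\bf b}_f$ is again $1$, and for $h\prec_{\bf b}f$ the coefficient is $\check{t}_{hf}(q)+q\,\check{t}_{h^{\uparrow},f}(q)$ when $h(\ka)=h(\ka+1)$ and just $\check{t}_{hf}(q)$ otherwise, which lies in $q\Z[q]$. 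Uniqueness then forces $\texttt{T}_f=T^{\bf b}_f$. The only genuinely delicate point is the compatibility of bar involutions in the first step; the rest is bookkeeping with the triangular formulas \eqref{aux:N:in:M} and \eqref{eq:UMf}.
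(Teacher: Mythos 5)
Your proposal is correct and follows essentially the same route as the paper: substitute the triangular expansion \eqref{aux:N:in:M} (resp.\ \eqref{eq:UMf}) into \eqref{dual:canonical:in:N} (resp.\ \eqref{dual:canonical:in:U}) to exhibit the bar-invariant element $\texttt{L}_f$ (resp.\ $\texttt{T}_f$) as $M^{\bf b}_f$ plus a $q^{-1}\Z[q^{-1}]$- (resp.\ $q\Z[q]$-) combination of lower standard monomials, then invoke uniqueness of the (dual) canonical basis. You merely spell out the coefficient bookkeeping and the compatibility of the bar involutions, which the paper leaves implicit.
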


\begin{proof}
The proofs of the two identities are analogous, and we will prove
the first one.

 Recall that we have
$ L^{\bf b}_f=M^{\bf b}_f +\sum_{g\prec_{\bf b}f}\ell^{\bf
b}_{gf}(q) M^{\bf b}_g,  \text{where } \ell^{\bf b}_{gf}(q) \in
q^{-1}\Z[q^{-1}]. $ Using \eqref{aux:N:in:M} and
\eqref{dual:canonical:in:N} we obtain an expression for the
bar-invariant element $\texttt{L}_f$ that equals $M_f$ plus a
$q^{-1}\Z[q^{-1}]$-linear combination of $M_g$ with $g\prec_{\bf
b}f$. By the uniqueness of the dual canonical basis in
$\widehat{\mathbb T}^{\bf b}$ (see Lemma~ \ref{lem:lusztig}), this
must be equal to $L^{\bf b}_f$. Hence, $\texttt{L}_f=L^{\bf b}_f$.
\end{proof}
We call  $\{N_f | f\in\Z^{m+n}\}$ and $\{U_f | f\in\Z^{m+n}\}$ the
{\em parabolic monomial bases} for $\widehat{\mathbb T}^{\bf b}$.

\begin{rem}
It is natural to conjecture that the polynomials
$\check{\ell}_{gf}(q)$ defined in Lemma~\ref{lem:DCBTLhat} and the
polynomials $\check{t}_{gf}(q)$ in Lemma~\ref{lem:CBTUhat} satisfy
the positivity property: $\check{t}_{gf}(q)\in\N[q]$ and
$\check{\ell}_{gf}(-q^{-1})\in\N[q]$. In light of \eqref{aux:N:in:M}
and \eqref{eq:UMf}, the positivity of $\check{t}_{gf}(q)$ and
$\check{\ell}_{gf}(-q^{-1})$ implies
Conjecture~\ref{con:positive}.
\end{rem}

Two $0^m1^n$-sequences are said to be {\em adjacent} to each other
if they are identical except for a switch of a neighboring pair
$\{0,1\}$, that is, the $0^m1^n$-sequences ${\bf b}':=({\bf
b}^1,{1},{0},{\bf b}^2)$ and ${\bf b}=({\bf b}^1,{0},{1},{\bf b}^2)$
are adjacent. The constructions below for $\bf b'$ are analogous to
the above constructions for $\bf b$, so we will merely set up the
necessary notations for later use. Recall the spaces $\mathbb L'$
and $\mathbb U'$ from Definition~\ref{def:UL'}. We form the
$\Q(q)$-vector space
\begin{align}\label{def:T:N'}
\TL' :=\mathbb T^{{\bf b}^1}\otimes \mathbb L'\otimes \mathbb
T^{{\bf b}^2},
\end{align}
which admits a {\em parabolic monomial basis}
\begin{align*}
N'_f:=\texttt{v}^{b_1}_{f(1)}\otimes\cdots\otimes
\texttt{v}^{b_{\ka-1}}_{f(\ka-1)}\otimes L'_{f_{\ka,\ka+1}}\otimes
\texttt{v}^{b_{\ka+2}}_{f(\ka+2)}\otimes\cdots\otimes
\texttt{v}^{b_{m+n}}_{f(m+n)},\quad \text{ for } f\in\Z^{m+n}.
\end{align*}
Here we recall $\ka$ from \eqref{kappa}. We proceed as before to
define the $B$-completion $\TLhat'$ of $\TL'$, and then obtain
{\em dual canonical basis elements} in $\TLhat'$ denoted accordingly by
$\{\texttt{L}'_f\}$. Note that $\TLhat'\subseteq\widehat{\mathbb
T}^{{\bf b}'}$.

In addition, we form the $\Q(q)$-vector space
\begin{align}\label{def:T:U'}
\TU' :=\mathbb T^{{\bf b}^1}\otimes \mathbb U'\otimes \mathbb
T^{{\bf b}^2}.
\end{align}
which has a {\em parabolic monomial basis}
\begin{align*}
U_f' :=\texttt{v}^{b_1}_{f(1)}\otimes\cdots\otimes
\texttt{v}^{b_{\ka-1}}_{f(\ka-1)}\otimes T_{f_{\ka,\ka+1}}'\otimes
\texttt{v}^{b_{\ka+2}}_{f(\ka+2)}\otimes\cdots\otimes
\texttt{v}^{b_{m+n}}_{f(m+n)},\quad \text{ for } f\in\Z^{m+n}.
\end{align*}
We proceed as before to define the $B$-completion $\widehat{\mathbb
T}_U'$ of $\TU'$, and then obtain the {\em canonical basis} in
$\TUhat'$ denoted accordingly by $\{\texttt{T}'_f\}$. Note that
$\TUhat'\subseteq\widehat{\mathbb T}^{{\bf b}'}$.

Hence we may naturally regard $\texttt{L}_f'\in\TLhat'$ as an
element in $\widehat{\mathbb T}^{\bf b'}$ and $\texttt{T}_f'
\in\TUhat'$ as an element in $\widehat{\mathbb T}^{\bf b'}$. The
following analogue of Propositions~ \ref{LTsame} can be proved in
exactly the same way.

\begin{prop}\label{L'T'same}
We have the following identification of canonical and dual canonical
bases:
$\texttt{L}_f'=L^{\bf b'}_f \in  \TLhat'\subseteq\widehat{\mathbb
T}^{\bf b'}$ and $ \texttt{T}_f'=T^{\bf b'}_f\in
\TUhat'\subseteq\widehat{\mathbb T}^{\bf b'}$, for $f\in\Z^{m+n}$.
%
\end{prop}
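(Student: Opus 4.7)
The plan is to mimic the proof of Proposition~\ref{LTsame}, with the roles of $\mathbb V\wotimes\mathbb W$ and $\mathbb W\wotimes\mathbb V$ swapped and the downward shifts $f^\downarrow$ replaced by upward shifts $f^\uparrow$. The explicit rank-two formulas for $L'_f$ and $T'_f$ in $\mathbb W\wotimes\mathbb V$ displayed just before Definition~\ref{def:UL'} supply everything that is needed.

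First I would record the $\bf b'$-analogues of \eqref{aux:N:in:M} and \eqref{eq:UMf}. Since $N'_f$ (resp.\ $U'_f$) differs from $M^{\bf b'}_f$ only in the $\ka$-th and $(\ka+1)$-st tensor slots, where $w_{f(\ka)}\otimes v_{f(\ka+1)}$ is replaced by $L'_{f_{\ka,\ka+1}}$ (resp.\ $T'_{f_{\ka,\ka+1}}$), the rank-two formulas translate immediately into
\begin{align*}
N'_f &= \begin{cases} M^{\bf b'}_f + \sum_{k\ge 1} (-q)^{-k}\, M^{\bf b'}_{f^{\uparrow k}}, & \text{if } f(\ka)=f(\ka+1), \\ M^{\bf b'}_f, & \text{otherwise}, \end{cases}\\
U'_f &= \begin{cases} M^{\bf b'}_f + q\, M^{\bf b'}_{f^{\uparrow}}, & \text{if } f(\ka)=f(\ka+1), \\ M^{\bf b'}_f, & \text{otherwise}, \end{cases}
\end{align*}
with $f^\uparrow, f^{\uparrow k}$ defined by \eqref{eq:updown2}. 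A short check applying $\downarrow_{\bf b'}$ to the pair $(b'_\ka,b'_{\ka+1})=(1,0)$ shows $f^{\uparrow k}\preceq_{\bf b'} f$, so these expansions are compatible with the Bruhat order on $\widehat{\mathbb T}^{\bf b'}$.

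Next I would invoke the $\bf b'$-analogues of Lemmas~\ref{lem:DCBTLhat} and~\ref{lem:CBTUhat}, which by identical arguments furnish bar-invariant topological bases $\{\texttt{L}'_f\}$ of $\TLhat'$ and $\{\texttt{T}'_f\}$ of $\TUhat'$ with expansions $\texttt{L}'_f=\sum_{g\preceq_{\bf b'}f}\check{\ell}'_{gf}(q)\,N'_g$ and $\texttt{T}'_f=\sum_{g\preceq_{\bf b'}f}\check{t}'_{gf}(q)\,U'_g$, where $\check{\ell}'_{gf}(q)\in q^{-1}\Z[q^{-1}]$ and $\check{t}'_{gf}(q)\in q\Z[q]$ for $g\prec_{\bf b'}f$. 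Substituting the two displays above into these expansions yields, inside $\widehat{\mathbb T}^{\bf b'}$, $\texttt{L}'_f = M^{\bf b'}_f + (\text{a } q^{-1}\Z[q^{-1}]\text{-combination of } M^{\bf b'}_g,\ g\prec_{\bf b'} f)$ and $\texttt{T}'_f = M^{\bf b'}_f + (\text{a } q\Z[q]\text{-combination of } M^{\bf b'}_g,\ g\prec_{\bf b'} f)$.

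Since $\texttt{L}'_f$ and $\texttt{T}'_f$ are bar-invariant by construction, the uniqueness in Lemma~\ref{lem:lusztig}, which underlies Proposition~\ref{prop:existence:can1}, forces $\texttt{L}'_f=L^{\bf b'}_f$ and $\texttt{T}'_f=T^{\bf b'}_f$. I do not anticipate any genuine obstacle: the entire argument is strictly parallel to that of Proposition~\ref{LTsame}, with the only substantive change being $\downarrow\leftrightarrow\uparrow$ dictated by the rank-two computation in $\mathbb W\wotimes\mathbb V$ rather than in $\mathbb V\wotimes\mathbb W$.
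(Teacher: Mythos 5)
Your proposal is correct and follows exactly the route the paper intends: the paper states that Proposition~\ref{L'T'same} ``can be proved in exactly the same way'' as Proposition~\ref{LTsame}, and your argument is precisely that proof with $\downarrow$ replaced by $\uparrow$, including the correct verification that $f\downarrow_{\bf b'}f^{\uparrow}$ for the pair $(b'_\ka,b'_{\ka+1})=(1,0)$, so that the expansions respect $\preceq_{\bf b'}$ and the uniqueness in Lemma~\ref{lem:lusztig} applies.
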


\subsection{Adjacent canonical bases}
\label{sec:adjCB}

We continue to use the notations of the adjacent sequences $\bf b$ and $\bf
b'$ as well as $\ka$  from \S\ref{sec:CB2}.

For $f\in\Z^{m+n}$, define $f\cdot\tau\in\Z^{m+n}$ by letting
\begin{align*}
(f\cdot\tau)(i)=f(i), & \;\; \text{ for } i\not=\ka,\ka+1,
  \\
(f\cdot\tau)(\ka)=f(\ka+1),\;\; &\text{ and }
(f\cdot\tau)(\ka+1)=f(\ka).
\end{align*}
Recall $f^\uparrow, f^\downarrow$ from \eqref{eq:updown2}. The
following notations will be convenient in the sequel: for
$f\in\Z^{m+n}$ set
\begin{align} \label{eq:fL}
f^{\mathbb L}=
\begin{cases}
f\cdot\tau,&\text{ if }f(\ka)\not=f(\ka+1),\\
f^\uparrow,&\text{ if }f(\ka)=f(\ka+1),
\end{cases}
\end{align}
\begin{align} \label{eq:fU}
f^{\mathbb U}=\begin{cases}
f\cdot\tau,&\text{ if }f(\ka)\not=f(\ka+1),\\
f^\downarrow,&\text{ if }f(\ka)=f(\ka+1).
\end{cases}
\end{align}

Recall from Lemma~\ref{lem:RTL} the $U_q(\gl_\infty)$-module
isomorphism ${\mc R}_L$ of the weakly based modules $\mathbb L$ and
$\mathbb L'$. Recall $\TL =\mathbb T^{{\bf b}_1} \otimes \mathbb L
\otimes \mathbb T^{{\bf b}_2}$ and $\mathbb T'_{\mathbb L} =\mathbb T^{{\bf b}_1} \otimes \mathbb L'
\otimes \mathbb T^{{\bf b}_2}$.
Then
\begin{align*}
\mc R \stackrel{\text{def}}{=} 1_{{\bf b}_1}\otimes\mc{R}_L\otimes
1_{{\bf b}_2} :&\  \TL \longrightarrow \TL'
  \\
\mc R(N_f)=N'_{f^{\mathbb L}}, & \quad \forall f,
\end{align*}
is an isomorphism of $U_q(\gl_\infty)$-modules.

Define the truncated subspaces $\left[\TL\right]_{\le|k|}$ and
$\left[\TL'\right]_{\le|k|}$ for $k\in \N$ as in
\S\ref{sec:completions}, and then form the topological
$A$-completions $\TLwt$ and $\TLwt'$ as in Definition~
\ref{def:completion:T} with corresponding projection maps
$\pi_{\mathbb L,k}: \TL \rightarrow \left[\TL\right]_{\le|k|}$ and
$\pi_{\mathbb L,k}': \TL' \rightarrow \left[\TL'\right]_{\le|k|}$ as in \eqref{def:pi:lek}. We have $\mc
R(\left[\TL\right]_{\le|k|})\subseteq \left[\TL'\right]_{\le|k+1|}$,
and hence $\mc R$ extends to a linear isomorphism $\mc
R:\TLwt\rightarrow \TLwt'$, which  is actually a homeomorphism of
topological vector spaces.
%

\begin{definition}
\begin{enumerate}
\item
The {\em partial ordering $\ \preceq_{\bf b,b'}$} on $\Z^{m+n}$ is
defined as follows: $g\preceq_{\bf b,b'}f$ if and only if
$g\preceq_{\bf b}f$ and $g^{\mathbb L}\preceq_{\bf b'}f^{\mathbb
L}$, for $f,g \in \Z^{m+n}$.

\item
The {\em partial ordering $\ \preceq_{\bf b,b'}^*$} on $\Z^{m+n}$ is
defined as follows: $g\preceq_{\bf b,b'}^*f$ if and only if
$g\preceq_{\bf b'}f$ and $g^{\mathbb U}\preceq_{\bf b}f^{\mathbb
U}$, for $f,g \in \Z^{m+n}$.

\item
The {\em $C$-completion} of $\TL$, denoted by $\TLC$, is the
$\Q(q)$-subspace of $\TLwt$ spanned by vectors of the form
$N_f+\sum_{g\prec_{\bf b,b}f}r_{g}N_g$, for $r_g\in\Q(q)$.

\item
The {\em $C$-completion} of $\TL'$, denoted by $\TLC'$, is the
$\Q(q)$-subspace of $\TLwt'$ spanned by vectors of the form
$N_{f}'+\sum_{g\prec_{\bf b,b'}^*f}r_{g}N_{g}'$, for $r_g\in\Q(q)$.
\end{enumerate}
\end{definition}
In other words, $\TLC$ is simply the $B$-completion of $\TL$ with
respect to the refined partial ordering $\preceq_{\bf b,b'}$, while
$\TLC'$ is the $B$-completion of $\TL'$ with respect to
$\preceq_{\bf b,b'}^*$. By definition, the $A$-, $B$- and
$C$-completions of $\TL$ and $\TL'$ in \eqref{def:T:N} and
\eqref{def:T:N'} are related as follows:
$$
\TLC \subseteq \TLhat \subseteq \TLwt,\qquad
\TLC' \subseteq \TLhat' \subseteq \TLwt'.
$$
Since $(f^{\mathbb L})^{\mathbb U} =(f^{\mathbb U})^{\mathbb L}=f$,
we have
\begin{equation}  \label{2order}
g\prec_{\bf b,b'} f \ \text{ if and only if }\ g^{\mathbb L}
\prec_{\bf b,b'}^*  f^{\mathbb L}.
\end{equation}

\begin{rem}
The reason for defining the three different $A$-, $B$-, $C$-completions is roughly as follows.
The $A$-completions are the simplest completions on which all constructions are based.
While they are suitable for our purpose of comparing with finite-dimensional Fock spaces, they are not adequate for other purposes.
For example, they are not suitable for defining the bar map and hence for the formulation of the BKL conjectures.
For this purpose the $B$-completions (where $B$ stands for Bruhat or Brundan)
are the most natural candidates. However, as shown in Remark \ref{rem:VWWV}, the $B$-completions themselves
are not adequate for the purpose of comparing the dual canonical bases of two adjacent Fock spaces. For this, we introduce the $C$-completions.
\end{rem}

\begin{thm}\label{S:L:to:L}
Let ${\bf b}$ and ${\bf b}'$ be adjacent $0^m1^n$-sequences. Then
\begin{enumerate}
\item
the restriction of  $\mc R:\TLwt\stackrel{\sim}{\rightarrow} \TLwt'$
gives a $\Q(q)$-linear isomorphism $\mc R:\TLC\rightarrow \TLC'$;

\item
$\TLC$ and $\TLC'$ are bar-invariant subspaces of $\ \TLhat$ and
$\TLhat'$, respectively. Moreover, the dual canonical bases of $\
\TLhat$ and $\TLhat'$ lie in $\TLC$ and $\TLC'$, respectively;

\item
$\mc R(N_f)=N'_{f^{\mathbb L}},$
$\mc R(\texttt{L}_f)=\texttt{L}'_{f^{\mathbb L}}$, and
$\mc R({L}^{\bf b}_f)={L}^{\bf b'}_{f^{\mathbb L}}$, for all $f \in
\Z^{m+n}.$
\end{enumerate}
\end{thm}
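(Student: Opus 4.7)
The plan is to handle the three assertions in the order (3)-first-part, (1), (2), (3)-remainder, with the main work concentrated in a single key step: showing that $\mc R$ commutes with the bar involutions on $\TL$ and $\TL'$. Once this is in hand, everything else will follow from uniqueness arguments of Kazhdan-Lusztig type (Lemma \ref{lem:lusztig}).

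The identity $\mc R(N_f) = N'_{f^{\mathbb L}}$ is immediate from the definition $\mc R = 1_{{\bf b}_1} \otimes \mc R_L \otimes 1_{{\bf b}_2}$ together with $\mc R_L(L_f) = L'_{f^{\mathbb L}}$, which was verified in Lemma \ref{lem:RTL}. For (1), I would use the equivalence \eqref{2order} to rewrite a generator $N_f + \sum_{g \prec_{\bf b, b'} f} r_g N_g$ of $\TLC$, upon applying $\mc R$, as $N'_{f^{\mathbb L}} + \sum_{h \prec^*_{\bf b, b'} f^{\mathbb L}} r'_h N'_h$, a generator of $\TLC'$; invertibility is clear since the operations $(\cdot)^{\mathbb L}$ and $(\cdot)^{\mathbb U}$ are mutually inverse. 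Part (1) then follows by extension to the $C$-completions.

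The heart of the proof is (2). First, I would establish that $\mc R$ commutes with bar involutions. Lemma \ref{lem:RTL} states that $\mc R_L$ is an isomorphism of weakly based $U_q(\gl_\infty)$-modules, and since the defining bases $\{L_f\}$ and $\{L'_f\}$ are bar-invariant, $\mc R_L$ intertwines the bar involutions on $\mathbb L$ and $\mathbb L'$. By the naturality of the bar involution on tensor products of weakly based modules built via the quasi-$\mc R$-matrix (cf.~\eqref{def:bar:map}), the map $\mc R = 1 \otimes \mc R_L \otimes 1$ then commutes with bar on $\TL$ and $\TL'$. Using this, I would show $\bar N_f \in \TLC$ by computing $\mc R(\bar N_f) = \overline{\mc R(N_f)} = \overline{N'_{f^{\mathbb L}}}$ in two ways: expanding the left-hand side in $\{N'_{g^{\mathbb L}}\}_g$ via \eqref{aux:N:Bruhat}, and expanding the right-hand side via the analog of \eqref{aux:N:Bruhat} in $\TL'$. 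Matching coefficients on the linearly independent family $\{N'_h\}$ forces every $g$ contributing nontrivially to satisfy both $g \prec_{\bf b} f$ and $g^{\mathbb L} \prec_{\bf b'} f^{\mathbb L}$, i.e., $g \prec_{\bf b, b'} f$; a routine composition argument then propagates this to all of $\TLC$.

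With $\TLC$ bar-invariant and containing the parabolic monomial basis $\{N_f\}$, I would apply Lemma \ref{lem:lusztig} to the poset $(\Z^{m+n}, \preceq_{\bf b, b'})$ -- whose finite-interval property follows from Lemma \ref{lem:f:to:g:finite} since $\preceq_{\bf b, b'}$ refines $\preceq_{\bf b}$ -- to produce a unique bar-invariant topological basis of $\TLC$ with $q^{-1}\Z[q^{-1}]$-coefficients; by uniqueness in $\TLhat$ (Lemma \ref{lem:DCBTLhat}), this basis coincides with $\{\texttt{L}_f\}$, yielding (2). For the remaining assertions of (3), $\mc R(\texttt{L}_f)$ is bar-invariant in $\TLC'$ and, by (1) and the ordering translation, has the required upper-triangular form $N'_{f^{\mathbb L}} + \sum_{h \prec^*_{\bf b, b'} f^{\mathbb L}} (\cdot) N'_h$ with coefficients in $q^{-1}\Z[q^{-1}]$, so uniqueness in $\TLhat'$ gives $\mc R(\texttt{L}_f) = \texttt{L}'_{f^{\mathbb L}}$; Propositions \ref{LTsame} and \ref{L'T'same} then identify these with $L^{\bf b}_f$ and $L^{\bf b'}_{f^{\mathbb L}}$. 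The main subtlety throughout is the interplay between the two partial orderings $\preceq_{\bf b, b'}$ and $\preceq^*_{\bf b, b'}$ under the relabeling $g \mapsto g^{\mathbb L}$, which is precisely what the relation \eqref{2order} is designed to control.
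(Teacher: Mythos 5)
Your proposal is correct and follows essentially the same route as the paper: establish $\mc R(N_f)=N'_{f^{\mathbb L}}$, show $\mc R$ commutes with the bar involutions via the weakly based module isomorphism $\mc R_L$ and the quasi-$\mc R$-matrix construction, compare the two expansions of $\ov{N'_{f^{\mathbb L}}}$ to force the refined support condition $g\prec_{\bf b,b'}f$, and then invoke the uniqueness statement of Lemma \ref{lem:lusztig} together with Propositions \ref{LTsame} and \ref{L'T'same}. The only differences are cosmetic (order of the three parts and a slightly more explicit justification of the bar-commutation step).
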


\begin{proof}
By definition, we have
\begin{equation}  \label{eq:RNf}
\mc R(N_f)=N'_{f^{\mathbb L}}, \quad \forall f.
\end{equation}
Part (1)  follows from this, \eqref{2order}, and the definition of
$C$-completions $\TLC$, $\TLC'$.

We now first work with the $A$-completions and with $\mc
R:\TLwt\rightarrow \TLwt'$. It follows from \eqref{eq:RNf} and the
definition of the bar map on tensor modules (cf. \eqref{def:bar:map}
and \eqref{eq:barMf}) that
\begin{equation} \label{eq:RbarN}
\mc R(\ov{N}_f)=\ov{\mc R(N_f)}= \ov{N'_{f^{\mathbb L}}}.
\end{equation}


Hence from \eqref{aux:N:Bruhat}, \eqref{eq:RNf} and \eqref{eq:RbarN}
we obtain that
\begin{equation}  \label{eq:barN'}
\ov{N'_{f^{\mathbb L}}}=N'_{f^{\mathbb L}}+\sum_{g\prec_{\bf b}f}
a_{gf}N'_{g^{\mathbb L}}.
\end{equation}
On the other hand, $\ov{N'_{f^{\mathbb
L}}}\in\TLhat'\subseteq\TLwt'$ can be written in the form
\begin{equation}  \label{eq:barN'2}
\ov{N'_{f^{\mathbb L}}}=N'_{f^{\mathbb L}}+\sum_{g^{\mathbb
L}\prec_{\bf b'}f^{\mathbb L}}a_{gf}' N'_{g^{\mathbb L}}.
\end{equation}
The comparison between \eqref{eq:barN'} and \eqref{eq:barN'2}
implies that $a_{gf}' =a_{gf}$ if $g\prec_{\bf b, \bf b'}f$, and
\begin{equation}  \label{eq:barN'3}
\ov{N'_{f^{\mathbb L}}}=N'_{f^{\mathbb L}}+\sum_{g\prec_{\bf b, \bf
b'}f} a_{gf}N'_{g^{\mathbb L}}=N'_{f^{\mathbb L}}+\sum_{g^{\mathbb
L} \prec_{\bf b,b'}^*  f^{\mathbb L}} a_{gf}N'_{g^{\mathbb L}}.
\end{equation}
Using the inverse $\mc R^{-1}$ instead and arguing similarly as
above, we then obtain the following counterpart of
\eqref{eq:barN'3}:
\begin{equation}  \label{eq:barN2}
\ov{N_{f}}=N_{f}+\sum_{g\prec_{\bf b, \bf b'}f} a_{gf} N_{g}.
\end{equation}
Hence, $\ov{N_{f}}$ actually lies in the $C$-completion $\TLC$ (and
then in $B$-completion $\TLhat$) and also $\ov{N'_{f^{\mathbb L}}}
\in \TLC' \subset \TLhat'$. The first half of (2) now follows from
\eqref{eq:barN'3} and \eqref{eq:barN2}.

Now we can work within the $B$-completions. By
Lemma~\ref{lem:DCBTLhat}, \eqref{eq:barN2}, and the uniqueness part
of Lemma ~\ref{lem:lusztig} applied to the partial ordering
$\preceq_{\bf b, \bf b'}$, the dual canonical basis element
$\texttt{L}_f$ in $\TLhat$ satisfies the refined partial ordering as
follows:
\begin{align}\label{DCB2}
\texttt{L}_f=N_f +\sum_{g\prec_{\bf b, \bf b'}f}
\check{\ell}_{gf}(q) N_g,
\end{align}
where $\check{\ell}_{gf}(q) \in q^{-1}\Z[q^{-1}]$, for ${g\prec_{\bf
b, \bf b'}f}$. This strengthens \eqref{dual:canonical:in:N}. Hence
$\texttt{L}_f \in \TLC$. Similarly, by Lemma~\ref{lem:CBTUhat} and
\eqref{eq:barN'3} we have
\begin{align}\label{DCB2'}
\texttt{L}_f'=N_f' +\sum_{g\prec_{\bf b, \bf b'}^*f}
\check{\ell}_{gf}'(q) N_g',
\end{align}
where $\check{\ell}_{gf}'(q) \in q^{-1}\Z[q^{-1}]$, for
${g\prec_{\bf b, \bf b'}^*f}$, and hence $\texttt{L}_{f}' \in
\TLC'$. This proves the second part of (2).

Thanks to \eqref{eq:RNf} and \eqref{DCB2}, $\mc R (\texttt{L}_f)$
satisfies the same characterization as the dual canonical basis
element $\texttt{L}'_{f^{\mathbb L}}$ (similar to
Lemma~\ref{lem:DCBTLhat}). Hence $\mc
R(\texttt{L}_f)=\texttt{L}'_{f^{\mathbb L}}$ by the uniqueness of
dual canonical basis.

Now $\mc R({L}^{\bf b}_f)={L}^{\bf b'}_{f^{\mathbb L}}$ follows from
the identifications in Propositions~\ref{LTsame} and \ref{L'T'same}.
\end{proof}

\begin{cor}  \label{cor:llsame}
In the notations of \eqref{DCB2} and \eqref{DCB2'}, we have
$\check{\ell}_{gf}(q) =\check{\ell}_{g^{\mathbb L}f^{\mathbb
L}}'(q).$
\end{cor}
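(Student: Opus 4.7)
The plan is to derive the corollary directly by applying the isomorphism $\mc R$ to the expansion \eqref{DCB2} of the dual canonical basis element $\texttt{L}_f$ and comparing the result, term by term, with the expansion \eqref{DCB2'} of $\texttt{L}'_{f^{\mathbb L}}$.

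First, I would invoke Theorem~\ref{S:L:to:L}(3), which gives $\mc R(N_g) = N'_{g^{\mathbb L}}$ for all $g \in \Z^{m+n}$ and $\mc R(\texttt{L}_f) = \texttt{L}'_{f^{\mathbb L}}$. Applying the $\Q(q)$-linear (and continuous) map $\mc R$ to \eqref{DCB2}, I would obtain
\begin{align*}
\texttt{L}'_{f^{\mathbb L}} \;=\; \mc R(\texttt{L}_f) \;=\; N'_{f^{\mathbb L}} + \sum_{g \prec_{\bf b, \bf b'} f} \check{\ell}_{gf}(q)\, N'_{g^{\mathbb L}}.
\end{align*}

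Next, I would rewrite this sum by reindexing via the bijection $g \mapsto g^{\mathbb L}$ on $\Z^{m+n}$. By the equivalence \eqref{2order}, the condition $g \prec_{\bf b, \bf b'} f$ translates to $g^{\mathbb L} \prec_{\bf b, \bf b'}^* f^{\mathbb L}$, so setting $h = g^{\mathbb L}$ yields
\begin{align*}
\texttt{L}'_{f^{\mathbb L}} \;=\; N'_{f^{\mathbb L}} + \sum_{h \prec_{\bf b, \bf b'}^* f^{\mathbb L}} \check{\ell}_{h^{\mathbb U}\!, f}(q)\, N'_{h},
\end{align*}
using that $\mathbb L$ and $\mathbb U$ are mutually inverse involutions on $\Z^{m+n}$.

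Finally, I would compare this expression with the defining expansion \eqref{DCB2'}
\begin{align*}
\texttt{L}'_{f^{\mathbb L}} \;=\; N'_{f^{\mathbb L}} + \sum_{h \prec_{\bf b, \bf b'}^* f^{\mathbb L}} \check{\ell}'_{h, f^{\mathbb L}}(q)\, N'_h.
\end{align*}
Since $\{N'_h \mid h \in \Z^{m+n}\}$ is a basis of $\TL'$ (which sits inside its completion), matching coefficients forces $\check{\ell}'_{h, f^{\mathbb L}}(q) = \check{\ell}_{h^{\mathbb U}\!, f}(q)$ for every $h$. Substituting $h = g^{\mathbb L}$ and using $(g^{\mathbb L})^{\mathbb U} = g$ gives the desired identity $\check{\ell}_{gf}(q) = \check{\ell}'_{g^{\mathbb L}, f^{\mathbb L}}(q)$. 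There is no real obstacle here, since the isomorphism $\mc R$ and the compatibility of the two partial orderings have already done all the work in Theorem~\ref{S:L:to:L}; the corollary is essentially a bookkeeping consequence of that theorem.
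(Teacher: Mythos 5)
Your proof is correct and is exactly the argument the paper intends: the corollary is stated as an immediate consequence of Theorem~\ref{S:L:to:L}(3), obtained by applying the continuous isomorphism $\mc R$ to \eqref{DCB2}, reindexing via \eqref{2order}, and matching coefficients against \eqref{DCB2'} in the topological basis $\{N'_h\}$. No discrepancies with the paper's (implicit) proof.
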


Similarly, we recall $\mc R_U: \mathbb U \rightarrow \mathbb U'$
from Lemma~\ref{lem:RTL}, $\TU =\mathbb T^{{\bf b}_1} \otimes
\mathbb U \otimes \mathbb T^{{\bf b}_2}$ from \eqref{def:T:U}, and
$\TU'=\mathbb T^{{\bf b}_1} \otimes
\mathbb U' \otimes \mathbb T^{{\bf b}_2}$ from \eqref{def:T:U'}. Then we have an isomorphism of
$U_q(\gl_\infty)$-modules
\begin{align*}
\mc R^u \stackrel{\text{def}}{=} 1_{{\bf b}_1}\otimes\mc{R}_U\otimes
1_{{\bf b}_2} :&\  \TU \longrightarrow \TU'
  \\
\mc R^u(U_f)=U'_{f^{\mathbb U}}, & \quad \forall f,
\end{align*}
The isomorphism $\mc R^{u}$ extends to an isomorphism on the
$A$-completions $\mc R^{u} : \TUwt \rightarrow \TUwt'$ as before.
The {\em $C$-completions} $\TUC$ and $\TUC'$  of $\TU$ and $\TU'$
are defined as the $B$-completions of $\TU$ and $\TU'$
respectively with respect to some suitably refined partial orderings
(given by the conditions in the sums \eqref{CB2} and \eqref{CB2'}
below).

\begin{thm}\label{S:T:to:T}
Let ${\bf b}$ and ${\bf b}'$ be adjacent $0^m1^n$-sequences. Then
\begin{enumerate}
\item
the restriction of  $\mc R^{u}:\TUwt\stackrel{\sim}{\rightarrow}
\TUwt'$ gives an isomorphism $\mc R^{u}:\TUC\rightarrow \TUC'$;

\item
the canonical bases of $\TUhat$ and $\TUhat'$ lie in $\TUC$ and
$\TUC'$, respectively;

\item
$ \mc R^{u}(U_f)=U'_{f^{\mathbb U}},$
$\mc R^{u} (\texttt{T}_f)=\texttt{T}'_{f^{\mathbb U}}$, and
$\mc R^{u} ({T}^{\bf b}_f)= {T}^{\bf b'}_{f^{\mathbb U}}$, for all
$f \in \Z^{m+n}.$
\end{enumerate}
\end{thm}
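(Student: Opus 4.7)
The plan is to mirror almost verbatim the proof of Theorem \ref{S:L:to:L}, swapping the dual-canonical-basis data $(\mathbb L, \mathbb L', N, N', \texttt{L}, \check{\ell})$ for the canonical-basis data $(\mathbb U, \mathbb U', U, U', \texttt{T}, \check{t})$, replacing $\mc R$ by $\mc R^u$, and using the $\mathbb U$-counterparts of the refined orderings $\preceq_{\bf b,b'}$ and $\preceq^*_{\bf b,b'}$. By Lemma \ref{lem:RTL}, the operator $\mc R^u = 1_{{\bf b}^1}\otimes \mc R_U\otimes 1_{{\bf b}^2}$ is an isomorphism of weakly based $U_q(\gl_\infty)$-modules, and by construction $\mc R^u(U_f)=U'_{f^{\mathbb U}}$ for all $f$. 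Since $(\cdot)^{\mathbb U}$ and $(\cdot)^{\mathbb L}$ are mutually inverse bijections on $\Z^{m+n}$ (a direct check using \eqref{eq:fL} and \eqref{eq:fU}), the compatibility of $\mc R^u$ with the refined partial orders gives an isomorphism on $A$-completions $\mc R^u:\TUwt\rightarrow\TUwt'$ which restricts to an isomorphism $\TUC\rightarrow\TUC'$, settling part (1).

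Next I would show that $\mc R^u$ commutes with the bar involutions. Because $\mc R_U$ is an isomorphism of weakly based modules and the bar map on a tensor product is constructed from the bar maps on the factors via the quasi-$\mc R$-matrix $\Theta$ (see \eqref{def:bar:map} and \eqref{eq:barMf}), compatibility on the middle factor propagates to the full tensor product, just as $\natural_{\bf b}$ commutes with bar in Theorem \ref{TwedgeV:isom:TwedgeW}. Hence $\mc R^u(\ov U_f)=\ov{U'_{f^{\mathbb U}}}$. Comparing the two expansions of this common element via \eqref{aux:U:Bruhat} applied on each side, and then running the analogous comparison using $(\mc R^u)^{-1}$, one deduces that the supports of $\ov U_f$ and $\ov{U'_{f^{\mathbb U}}}$ are constrained by the refined orderings, exactly as for \eqref{eq:barN'3} and \eqref{eq:barN2}. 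Therefore $\ov U_f\in\TUC$ and $\ov{U'_{f^{\mathbb U}}}\in\TUC'$.

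Now I would apply Lemma \ref{lem:lusztig} to the refined partial order on $\Z^{m+n}$ to obtain a strengthened triangular expansion
\[
\texttt{T}_f = U_f + \sum_{g} \check{t}_{gf}(q)\, U_g, \qquad \check{t}_{gf}(q)\in q\Z[q],
\]
where the sum ranges over $g$ strictly below $f$ in the refined order, and similarly for $\texttt{T}'_f$; by the uniqueness clause of Lemma \ref{lem:lusztig} these refined expansions must coincide with those provided by Lemma \ref{lem:CBTUhat}. In particular $\texttt{T}_f \in \TUC$ and $\texttt{T}'_f\in\TUC'$, which yields part (2). Applying $\mc R^u$ to this expansion produces a bar-invariant element that equals $U'_{f^{\mathbb U}}$ modulo a $q\Z[q]$-linear combination of $U'_{g^{\mathbb U}}$ with $g^{\mathbb U}$ strictly below $f^{\mathbb U}$; the uniqueness part of Lemma \ref{lem:lusztig}, applied inside $\TUC'$, then forces $\mc R^u(\texttt{T}_f)=\texttt{T}'_{f^{\mathbb U}}$. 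Combining with Propositions \ref{LTsame} and \ref{L'T'same}, which identify $\texttt{T}_f = T^{\bf b}_f$ and $\texttt{T}'_{f^{\mathbb U}} = T^{\bf b'}_{f^{\mathbb U}}$, proves part (3).

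The only substantive difficulty is bookkeeping: one must record precisely the $\mathbb U$-analogues of the partial orders $\preceq_{\bf b,b'}$ and $\preceq^*_{\bf b,b'}$ defining $\TUC$ and $\TUC'$, and verify that they are interchanged by the inverse involutions $(\cdot)^{\mathbb U}$ and $(\cdot)^{\mathbb L}$ so that $\mc R^u$ really does map one $C$-completion onto the other. Beyond this combinatorial verification there is no new analytic input, since the triangularity coefficients simply land in $q\Z[q]$ rather than $q^{-1}\Z[q^{-1}]$, which is precisely what Lemma \ref{lem:lusztig}(iii)--(v) produces.
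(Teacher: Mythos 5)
Your proposal is correct and follows essentially the same route as the paper, which itself states that the proof of Theorem~\ref{S:T:to:T} is "entirely analogous" to that of Theorem~\ref{S:L:to:L} and only records the resulting expansions \eqref{CB2} and \eqref{CB2'}. You correctly identify the one point requiring care — that the $C$-completions $\TUC$, $\TUC'$ are defined by the refined orderings $g\prec_{\bf b}f,\ g^{\mathbb U}\prec_{\bf b'}f^{\mathbb U}$ (resp.\ $g\prec_{\bf b'}f,\ g^{\mathbb L}\prec_{\bf b}f^{\mathbb L}$) and that these are interchanged under the mutually inverse maps $(\cdot)^{\mathbb U}$ and $(\cdot)^{\mathbb L}$ — and the triangularity lands in $q\Z[q]$ via Lemma~\ref{lem:lusztig}(iii)--(v) exactly as you say.
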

Theorem~\ref{S:T:to:T} is the canonical basis analogue of Theorem
\ref{S:L:to:L}, where (3) uses the identification provided by
Propositions~\ref{LTsame} and \ref{L'T'same}. While we will skip the
entirely analogous proof, we note that we obtain the following
analogues of \eqref{DCB2} and \eqref{DCB2'} in the process of proof:
\begin{align}\label{CB2}
\texttt{T}_f=U_f +\sum_{g\prec_{\bf b}f, g^{\mathbb U}\prec_{\bf
b'}f^{\mathbb U}} \check{t}_{gf}(q) U_g, \quad \text{ where }\
\check{t}_{gf}(q) \in q\Z[q].
\end{align}
\begin{align}\label{CB2'}
\texttt{T}_f'=U_f' +\sum_{g\prec_{\bf b'}f, g^{\mathbb L}\prec_{\bf
b}f^{\mathbb L}} \check{t}_{gf}'(q) U_g', \quad \text{ where }\
\check{t}_{gf}'(q) \in q\Z[q].
\end{align}
We also have the following corollary to Theorem~\ref{S:T:to:T}.

\begin{cor}  \label{cor:samett}
In the notations of \eqref{CB2} and \eqref{CB2'}, we have
$\check{t}_{gf}(q) =\check{t}_{g^{\mathbb U}f^{\mathbb U}}'(q).$
\end{cor}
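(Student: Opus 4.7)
The plan is to derive Corollary~\ref{cor:samett} as a direct consequence of Theorem~\ref{S:T:to:T}(3), in complete analogy with how Corollary~\ref{cor:llsame} follows from Theorem~\ref{S:L:to:L}(3). The key identities available are $\mc R^{u}(U_g) = U'_{g^{\mathbb U}}$ for every $g$ and $\mc R^{u}(\texttt{T}_f) = \texttt{T}'_{f^{\mathbb U}}$, together with the $\Q(q)$-linearity and continuity of $\mc R^{u}$ on the appropriate $C$-completion.

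First, I would apply $\mc R^{u}$ to both sides of the expansion \eqref{CB2}:
\begin{align*}
\mc R^{u}(\texttt{T}_f) = \mc R^{u}(U_f) + \sum_{g\prec_{\bf b}f,\,g^{\mathbb U}\prec_{\bf b'}f^{\mathbb U}} \check{t}_{gf}(q)\,\mc R^{u}(U_g),
\end{align*}
which by Theorem~\ref{S:T:to:T}(3) becomes
\begin{align*}
\texttt{T}'_{f^{\mathbb U}} = U'_{f^{\mathbb U}} + \sum_{g\prec_{\bf b}f,\,g^{\mathbb U}\prec_{\bf b'}f^{\mathbb U}} \check{t}_{gf}(q)\,U'_{g^{\mathbb U}}.
\end{align*}
Reindexing the sum by $h = g^{\mathbb U}$, and noting that the involution $g \mapsto g^{\mathbb U}$ (in view of $(g^{\mathbb U})^{\mathbb U}=g$ and the analogue of \eqref{2order} for the orderings $\preceq_{\bf b,b'}^{*}$ used in \eqref{CB2'}) converts the constraint $\{g\prec_{\bf b}f,\ g^{\mathbb U}\prec_{\bf b'}f^{\mathbb U}\}$ into the constraint appearing in \eqref{CB2'}, I would then compare this with the defining expansion \eqref{CB2'} of $\texttt{T}'_{f^{\mathbb U}}$ in the parabolic monomial basis $\{U'_h\}$.

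The final step is to invoke the linear independence of $\{U'_h\}$ in $\TUhat'$ to match coefficients term by term, yielding $\check{t}_{gf}(q) = \check{t}'_{g^{\mathbb U} f^{\mathbb U}}(q)$. There is no real obstacle here; the only point that requires a moment's care is verifying that the two index sets in the refined orderings genuinely correspond under $g \mapsto g^{\mathbb U}$, which is precisely the content of \eqref{2order} (applied with the roles of $\mathbb L$ and $\mathbb U$ interchanged, since both orderings $\preceq_{\bf b,b'}$ and $\preceq_{\bf b,b'}^{*}$ are used symmetrically in the $\mc R^{u}$ setting).
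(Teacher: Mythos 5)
Your argument is exactly the one the paper intends: Corollary~\ref{cor:samett} is stated as an immediate consequence of Theorem~\ref{S:T:to:T}(3) together with the expansions \eqref{CB2} and \eqref{CB2'}, obtained by applying $\mc R^{u}$ to \eqref{CB2}, reindexing via $g\mapsto g^{\mathbb U}$ (using $(g^{\mathbb U})^{\mathbb L}=g$ to match the two index sets), and comparing coefficients against \eqref{CB2'} — precisely mirroring how Corollary~\ref{cor:llsame} follows from Theorem~\ref{S:L:to:L}. No gaps; this matches the paper's (implicit) proof.
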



\part{Representation Theory}

\section{BGG category for basic Lie superalgebras}
 \label{sec:repn:prep}

In this section, we establish some basic properties of the BGG
 category $\mc O$ of $\glmn$-modules. We show that the category $\mc
 O$ is independent of the choice of non-conjugate Borel
 subalgebras. We then make systematic comparisons of the Verma,
 simple and tilting modules with respect to different Borel subalgebras.
 Finally, we introduce certain parabolic Verma modules associated to a pair of
 adjacent Borel subalgebras. All the results in this section remain valid for
arbitrary basic Lie superalgebras.

\subsection{Preliminaries}\label{repn:prep:prelim}

Let $\C^{m|n}$ be the complex superspace of dimension $(m|n)$. The
general linear Lie superalgebra $\gl(m|n)$ is the Lie superalgebra
of linear transformations from $\C^{m|n}$ to itself. Thus, with
respect to a given ordered basis of $\C^{m|n}$, $\gl(m|n)$ may be
realized in terms of $(m+n)\times (m+n)$ matrices over $\C$. Let
$\{e_1,\ldots,e_m\}$ and $\{e_{m+1},\ldots,e_{m+n}\}$ be the
standard bases for the even subspace $\C^{m|0}$ and the odd subspace
$\C^{0|n}$, respectively, so that their union is a homogeneous basis
for $\C^{m|n}$. Then with respect to this ordered basis we let
$e_{ij}$, $1\le i,j\le m+n$, denote the $(i,j)$th elementary matrix.
The Cartan subalgebra of diagonal matrices is denoted by $\h_{m|n}$,
which is spanned by $\{e_{ii}|1\le i\le m+n\}$. We denote by
$\{\ep_i|1\le i\le m+n\}$ the basis in $\h_{m|n}^*$ dual to
$\{e_{ii}|1\le i\le m+n\}$, and the lattice  of {\em integral
weights} for $\glmn$ by
$$
\wtl =\sum_{i=1}^{m+n} \Z \ep_i.
$$

The supertrace form on $\gl(m|n)$ induces a non-degenerate symmetric
bilinear form $(\cdot|\cdot)$ on $\h_{m|n}^*$ determined by
\begin{align*}
(\ep_i|\ep_j)=(-1)^{|i|}\delta_{ij}, \quad \text{ for }1\le i,j\le m+n,
\end{align*}
where we use the notation
$|i|:=\begin{cases} {0}, &\text{ if }1\le i\le m,\\
 {1}, &\text{ if }m+1\le i\le m+n. \end{cases}
$
The subalgebra of upper triangular matrices with respect to this
standard basis is called the {\em standard Borel subalgebra} and
denoted by $\mf{b}_{\text{st}}$.

In this paper we shall need to deal with various Borel subalgebras
of $\gl(m|n)$ that may not be conjugate to $\mf{b}_{\text{st}}$. For
this purpose, let ${\bf b}=(b_1,b_2,\ldots,b_{m+n})$ be a
${0^m1^n}$-sequence. Such a sequence ${\bf b}$ gives rise to a {\em ${\bf
b}$-ordered basis} $\{e^{\bf b}_1, e^{\bf b}_2,\ldots,e^{\bf
b}_{m+n}\}$ for $\C^{m|n}$ by rearranging its standard basis  as
follows: Let $1\le i_1<i_2<\ldots<i_m\le m+n$ be such that
$b_{i_k}={0}$, and $1\le j_1<j_2<\ldots<j_n\le m+n$ be such that
$b_{j_\ell}={1}$. Then
$$
e^{\bf b}_{i_k} =e_{k}\;\; (1\le k\le m), \qquad e^{\bf b}_{j_\ell}
=e_{m+\ell} \;\;(1\le \ell\le n).
$$
For example, for the standard sequence ${\bf
b}_{\text{st}}=(0,\ldots,0,1,\ldots,1)$, the ${\bf
b}_{\text{st}}$-ordered basis is the standard basis, i.e.,
$e^{\bf b}_i=e_i$, for $1\le i\le m+n$. On the other hand, if ${\bf
b}$ consists of $n$ ${1}$'s followed by $m$ ${0}$'s, then the ${\bf
b}$-ordered basis is
$\{e_{m+1},e_{m+2},\ldots,e_{m+n},e_1,\ldots,e_m\}$.

We also realize $\gl(m|n)$ as $(m+n)\times(m+n)$ matrices with
respect to the ${\bf b}$-ordered basis. The $(i,j)$th elementary
matrix here is denoted by $e^{\bf b}_{ij}$. The Borel subalgebra
$\mf{b}$ corresponding to ${\bf b}$ (also denoted by $\mf{b}_{\bf
b}$ if necessary) is the subalgebra generated by $e^{\bf b}_{ij}$
for $1\le i\le j\le m+n$. The algebras $\mf{b}$'s for different
${\bf b}$'s are non-conjugate under the (even) group $G_{ \bar 0}$,
and the corresponding simple systems associated to different ${\bf
b}$'s are representatives among all simple systems for
$(\glmn,\h_{m|n})$ under the conjugation by its Weyl group $W=\mf
S_m\times \mf S_n$.

The Cartan subalgebras of ${\mf b}$ consisting of diagonal matrices
are all equal to $\h_{m|n}$ (independent of ${\bf b}$). Let
$\ep^{\bf b}_i\in\h_{m|n}^*$ be defined by
$$
\langle\ep^{\bf b}_i,e_{jj}^{\bf b}\rangle=\delta_{ij}, \qquad
\text{ for } 1\le i,j \le m+n.
$$
We have
$$
(\ep^{\bf b}_i|\ep^{\bf b}_j)=(-1)^{b_i}\delta_{ij}, \qquad \text{
for } 1\le i,j \le m+n.
$$
The simple system with respect to the Borel subalgebra $\mf{b}$ associated to
${\bf b}$ is
$$
\Pi({\bf b}):=\{\ep^{\bf b}_i-\ep^{\bf b}_{i+1}|1\le i\le m+n-1\},
$$
where the parity of $\ep^{\bf b}_i-\ep^{\bf b}_{i+1}$ is $\bar{0}$,
if $b_i=b_{i+1}$, and $\bar{1}$ otherwise. Let $\Phi^+_{{\bf
b},\bar{0}}$ and $\Phi^+_{{\bf b},\bar{1}}$ be the corresponding
sets of positive even and positive odd roots of $\mf{b}$.

Let $\la\in\h_{m|n}^*$. Fix a $0^m1^n$-sequence $\bf b$ and hence a
Borel subalgebra $\mf b$. Let $\C_\la$ be the one-dimensional $\h_{m|n}$-module
that transforms by $\la$, which is extended to a $\mf{b}$-module by
letting $e_{ij}^{\bf b}$ act trivially, for $i<j$. The {\em $\bf b$-Verma
module} of highest weight $\la$ is defined to be
\begin{align*}
M_{\bf b}(\la):={\rm Ind}_{\mf{b}}^{\gl(m|n)}\C_\la,
\end{align*}
and as usual it has a unique irreducible quotient $\glmn$-module,
denoted by $L_{\bf b}(\la)$. We denote by $\text{ch} M$ the (formal)
character of a $\glmn$-weight module $M$ as usual. We have the
following character formula of the $\bf b$-Verma module:
\begin{align*}
\text{ch}M_{\bf b}(\la)
 &=e^{\la}\frac{\prod_{\gamma\in\Phi^+_{{\bf
b},\bar{1}}}(1+e^{-\gamma})}{\prod_{\beta\in\Phi^+_{{\bf
b},\bar{0}}}(1-e^{-\beta})}.
 \end{align*}


\subsection{Odd reflection}

We follow the notation in \S\ref{sec:CB2}. Take a $0^m1^n$-sequence
of the form ${\bf b}=({\bf b}^1,{0},{1},{\bf b}^2)$, where ${\bf
b}^1$ and ${\bf b}^2$ are ${0}^{m_1}{1}^{n_1}$- and
${0}^{m_2}{1}^{n_2}$-sequences satisfying $m=m_1+m_2+1$ and
$n=n_1+n_2+1$. Recall from \eqref{kappa} that $\ka=m_1+n_1+1.$ Take
the $0^m1^n$-sequence ${\bf b}'=({\bf b}^1,{1},{0},{\bf b}^2)$, {\em
adjacent} to the sequence ${\bf b}$.

Note the simple system $\Pi(\bf{b'})$ is obtained from $\Pi(\bf{b})$
by an odd reflection with respect to the odd simple root $\alpha=\ep^{\bf
b}_\ka-\ep^{\bf b}_{\ka+1}$. The corresponding positive systems are
related by $\Phi^+_{\bf b'} =\Phi^+_{\bf b} \cup \{-\alpha\}
\backslash \{\alpha\}$.

\begin{lem}\label{lem:verma:borels}
Let ${\bf b},{\bf b}'$ be adjacent $0^m1^n$-sequences as above. Let
$\alpha=\ep^{\bf b}_\ka-\ep^{\bf b}_{\ka+1}$. Then
\begin{align*}
{\rm ch}\, M_{\bf b}(\la)={\rm ch}\, M_{\bf b'}(\la-\alpha).
\end{align*}
\end{lem}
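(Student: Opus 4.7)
The plan is to derive the identity directly from the Weyl--Kac-type character formula for $\mathbf{b}$-Verma modules that was recorded just above the lemma, since this is exactly the situation where such a formula is well suited: the two Borel subalgebras $\mathfrak{b}$ and $\mathfrak{b}'$ differ by a single odd reflection, so their root data differ in a very controlled way.

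First I would describe explicitly how $\Phi^+_{\mathbf{b}'}$ relates to $\Phi^+_{\mathbf{b}}$. Since the odd reflection at the isotropic simple root $\alpha=\epsilon^{\mathbf b}_\kappa-\epsilon^{\mathbf b}_{\kappa+1}$ leaves all positive even roots unchanged, we have $\Phi^+_{\mathbf{b}',\bar 0}=\Phi^+_{\mathbf{b},\bar 0}$. On the odd side, the only effect is to flip the sign of $\alpha$, i.e. $\Phi^+_{\mathbf{b}',\bar 1}=\bigl(\Phi^+_{\mathbf{b},\bar 1}\setminus\{\alpha\}\bigr)\cup\{-\alpha\}$. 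These are standard facts about odd reflections on $\Pi(\mathbf{b})$.

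Next, I would apply the Verma character formula to the right-hand side and use the elementary identity $1+e^{\alpha}=e^{\alpha}(1+e^{-\alpha})$ to move the sign-flipped odd factor back into its original form. Explicitly,
\begin{align*}
\mathrm{ch}\,M_{\mathbf{b}'}(\lambda-\alpha)
&=e^{\lambda-\alpha}\,\frac{(1+e^{\alpha})\prod_{\gamma\in\Phi^+_{\mathbf{b},\bar 1}\setminus\{\alpha\}}(1+e^{-\gamma})}{\prod_{\beta\in\Phi^+_{\mathbf{b},\bar 0}}(1-e^{-\beta})}\\
&=e^{\lambda-\alpha}\cdot e^{\alpha}\cdot\frac{(1+e^{-\alpha})\prod_{\gamma\in\Phi^+_{\mathbf{b},\bar 1}\setminus\{\alpha\}}(1+e^{-\gamma})}{\prod_{\beta\in\Phi^+_{\mathbf{b},\bar 0}}(1-e^{-\beta})}\\
&=e^{\lambda}\,\frac{\prod_{\gamma\in\Phi^+_{\mathbf{b},\bar 1}}(1+e^{-\gamma})}{\prod_{\beta\in\Phi^+_{\mathbf{b},\bar 0}}(1-e^{-\beta})}
=\mathrm{ch}\,M_{\mathbf{b}}(\lambda),
\end{align*}
which is exactly the claim.

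There is no real obstacle here: the only subtlety is being careful that the even positive roots truly are preserved (which is automatic, as odd reflections act trivially on even roots), and that $\alpha\in\Phi^+_{\mathbf{b},\bar 1}$ but $-\alpha\in\Phi^+_{\mathbf{b}',\bar 1}$ so that the product over odd positive roots genuinely loses one factor and gains one factor as indicated. The proof is then purely formal manipulation of the two product formulas. This argument is valid for any basic Lie superalgebra, consistent with the remark at the beginning of the section.
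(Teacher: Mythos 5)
Your proof is correct and is essentially the paper's own argument: both rest on the observations that $\Phi^+_{\mathbf{b}',\bar 0}=\Phi^+_{\mathbf{b},\bar 0}$ and $\Phi^+_{\mathbf{b}',\bar 1}=(\Phi^+_{\mathbf{b},\bar 1}\setminus\{\alpha\})\cup\{-\alpha\}$, followed by the elementary manipulation $e^{\lambda}(1+e^{-\alpha})=e^{\lambda-\alpha}(1+e^{\alpha})$ in the product formula. The only cosmetic difference is that you compute from the right-hand side to the left, while the paper goes the other way.
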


\begin{proof}
Let $\Psi:=\Phi^+_{{\bf b},\bar{1}}\setminus\{\alpha\}$. Then
$\Phi^+_{{\bf b'},\bar{1}}=\Psi\cup\{-\alpha\}$. Also $\Phi^+_{{\bf
b},\bar{0}}=\Phi^+_{{\bf b'},\bar{0}}$. Thus, we have
\begin{align*}
{\rm ch}\, M_{\bf b}(\la)
 &=e^{\la}\frac{\prod_{\gamma\in\Phi^+_{{\bf
b},\bar{1}}}(1+e^{-\gamma})}{\prod_{\beta\in\Phi^+_{{\bf
b},\bar{0}}}(1-e^{-\beta})}
=e^{\la}(1+e^{-\alpha})
\frac{\prod_{\gamma\in\Psi}(1+e^{-\gamma})}{\prod_{\beta\in\Phi^+_{{\bf
b},\bar{0}}}(1-e^{-\beta})}
 \\
 &
= e^{\la-\alpha}\frac{\prod_{\gamma\in\Phi^+_{{\bf
b'},\bar{1}}}(1+e^{-\gamma})}{\prod_{\beta\in\Phi^+_{{\bf
b},\bar{0}}}(1-e^{-\beta})}
 ={\rm ch}\, M_{\bf b'}(\la-\alpha).
\end{align*}
This proves the lemma.
\end{proof}

For $\alpha=\ep^{\bf b}_\ka-\ep^{\bf b}_{\ka+1}$,
we introduce the following notation:
\begin{align} \label{eq:laL}
\la^{\mathbb L} =
\begin{cases}
 \la,         &\text{ if }(\la,\alpha)=0,\\
\la-\alpha,   &\text{ if }(\la,\alpha)\neq 0,
\end{cases}
\qquad \text{for } \la \in \wtl.
\end{align}
The following odd reflection lemma is well known (see e.g.~\cite{PS,
KW2}). 

\begin{lem}\label{lem:irred:borels}
Let ${\bf b},{\bf b}'$ be two adjacent $0^m1^n$-sequences as above
and let $\alpha=\ep^{\bf b}_\ka-\ep^{\bf b}_{\ka+1}$.
Then $L_{\bf b}(\la)=L_{\bf b'}(\la^{\mathbb L})$.
\end{lem}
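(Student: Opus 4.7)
The plan is to locate an explicit $\mf{b}'$-highest weight vector of weight $\la^{\mathbb L}$ inside $L_{\bf b}(\la)$; the irreducibility of $L_{\bf b}(\la)$ will then force $L_{\bf b}(\la)=L_{\bf b'}(\la^{\mathbb L})$. Let $v_\la\in L_{\bf b}(\la)$ be a $\mf{b}$-highest weight vector, and write $e_\alpha:=e^{\bf b}_{\ka,\ka+1}$ and $f_\alpha:=e^{\bf b}_{\ka+1,\ka}$ for the positive and negative odd root vectors attached to the isotropic simple root $\alpha$. A direct computation with matrix units gives $[e_\alpha,f_\alpha]=e^{\bf b}_{\ka\ka}+e^{\bf b}_{\ka+1,\ka+1}$, and combining with $b_\ka=0$, $b_{\ka+1}=1$ and $(\ep^{\bf b}_i|\ep^{\bf b}_j)=(-1)^{b_i}\delta_{ij}$ one obtains
\[
 e_\alpha f_\alpha\, v_\la \;=\; (\la,\alpha)\, v_\la.
\]

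Suppose first that $(\la,\alpha)\neq 0$, so $\la^{\mathbb L}=\la-\alpha$. Set $v_\mu:=f_\alpha v_\la$; it is nonzero since $e_\alpha v_\mu=(\la,\alpha)v_\la\neq 0$, and has weight $\mu=\la-\alpha$. I claim $v_\mu$ is $\mf{b}'$-highest. Because $\mf{b}'$ differs from $\mf{b}$ only at positions $\ka,\ka+1$, its positive simple root vectors fall into three groups: $f_\alpha$ itself (attached to the new simple root $-\alpha$), the two ``replacement'' vectors $e^{\bf b}_{\ka-1,\ka+1}$ and $e^{\bf b}_{\ka,\ka+2}$, and the remaining simple root vectors of $\mf{b}$ whose indices are disjoint from $\{\ka,\ka+1\}$. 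For each such $X$ one writes $Xv_\mu=\pm f_\alpha(Xv_\la)+[X,f_\alpha]v_\la$: the first summand vanishes because $X$ annihilates $v_\la$, and a short super-bracket calculation of matrix units shows that $[X,f_\alpha]$ is either zero or a positive $\mf{b}$-root vector (explicitly $\pm e^{\bf b}_{\ka-1,\ka}$ or $\pm e^{\bf b}_{\ka+1,\ka+2}$), and so again annihilates $v_\la$. For the remaining case $X=f_\alpha$, the identity $f_\alpha^2=\tfrac12[f_\alpha,f_\alpha]=0$ in $U(\mf{g})$ holds since $\alpha$ is isotropic and $-2\alpha$ is not a root of $\glmn$. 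Thus $v_\mu$ is $\mf{b}'$-highest of weight $\la-\alpha$, and since $v_\la=(\la,\alpha)^{-1}e_\alpha v_\mu\in U(\mf{g})v_\mu$, the submodule generated by $v_\mu$ equals $L_{\bf b}(\la)$. This yields $L_{\bf b}(\la)=L_{\bf b'}(\la-\alpha)=L_{\bf b'}(\la^{\mathbb L})$.

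Now suppose $(\la,\alpha)=0$, so $\la^{\mathbb L}=\la$. Here it suffices to show $f_\alpha v_\la=0$, for then $v_\la$ itself is $\mf{b}'$-highest of weight $\la$ and generates $L_{\bf b}(\la)$. If on the contrary $f_\alpha v_\la\neq 0$, then exactly the same bracket verifications as above, together with $e_\alpha f_\alpha v_\la=0$, exhibit $f_\alpha v_\la$ as a nonzero $\mf{b}$-singular vector in $L_{\bf b}(\la)$ of weight $\la-\alpha$. The $\mf{g}$-submodule it generates has all weights bounded above by $\la-\alpha$ in the standard dominance order and therefore does not contain $v_\la$; this contradicts the irreducibility of $L_{\bf b}(\la)$. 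Hence $f_\alpha v_\la=0$, completing the argument.

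The main obstacle is purely notational: checking by hand the finitely many super-commutators $[X,f_\alpha]$ as $X$ ranges over the positive simple root vectors of $\mf{b}'$. The only conceptual ingredient beyond this bookkeeping is the identity $f_\alpha^2=0$ in $U(\mf{g})$, which is precisely the isotropy of the odd root $\alpha$.
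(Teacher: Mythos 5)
Your argument is correct and complete. The paper does not actually prove this lemma --- it states it as ``well known'' and cites Penkov--Serganova and Kac--Wakimoto --- and what you have written out is precisely the standard odd-reflection argument underlying those references: produce the $\mf b'$-singular vector $f_\alpha v_\la$ (or $v_\la$ itself when $(\la,\alpha)=0$), check the finitely many super-commutators with the new simple root vectors, and use irreducibility to conclude. All the key computations check out: $e_\alpha f_\alpha v_\la=(\la,\alpha)v_\la$, the brackets $[e^{\bf b}_{\ka-1,\ka+1},f_\alpha]=e^{\bf b}_{\ka-1,\ka}$ and $[e^{\bf b}_{\ka,\ka+2},f_\alpha]=\pm e^{\bf b}_{\ka+1,\ka+2}$ land in $\mf n_{\bf b}$, $f_\alpha^2=0$ by isotropy, and in the atypical case the would-be singular vector $f_\alpha v_\la$ of weight $\la-\alpha$ must vanish in an irreducible module. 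Nothing is missing.
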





\subsection{BGG category}
\label{sec:BGGmn}

For $\mu\in\h_{m|n}^*$ and a $\gl(m|n)$-module $M$ we denote the
$\mu$-weight space of $M$ as usual by $M_\mu=\{x\in M\vert
hx=\mu(h)x,\forall h\in \h_{m|n}\}$.

\begin{definition}
Let ${\bf b}$ be a $0^m1^n$-sequence. The Bernstein-Gelfand-Gelfand
(BGG) category $\Omn_{\bf b}$ is the category of finitely generated
$\h_{m|n}$-semisimple $\gl(m|n)$-modules $M$ such that
\begin{itemize}
\item[(i)]
$M=\bigoplus_{\mu\in \wtl}M_\mu$ and $\dim M_\mu<\infty$;

\item[(ii)]
there exist finitely many weights ${}^1\la,{}^2\la,\ldots,{}^k\la\in X(m|n)$
(depending on $M$) such that if $\mu$ is a weight in $M$, then
$\mu\in{{}^i\la}-\sum_{\alpha\in{\Pi({\bf b}})}\Z_+\alpha$, for
some $i$.
\end{itemize}
The morphisms in $\Omn_{\bf b}$ are all (not necessarily even)
homomorphisms of $\gl(m|n)$-modules.
\end{definition}
In short, $\Omn_{\bf b}$ is the category of finitely generated
integral weight $\gl(m|n)$-modules that are $\mf{b}$-locally finite.
The $\gl(m|n)$-modules $L_{\bf b}(\la)$ and $M_{\bf b}(\la)$,
for $\la\in \wtl$, are objects in the BGG category $\Omn_{\bf b}$.

Let $M\in\Omn_{\bf b}$ so that $M=\bigoplus_{\gamma\in \wtl}M_\gamma$.
For $\vartheta\in\Z_2$ we define
$$
\wtl_\vartheta:= \big\{\gamma\in \wtl \vert
\sum_{i=m+1}^n\langle\gamma,e_{ii}\rangle\equiv\vartheta \big\}.
$$
Introduce the following $\glmn$-module whose $\Z_2$-grading
is specified by the action of $\gl(m|n)$ (cf.~\cite[Section 2.5]{CL})
\begin{align}\label{Z2:gradation}
M' =M'_{\bar 0} \oplus M'_{\bar 1}, \qquad \text{where }
M'_{\vartheta}:=\bigoplus_{\gamma\in \wtl_\vartheta}M_\gamma
\;\;(\vartheta\in \Z_2).
\end{align}
Then $M'\in\Omn_{\bf b}$, and the identity map (which does not necessarily preserves the $\Z_2$-gradation) gives an isomorphism $M\cong M'$ in the category
$\Omn_{\bf b}$. Let us denote by $\Omn_{{\bf b},\bar{0}}$ the full
subcategory of $\Omn_{\bf b}$ consisting of objects with
$\Z_2$-gradation given by \eqref{Z2:gradation}. Then all morphisms
in $\Omn_{{\bf b},\bar{0}}$ are automatically even, and hence
$\Omn_{{\bf b},\bar{0}}$ is an abelian category.  Since the
categories $\Omn_{{\bf b},\bar{0}}$ and $\Omn_{\bf b}$ have
isomorphic skeleton subcategories, $\Omn_{{\bf b},\bar{0}}$ and
$\Omn_{\bf b}$ are equivalent categories. It follows that $\Omn_{\bf
b}$ is an abelian category.

We adopt the following convention. When dealing with the BGG category associated to the standard
$0^m1^n$-sequence $\bf b_{\rm st}$ and the standard Borel subalgebra ${\mf
b}_{\rm st}$, we will drop  the subscript ${\bf b}_{\text{st}}$ and
write the corresponding category, Verma module, and irreducible module as
$\Omn$, $M(\la)$, and $L(\la)$, respectively.

\begin{prop}\label{prop:sameO}
The categories $\Omn_{\bf b}$ are identical, for all
$0^m1^n$-sequences ${\bf b}$.
\end{prop}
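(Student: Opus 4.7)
The plan is to reduce to the case of adjacent sequences via the fact that any two $0^m1^n$-sequences are connected by a chain of adjacent transpositions, and then exploit the odd reflection structure to identify $\Omn_{\bf b}$ with $\Omn_{{\bf b}'}$ for adjacent ${\bf b},{\bf b}'$. Condition (i), the integrality of weights (the lattice $\wtl$), the Cartan $\h_{m|n}$, and the notion of finite generation as a $\glmn$-module are all manifestly independent of ${\bf b}$, so only condition (ii), which is formulated in terms of the simple system $\Pi({\bf b})$, needs to be controlled.

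First I would reformulate (ii) as local finiteness: for a finitely generated, $\h_{m|n}$-semisimple, integral weight $\glmn$-module $M$ with $\dim M_\mu < \infty$, condition (ii) relative to ${\bf b}$ is equivalent to the statement that $M$ is $\mf{b}$-locally finite, i.e., $U(\mf{b})v$ is finite-dimensional for every $v\in M$. The forward direction takes the ${}^i\la$'s to be the highest weights of the finite-dimensional $\mf{b}$-submodules $U(\mf{b})v_j$, with $v_1,\dots,v_r$ a finite generating set, and uses $U(\glmn)=U(\mf{n}^-_{\bf b})U(\mf{b})$ to propagate the bound. The reverse direction uses that for $v\in M_\nu$ any weight $\mu$ of $U(\mf{b})v$ satisfies $\nu\preceq_{\bf b}\mu\preceq_{\bf b}{}^i\la$ for some $i$, and such a doubly bounded set of weights in $\wtl$ is finite.

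Next I would compare the two Borels. For adjacent sequences ${\bf b}=({\bf b}^1,0,1,{\bf b}^2)$ and ${\bf b}'=({\bf b}^1,1,0,{\bf b}^2)$, setting $\alpha=\ep^{\bf b}_\ka-\ep^{\bf b}_{\ka+1}$, the odd reflection identity $\Phi^+_{{\bf b}'}=(\Phi^+_{\bf b}\setminus\{\alpha\})\cup\{-\alpha\}$ yields a decomposition $\mf{b}'=(\mf{b}\cap\mf{b}')\oplus \C f$ as superspaces, where $f$ is a nonzero root vector of weight $-\alpha$, and $\mf{b}\cap\mf{b}'$ is a common codimension-one subalgebra (closed under bracket because $\alpha$ simple cannot split as a sum of two positive roots of ${\bf b}$). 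Because $\alpha$ is odd and isotropic (since $b_\ka\neq b_{\ka+1}$ forces $(\alpha|\alpha)=0$), the odd element $f$ satisfies $f^2=0$, so by PBW for Lie superalgebras $U(\mf{b}')=U(\mf{b}\cap\mf{b}')+U(\mf{b}\cap\mf{b}')\cdot f$; consequently, for any $v\in M$,
\[
U(\mf{b}')\,v \;=\; U(\mf{b}\cap\mf{b}')\,v \;+\; U(\mf{b}\cap\mf{b}')(fv) \;\subseteq\; U(\mf{b})\,v \;+\; U(\mf{b})(fv).
\]
When $M$ is $\mf{b}$-locally finite, both summands on the right are finite-dimensional, so $M$ is $\mf{b}'$-locally finite; exchanging the roles of ${\bf b}$ and ${\bf b}'$ yields the reverse implication. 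Combined with the local finiteness reformulation and the reduction to adjacent sequences, this identifies $\Omn_{\bf b}$ with $\Omn_{{\bf b}'}$.

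The main subtlety is that condition (ii) is formulated via the partial order determined by $\Pi({\bf b})$, and this order changes genuinely under odd reflection: $\alpha\in\sum_{\gamma\in\Pi({\bf b})}\Z_+\gamma$ whereas $\alpha\notin\sum_{\gamma\in\Pi({\bf b}')}\Z_+\gamma$, so the downward cones $-\sum_{\gamma\in\Pi({\bf b})}\Z_+\gamma$ and $-\sum_{\gamma\in\Pi({\bf b}')}\Z_+\gamma$ in $\wtl$ are incomparable in general. A direct combinatorial matching of the ${}^i\la$'s across ${\bf b}$ and ${\bf b}'$ is therefore infeasible; reinterpreting (ii) as the intrinsic $\mf{b}$-local finiteness of $M$ is what makes the comparison across adjacent Borels possible, and the isotropy $f^2=0$ of the reflected odd simple root is essential for realizing $U(\mf{b}')$ as a rank-two free extension of $U(\mf{b}\cap\mf{b}')$ on either side.
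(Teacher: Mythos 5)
Your proof is correct, but it takes a genuinely different route from the paper's. Both arguments reduce to the case of adjacent sequences, but from there the paper argues through the module structure: it invokes the odd reflection lemma (Lemma \ref{lem:irred:borels}) to see that $\Omn_{\bf b}$ and $\Omn$ share the same simple objects, uses the finite-length property of Verma modules (inherited from $\gl(m|n)_{\bar 0}$) together with the character identities of Lemmas \ref{lem:verma:borels} and \ref{lem:irred:borels} to show every $\bf b$-Verma module has finite length with composition factors in $\mathrm{Irr}\,\mathcal O$, and then characterizes both categories intrinsically as the integral-weight modules with finite composition series with factors in $\mathrm{Irr}\,\mathcal O$. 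You instead work at the level of the Borel subalgebras themselves: after reformulating condition (ii) as $\mf b$-local finiteness (a reformulation the paper also states, in passing, just after the definition), you use the PBW decomposition $U(\mf b') = U(\mf b\cap\mf b') + U(\mf b\cap\mf b')\,f$ with $f\in\glmn_{-\alpha}$ and $f^2=0$ (valid since $\alpha$ is isotropic, so $-2\alpha$ is not a root) to get $U(\mf b')v\subseteq U(\mf b)v + U(\mf b)(fv)$, which transfers local finiteness directly. Your argument is more elementary — it avoids the finite-length input for Verma modules, which ultimately rests on the classical theory for the even part — and it applies verbatim to any basic Lie superalgebra, consistent with the paper's remark that the results of this section hold in that generality. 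What the paper's approach buys in exchange is the characterization of $\Omn$ via composition series with factors in $\mathrm{Irr}\,\mathcal O$, which is a useful structural fact in its own right. Your closing observation, that the cones $-\sum_{\gamma\in\Pi({\bf b})}\Z_+\gamma$ and $-\sum_{\gamma\in\Pi({\bf b}')}\Z_+\gamma$ are incomparable so that a naive matching of the ${}^i\la$'s fails, correctly identifies why the reformulation as local finiteness is the essential move.
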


\begin{proof}
We shall show that the category $\Omn_{\bf b}$ for a fixed $\bf b$
is identical to $\Omn$.

It is clear that any two $0^m1^n$-sequences, say $\bf b$ and $\bf
b_{\rm st}$, can be connected via a sequence of $0^m1^n$-sequences
such that any two neighboring sequences are adjacent. Accordingly, the
Borel subalgebras $\mf b$ and $\mf b_{\rm st}$ can be converted to
one another via a sequence of odd reflections. It follows by this
observation and Lemma~ \ref{lem:irred:borels} that the categories
$\Omn_{\bf b}$ and $\Omn$ have the same collection of simple objects, denoted by
$\text{Irr} \mathcal O$.

Since every Verma module $M(\la) \in\Omn$ has finite length when
regarded as a $\gl(m|n)_{\bar{0}}$-module, it has finite length as a
$\gl(m|n)$-module as well. It follows by this and the character
comparisons in Lemmas~\ref{lem:verma:borels} and
\ref{lem:irred:borels} that every $\bf b'$-Verma module $M_{\bf
b}(\la) \in\Omn_{\bf b}$ has finite length with composition factors
in $\text{Irr} \mathcal O$. So we conclude that both categories
$\Omn$ and $\Omn_{\bf b}$ can be characterized as the category of
integral weight $\h_{m|n}$-semisimple $\gl(m|n)$-modules that have
finite composition series with composition factors in $\text{Irr}
\mathcal O$, and hence $\Omn_{\bf b}=\Omn$.
\end{proof}

\subsection{Weyl vectors}\label{sec:super:Burhat}

The supertrace function
\begin{equation} \label{strace}
\mathrm{Str}=\sum_{i=1}^m\ep_i-\sum_{j=1}^n\ep_{m+j}
\end{equation}
satisfies the fundamental property that
\begin{equation} \label{eq:strace}
(\mathrm{Str}\vert \gamma) =0, \qquad \forall \gamma \in \Phi.
\end{equation}

Let $\mf{b}$ be the Borel subalgebra corresponding to the
$0^m1^n$-sequence $\bf b$. Recall $\Phi^+_{{\bf b},\bar{0}}$ and
$\Phi^+_{{\bf b},\bar{1}}$ denote the sets of positive even
and positive odd roots of $\mf{b}$, respectively. We define the Weyl
vector $\wt{\rho}_{\bf b}$ and its normalized version $\rho_{\bf b}$
by
\begin{align}  \label{eq:2rho}
\begin{split}
\wt{\rho}_{\bf b}
 &:=\hf\sum_{\alpha\in \Phi^+_{{\bf
b},\bar{0}}}\alpha-\hf\sum_{\beta\in\Phi^+_{{\bf b},\bar{1}} }\beta,
  \\
\rho_{\bf b}
 &:=\wt{\rho}_{\bf b}+\frac{m-n+1}{2}\mathrm{Str}.
 \end{split}
\end{align}
We shall always use the normalized $\rho_{\bf b}$, which behaves as
well as $\wt{\rho}_{\bf b}$ in most circumstances and is more
convenient for our purpose.

\begin{lem}  \label{lem:rho}
The element $\rho_{\bf b} \in \h_{m|n}^*$ is characterized by the
following two properties:
\begin{itemize}
\item[(i)]
$(\rho_{\bf b}\vert \beta) =\hf(\beta\vert \beta)$, for every simple root
$\beta\in \Pi(\bf{b})$.

\item[(ii)]
$
(\rho_{\bf b}\vert \ep_{m+n}) =\left\{
 \begin{array}{ll}
 0, & \text{ if } b_{m+n} =1,
 \\
 1, & \text{ if } b_{m+n} =0.
 \end{array}
 \right.
$
\end{itemize}
Moreover, we have $\rho_{\bf b} \in \wtl$, for every $\bf b$.
\end{lem}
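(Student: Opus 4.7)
The plan is to establish (i), (ii), and the integrality $\rho_{\bf b}\in X(m|n)$ by induction on the number of odd reflections connecting ${\bf b}$ to the standard sequence ${\bf b}_{\text{st}}=(0^m,1^n)$, with the characterization (uniqueness) following from a brief linear-algebra observation.

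For the base case ${\bf b}={\bf b}_{\text{st}}$, I would compute $\rho_{{\bf b}_{\text{st}}}$ directly. Summing the positive even roots of the $\gl(m)\oplus\gl(n)$-factor and the positive odd roots $\{\ep_i-\ep_{m+j}:i\le m,j\le n\}$, and then adding $\tfrac{m-n+1}{2}\mathrm{Str}$, yields
\[
\rho_{{\bf b}_{\text{st}}}=\sum_{i=1}^m(m+1-n-i)\ep_i+\sum_{j=1}^n(n-j)\ep_{m+j}\in X(m|n).
\]
Properties (i) and (ii) are then immediate: for each of the three types of simple roots (two even types from the $\gl(m)\oplus\gl(n)$-factor and the unique odd simple root $\ep_m-\ep_{m+1}$), the pairings can be checked directly, and the $\ep_{m+n}$-coefficient equals $0$, matching (ii) since $b_{m+n}=1$ for ${\bf b}_{\text{st}}$.

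For the inductive step, given adjacent sequences ${\bf b},{\bf b}'$ related by an odd reflection at positions $\kappa,\kappa+1$ with odd simple root $\alpha=\ep^{\bf b}_\kappa-\ep^{\bf b}_{\kappa+1}$, I would first establish the transformation
\[
\rho_{{\bf b}'}=\rho_{\bf b}+\alpha.
\]
This follows since $\Phi^+_{{\bf b}',\bar 0}=\Phi^+_{{\bf b},\bar 0}$ and $\Phi^+_{{\bf b}',\bar 1}=(\Phi^+_{{\bf b},\bar 1}\setminus\{\alpha\})\cup\{-\alpha\}$ give $\tilde\rho_{{\bf b}'}=\tilde\rho_{\bf b}+\alpha$, while $\mathrm{Str}$ is defined in the fixed standard basis and hence is independent of ${\bf b}$. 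Integrality persists since $\alpha\in X(m|n)$. To propagate (i) I would use the explicit relations $\alpha^{{\bf b}'}_\kappa=-\alpha$, $\alpha^{{\bf b}'}_{\kappa\pm 1}=\alpha^{\bf b}_{\kappa\pm 1}+\alpha$ (with other simple roots unchanged), together with $(\alpha,\alpha)=0$ and the inductive identity $(\rho_{\bf b},\alpha)=0$. To propagate (ii) I would split into two subcases: when $\kappa+1<m+n$, neither $b_{m+n}$ nor $\ep_{m+n}$ is affected and $(\alpha,\ep_{m+n})=0$, so (ii) is preserved; the delicate case $\kappa+1=m+n$ is handled by computing $(\alpha,\ep_{m+n})$ directly and invoking (i) applied to $\alpha$ to relate $(\rho_{\bf b},\ep^{\bf b}_{m+n-1})$ to $(\rho_{\bf b},\ep^{\bf b}_{m+n})$, after which the arithmetic matches the case distinction between $b_{m+n}$ and $b'_{m+n}$.

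Finally, for the characterization: the simple roots in $\Pi({\bf b})$ span an $(m+n-1)$-dimensional subspace of $\h_{m|n}^*$ whose orthogonal complement under $(\cdot|\cdot)$ is spanned by $\mathrm{Str}$, by \eqref{eq:strace} and a dimension count. Hence (i) determines $\rho_{\bf b}$ up to a scalar multiple of $\mathrm{Str}$, and that scalar is pinned down by (ii) since $(\mathrm{Str}|\ep_{m+n})=1$. The main obstacle is the case $\kappa+1=m+n$ in propagating (ii); all other steps are routine bookkeeping.
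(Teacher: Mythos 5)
Your proposal is correct and follows essentially the same route as the paper: uniqueness via the observation that $\mathrm{Str}$ spans the orthogonal complement of the span of $\Pi({\bf b})$, a direct computation of $\rho_{{\bf b}_{\rm st}}$ for the base case, and propagation of (ii) through odd reflections using $\rho_{{\bf b}'}=\rho_{\bf b}+\alpha$ with the same case split on whether the reflection occurs at the last position (i.e., whether ${\bf b}^2$ is empty). The only differences are cosmetic: the paper obtains (i) for all ${\bf b}$ at once from $(\wt{\rho}_{\bf b}\vert\beta)=\hf(\beta\vert\beta)$ together with $(\mathrm{Str}\vert\beta)=0$ rather than propagating it inductively, and in your ``easy'' case $\ka+1<m+n$ the asserted identity $(\alpha\vert\ep_{m+n})=0$ is valid only when $\ep_{m+n}$ is read as $\ep^{\bf b}_{m+n}$ (the reading under which the lemma itself is true), since for the fixed standard basis vector it can fail, e.g.\ for ${\bf b}=(0,1,0)$.
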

Lemma~\ref{lem:rho} implies that $\rho_{\bf b}$ here coincides with
the one used by Kujawa \cite[2.7]{Ku}.

\begin{proof}
Clearly an element satisfying (i) and (ii) is unique. Thanks to
\eqref{eq:strace}, we have $(\rho_{\bf b}\vert \beta) =(\wt{\rho}_{\bf
b}\vert \beta) =\hf(\beta\vert\beta)$, for every simple root $\beta\in
\Pi(\bf{b})$. So it remains to show that $\rho_{\bf b}$ defined in
\eqref{eq:2rho} satisfies (ii).

A direct computation shows that, for ${\bf b_{\text st}}
=(0,\ldots,0,1,\ldots,1)$,
$$
\rho_{\bf b_{\text st}}=\sum_{i=1}^m(m+1-i-n)\ep_i +
\sum_{j=m+1}^{m+n}(m+n-j)\ep_j,
$$
and so $\rho_{\bf b_{\text st}}$ satisfies (ii). As observed in
proof of Proposition~\ref{prop:sameO}, the Borel subalgebra $\mf b$
(associated to $\bf b$) and $\mf b_{\rm st}$ can be converted to one
another via a sequence of odd reflections. So it remains to verify
the following consistency of (ii): if the property (ii) holds for
${\bf b}=({\bf b}^1,{0},{1},{\bf b}^2)$ (with $0$ and $1$ at $\ka$th
and $(\ka+1)$th places) then it holds for the adjacent sequence
${\bf b'}=({\bf b}^1,{1},{0},{\bf b}^2)$. Note that $\rho_{\bf{b'}}
=\rho_{\bf{b}}+\alpha$, where $\alpha =\ep_\ka^{\bf
b}-\ep_{\ka+1}^{\bf b}$. Now this consistency of (ii) follows from
by a quick case-by-case checking, depending on whether or not
${\bf b}^2$ is empty.
\end{proof}

Define a bijection
\begin{equation}  \label{eq:bijXZ}
\wtl \longrightarrow \Z^{m+n}, \qquad  \la \mapsto f^{\bf b}_\la,
\end{equation}
where $f^{\bf b}_\la\in\Z^{m+n}$ is defined by letting
\begin{align}\label{la:to:fla}
f^{\bf b}_\la(i):=(\la+\rho_{\bf b}\vert\ep_i^{\bf b}),\quad \forall i\in [m+n].
\end{align}
The {\em Bruhat ordering} with respect to the Borel subalgebra
$\mf{b}$ is the partial ordering on $\wtl$ induced by the Bruhat
ordering $\succeq_{\bf b}$ on $\Z^{m+n}$ under the above bijection.
This terminology is justified by the role it plays in representation
theory of $\glmn$ (see \cite[Section~2.2]{CWbook}; also see
\cite{Br, Se} in the case of the standard Borel ${\mf
b}_{\text{st}}$). The Bruhat ordering on $\wtl$ will also be denoted
by $\succeq_{\bf b}$ by abuse of notation. Recall $d_i$ from
\eqref{eq:di}. For adjacent sequences ${\bf b}$ and ${\bf b}'$ (for
notations see \S\ref{sec:CB2} or the above proof of
Lemma~\ref{lem:rho}), we have
\begin{align*}
f_{\la}^{{\bf b}'}=f_{\la}^{\bf
b}+d_{\ka}-d_{\kappa+1},\quad\forall\la\in \wtl.
\end{align*}

Now consider the standard sequence ${\bf b}_{\text{st}}$ and the
standard Borel subalgebra ${\mf b}_{\text{st}}$. A weight $\la \in \wtl$ is called
{\em typical} if $f^{{\bf b}_{\text{st}}}_\la(i) \neq f^{{\bf
b}_{\text{st}}}_\la(j)$ for all $i,j$ such that $1\le i \le m< j\le
m+n$, and $\la$ is {\em anti-dominant} if $f^{{\bf
b}_{\text{st}}}_\la(1)\leq f^{{\bf b}_{\text{st}}}_\la(2)\ldots \leq
f^{{\bf b}_{\text{st}}}_\la(m)$ and $f^{{\bf
b}_{\text{st}}}_\la(m+1) \geq \ldots \geq f^{{\bf
b}_{\text{st}}}_\la(m+n)$.

\subsection{Tilting modules}

Recall that the Lie superalgebra $\gl(m|n)$ has an automorphism
$\tau$ given by the formula:
\begin{align*}
\tau(e_{ij}):= -(-1)^{|i|(|i|+|j|)} e_{ji}.
\end{align*}
For an object $M =\oplus_{\nu\in \wtl} M_\nu\in\Omn$, we let
$$
M^\vee:=\oplus_{\nu\in \wtl} M_\nu^*
$$
be the restricted dual of $M$.
We may define an action of $\gl(m|n)$ on $M^\vee$ by $(g\cdot
f)(x):=-f(\tau(g)x)$, for $f\in M^\vee$, $g\in\gl(m|n)$, and $x\in
M$.  We denote the resulting $\gl(m|n)$-module by $M^\tau$, which is
an object in $\Omn$. An object $M\in\Omn$ is called {\em self-dual},
if $M^\tau\cong M$. Clearly, $L(\la)$ is self-dual, for all $\la\in
\wtl$.

Fix an arbitrary $0^m1^n$-sequence ${\bf b}$.  An object $M\in\Omn$
is said to have a {\em ${\bf b}$-Verma flag} (respectively, a {\em
dual ${\bf b}$-Verma flag}), if $M$ has a filtration
\begin{align*}
M_0=0\subseteq M_1\subseteq M_2\subseteq\cdots\subseteq M_t=M,
\end{align*}
such that $M_i/M_{i-1}\cong M_{\bf b}(\gamma_i)$ (respectively,
$M_i/M_{i-1}\cong M_{\bf b}(\gamma_i)^\tau$), for some $\gamma_i\in \wtl$ and
$1\le i\le t$.

\begin{definition}\label{def:tilt}
Associated with each $\la\in \wtl$, a ${\bf b}$-{\em tilting module}
$T_{\bf b}(\la)$ is an indecomposable $\gl(m|n)$-module in $\mc
O^{m|n}$ satisfying the following two conditions:
\begin{itemize}
\item[(i)]
$T_{\bf b}(\la)$ has a ${\bf b}$-Verma flag with $M_{\bf b}(\la)$ at
the bottom.

\item[(ii)]
$\text{Ext}^1_{\Omn}(M_{\bf b}(\mu),T_{\bf b}(\la))=0$, for all
$\mu\in \wtl$.
\end{itemize}
\end{definition}

Combining \cite[Theorem 6.3, Lemma 7.3]{Br2} with \cite[Theorem
6.4]{Br}, as a super generalization of \cite{So2}, we conclude that
the ${\bf b}$-tilting module $T_{\bf b}(\la)$, for every $\la\in \wtl$,
in the category $\Omn$ exists and is unique (nevertheless, it
depends on $\bf b$). Let $\mc O_{\bf b}^{m|n,\Delta}$ denote the
full subcategory of $\mc O^{m|n}$ consisting of objects that
have finite ${\bf b}$-Verma flags.

The following lemma is standard in a highest weight category
\cite{Don} (for a proof see e.g.~\cite[Proposition 3.7]{CW}).

\begin{lem}\label{lem:tilt:aux1}
Let ${\bf b}$ be a ${0^m1^n}$-sequence.
\begin{itemize}
\item[(i)]
If $N\in\Omn$ has a ${\bf b}$-Verma flag, then
$\text{Ext}^1_{\Omn}(N,M_{\bf b}(\mu)^\tau)=0$, for all $\mu\in \wtl$.

\item[(ii)]
$N\in\Omn$ has a dual ${\bf b}$-Verma flag if and only if
$\text{Ext}^1_{\Omn}(M_{\bf b}(\mu),N)=0$, for all $\mu\in \wtl$.
\end{itemize}
\end{lem}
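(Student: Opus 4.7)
The plan is to establish both parts by standard highest weight category arguments, with the crucial technical ingredient being the vanishing
\[
\mathrm{Ext}^1_{\Omn}\bigl(M_{\bf b}(\la), M_{\bf b}(\mu)^\tau\bigr) = 0, \qquad \forall\, \la,\mu\in X(m|n). \tag{$*$}
\]
To verify $(*)$, I would take an arbitrary short exact sequence $0 \to M_{\bf b}(\mu)^\tau \to E \to M_{\bf b}(\la) \to 0$ in $\Omn$ and construct a splitting. Weight-space considerations show that $M_{\bf b}(\mu)^\tau$ has the same character as $M_{\bf b}(\mu)$, with $\mu$ as its unique maximal weight; in particular the $\la$-weight space of $M_{\bf b}(\mu)^\tau$ is either zero (when $\la \not\preceq_{\bf b}\mu$) or spanned by vectors that all lie in the image of $\mf{n}^-$. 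In either case, the generator of $M_{\bf b}(\la)$ can be lifted to a $\mf{b}$-invariant weight vector in $E$, which by universality of Verma modules yields the desired splitting.

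For part~(i), I would induct on the length $t$ of a $\bf b$-Verma flag $0=N_0\subset N_1\subset\cdots\subset N_t=N$. The short exact sequence $0 \to N_{t-1} \to N \to M_{\bf b}(\la_t) \to 0$ gives a long exact sequence containing $\mathrm{Ext}^1(M_{\bf b}(\la_t), M_{\bf b}(\mu)^\tau) \to \mathrm{Ext}^1(N, M_{\bf b}(\mu)^\tau) \to \mathrm{Ext}^1(N_{t-1}, M_{\bf b}(\mu)^\tau)$. The outer terms vanish by $(*)$ and the inductive hypothesis, forcing $\mathrm{Ext}^1(N, M_{\bf b}(\mu)^\tau) = 0$.

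For the ``only if'' direction of part~(ii), I would invoke that $\tau$ is a contravariant equivalence on $\Omn$ that sends $\bf b$-Verma modules to dual $\bf b$-Verma modules and vice versa. If $N$ has a dual $\bf b$-Verma flag, then $N^\tau$ has a $\bf b$-Verma flag, so by part~(i) we have $\mathrm{Ext}^1(N^\tau, M_{\bf b}(\mu)^\tau)=0$; applying $\tau$-duality then yields $\mathrm{Ext}^1(M_{\bf b}(\mu), N)=0$. For the ``if'' direction, I would induct on a suitable truncation of the character of $N$. Pick a weight $\la$ that is maximal in $\mathrm{supp}(N)$ with respect to $\preceq_{\bf b}$; maximality ensures the existence of a nonzero $\mf{n}^+$-invariant vector in $N_\la$, which dualizes to produce an embedding $M_{\bf b}(\la)^\tau \hookrightarrow N$. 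The short exact sequence $0 \to M_{\bf b}(\la)^\tau \to N \to N' \to 0$ combined with $(*)$ (applied to $\mathrm{Ext}^1(M_{\bf b}(\mu), M_{\bf b}(\la)^\tau)$ in the long exact sequence) shows that $N'$ again satisfies $\mathrm{Ext}^1(M_{\bf b}(\mu), N')=0$ for all $\mu$, and a character-based induction produces the desired dual $\bf b$-Verma flag.

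The main obstacle is making the base case $(*)$ precise with enough care that the splitting vector can genuinely be chosen to be a highest weight vector (the $\la$-weight space of $E$ may be larger than one-dimensional, and one must kill the contribution from $M_{\bf b}(\mu)^\tau$). A secondary subtlety is verifying, for the ``if'' direction of part~(ii), that the embedding $M_{\bf b}(\la)^\tau \hookrightarrow N$ can indeed be constructed from a maximal weight without additional dominance hypotheses; here the finite interval property of $\preceq_{\bf b}$ established in \lemref{lem:f:to:g:finite} is what guarantees the inductive scheme terminates.
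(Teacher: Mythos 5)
The paper itself offers no proof of this lemma --- it simply cites Donkin and \cite[Proposition 3.7]{CW} --- so you are reconstructing the standard highest-weight-category argument from scratch. Your key vanishing $(*)$, part (i), and the ``only if'' half of (ii) are essentially sound: for $(*)$, when $\la\not\prec_{\bf b}\mu$ the weight $\la$ is maximal in $E$ and any lift of the generator is automatically a $\mf b$-highest weight vector, and the case $\la\prec_{\bf b}\mu$, which you correctly flag as problematic (one cannot in general correct the lift by an element of $(M_{\bf b}(\mu)^\tau)_\la$ by ``weight considerations'' alone), is handled by applying $\tau$ to the extension and reducing to the first case. The induction in (i) and the $\tau$-duality argument for the ``only if'' of (ii) are exactly right.

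The genuine gap is in the ``if'' direction of (ii), in two places. First, a maximal weight $\la$ of $N$ gives a $\mf n^+$-invariant vector $v\in N_\la$ and hence a homomorphism $M_{\bf b}(\la)\to N$; it does \emph{not} ``dualize'' to an embedding $M_{\bf b}(\la)^\tau\hookrightarrow N$ (dualizing produces a map out of $N^\tau$, not into $N$). Indeed $N=M_{\bf b}(\la)$ has a maximal vector of maximal weight but admits no submodule isomorphic to $M_{\bf b}(\la)^\tau$ unless it is simple --- consistent with the fact that $M_{\bf b}(\la)$ generally fails the $\mathrm{Ext}^1$ condition. Producing the costandard submodule genuinely requires the hypothesis $\mathrm{Ext}^1_{\Omn}(M_{\bf b}(\mu),N)=0$; the cleanest route is to transport everything through $\tau$ and prove the equivalent Verma-flag criterion for $N^\tau$. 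Second, even granting $0\to M_{\bf b}(\la)^\tau\to N\to N'\to 0$, the relevant segment of the long exact sequence is $\mathrm{Ext}^1(M_{\bf b}(\mu),N)\to\mathrm{Ext}^1(M_{\bf b}(\mu),N')\to\mathrm{Ext}^2(M_{\bf b}(\mu),M_{\bf b}(\la)^\tau)$, so the term you must kill is an $\mathrm{Ext}^2$ between a standard and a costandard object, not the $\mathrm{Ext}^1$ of $(*)$. That higher vanishing is true but needs its own argument (e.g.\ dimension shifting along a short exact sequence $0\to K\to P\to M_{\bf b}(\mu)\to 0$ with $P$ projective and $K$ admitting a Verma flag). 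Finally, the induction is best run on the composition length of $N$, which is finite for objects of $\Omn$, rather than on a truncation of the character.
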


We have the following useful characterization of tilting modules,
which is well known in the algebraic group or Kac-Moody setting (cf.
\cite{Don, So}). The same proof can be adapted in our setting, using
Lemma~\ref{lem:tilt:aux1} and \cite[Proposition 5.6]{So2}.

\begin{lem}\label{lem:char:tilt}
A $\gl(m|n)$-module $T\in\Omn$ is a ${\bf b}$-tilting module if and
only if $T$ is an indecomposable self-dual module that has a ${\bf
b}$-Verma flag.
\end{lem}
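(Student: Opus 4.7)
The plan is to deduce both directions from two facts: (a) the operation $\tau$ swaps $\mathbf{b}$-Verma flags and dual $\mathbf{b}$-Verma flags, because $M_{\bf b}(\mu)^{\tau\tau}\cong M_{\bf b}(\mu)$ and taking $(-)^\tau$ is exact and contravariant on $\Omn$; and (b) an Ext-characterization of dual Verma flags provided by Lemma~\ref{lem:tilt:aux1}(ii), together with the structural result \cite[Proposition~5.6]{So2} that a module possessing both a $\mathbf{b}$-Verma flag and a dual $\mathbf{b}$-Verma flag decomposes as a direct sum of $\mathbf{b}$-tilting modules (and is thus a single tilting module when indecomposable).

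\smallskip

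\noindent\emph{$(\Rightarrow)$.} Suppose $T=T_{\bf b}(\la)$ is a $\mathbf{b}$-tilting module. By Definition~\ref{def:tilt}, $T$ is indecomposable and has a $\mathbf{b}$-Verma flag, and by Lemma~\ref{lem:tilt:aux1}(ii) the vanishing condition (ii) of the definition ensures that $T$ also has a dual $\mathbf{b}$-Verma flag. Applying the duality $(-)^\tau$ and using (a), the module $T^\tau$ is an indecomposable object of $\Omn$ equipped with both a $\mathbf{b}$-Verma flag and a dual $\mathbf{b}$-Verma flag. By \cite[Proposition~5.6]{So2} (adapted to $\Omn$, which is legitimate since $\Omn$ is a highest-weight-type category by Proposition~\ref{prop:sameO} and the finiteness of composition series established in its proof), $T^\tau$ is a $\mathbf{b}$-tilting module. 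Since $T$ and $T^\tau$ have the same highest weight $\la$, the uniqueness of tilting modules (combined from \cite{Br2} and \cite{Br} as cited in the paragraph preceding Lemma~\ref{lem:tilt:aux1}) yields $T^\tau\cong T$; hence $T$ is self-dual.

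\smallskip

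\noindent\emph{$(\Leftarrow)$.} Conversely, assume $T\in\Omn$ is indecomposable, self-dual, and has a $\mathbf{b}$-Verma flag. Self-duality and (a) give a dual $\mathbf{b}$-Verma flag on $T\cong T^\tau$. Thus $T$ has both flags, so by \cite[Proposition~5.6]{So2} it is a direct sum of $\mathbf{b}$-tilting modules; indecomposability then forces $T\cong T_{\bf b}(\la)$ for some $\la$. Equivalently, Lemma~\ref{lem:tilt:aux1}(ii) translates the dual Verma flag back into the Ext-vanishing condition of Definition~\ref{def:tilt}(ii), and the bottom Verma module in any Verma flag of the indecomposable $T$ must be a submodule of the form $M_{\bf b}(\la)$ with $\la$ the highest weight, verifying (i) as well.

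\smallskip

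\noindent\emph{Main obstacle.} The one non-routine ingredient is justifying the use of \cite[Proposition~5.6]{So2} in the super setting, i.e., that any $N\in\Omn$ with both a $\mathbf{b}$-Verma and dual $\mathbf{b}$-Verma flag is a direct sum of tilting modules. This is where the highest-weight-category structure of $\Omn$ (blockwise finiteness of composition length, existence of projective covers in truncated blocks) is really used; once this decomposition statement is in hand, everything else is formal manipulation via $\tau$ and Lemma~\ref{lem:tilt:aux1}.
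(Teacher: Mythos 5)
Your argument is correct and is exactly the route the paper intends: the paper gives no details beyond saying the standard proof adapts ``using Lemma~\ref{lem:tilt:aux1} and \cite[Proposition 5.6]{So2}'', and your write-up is precisely that standard argument, converting Definition~\ref{def:tilt}(ii) into a dual ${\bf b}$-Verma flag via Lemma~\ref{lem:tilt:aux1}(ii), using that $\tau$ exchanges the two kinds of flags and preserves characters, and invoking the Soergel-type decomposition of a module with both flags into indecomposable tiltings. Your closing remark correctly identifies the only point the paper itself leaves implicit, namely the adaptation of \cite[Proposition 5.6]{So2} to $\Omn$.
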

%
%

When dealing with the standard $0^m1^n$-sequence ${\bf
b}_{\text{st}}$ and the standard Borel $\mf{b}_{\text{st}}$, we
shall continue the convention of suppressing ${\bf b}_{\rm st}$, and
hence denote the ${\bf b}_{\text{st}}$-tilting modules by $T(\la)$.

\begin{prop}\label{prop:tilt:diff:borel}
Let $T(\la)$ be the tilting module corresponding to $\la\in \wtl$ in
$\Omn$. Then $T(\la)$ is a ${\bf b}$-tilting module, for an
arbitrary $0^m1^n$-sequence ${\bf b}$.
\end{prop}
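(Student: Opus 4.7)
By Lemma~\ref{lem:char:tilt}, the standard tilting module $T(\la)$ is already indecomposable and self-dual, so it suffices to show that $T(\la)$ admits a ${\bf b}$-Verma flag for every $0^m1^n$-sequence ${\bf b}$. The plan is to induct on the length of a chain of adjacent sequences ${\bf b}_{\rm st}={\bf b}^{(0)},{\bf b}^{(1)},\ldots,{\bf b}^{(r)}={\bf b}$ connecting ${\bf b}_{\rm st}$ to ${\bf b}$; such a chain exists by the observation used in the proof of Proposition~\ref{prop:sameO}. The base $r=0$ is Definition~\ref{def:tilt} for the standard Borel, so the entire content is the inductive step: if a self-dual indecomposable $T\in\Omn$ admits a ${\bf b}$-Verma flag and ${\bf b}'$ is adjacent to ${\bf b}$, then $T$ admits a ${\bf b}'$-Verma flag.

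To execute the adjacent step, let $\alpha$ denote the odd isotropic root swapped by the reflection relating ${\bf b}$ and ${\bf b}'$. I would deploy the auxiliary modules $N_{{\bf b},{\bf b}'}(\mu)\in\Omn$ corresponding to the parabolic monomial basis element $N_f$ of Section~\ref{sec:adjacent}, designed to carry simultaneously a ${\bf b}$-Verma flag of length at most two (mirroring the Fock-space identity \eqref{eq:MNf}) and a ${\bf b}'$-Verma flag. The basic building block is the isomorphism $M_{{\bf b}'}(\mu-\alpha)\cong M_{\bf b}(\mu)$ whenever $(\mu,\alpha)\neq 0$, realized by the explicit map $v_{\mu-\alpha}^{{\bf b}'}\mapsto e_{-\alpha}v_\mu^{\bf b}$: surjectivity follows from the direct computation $e_\alpha e_{-\alpha}v_\mu^{\bf b}=(\mu,\alpha)v_\mu^{\bf b}$, and equality of characters is Lemma~\ref{lem:verma:borels}. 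For the atypical weights $(\mu,\alpha)=0$ this map degenerates, and the corresponding $N_{{\bf b},{\bf b}'}(\mu)$ fuses two atypical ${\bf b}$-Vermas into a single module still admitting a ${\bf b}'$-Verma flag. Given a ${\bf b}$-Verma flag of $T$, I would refine it into an $N$-filtration by combining adjacent atypical Verma factors, then read off a ${\bf b}'$-Verma flag from the ${\bf b}'$-Verma structure of each $N_{{\bf b},{\bf b}'}$.

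The main obstacle is precisely this refinement, since individual atypical ${\bf b}$-Verma modules do not admit ${\bf b}'$-Verma flags on their own. Making the grouping possible will require the self-duality of $T$ together with the Ext-vanishing $\mathrm{Ext}^1_{\Omn}(M_{\bf b}(\nu),T)=0$ for all $\nu$ (which follows from the dual ${\bf b}$-Verma flag of $T$ via Lemma~\ref{lem:tilt:aux1}); these inputs combine to force a symmetry on the multiplicities of atypical ${\bf b}$-Verma factors in any ${\bf b}$-flag of $T$, so that they appear in matched pairs fusable into $N$-modules. This pairing is the module-theoretic shadow of the canonical-basis compatibility captured by Theorem~\ref{S:T:to:T}, and it is exactly the reason the proposition holds for tilting modules and fails for arbitrary modules with ${\bf b}$-Verma flags.
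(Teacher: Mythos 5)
Your strategy --- induct along a chain of adjacent sequences and, at each odd reflection, reorganize the ${\bf b}$-Verma flag of $T$ into a flag by modules carrying both a ${\bf b}$- and a ${\bf b}'$-Verma flag --- is genuinely different from the paper's, and it has a gap at precisely the step you yourself flag as the main obstacle. First, a correction to the setup: the modules that fuse two atypical ${\bf b}$-Verma modules into one object with a ${\bf b}'$-Verma flag are the $U_{\bf b}(\mu)$ of Lemma~\ref{expand:repn:M:N} (induced from the rank-one tilting module, cf.\ \eqref{aux:105} and the Fock-space identity \eqref{eq:UMf}), not the $N$'s. The identity \eqref{eq:MNf} you cite goes the other way: it expresses a Verma module as a two-step extension of $N$'s, so for atypical $\mu$ the module $N_{\bf b}(\mu)$ is a \emph{proper quotient} of $M_{\bf b}(\mu)$ and admits no ${\bf b}$-Verma flag at all. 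Second, and more seriously, the refiltration step is asserted rather than proved. Self-duality together with $\mathrm{Ext}^1_{\Omn}(M_{\bf b}(\nu),T)=0$ yields at best an identity in the Grothendieck group; a matching of the multiplicities of $M_{\bf b}(\mu)$ and $M_{\bf b}(\mu-\alpha)$ in a ${\bf b}$-Verma flag does not produce an actual filtration of $T$ with subquotients $U_{\bf b}(\mu)$ --- one must exhibit the submodules, and nothing in your argument does so. The appeal to Theorem~\ref{S:T:to:T} cannot close this gap: that statement is purely combinatorial, and endowing it with module-theoretic meaning presupposes the BKL conjecture, which is what is being proved.

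For comparison, the paper sidesteps the refiltration problem entirely. Write $\la=\mu+\gamma$ with $\mu$ anti-dominant typical and $L(\gamma)$ finite dimensional. Then $M(\mu)$ is irreducible, hence equal to a single ${\bf b}$-Verma module $M_{\bf b}(\mu^{\bf b})$ for \emph{every} ${\bf b}$ simultaneously, so the self-dual module $M(\mu)\otimes L(\gamma)$ has a ${\bf b}$-Verma flag for every ${\bf b}$ at once. Its indecomposable summand with $M(\la)$ at the bottom is $T(\la)$, direct summands of modules with ${\bf b}$-Verma flags again have ${\bf b}$-Verma flags, and Lemma~\ref{lem:char:tilt} concludes. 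No induction over odd reflections, and no analysis of atypical pairs, is needed. If you wish to salvage your route, you would need a genuine construction of the $U$-flag (for instance an Ext-vanishing criterion for $U$-flags in the spirit of Lemma~\ref{lem:tilt:aux1}, or a direct analysis of $T$ restricted to $\mf{b}+\C e_{-\alpha}$); as written, the central claim is unsupported.
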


\begin{proof}
Fix an arbitrary $0^m1^n$-sequence ${\bf b}$.

Let $\mu\in \wtl$ be anti-dominant and typical. Then it is well known that
(cf. e.g.~\cite{Kac2, Se, Br}) the Verma module $M(\mu)$ with
respect to the standard Borel $\mf{b}_{\text{st}}$ is irreducible,
and hence $M(\mu)^\tau\cong M(\mu)$. This implies that
$L(\mu)=M(\mu)=T(\mu)$. Hence, $M(\mu)$ is equal to a $\bf b$-Verma
module $M_{\bf b}(\mu^{\bf b})$, for some $\mu^{\bf b}\in \wtl$.

Now fix $\la\in \wtl$. Then it is easy to find an anti-dominant
typical weight $\mu \in \wtl$ and a weight $\gamma\in \wtl$ such
that $\text{dim}_\C L(\gamma)<\infty$ and $\la=\mu+\gamma$. The
$\gl(m|n)$-module $M(\mu)\otimes L(\gamma)$ is self-dual (as a
tensor product of two simples) and has a ${\bf b}_{\text{st}}$-Verma
flag, in which $M(\la)$ appears as a subquotient exactly once. By
some standard argument which goes back to Soergel, any direct
summand of $M(\mu)\otimes L(\gamma)$ is self-dual and also has a
${\bf b}_{\text{st}}$-Verma flag. The unique summand $T$ containing
$M(\la)$ must have $M(\la)$ at the bottom, since $\la$ is the
highest weight in $M(\mu)\otimes L(\gamma)$. Hence we have $T\cong
T(\la)$.

Since $M(\mu)=M_{\bf
b}(\mu^{\bf b})$, 
the tensor product
$M(\mu)\otimes L(\gamma)$ also has a ${\bf b}$-Verma flag.  Now the
indecomposable summand $T$ has a ${\bf b}$-Verma flag and is also
self-dual. Hence, it must be a ${\bf b}$-tilting module by Lemma~
\ref{lem:char:tilt}.
\end{proof}

Let ${\bf b}$ and ${\bf b}'$ be two adjacent ${0^m1^n}$-sequences
such that the corresponding Borel subalgebra ${\mf b'}$ is obtained
from ${\mf b}$ via the odd reflection with respect to the simple
root $\alpha$ of $\mf{b}$. We introduce the following notation:
\begin{align} \label{eq:laU}
\la^{\mathbb U} =
\begin{cases}
 \la-2\alpha,         &\text{ if }(\la,\alpha)=0,\\
\la-\alpha,   &\text{ if }(\la,\alpha)\neq 0,
\end{cases}
\qquad \text{for } \la \in \wtl.
\end{align}
The following may be regarded as a ``dual version'' of Lemma~
\ref{lem:irred:borels}.

\begin{thm}\label{thm:tilt:relation}
%
Let ${\bf b}$ and ${\bf b}'$ be two adjacent ${0^m1^n}$-sequences
such that the Borel subalgebra ${\mf b'}$ is obtained from ${\mf b}$ via the
odd reflection with respect to the simple root $\alpha$ of $\mf{b}$.
Then
$$
T_{\bf b}(\la)= T_{\bf b'}(\la^{\mathbb U}), \qquad \text{for } \la \in \wtl.
$$
\end{thm}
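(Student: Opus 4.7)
By Proposition~\ref{prop:tilt:diff:borel}, $T_{\mathbf{b}}(\lambda)$ is already a $\mathbf{b}'$-tilting module, hence isomorphic to $T_{\mathbf{b}'}(\mu)$ for a unique weight $\mu\in\wtl$. The plan is to pin down $\mu=\lambda^{\mathbb U}$ by combining character considerations with an analysis of how the $\bf b$-Verma flag of $T_{\mathbf b}(\lambda)$ translates into a $\bf b'$-Verma flag under odd reflection.

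First I would use Lemma~\ref{lem:verma:borels}: if $(T_{\mathbf{b}}(\lambda) : M_{\mathbf{b}}(\nu)) = a_\nu$ in the $\mathbf b$-Verma filtration (so $a_\lambda=1$ and $a_\nu\neq 0 \Rightarrow \nu\preceq_{\mathbf b}\lambda$), then
\[
\ch T_{\mathbf b'}(\mu) = \ch T_{\mathbf b}(\lambda)= \sum_\nu a_\nu\, \ch M_{\mathbf b'}(\nu-\alpha),
\]
so the multiset of weights of $\mathbf b'$-Verma layers of $T_{\mathbf b}(\lambda)$ equals $\{\nu-\alpha : a_\nu\neq 0\}$ (with the same multiplicities). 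Since $\mu$ is the $\preceq_{\mathbf b'}$-maximum of this multiset and occurs there with multiplicity one, the problem reduces to identifying that maximum. Under the bijection \eqref{la:to:fla}, the map $\nu\mapsto\nu-\alpha$ corresponds precisely to $f^{\mathbf b}_\nu\mapsto f^{\mathbf b}_\nu\cdot\tau$ (where $\tau$ swaps positions $\kappa,\kappa+1$), and the condition $(\lambda,\alpha)=0$ versus $(\lambda,\alpha)\neq 0$ corresponds exactly to $f^{\mathbf b}_\lambda(\kappa)=f^{\mathbf b}_\lambda(\kappa+1)$ versus $\neq$, which is the same dichotomy driving the combinatorial operation $f\mapsto f^{\mathbb U}$ of \eqref{eq:fU}.

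Case $(\lambda,\alpha)\neq 0$: the claim is $\mu=\lambda-\alpha$. I would verify (i) $a_\lambda=1$ forces a $\mathbf b'$-Verma layer $M_{\mathbf b'}(\lambda-\alpha)$ of multiplicity one, and (ii) for every other $\nu\prec_{\mathbf b}\lambda$ with $a_\nu\neq 0$, the shifted weight $\nu-\alpha$ is strictly below $\lambda-\alpha$ in the $\mathbf b'$-Bruhat ordering. Translating (ii) to $\Z^{m+n}$ via \eqref{la:to:fla}, I need $f^{\mathbf b}_\nu\cdot\tau\prec_{\mathbf b'} f^{\mathbf b}_\lambda\cdot\tau$, which follows from $f^{\mathbf b}_\nu\prec_{\mathbf b} f^{\mathbf b}_\lambda$ together with $f^{\mathbf b}_\lambda(\kappa)\neq f^{\mathbf b}_\lambda(\kappa+1)$ by the characterization \eqref{eq:sharpBr} of $\preceq_{\mathbf b}$ using $\sharp_{\mathbf b}$: the swap $\tau$ affects only the values at positions $\kappa$ and $\kappa+1$, and in the $(\lambda,\alpha)\neq 0$ situation this never reverses strict inequalities of the $\sharp$-counts.

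Case $(\lambda,\alpha)=0$: the claim is $\mu=\lambda-2\alpha$. Here the key input is the stronger statement $a_{\lambda-\alpha}\geq 1$, which I would extract from the self-duality of $T_{\mathbf b}(\lambda)$ (Lemma~\ref{lem:char:tilt}) combined with the $\gl(1|1)_\alpha$-level observation from Section~\ref{subsec:VW-WV} (see also the discussion preceding Lemma~\ref{lem:irred:borels}) that $L(\lambda-\alpha)$ lies in the socle of $M_{\mathbf b}(\lambda)$: the resulting composition factor $L(\lambda-\alpha)$ must be compensated, under the Verma-flag structure and self-duality, by a Verma layer $M_{\mathbf b}(\lambda-\alpha)$. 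Given this, the $\mathbf b'$-flag contains $M_{\mathbf b'}(\lambda-2\alpha)$, and since $-\alpha\in\Phi^+_{\mathbf b'}$ one has $\lambda-2\alpha\succ_{\mathbf b'}\lambda-\alpha$. A $\sharp_{\mathbf b'}$-computation parallel to the previous case then shows that $\lambda-2\alpha$ dominates every $\nu-\alpha$ (for $\nu\preceq_{\mathbf b}\lambda$ in the support of $T_{\mathbf b}(\lambda)$), confirming $\mu=\lambda-2\alpha$.

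The main obstacle is the atypical case: one needs both the appearance of the Verma layer $M_{\mathbf b}(\lambda-\alpha)$ (not just of the composition factor) and the Bruhat-maximality claim. The first relies on a careful use of self-duality and the tilting axioms of Definition~\ref{def:tilt} together with Lemma~\ref{lem:tilt:aux1}; the second is a purely combinatorial comparison of the two Bruhat orderings on $\Z^{m+n}$ via \eqref{eq:sharpBr}, mirroring the behavior of $f\mapsto f^\downarrow$ in \eqref{eq:fU} that drove the parabolic monomial basis constructions in Section~\ref{sec:adjacent}.
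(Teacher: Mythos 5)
There is a genuine gap in the combinatorial half of your argument: the claim that $\nu\prec_{\mathbf b}\lambda$ forces $\nu-\alpha\prec_{\mathbf b'}\lambda-\alpha$ (equivalently, that $f\mapsto f\cdot\tau$ carries $\preceq_{\bf b}$ into $\preceq_{\bf b'}$) is false, even in the typical case. Take ${\bf b}=(0,0,1)$, ${\bf b'}=(0,1,0)$, $\ka=2$, $f=f^{\bf b}_\la=(2,0,2)$ (so $f(\ka)\neq f(\ka+1)$) and $g=f^{\bf b}_\nu=(0,-1,-1)$. One checks from \eqref{eq:sharpBr} that $g\prec_{\bf b}f$, but $g\cdot\tau=(0,-1,-1)$ and $f\cdot\tau=(2,2,0)$ are \emph{incomparable} for $\preceq_{\bf b'}$: indeed $\sharp_{\bf b'}(g\cdot\tau,-1,3)=1>0=\sharp_{\bf b'}(f\cdot\tau,-1,3)$. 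The culprit is exactly the weights with $g(\ka)=g(\ka+1)$, on which the two Bruhat orders run in opposite directions (the $\downarrow$ of \eqref{eq:updown} for ${\bf b}$ versus the $\uparrow$ for ${\bf b}'$); this incompatibility is the very reason Section~\ref{sec:adjacent} has to introduce the $C$-completions rather than work with the $B$-completions alone (cf.\ Remark~\ref{rem:VWWV}). So your identification of $\lambda^{\mathbb U}$ as the $\preceq_{\bf b'}$-maximum of the shifted flag weights does not follow from $\nu\preceq_{\bf b}\lambda$; restricting to the $\nu$ that actually occur in the flag would require information essentially equivalent to what is being proved. The same objection applies to the "$\sharp_{\bf b'}$-computation" in your atypical case. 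A secondary soft spot: the assertion $a_{\lambda-\alpha}\ge 1$ via "self-duality compensating the composition factor $L(\lambda-\alpha)$" is not an argument; the clean source of this fact is Soergel--Brundan reciprocity.

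The paper's proof avoids the Bruhat-maximality question entirely. Since $T_{\bf b}(\la)$ is known to be a ${\bf b'}$-tilting module and the ${\bf b'}$-tilting characters are linearly independent, it suffices to match characters; the reciprocity formula \eqref{tilt:verma:irred}, $\left(T_{\bf b}(\la):M_{\bf b}(\mu)\right)=[M_{\bf b}(-\mu-2\rho_{\bf b}):L_{\bf b}(-\la-2\rho_{\bf b})]$, converts $\mathrm{ch}\,T_{\bf b}(\la)$ into a sum of Verma characters weighted by composition multiplicities, and then Lemmas~\ref{lem:verma:borels} and \ref{lem:irred:borels} together with $\rho_{\bf b'}=\rho_{\bf b}+\alpha$ transport everything to ${\bf b'}$ and reassemble it, via reciprocity again, as $\mathrm{ch}\,T_{\bf b'}(\la^{\mathbb U})$. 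If you want to rescue your approach, replace the order-theoretic identification of the top layer by this character computation — at which point you have reproduced the paper's proof.
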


\begin{proof}
By Proposition \ref{prop:tilt:diff:borel}, the ${\bf b}$-tilting
module $T_{\bf b}(\la)$ is also a ${\bf b'}$-tilting module. Since
the ${\bf b'}$-tilting modules form a basis of the Grothendieck
group of $\mc O^{m|n,\Delta}_{\bf b'}$, in order to prove the
theorem, it suffices to prove the following character identities:
\begin{align*}
\text{ch}T_{\bf b}(\la)=\begin{cases}
\text{ch}T_{\bf b'}(\la-2\alpha),&\text{ if }(\la,\alpha)=0,\\
\text{ch}T_{\bf b'}(\la-\alpha),&\text{ if }(\la,\alpha)\not=0.
\end{cases}
\end{align*}

By Soergel's character formula for tilting modules \cite[Theorem
6.7]{So2} and its super generalization \cite[Theorem 6.4]{Br2}, we
have, for an arbitrary ${\bf b}$,
\begin{align}\label{tilt:verma:irred}
\left(T_{\bf b}(\la):M_{\bf b}(\mu)\right)=[M_{\bf
b}(-\mu-2\rho_{\bf b}):L_{\bf b}(-\la-2\rho_{\bf b})].
\end{align}
Using \eqref{tilt:verma:irred} we compute
\begin{align}
\text{ch}T_{\bf b}(\la)
&=\sum_{\mu}\left(T_{\bf b}(\la):M_{\bf b}(\mu)\right)\text{ch}M_{\bf b}(\mu)
 \notag \\
&=\sum_{\mu}[M_{\bf b}(-\mu-2\rho_{\bf b}):L_{\bf b}(-\la-2\rho_{\bf
b})]\text{ch}M_{\bf b}(\mu).
 \label{eq:chTb}
\end{align}
We now apply Lemmas \ref{lem:verma:borels} and
\ref{lem:irred:borels}, and the identity $\rho_{\bf b'}=\rho_{\bf
b}+\alpha$ in two separate cases. We shall also need
\eqref{tilt:verma:irred} for varying $\bf b$, $\la, \mu$.

Case (i). Assume $(\la,\alpha)=0$. Continuing \eqref{eq:chTb}, we
have {\allowdisplaybreaks
\begin{align*}
\text{ch}T_{\bf b}(\la)
 &= \sum_{\mu} \big[M_{\bf b'}(-\mu-2\rho_{\bf
b}-\alpha): L_{\bf b'}(-\la-2\rho_{\bf b})\big] \ \text{ch}M_{\bf
b'}(\mu-\alpha)
 \\
&=\sum_{\mu} \big[M_{\bf b'}(-\mu-2\rho_{\bf b'}+\alpha): L_{\bf
b'}(-\la-2\rho_{\bf b'}+2\alpha)\big] \ \text{ch}M_{\bf
b'}(\mu-\alpha)
 \\
&=\sum_{\mu}\big(T_{\bf b'}(\la-2\alpha):M_{\bf
b'}(\mu-\alpha)\big)\ \text{ch}M_{\bf b'}(\mu-\alpha)
 \\
&=\text{ch}T_{\bf b'}(\la-2\alpha).
\end{align*}

Case (ii). Assume $(\la,\alpha)\not=0$. Continuing \eqref{eq:chTb}
again, we have
\begin{align*}
\text{ch}T_{\bf b}(\la) &=\sum_{\mu}\big[M_{\bf b'}(-\mu-2\rho_{\bf
b}-\alpha):
L_{\bf b'}(-\la-2\rho_{\bf b}-\alpha)\big]\ \text{ch}M_{\bf b'}(\mu-\alpha)\\
&= \sum_{\mu}\big[M_{\bf b'}(-\mu-2\rho_{\bf b'}+\alpha):
L_{\bf b'}(-\la-2\rho_{\bf b}+\alpha)\big]\ \text{ch}M_{\bf b'}(\mu-\alpha)\\
&= \sum_{\mu}\big(T_{\bf b'}(\la-\alpha):
M_{\bf b'}(\mu-\alpha)\big)\ \text{ch}M_{\bf b'}(\mu-\alpha)\\
&=
\text{ch}T_{\bf b'}(\la-\alpha).
\end{align*}}
This completes the proof.
\end{proof}

\subsection{Auxiliary modules}

Let ${\bf b}=(b_1,\ldots,b_{m+n})$ and ${\bf b'}$ be two
${0^m1^n}$-sequences adjacent by the simple odd root
$\alpha=\ep^{\bf b}_\ka-\ep^{\bf b}_{\ka+1}$ of $\mf{b}$ as before,
and let $\mf b$ and $\mf b'$ be the corresponding Borel subalgebras again. For
definiteness let us assume that $(\ep^{\bf b}_\ka,\ep^{\bf
b}_\ka)=1=-(\ep^{\bf b}_{\ka+1},\ep^{\bf b}_{\ka+1})$, i.e.,
$b_\ka=0, b_{\ka+1}=1$.

Let $v_\la$ be a ${\bf b}$-highest weight vector of the ${\bf
b}$-Verma module $M_{\bf b}(\la)$. We denote by $e_{\pm\alpha}$ the
root vectors corresponding to the roots $\pm\alpha$.

Suppose that $(\la, \alpha) = 0$. The Lie superalgebra
\begin{align*}
\mf{a}_\alpha:=\h_{m|n}+\C e_{\alpha}+\C e_{-\alpha}
\end{align*}
is isomorphic to a direct sum of $\gl(1|1)$ and a subalgebra of
$\h_{m|n}$. Thus, the Verma module of $\mf{a}_\alpha$ of highest
weight $\la$, denoted by $M_{(b_\ka,b_{\ka+1})}(\la)$, is
two-dimensional. The irreducible modules of $\mf{a}_\alpha$ of
highest weight $\la$ and $\la-\alpha$, denoted by $\C_\la$ and
$\C_{\la-\alpha}$, respectively, are one-dimensional and we have the
following exact sequence of $\mf{a}_\alpha$-modules
\begin{align}\label{aux:106}
0\longrightarrow \C_{\la-\alpha}\longrightarrow
M_{(b_\ka,b_{\ka+1})}(\la)\longrightarrow \C_\la\longrightarrow 0.
\end{align}
We denote by $\mf n$ the radical corresponding to the Borel
subalgebra $\mf b$, and by $\mf n_{\neq\alpha}$ the subalgebra of
$\mf n$ spanned by the root spaces $\gl(m|n)_{\beta}$, for $\beta
\neq \alpha$. We observe that $\mf{a}_\alpha+\mf{n}_{\neq\alpha} =
\mf{b}+\C e_{-\alpha}$ and it contains $\mf{n}_{\neq\alpha}$ as an
ideal. Thus, \eqref{aux:106} extends trivially to an exact sequence
of $(\mf{b}+\C e_{-\alpha})$-modules.

Noting that $(\la,-\alpha)=0$, we can switch the role of $\alpha$
with $-\alpha$ (and $\bf b$ with $\bf b'$ at the same time) above.
Regarding $\C_\la$ as the one-dimensional $(\mf b+\C
e_{-\alpha})$-module or similarly regarding $\C_\la$ as the
one-dimensional $(\mf b'+\C e_\alpha)$-module, we may form the
parabolic Verma modules
$$
N_{\bf b}(\la) := \text{Ind}_{\mf b +\C e_{-\alpha}}^{\gl(m|n)}
\C_\la,
 \qquad
N_{\bf b'}(\la) := \text{Ind}_{\mf b'+\C e_\alpha}^{\gl(m|n)} \C_\la
$$
Observing $\mf b'+\C e_\alpha =\mf b+\C e_{-\alpha}$, we have
$N_{\bf b}(\la) =N_{\bf b'}(\la). $

We continue to assume that $(\la, \alpha) = 0$. The tilting
$\mf{a}_\alpha$-module of highest weight $\la$ will be denoted by
$T_{(b_\ka,b_{\ka+1})}(\la)$. We have the following exact sequence
of $\mf{a}_\alpha$-modules (see \cite[Theorem~4.37 for $m=n=1$]{Br}
and compare with Lemma~\ref{lem:formula:can:VW}):
\begin{align}\label{aux:105}
0\longrightarrow M_{(b_\ka,b_{\ka+1})}(\la)\longrightarrow
T_{(b_\ka,b_{\ka+1})}(\la)\longrightarrow
M_{(b_\ka,b_{\ka+1})}(\la-\alpha)\longrightarrow 0,
\end{align}
As before, \eqref{aux:105} may be regarded as an exact sequence of
$(\mf{b}+\C e_{-\alpha})$-modules with trivial action by
$\mf{n}_{\neq\alpha}$. We form the $\gl(m|n)$-module
\begin{align*}
U_{\bf b}(\la):= \text{Ind}_{\mf b +\C
e_{-\alpha}}^{\gl(m|n)} T_{(b_\ka,b_{\ka+1})}(\la).
\end{align*}
We can similarly form the module $U_{\bf b'}(\la) := \text{Ind}_{\mf
b'+\C e_\alpha}^{\gl(m|n)}  T_{(b_{\ka+1},b_\ka)}(\la)$. We
note that
$$
U_{\bf b}(\la) = U_{\bf b'}(\la-2\alpha)
$$
since
$T_{(b_\ka,b_{\ka+1})}(\la)=T_{(b_{\ka+1},b_\ka)}(\la-2\alpha)$ by
Theorem \ref{thm:tilt:relation}.

In the case that $(\la, \alpha) \neq 0$, we define $U_{\bf b}(\la), N_{\bf
b}(\la), U_{{\bf b}'}(\la)$ and $N_{{\bf b}'}(\la)$ to be
\begin{equation}  \label{UNM}
U_{\bf b}(\la) =N_{\bf b}(\la) =M_{\bf b}(\la), \qquad U_{{\bf
b}'}(\la) =N_{{\bf b}'}(\la) =M_{{\bf b}'}(\la).
\end{equation}

Recall the notation $\la^{\mathbb L}$ from \eqref{eq:laL} and
$\la^{\mathbb U}$ from \eqref{eq:laU}. Summarizing the above two
cases and using Lemma~\ref{lem:verma:borels}, we have
\begin{align}  \label{NUadj}
{\rm ch}\, N_{\bf b}(\la)={\rm ch}\, N_{\bf b'}(\la^{\mathbb L}),
 \quad
{\rm ch}\, U_{\bf b}(\la)={\rm ch}\, U_{\bf b'}(\la^{\mathbb U}),
\qquad\text{ for } \la \in \wtl.
\end{align}

\begin{lem}\label{expand:repn:M:N}
Let ${\bf b}$ be a ${0^m1^n}$-sequence and $\alpha=\ep^{\bf
b}_\ka-\ep^{\bf b}_{\ka+1}$ be an isotropic simple root as above.
Let $\la\in \wtl$ be such that $(\la,\alpha)=0$. Then we have the
following short exact sequences of $\gl(m|n)$-modules:
\begin{align*}
0 \longrightarrow N_{\bf b}(\la-\alpha) \longrightarrow M_{\bf b}(\la)
\longrightarrow N_{\bf b}(\la) \longrightarrow 0,
 \\
0 \longrightarrow M_{\bf b}(\la) \longrightarrow U_{\bf b}(\la)
\longrightarrow M_{\bf b}(\la-\alpha) \longrightarrow 0.
\end{align*}
\end{lem}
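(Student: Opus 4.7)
The plan is to derive both short exact sequences by applying the exact induction functor $\text{Ind}_{\mf{b} + \C e_{-\alpha}}^{\gl(m|n)}$ to the two given exact sequences \eqref{aux:106} and \eqref{aux:105} of $\mf{a}_\alpha$-modules, where both are viewed as exact sequences of $(\mf{b} + \C e_{-\alpha})$-modules via the trivial extension of the $\mf{n}_{\neq\alpha}$-action already recorded just above the lemma. Exactness of this induction functor is the standard PBW consequence that $U(\gl(m|n))$ is free as a right $U(\mf{b} + \C e_{-\alpha})$-module, with complement spanned by PBW monomials in $\mf{n}^-_{\neq -\alpha}$.

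The key bookkeeping step will be the identification
\begin{align*}
\text{Ind}_{\mf{b} + \C e_{-\alpha}}^{\gl(m|n)} M_{(b_\ka,b_{\ka+1})}(\mu)
 \;\cong\; M_{\bf b}(\mu),\qquad \mu\in\{\la,\la-\alpha\}.
\end{align*}
I will establish this by first showing that the pull-back of the two-dimensional $\mf{a}_\alpha$-Verma module $M_{(b_\ka,b_{\ka+1})}(\mu)$ to $\mf{b} + \C e_{-\alpha}$ is isomorphic to the relative Verma $U(\mf{b} + \C e_{-\alpha})\otimes_{U(\mf{b})} \C_\mu$. Both sides are generated by a highest-weight vector of weight $\mu$ annihilated by $\mf{b}$, and both are two-dimensional by the super PBW decomposition across the single odd generator $e_{-\alpha}$; the natural surjection provided by the universal property is then an isomorphism by dimension count. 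Transitivity of induction then yields the displayed identification.

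Assuming $(\la,\alpha) = 0$, the isotropy $(\alpha,\alpha) = 0$ forces $(\la-\alpha,\alpha) = 0$ as well, so by the defining convention $N_{\bf b}(\la-\alpha) = \text{Ind}_{\mf{b} + \C e_{-\alpha}}^{\gl(m|n)} \C_{\la-\alpha}$. Applying $\text{Ind}_{\mf{b} + \C e_{-\alpha}}^{\gl(m|n)}$ to \eqref{aux:106} then yields the first exact sequence at once, using $N_{\bf b}(\la) = \text{Ind}_{\mf{b} + \C e_{-\alpha}}^{\gl(m|n)}\C_\la$ together with the identification above for the middle term. Similarly, applying the same functor to \eqref{aux:105}, using the definition $U_{\bf b}(\la) = \text{Ind}_{\mf{b} + \C e_{-\alpha}}^{\gl(m|n)} T_{(b_\ka,b_{\ka+1})}(\la)$ and the identification above for both $M_{\bf b}(\la)$ and $M_{\bf b}(\la-\alpha)$, produces the second exact sequence. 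No genuine obstacle is anticipated beyond keeping track of the case-dependent definitional convention for $N_{\bf b}$ and $U_{\bf b}$.
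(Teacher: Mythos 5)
Your proposal is correct and follows exactly the route of the paper's (very terse) proof: induce the $(\mf b+\C e_{-\alpha})$-module exact sequences \eqref{aux:106} and \eqref{aux:105} up to $\gl(m|n)$, using exactness of induction. The details you supply — the PBW/transitivity identification $\text{Ind}_{\mf b+\C e_{-\alpha}}^{\gl(m|n)}M_{(b_\ka,b_{\ka+1})}(\mu)\cong M_{\bf b}(\mu)$ and the check that $(\la-\alpha,\alpha)=0$ so that $N_{\bf b}(\la-\alpha)$ is indeed the parabolic Verma rather than the case \eqref{UNM} — are exactly the points the paper leaves implicit.
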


\begin{proof}
The first exact sequence is obtained by regarding the exact sequence
\eqref{aux:106} as an exact sequence of $(\mf{b}+\C
e_{-\alpha})$-modules, and then inducing it to an exact sequence of
$\gl(m|n)$-modules. The second exact sequence is obtained similarly,
now using \eqref{aux:105} in place of \eqref{aux:106}.
\end{proof}

By \eqref{UNM} and Lemma~\ref{expand:repn:M:N}, we have
\begin{align} \label{eq:MNla}
\text{ch}\,M_{\bf b}(\la) =
 \begin{cases}
\text{ch}\,N_{\bf b}(\la) +\text{ch}\,N_{\bf b}(\la-\alpha),
  &\text{ if }(\la,\alpha)=0,\\
\text{ch}\,N_{\bf b}(\la),&\text{ if }(\la,\alpha)\not=0;
\end{cases}
\end{align}
\begin{align} \label{eq:UMla}
\text{ch}\,U_{\bf b}(\la) =
 \begin{cases}
\text{ch}\,M_{\bf b}(\la) +\text{ch}\,M_{\bf b}(\la-\alpha),
  &\text{ if }(\la,\alpha)=0,\\
\text{ch}\,M_{\bf b}(\la),&\text{ if }(\la,\alpha)\not=0.
\end{cases}
\end{align}

\begin{rem}
Similarly, we have a short exact sequence of $\gl(m|n)$-modules:
\begin{align*}
0 \longrightarrow N_{\bf b'}(\la) \longrightarrow M_{\bf b'}(\la-\alpha)
\longrightarrow N_{\bf b'}(\la-\alpha) \longrightarrow 0.
\end{align*}
Since $N_{\bf b}(\la)=N_{\bf b'}(\la)$ and $N_{\bf
b}(\la-\alpha)=N_{\bf b'}(\la-\alpha)$, we see that $M_{\bf b}(\la)$
and $M_{\bf b'}(\la-\alpha)$ are opposite extension of two modules.
\end{rem}

\begin{rem}
All the results in Section~ \ref{sec:repn:prep} remain valid for an
arbitrary basic Lie superalgebra, such as $\mf{osp}(m|2n)$ (cf.
\cite{CWbook}). This, in particular, applies to Propositions
\ref{prop:sameO} and \ref{prop:tilt:diff:borel}, Theorem
\ref{thm:tilt:relation}, and Lemma \ref{expand:repn:M:N}.
\end{rem}

\section{Super duality for general linear Lie superalgebras}
\label{sec:SD}

In this section, we establish a super duality, which is a certain
equivalence of categories and identification of Kazhdan-Lusztig
theories. In contrast to earlier formulations by the authors, we
allow the head (Dynkin) diagrams to correspond to Lie superalgebras.
The equivalence established here will be needed for an inductive
argument in the proof of Brundan's conjecture next section.

\subsection{Infinite-rank Lie superalgebras}
\label{Sec:GGG}

Define the sets
\begin{align*}
\wt{\mathbb I}
&:=\Big\{1,2,\ldots,m+n;\ul{\hf},\ul{1},\ul{\frac{3}{2}},\ldots\Big\},
 \\
{\mathbb I} &:=\{1,2,\ldots,m+n;\ul{1},\ul{2},\ul{3},\ldots\},
 \\
\breve{\mathbb I} &:=\Big\{1,2,\ldots,m+n;\ul{\hf},
\ul{\frac{3}{2}},\ul{\frac{5}{2}},\ldots\Big\}.
\end{align*}
Let ${\bf b}=(b_1,b_2,\ldots,b_{m+n})$ be a $0^m1^n$-sequence.

Let $\wt{V}$ denote the complex vector superspace with homogeneous
ordered basis $\{e^{\bf b}_{i}\vert 1\le i\le
m+n\}\cup\{e_{\ul{r}}\vert r\in\hf\N\}$. Recall that the
$\Z_2$-gradation of $e^{\bf b}_i$ is given by $|e^{\bf b}_i|=b_i$.
The $\Z_2$-gradation for the $e_{\ul{r}}$'s is defined by
$|e_{\ul{r}}|=\ov{2r}\in\Z_2$. We denote by ${\DG}$ the Lie
superalgebra of endomorphisms of $\wt{V}$ vanishing on all but
finitely many $e_r$'s, $r\in\wt{\mathbb I}$. For $r,
s,p\in\wt{\mathbb I}$, let $E_{rs}$ denote the endomorphism defined
by $E_{rs}(e_p):=\delta_{sp}e_r$. Then ${\DG}$ has a basis given by
$\{E_{rs} |r,s \in \wt{\mathbb I}\}$. The subalgebra spanned by
$\{E_{ij} |1\le i,j\le m+n\}$ is isomorphic to $\gl(m|n)$.

Let $\wt\h$ stand for the Cartan subalgebra spanned by $\{E_{rr} |r
\in \wt{\mathbb I}\}$, and let $\wt{\h}^*$ denote its restricted
dual. We may regard the elements $\epsilon^{\bf b}_i$ $(1\le i\le
m+n)$ as elements in $\wt{\h}^*$ in a natural way. For $r\in\hf\N$,
define $\delta_{r}\in\wt{\h}^*$ to be the element determined by
\begin{align*}
\delta_{r}(E_{ss})=\delta_{\ul{r}s},\quad s\in\wt{\mathbb I},
\end{align*}
so that $\{\ep^{\bf b}_i,\delta_r|1\le i\le m+n,r\in\hf\N\}$ is a
basis for $\wt{\mf h}^*$. Denote the set of roots of ${\DG}$ by
$\wt{\Phi}$.  The ordered basis $\{e^{\bf b}_1,\ldots,e^{\bf
b}_{m+n},e_{\ul{\hf}},e_{\ul{1}},\ldots\}$ of $\wt{V}$ determines a
Borel subalgebra $\wt{\mc{B}}_{\bf b}$ with the simple system
\begin{align*}
\Pi(\wt{\mc{B}}_{\bf b})=\{\epsilon^{\bf b}_{1}-\epsilon^{\bf
b}_2,\ldots,\epsilon^{\bf b}_{m+n-1}-\epsilon^{\bf
b}_{m+n}\}\cup\{\ep^{\bf b}_{m+n}-\delta_{\hf}\}\cup
\big\{\delta_r-\delta_{r+\hf}\vert r\in\hf\N\big\}.
\end{align*}
Denote the Dynkin diagram of the Lie superalgebra $\gl(m|n)$ with
respect to the Borel subalgebra $\mf{b}$ by
\makebox(36,0){$\oval(36,14)$}\makebox(-30,8){$\mf{T}^{\bf b}$}.
Then the corresponding Dynkin diagram of $\wt{\mc{B}}_{\bf b}$
together with $\Pi(\wt{\mc{B}}_{\bf b})$ is given by
\begin{center}
\hskip 2cm \setlength{\unitlength}{0.16in}
\begin{picture}(24,2.5)
\put(0,1){\makebox(0,0)[c]{{\ovalBox(3.0,1.4){$\mf{T}^{\bf b}$}}}}
\put(1.5,1){\line(1,0){2}}
\put(3.9,1){\makebox(0,0)[c]{$\bigotimes$}}
\put(4.3,1){\line(1,0){2}}
\put(6.7,1){\makebox(0,0)[c]{$\bigotimes$}}
\put(7.1,1){\line(1,0){2}}
\put(9.5,1){\makebox(0,0)[c]{$\bigotimes$}}
\put(9.9,1){\line(1,0){2}} \put(12.8,1){\makebox(0,0)[c]{$\cdots$}}
\put(3.5,0){\makebox(0,0)[c]{\tiny$\ep^{\bf b}_{m+n}-\delta_{\hf}$}}
\put(6.6,0){\makebox(0,0)[c]{\tiny$\delta_{\hf}-\delta_{1}$}}
\put(9.5,0){\makebox(0,0)[c]{\tiny$\delta_{1}-\delta_{\frac{3}{2}}$}}
\put(17.5,0.5){\text{ if }  $b_{m+n}={0}$;}
\end{picture}

\end{center}
\begin{center}
\hskip 2cm \setlength{\unitlength}{0.16in}
\begin{picture}(24,2.5)
\put(0,1){\makebox(0,0)[c]{{\ovalBox(3.0,1.4){$\mf{T}^{\bf b}$}}}}
\put(1.5,1){\line(1,0){2}} \put(3.9,1){\makebox(0,0)[c]{$\bigcirc$}}
\put(4.3,1){\line(1,0){2}}
\put(6.7,1){\makebox(0,0)[c]{$\bigotimes$}}
\put(7.1,1){\line(1,0){2}}
\put(9.5,1){\makebox(0,0)[c]{$\bigotimes$}}
\put(9.9,1){\line(1,0){2}} \put(12.8,1){\makebox(0,0)[c]{$\cdots$}}
\put(3.5,0){\makebox(0,0)[c]{\tiny$\ep^{\bf b}_{m+n}-\delta_{\hf}$}}
\put(6.6,0){\makebox(0,0)[c]{\tiny$\delta_{\hf}-\delta_{1}$}}
\put(9.5,0){\makebox(0,0)[c]{\tiny$\delta_{1}-\delta_{\frac{3}{2}}$}}
\put(17.5,0.5){\text{ if }  $b_{m+n}={1}$.}
\end{picture}
\vspace{.3cm}
\end{center}

Let $\G$ and ${\SG}$ be the Lie subalgebras of ${\DG}$ spanned by
$\{E_{rs}\vert r,s\in\mathbb I\}$ and $\{E_{rs}\vert
r,s\in\breve{\mathbb I}\}$, respectively. The Cartan subalgebras of
$\G$ and ${\SG}$ are $\h=\wt{\h}\cap \G$ and $\breve\h=\wt{\h}\cap
\breve\G$, and their restricted duals are denoted by ${\h}^*$ and
$\breve{\h}^*$, respectively. The subalgebras $\mc{B}_{\bf
b}=\wt{\mc{B}}_{\bf b}\cap\G$ and $\breve{\mc{B}}_{\bf
b}=\wt{\mc{B}}_{\bf b}\cap{\SG}$ are Borel subalgebras of $\G$ and
${\SG}$, respectively. The simple systems of $\G$ and $\breve{\G}$ with respect to $\mc{B}_{\bf b}$ and $\breve{\mc{B}}_{\bf b}$
are denoted by $\Pi(\breve{\mc{B}}_{\bf b})$ and $\Pi(\mc{B}_{\bf
b})$, respectively. The Dynkin diagrams with
$\Pi(\breve{\mc{B}}_{\bf b})$ and $\Pi(\mc{B}_{\bf b})$ specified
are as follows.
\begin{center}
\hskip 2cm \setlength{\unitlength}{0.16in}
\begin{picture}(24,2.5)
\put(-4,.5){$\G:$}
\put(0,1){\makebox(0,0)[c]{{\ovalBox(3.0,1.4){$\mf{T}^{\bf b}$}}}}
\put(1.5,1){\line(1,0){2}} \put(3.9,1){\makebox(0,0)[c]{$\bigcirc$}}
\put(4.3,1){\line(1,0){2}} \put(6.7,1){\makebox(0,0)[c]{$\bigcirc$}}
\put(7.1,1){\line(1,0){2}} \put(9.5,1){\makebox(0,0)[c]{$\bigcirc$}}
\put(9.9,1){\line(1,0){2}} \put(12.8,1){\makebox(0,0)[c]{$\cdots$}}
\put(3.5,0){\makebox(0,0)[c]{\tiny$\ep^{\bf b}_{m+n}-\delta_{1}$}}
\put(6.7,0){\makebox(0,0)[c]{\tiny$\delta_{1}-\delta_{2}$}}
\put(9.5,0){\makebox(0,0)[c]{\tiny$\delta_{2}-\delta_{3}$}}
\put(17.5,0.5){\text{ if }  $b_{m+n}=0$;}
\end{picture}
\end{center}
\begin{center}
\hskip 2cm \setlength{\unitlength}{0.16in}
\begin{picture}(24,2.5)
\put(-4,.5){$\SG:$}
\put(0,1){\makebox(0,0)[c]{{\ovalBox(3.0,1.4){$\mf{T}^{\bf b}$}}}}
\put(1.5,1){\line(1,0){2}}
\put(3.9,1){\makebox(0,0)[c]{$\bigotimes$}}
\put(4.3,1){\line(1,0){2}} \put(6.7,1){\makebox(0,0)[c]{$\bigcirc$}}
\put(7.1,1){\line(1,0){2}} \put(9.5,1){\makebox(0,0)[c]{$\bigcirc$}}
\put(9.9,1){\line(1,0){2}} \put(12.8,1){\makebox(0,0)[c]{$\cdots$}}
\put(3.5,0){\makebox(0,0)[c]{\tiny$\ep^{\bf b}_{m+n}-\delta_{\hf}$}}
\put(6.6,0){\makebox(0,0)[c]{\tiny$\delta_{\hf}-\delta_{\frac{3}{2}}$}}
\put(9.5,0){\makebox(0,0)[c]{\tiny$\delta_{\frac{3}{2}}-\delta_{\frac{5}{2}}$}}
\put(17.5,0.5){\text{ if }  $b_{m+n}=0$;}
\end{picture}
\vspace{.3cm}
\end{center}
\begin{center}
\hskip 2cm \setlength{\unitlength}{0.16in}
\begin{picture}(24,2.5)
\put(-4,.5){$\G:$}
\put(0,1){\makebox(0,0)[c]{{\ovalBox(3.0,1.4){$\mf{T}^{\bf b}$}}}}
\put(1.5,1){\line(1,0){2}}
\put(3.9,1){\makebox(0,0)[c]{$\bigotimes$}}
\put(4.3,1){\line(1,0){2}} \put(6.7,1){\makebox(0,0)[c]{$\bigcirc$}}
\put(7.1,1){\line(1,0){2}} \put(9.5,1){\makebox(0,0)[c]{$\bigcirc$}}
\put(9.9,1){\line(1,0){2}} \put(12.8,1){\makebox(0,0)[c]{$\cdots$}}
\put(3.5,0){\makebox(0,0)[c]{\tiny$\ep^{\bf b}_{m+n}-\delta_{1}$}}
\put(6.6,0){\makebox(0,0)[c]{\tiny$\delta_{1}-\delta_{{2}}$}}
\put(9.5,0){\makebox(0,0)[c]{\tiny$\delta_{{2}}-\delta_{{3}}$}}
\put(17.5,0.5){\text{ if }  $b_{m+n}={1}$;}
\end{picture}
\end{center}
\begin{center}
\hskip 2cm \setlength{\unitlength}{0.16in}
\begin{picture}(24,2.5)
\put(-4,.5){$\SG:$}
\put(0,1){\makebox(0,0)[c]{{\ovalBox(3.0,1.4){$\mf{T}^{\bf b}$}}}}
\put(1.5,1){\line(1,0){2}} \put(3.9,1){\makebox(0,0)[c]{$\bigcirc$}}
\put(4.3,1){\line(1,0){2}} \put(6.7,1){\makebox(0,0)[c]{$\bigcirc$}}
\put(7.1,1){\line(1,0){2}} \put(9.5,1){\makebox(0,0)[c]{$\bigcirc$}}
\put(9.9,1){\line(1,0){2}} \put(12.8,1){\makebox(0,0)[c]{$\cdots$}}
\put(3.5,0){\makebox(0,0)[c]{\tiny$\ep^{\bf b}_{m+n}-\delta_{\hf}$}}
\put(6.6,0){\makebox(0,0)[c]{\tiny$\delta_{\hf}-\delta_{\frac{3}{2}}$}}
\put(9.6,0){\makebox(0,0)[c]{\tiny$\delta_{\frac{3}{2}}-\delta_{\frac{5}{2}}$}}
\put(17.5,0.5){\text{ if }  $b_{m+n}={1}$.}
\end{picture}
\vspace{.3cm}
\end{center}

\subsection{Parabolic categories}\label{SD:para:cat}

We define
{\allowdisplaybreaks
\begin{align}
&\wt{X} :=\Big\{\sum_{i=1}^{m+n}\la_i\ep_i
+\sum_{r\in\hf\N}{}{^+\la}_r\delta_r\mid \la_i\in\Z \text{ and
}{^+\la}_r\in\Z\Big\}\subseteq\wt{\h}^*,
 \notag \\
&{X}:=\Big\{\sum_{i=1}^{m+n}\la_i\ep_i
+\sum_{j\in\N}{^+\la}_j\delta_j\mid \la_i\in\Z \text{ and
}{^+\la}_j\in\Z\Big\}\subseteq{\h}^*,
 \label{eq:X} \\
&\breve{X} :=\Big\{\sum_{i=1}^{m+n}\la_i\ep_i
+\sum_{s\in\hf+\Z_+}{}{^+\la}_s\delta_s\mid \la_i\in\Z \text{ and
}{}{^+\la}_s\in\Z\Big\}\subseteq\breve{\h}^*.
 \notag
\end{align} }
We shall identify
$\la=\sum_{i=1}^{m+n}\la_i\ep_i+\sum_{j\in\N}{^+\la}_j\delta_j\in X$
with the tuple $(\la_1,\ldots,\la_{m+n};{^+\la})$, where we write
${^+\la}=({^+\la}_1,{^+\la}_2,\ldots)$. Recall $\mc P$ denotes the
set of all partitions. We let
\begin{align} \label{eq:X+}
{X}^+:=\Big\{\sum_{i=1}^{m+n}\la_i\ep_i+\sum_{j\in\N}{^+\la}_j\delta_j\mid
\la_i\in\Z,({^+\la}_1,{^+\la}_2,\ldots)\in\mc{P}\Big\}\subseteq{X}.
\end{align}
For a partition $\mu=(\mu_1,\mu_2,\cdots)$, let
$\mu'=(\mu'_1,\mu'_2,\ldots)$ denote the conjugate partition of
$\mu$. We also define $\theta(\mu)$ to be the sequence of integers
(which is a variant of the Frobenius notation of $\mu$)
\begin{equation*}
\theta(\mu):=(\theta(\mu)_{1/2},\theta(\mu)_1,\theta(\mu)_{3/2},\theta(\mu)_2,\cdots),
\end{equation*}
where
$$
\theta(\mu)_{i-1/2}:=\max\{\mu'_i-(i-1),0\}, \quad
\theta(\mu)_i:=\max\{\mu_i-i,0\}, \qquad \forall i\in\N.
$$
 We identify
elements in $X^+$ with tuples in $\Z^{m+n}\times\mc{P}$. For
$\la=(\la_1,\ldots,\la_{m+n},{^+\la})\in X^+$, define
\begin{align}  \label{eq:latheta}
\begin{split}
&\la^\theta :=\sum_{i=1}^{m+n}\la_i\ep_i
 +\sum_{r\in\hf\N}\theta({^+\la})_r\delta_r\in\wt{\h}^*,
  \\
&\la^\natural:=\sum_{i=1}^{m+n}\la_i\ep_i+\sum_{s\in\hf+\Z_+}
({^+\la})'_{s+\hf}\delta_s\in\breve{\h}^*.
\end{split}
\end{align}
Furthermore we set
\begin{equation} \label{eq:X+2}
\wt{X}^+:=\{\la^\theta\vert\la\in X^+\}, \qquad
\breve{X}^+:=\{\la^\natural\vert\la\in X^+\}
\end{equation}
so that we have natural bijections
\begin{align*}
\breve{X}^+\stackrel{\natural}{\longleftrightarrow} X^+
\stackrel{\theta}{\longleftrightarrow} \wt{X}^+, \qquad \la^\natural
\leftrightarrow \la \leftrightarrow \la^\theta.
\end{align*}

For a root $\alpha$ of ${\DG}$ we denote by ${\DG}_\alpha$ the root
space corresponding to $\alpha$. Similar notations apply to
$\G_\alpha$ and ${\SG}_\alpha$. Define
\begin{align*}
\wt{\mf{k}}
:=\wt{\h}+\sum_{\alpha\in \sum_{r\in\hf\N}\Z(\delta_{r}-\delta_{r+\hf})}{\DG}_\alpha,
 \qquad
\wt{\mf p}_{\bf b}:=\wt{\mc{B}}_{\bf b}+\wt{\mf{k}},\\
\breve{\mf{k}} :=\breve{\h}
+\sum_{\alpha\in \sum_{r\in-\hf+\N}\Z(\delta_{r}-\delta_{r+1})}{\SG}_\alpha,
 \qquad
\breve{\mf p}_{\bf b}:=\breve{\mc{B}}_{\bf b}+\breve{\mf{k}},\\
{\mf{k}}
:={\h}+\sum_{\alpha\in \sum_{r\in\N}\Z(\delta_r-\delta_{r+1})}{\G}_\alpha,
 \qquad
{\mf p}_{\bf b}:={\mc{B}}_{\bf b}+{\mf{k}}.
\end{align*}
Then $\wt{\mf{p}}_{\bf b}$, $\breve{\mf{p}}_{\bf b}$ and
${\mf{p}}_{\bf b}$ are parabolic subalgebras of $\DG, \SG$ and $\G$
with Levi subalgebras $\wt{\mf{k}}$, $\breve{\mf{k}}$, and
${\mf{k}}$, respectively. Let us denote the respective nilradicals
and opposite nilradicals by $\wt{\mf{u}}_{\bf b}$,
$\breve{\mf{u}}_{\bf b}$, and $\mf{u}_{\bf b}$, and
$\wt{\mf{u}}^-_{\bf b}$, $\breve{\mf{u}}^-_{\bf b}$, and
$\mf{u}^-_{\bf b}$.

For $\la\in X^+$, let $\wt{L}^0(\la^\theta)$,
$\breve{L}^0(\la^\natural)$, and $L^0(\la)$  denote the irreducible
$\wt{\mf{k}}$-, $\breve{\mf{k}}$, and ${\mf{k}}$-module of highest
weight $\la^\theta$, $\la^\natural$, and $\la$, respectively. They
can be extended in a trivial way to $\wt{\mf{p}}_{\bf b}$-,
$\breve{\mf{p}}_{\bf b}$-, and ${\mf{p}}_{\bf b}$-modules,
respectively. We form the respective parabolic Verma modules
\begin{align*}
\wt{\mc{M}}_{\bf b}(\la^\theta) =\text{Ind}_{\wt{\mf{p}}_{\bf
b}}^{\wt{\G}}\wt{L}^0(\la^\theta),
 \qquad
\breve{\mc{M}}_{\bf b}(\la^\natural)
=\text{Ind}_{\breve{\mf{p}}_{\bf
b}}^{\breve{\G}}\breve{L}^0(\la^\natural),
 \qquad
{\mc{M}}_{\bf b}(\la)=\text{Ind}_{{\mf{p}}_{\bf b}}^{\G}L^0(\la),
\end{align*}
whose unique irreducible quotients are denoted by $\wt{\mc{L}}_{\bf
b}(\la^\theta)$, $\breve{\mc{L}}_{\bf b}(\la^\natural)$, and
${\mc{L}}_{\bf b}(\la)$, respectively.

\begin{definition}\label{def:O:bfa:inf}
Let $\wt{\OO}_{\bf b}$ be the category of finitely generated
$\wt{\G}$-modules $\wt{\mc M}$ such that $\wt{\mc M}$ is a
semisimple $\wt{\h}$-module with finite-dimensional weight subspaces
$\wt{\mc M}_\gamma$, $\gamma\in \wt{X}$, satisfying the following
conditions.
\begin{itemize}
\item[(i)]
$\wt{\mc M}$ decomposes over $\wt{\mf{k}}$ into a direct sum of
$\wt{L}^0(\mu^\theta)$ for $\mu\in X^+$.

\item[(ii)]
There exist finitely many weights ${^1\la},{^2\la},\ldots,{^t\la}\in
X^+$ (depending on $\wt{\mc M}$) such that if $\gamma$ is a weight
in $\wt{\mc M}$, then
$\gamma\in{^i{\la}}^\theta-\sum_{\alpha\in{\Pi(\wt{\mc{B}}_{\bf
b})}}\Z_+\alpha$, for some $i$.
\end{itemize}
The morphisms in $\wt{\OO}_{\bf b}$ are all (not necessarily even)
homomorphisms of $\wt{\G}$-modules.
\end{definition}

Let $\mc M\in\wt{\OO}_{\bf b}$ so that $\mc M=\bigoplus_{\gamma\in
\wt{X}}\mc M_\gamma$. For $\vartheta\in\Z_2$  let
$$
\wt{X}_{\vartheta}=\Big\{\gamma\in\wt{X}\vert \sum_{i=m+1}^n\langle
\gamma,e_{ii} \rangle
+\sum_{r\in\hf+\Z_+}\langle\gamma,E_{\ul{r}\,\ul{r}}\rangle\equiv\vartheta\Big\}.
$$
We define as in \S\ref{repn:prep:prelim}
\begin{align}\label{Z2:gradation1}
\mc M'=\mc M'_{\bar 0} \oplus \mc M'_{\bar 1}, \quad \text{ where }
\mc M'_{\vartheta}:=\bigoplus_{\gamma\in \wt{X}_\vartheta}\mc
M_\gamma\;\; (\vartheta \in \Z_2).
\end{align}
Then $\mc M'\in\wt{\OO}_{\bf b}$, and $\mc M\cong \mc
M'$ in $\wt{\OO}_{\bf b}$ as in Section \ref{sec:BGGmn}. As argued in \S\ref{repn:prep:prelim}, the full subcategory
$\wt{\OO}_{{\bf b},\bar{0}}$ of $\wt{\OO}_{{\bf b}}$ consisting of
objects with $\Z_2$-gradation given by \eqref{Z2:gradation1} is an
abelian category. Since the categories $\wt{\OO}_{{\bf b},\bar{0}}$
and $\wt{\OO}_{\bf b}$ have isomorphic skeleton categories, we conclude that
$\wt{\OO}_{\bf b}$ is an abelian category.

The abelian categories $\breve{\OO}_{\bf b}$  of
$\breve{\G}$-modules and $\OO_{\bf b}$ of $\G$-modules are defined
in a similar fashion.

The modules $\wt{\mc{M}}_{\bf b}(\la^\theta)$ and $\wt{\mc{L}}_{\bf
b}(\la^\theta)$, $\breve{\mc{M}}_{\bf b}(\la^\natural)$ and
$\breve{\mc{L}}_{\bf b}(\la^\natural)$, ${\mc{M}}_{\bf b}(\la)$ and
${\mc{L}}_{\bf b}(\la)$ lie in the categories $\wt{\OO}_{\bf b}$,
$\breve{\OO}_{\bf b}$, $\OO_{\bf b}$, respectively, for $\la\in
X^+$. As in Definition \ref{def:tilt} we can also define, for each
$\la\in X^+$, tilting modules $\wt{\mc{T}}_{\bf b}(\la^\theta)$,
$\breve{\mc{T}}_{\bf b}(\la^\natural)$, and $\mc{T}_{\bf b}(\la)$ in
the categories $\wt{\OO}_{\bf b}$, $\breve{\OO}_{\bf b}$, and
$\OO_{\bf b}$, respectively. We can now adapt the arguments in \cite[Section 5]{CLW2} to show that tilting modules
exist and are unique in these respective categories. In contrast to
the more standard setups in \cite{So2, Br2}, the Lie superalgebras
under considerations here are infinite-rank.

For $\wt{\mc M}\in\wt{\OO}_{\bf b}$ we denote the $n$th
$\wt{\mf{u}}_{\bf b}^-$-homology group with coefficients in $\wt{\mc M}$ by
$H_n\left(\wt{\mf{u}}_{\bf b}^-;\wt{\mc M}\right)$. For $\breve{\mc
M}\in\breve{\OO}_{\bf b}$ and $\mc{M}\in\OO_{\bf b}$ the notations
$H_n\left({\breve{\mf{u}}}_{\bf b}^-;\breve{\mc M}\right)$ and
$H_n\left({\mf{u}}_{\bf b}^-;{\mc M}\right)$ stand for similar homology
groups. We introduce the following, for $\la,\mu \in X^+$:
\begin{align}  \label{eq:KLV}
\wt{\mf l}^{\bf b}_{\mu^\theta\la^\theta}(q)
 &:=\sum_{i=0}^\infty
\dim\text{Hom}_{\wt{\mf{k}}}
\left(\wt{L}^0(\mu^\theta),H_i\left(\wt{\mf{u}}_{\bf b}^-; \wt{\mc
L}_{\bf b}(\la^\theta)\right)\right) (-q^{-1})^i,
 \notag \\
\breve{\mf l}^{\bf b}_{\mu^\natural\la^\natural}(q)
 &:=\sum_{i=0}^\infty \dim\text{Hom}_{\breve{\mf{k}}}
\left(\breve{L}^0(\mu^\natural),H_i\left(\breve{\mf{u}}_{\bf b}^-;
\breve{\mc L}_{\bf b}(\la^\natural)\right)\right) (-q^{-1})^i,
 \\
{\mf l}^{\bf b}_{\mu\la}(q)
 &:=\sum_{i=0}^\infty
\dim\text{Hom}_{{\mf{k}}} \left({L}^0(\mu),H_i\left({\mf{u}}_{\bf
b}^-;{\mc L}_{\bf b}(\la)\right)\right) (-q^{-1})^i.
 \notag
\end{align}
They turn out to be polynomials, and will be called {\em
Kazhdan-Lusztig-Vogan (KLV) polynomials}.

\subsection{Equivalence of categories}\label{Tfunctors}

We may regard $X\subseteq\wt{X}$ and $\breve{X}\subseteq\wt{X}$, by
definitions given in \eqref{eq:X}. Given a semisimple
$\wt{\h}$-module $\wt{\mc M}$ such that $\wt{\mc
M}=\bigoplus_{\gamma\in\wt{X}}\wt{\mc M}_\gamma$, we define
\begin{align*}
T(\wt{\mc M}):= \bigoplus_{\gamma\in X}\wt{\mc M}_\gamma,\qquad
\hbox{and}\qquad \breve{T}(\wt{\mc M}):=
\bigoplus_{\gamma\in\breve{X}}\wt{\mc M}_\gamma.
\end{align*}
Note that $T(\wt{\mc M})$ is an $\h$-submodule of $\wt{\mc M}$
(regarded as an $\h$-module), and $\breve{T}(\wt{\mc M})$ is an
$\breve{\h}$-submodule of $\wt{\mc M}$ (regarded as an
$\breve{\h}$-module). Also if $\wt{\mc M}$ is a
$\wt{\mf{k}}$-module, then $T(\wt{\mc M})$ is a ${\mf{k}}$-submodule
of $\wt{\mc M}$ (regarded as a ${\mf{k}}$-module), and
$\breve{T}(\wt{\mc M})$ is a $\breve{\mf{k}}$-submodule of $\wt{\mc
M}$ (regarded as a $\breve{\mf{k}}$-module). Furthermore if $\wt{\mc
M}\in\wt{\OO}_{\bf b}$, then $T(\wt{\mc M})$ is a $\G$-submodule of
$\wt{\mc M}$ (regarded as a $\G$-module), and $\breve{T}(\wt{\mc
M})$ is a $\breve{\G}$-submodule of $\wt{\mc M}$ (regarded as a
$\breve{\G}$-module).

If $\wt{f}:\wt{\mc M}\longrightarrow \wt{\mc N}$ is an
$\wt{\h}$-homomorphism, we let
\begin{eqnarray*}
\CD T[\wt{f}] : T(\wt{\mc M}) @>>>T(\wt{\mc N}) \qquad
\hbox{and}\qquad \breve{T}[\wt{f}] :  \breve{T}(\wt{\mc M})
@>>>\breve{T}(\wt{\mc N})
 \endCD
\end{eqnarray*}
be the corresponding restriction maps. Then $T[\wt{f}]$
(respectively, $\breve{T}[\wt{f}]$) is an $\h$- (respectively,
$\breve{\h}$-) homomorphism. If $\wt{f}$ is also a homomorphism of
$\wt{\mf{k}}$-modules, then $T[\wt{f}]$ (respectively,
$\breve{T}[\wt{f}]$) is a ${\mf{k}}$- (respectively,
$\breve{\mf{k}}$-) homomorphism. Finally, if $\wt{f}$ is also a
homomorphism of $\wt{\G}$-modules, then $T[\wt{f}]$ (respectively,
$\breve{T}[\wt{f}]$) is a $\G$- (respectively, $\breve{\G}$-)
homomorphism.

Recall the notations $\la^\theta$, $\la^\natural$ from
\eqref{eq:latheta}. Following the line of arguments of \cite{CL} we can show that $T$ and $\breve{T}$ define exact functors
from $\wt{\OO}_{\bf b}$ to $\OO_{\bf b}$ and from $\wt{\OO}_{\bf b}$
to $\breve{\OO}_{\bf b}$, respectively; moreover, we establish the
following theorem similarly.

\begin{thm} [Super Duality]
\label{thm:SD}
\begin{enumerate}
\item
$T: \wt{\OO}_{\bf b}\to\OO_{\bf b}$ and $\breve{T}:\wt{\OO}_{\bf
b}\to\breve{\OO}_{\bf b}$ are equivalences of categories.
Consequently, the categories $\OO_{\bf b}$ and $\breve{\OO}_{\bf b}$
are equivalent.

\item
For $\mc Y=\mc M,\mc L,\mc T$ and $\la\in X^+$, we have
 $
T(\wt{\mc Y}_{\bf b}(\la^\theta))=\mc{Y}_{\bf b}(\la),
\breve{T}(\wt{\mc Y}_{\bf b}(\la^\theta))=\breve{\mc{Y}}_{\bf
b}(\la^\natural).
 $
Consequently,  for $\mc X=\mc L,\mc T$, we have
\begin{align*}
\rm{ch}\mc{X}_{\bf b}(\la)
 =\sum_{\mu\in
X^+} & a_{\mu\la}\rm{ch}\mc{M}_{\bf b}(\mu)\quad
 \Longleftrightarrow
\quad \rm{ch}\wt{\mc{X}}_{\bf b}(\la^\theta) =\sum_{\mu\in
X^+}a_{\mu\la}\rm{ch}\wt{\mc{M}}_{\bf b}(\mu^\theta)
 \\
&\Longleftrightarrow\quad \rm{ch}\breve{\mc{X}}_{\bf
b}(\la^\natural)=\sum_{\mu\in X^+}
a_{\mu\la}\rm{ch}\breve{\mc{M}}_{\bf b}(\mu^\natural),
 \qquad \text{ for } a_{\mu\la}\in\Z.
\end{align*}

\item
For $\la,\mu\in X^+$ the functors $T$ and $\breve{T}$ induces
natural isomorphisms
\begin{align*}
\text{Hom}_{\wt{\mf{k}}}\left(\wt{L}^0(\mu^\theta),H_n\left(\wt{\mf{u}}_{\bf
b}^-;\wt{\mc L}_{\bf b}(\la^\theta)\right)\right) &\cong
\text{Hom}_{\breve{\mf{k}}}\left(\breve{L}^0(\mu^\natural),
H_n\left(\breve{\mf{u}}_{\bf b}^-;\breve{\mc L}_{\bf b}(\la^\natural)\right)\right)
 \\
& \cong \text{Hom}_{{\mf{k}}}\left({L}^0(\mu),H_n\left(\mf{u}_{\bf
b}^-;{\mc L}_{\bf b}(\la)\right)\right).
\end{align*}
Consequently, the corresponding Kazhdan-Lusztig-Vogan polynomials
are identical, that is, $ \wt{\mf l}^{\bf
b}_{\mu^\theta\la^\theta}(q)
  =
\breve{\mf l}^{\bf b}_{\mu^\natural\la^\natural}(q)
 =
{\mf l}^{\bf b}_{\mu\la}(q).
 $
\end{enumerate}
\end{thm}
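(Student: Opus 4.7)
The plan is to follow the super duality framework developed in \cite{CL, CLW1}, adapted to the present setting where the head diagram $\mf T^{\bf b}$ is itself a (possibly super) Dynkin diagram of $\glmn$ rather than of $\gl(m+n)$. First I would verify that $T$ and $\breve T$ are well-defined exact functors $\wt{\OO}_{\bf b} \to \OO_{\bf b}$ and $\wt{\OO}_{\bf b} \to \breve{\OO}_{\bf b}$. Exactness is immediate from the definitions, since on the level of weight spaces these functors are restrictions to subsets $X \subseteq \wt X$ and $\breve X \subseteq \wt X$. That the image lies in the appropriate category follows from Definition~\ref{def:O:bfa:inf}(i)--(ii): the hypothesis forces every object of $\wt\OO_{\bf b}$ to be $\wt{\mf k}$-semisimple with types parametrized by $X^+$, and restricting the $\wt{\mf k}$-types $\wt L^0(\mu^\theta)$ to their $X$- and $\breve X$-weight subspaces recovers $L^0(\mu)$ and $\breve L^0(\mu^\natural)$ respectively.

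Next, to prove part (2) I would handle the three distinguished classes of modules in turn. For parabolic Verma modules, $T(\wt{\mc M}_{\bf b}(\la^\theta)) = \mc M_{\bf b}(\la)$ is essentially tautological from the PBW isomorphism $\wt{\mc M}_{\bf b}(\la^\theta) \cong U(\wt{\mf u}^-_{\bf b}) \otimes_\C \wt L^0(\la^\theta)$ once one observes that the $X$-weight subspace on the right is $U(\mf u^-_{\bf b}) \otimes_\C L^0(\la)$; the analogous statement for $\breve T$ is identical. For irreducible modules, the key assertion $T(\wt{\mc L}_{\bf b}(\la^\theta)) = \mc L_{\bf b}(\la)$ I would establish by noting that the restriction is a nonzero highest weight $\G$-module of highest weight $\la$, so surjects onto $\mc L_{\bf b}(\la)$; showing that the surjection is an isomorphism reduces by induction on the Bruhat ordering to comparing multiplicities in Verma flags using the exactness of $T$. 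For tilting modules, using the characterization via self-duality and $\bf b$-Verma flags (Lemma~\ref{lem:char:tilt} adapted to the infinite-rank setting of \cite{CLW2}) together with the fact that $T$ carries $\wt{\mc M}_{\bf b}$-flags to $\mc M_{\bf b}$-flags and commutes with duality, I would deduce $T(\wt{\mc T}_{\bf b}(\la^\theta)) = \mc T_{\bf b}(\la)$.

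For the equivalence in part (1), once the three classes and all their multiplicities match, I would invoke the abstract framework of \cite[Section~4]{CL}: an exact functor between two categories of the type $\wt\OO_{\bf b}$ and $\OO_{\bf b}$ which sends parabolic Verma, simple, and tilting modules bijectively onto their counterparts, and which induces isomorphisms on the spaces $\mathrm{Hom}(\wt{\mc M}_{\bf b}(\mu^\theta), \wt{\mc T}_{\bf b}(\la^\theta))$, is automatically an equivalence. The remaining Hom-matching reduces, via the $\bf b$-Verma and $\bf b$-tilting filtrations, to the character matching already proved. For part (3), I would compute $\wt{\mf u}^-_{\bf b}$-homology via the Chevalley--Eilenberg complex and observe that $T$ and $\breve T$ commute termwise with the formation of this complex on the respective weight subspaces; consequently the $\wt{\mf k}$-isotypic component indexed by $\mu^\theta$ of $H_n(\wt{\mf u}_{\bf b}^-; \wt{\mc L}_{\bf b}(\la^\theta))$ restricts to the corresponding $\mf k$- and $\breve{\mf k}$-isotypic components under truncation, giving the stated identification of KLV polynomials in \eqref{eq:KLV}.

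The main obstacle I anticipate is controlling the step $T(\wt{\mc L}_{\bf b}(\la^\theta)) = \mc L_{\bf b}(\la)$ in the presence of a superalgebra head $\mf T^{\bf b}$. In the earlier papers \cite{CL, CLW1} the head $\mf T^{\bf b}$ was purely even, so every odd simple root of $\wt{\mc B}_{\bf b}$ sat in the infinite tail and the requisite odd reflections acted predictably; here odd roots may occur already in $\mf T^{\bf b}$, and one must argue that restricting to the $X$-weight subspace does not kill the highest weight vector nor inadvertently collapse the module to a smaller simple quotient. This is resolved by combining the character identity of part (2) (established inductively on the Bruhat order) with the exactness of $T$ applied to a maximal subquotient filtration of $\wt{\mc M}_{\bf b}(\la^\theta)$, so that the irreducible at the top matches on both sides.
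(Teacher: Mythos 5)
Your overall architecture --- exact truncation functors defined by restriction to the $X$- and $\breve X$-weight subspaces, matching of parabolic Verma modules via PBW, matching of tilting modules via self-duality and Verma flags, and computation of $\mf u^-$-homology by observing that truncation commutes with the Chevalley--Eilenberg complex --- is the same route the paper takes, namely transplanting the arguments of \cite{CL} to the case where the head diagram $\mf T^{\bf b}$ is super. The genuine gap is in the one step that carries all the weight: $T(\wt{\mc L}_{\bf b}(\la^\theta))=\mc L_{\bf b}(\la)$. Your proposed argument is circular. You want to deduce irreducibility of $T(\wt{\mc L}_{\bf b}(\la^\theta))$ ``by induction on the Bruhat ordering, comparing multiplicities in Verma flags,'' and again in your last paragraph by ``combining the character identity of part (2) (established inductively on the Bruhat order) with the exactness of $T$.'' Run that induction: assuming $[T(\wt{\mc L}_{\bf b}(\mu^\theta))]=[\mc L_{\bf b}(\mu)]$ for all $\mu\prec\la$ and applying the exact functor $T$ to a composition series of $\wt{\mc M}_{\bf b}(\la^\theta)$ yields
\[
[T(\wt{\mc L}_{\bf b}(\la^\theta))]=[\mc L_{\bf b}(\la)]+\sum_{\mu\prec\la}\Bigl([\mc M_{\bf b}(\la):\mc L_{\bf b}(\mu)]-[\wt{\mc M}_{\bf b}(\la^\theta):\wt{\mc L}_{\bf b}(\mu^\theta)]\Bigr)[\mc L_{\bf b}(\mu)].
\]
Closing the induction therefore requires the equality of the two families of decomposition numbers --- which is precisely the content of part (2) for $\mc X=\mc L$, i.e.\ the statement being proved. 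Exactness of $T$ and the highest-weight property only give the inequality $[\mc M_{\bf b}(\la):\mc L_{\bf b}(\mu)]\ge[\wt{\mc M}_{\bf b}(\la^\theta):\wt{\mc L}_{\bf b}(\mu^\theta)]$, not equality, so the ``irreducible at the top'' need not match.

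The missing idea is the odd-reflection mechanism that is the engine of \cite{CL}: for each finite level $k$ one conjugates $\wt{\mc B}_{\bf b}$, by an explicit finite sequence of odd reflections supported in the infinite tail, into a Borel adapted to $\G^k$ (resp.\ $\SG^k$), tracks the extremal vector of $\wt{\mc L}_{\bf b}(\la^\theta)$ through these reflections, and checks that it survives in the $X$- (resp.\ $\breve X$-) weight subspace with the correct weight; irreducibility of $T(\wt{\mc L}_{\bf b}(\la^\theta))$ then follows because a singular vector in the truncation would be singular for a suitably reflected Borel of $\wt\G$ and hence would generate all of $\wt{\mc L}_{\bf b}(\la^\theta)$. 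It is this argument, not a Grothendieck-group bookkeeping, that must be verified to go through when $\mf T^{\bf b}$ is itself super (it does, since the relevant odd reflections never touch the head). A milder version of the same remark applies to your part (1): matching the three families of distinguished objects and some Hom-spaces does not ``automatically'' give an equivalence --- one also needs the $\mathrm{Ext}^1$-matching and the lifting of arbitrary objects by induction on length, as in \cite{CL}, which your appeal to that framework covers only implicitly.
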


\begin{rem}
Theorem \ref{thm:SD} affords further parabolic variants which allow
more general even Levi subalgebras on the
\makebox(36,0){$\oval(32,14)$}\makebox(-30,8){$\mf{T}^{\bf b}$} side
(see the Dynkin diagrams in \S\ref{Sec:GGG}). A novel viewpoint of
Theorem~ \ref{thm:SD}, in contrast to \cite{CL,CLW1}, is that super
duality holds also when the {\em head} Dynkin diagram is that of a
Lie {\em super}algebra.
\end{rem}

\begin{rem}
Using the same argument as \cite[Proposition 3.11]{CL}, we can show
that $\breve{\mc M}_{\bf b}(\la^\natural)$, for $\la\in X^+$, has a
finite composition series with composition factors with highest
weights lying in $\breve{X}^+$. It follows therefore that
$\breve{\OO}_{\bf b}$ is the category of $\breve{\G}$-modules that
have finite composition series and that, as
$\breve{\mf{k}}$-modules, decompose into direct sums of irreducible
$\breve{\mf{k}}$-modules with highest weights lying in
$\breve{X}^+$. Thus, the categories $\breve{\OO}_{\bf b}$ are
independent of the choices of the $0^m1^n$-sequences ${\bf b}$.
Similarly the categories $\wt{\OO}_{\bf b}$ (and $\OO_{\bf b}$,
respectively) are all independent of the choices of ${\bf b}$.
\end{rem}

\subsection{BGG categories of finite rank}
\label{sec:finiterank}

For $k\in\N$, we let $\G^k$ and ${\SG}^k$ be the respective
finite-dimensional general linear Lie superalgebras with simple
roots as follows:
\begin{align*}
&\{\ep^{\bf b}_1-\ep^{\bf b}_2,\ldots,\ep^{\bf b}_{m+n-1}-\ep^{\bf b}_{m+n},
\ep^{\bf b}_{m+1}-\delta_1, \delta_1-\delta_2,\ldots,\delta_{k-1}-\delta_k\},
 \\
&\{\ep^{\bf b}_1-\ep^{\bf b}_2,\ldots,\ep^{\bf b}_{m+n-1} -\ep^{\bf
b}_{m+n},\ep^{\bf b}_{m+1}-\delta_{1/2},
\delta_{1/2}-\delta_{3/2},\ldots,\delta_{k-3/2}-\delta_{k-1/2}\}.
\end{align*}
Then $\G^k$ and ${\SG}^k$ may be regarded as subalgebras of $\G$ and
${\SG}$. We denote the standard Borel subalgebras corresponding to
these simple systems by $\mc{B}_{\bf b}^k$ and $\breve{\mc{B}}_{\bf
b}^k$, respectively, and furthermore set $\h^k=\h\cap\G^k$ and
$\breve{\h}^k=\breve{\h}\cap{\SG}^k$. Moreover, we have natural
inclusions $\G^k \subseteq \G^{k+1}$ and ${\SG}^k \subseteq
\breve{\G}^{k+1}$, with $\G =\bigcup_k {\G}^k$ and $\breve{\G}
=\bigcup_k \breve{\G}^k$.

Set
\begin{align}  \label{eq:Xk}
X^k=X\cap(\h^{k})^*, \qquad \breve{X}^k=\breve{X}\cap(\breve{\h}^{k})^*.
\end{align}
Also define
\begin{align}  \label{eq:Xk+}
\begin{split}
X^{k,+} &=\Big\{\la =\sum_{j=1}^{m+n} \la_j \ep^{\bf b}_j +\sum_{i=1}^k \mu_i \delta_i \in X^k \mid \mu_1\ge \ldots \ge\mu_k\Big\},
  \\
\breve X^{k,+} &=\Big\{\sum_{j=1}^{m+n} \la_j \ep^{\bf b}_j +\sum_{i=1}^k \nu_i \delta_{i-\hf} \in \breve X^k \mid \nu_1\ge \ldots \ge\nu_k\Big\}.
\end{split}
\end{align}
We shall identify a weight $\la \in X^{k,+}$ as
the tuple $\la=(\la_1,\ldots,\la_{m+n}; \mu_1, \ldots, \mu_k)$.
Given $\la=(\la_1,\ldots,\la_{m+n};{^+\la})\in X^+$ with
${^+\la}_j=0$ for $j>k$, we may regard $\la$ as a weight in
$X^{k,+}$ in a natural way.
Similarly, for $\la\in X^+$  with ${^+\la}'_j=0$ for
$j>k$, we regard $\la^\natural$ as a weight in $\breve X^{k,+}$.

The Levi subalgebra, parabolic subalgebra, and the nilradical of the
finite-rank Lie superalgebra $\G^k$ are
$$
\mf{k}^k =\mf{k} \cap \G^k,
 \qquad
\mf{p}_{\bf b}^k =\mf{p}_{\bf b}\cap\G^k,\qquad \mf{u}_{\bf b}^k
=\mf{u}_{\bf b}\cap\G^k,
$$
respectively. We denote the parabolic Verma, irreducible, and
tilting $\G^k$-modules by $\mc{M}_{\bf b}^k(\la)$, $\mc{L}_{\bf
b}^k(\la)$, and $\mc{T}_{\bf b}^k(\la)$, for $\la\in X^{k,+}$. The
corresponding parabolic BGG category of $\G^k$-modules is denoted by
$\OO^{\ul k}_{\bf b}$, which is defined similarly as in
Definition~\ref{def:O:bfa:inf}, now with $\h$, $\mf{k}$, et cetera
therein replaced by $\h^k$, $\mf{k}^k$, et cetera.

The statements in the previous paragraph have obvious counterparts
for the Lie superalgebra ${\SG}^k$ as well. We introduce the
self-explanatory notations $\breve{\mc{M}}_{\bf b}^k(\xi)$,
$\breve{\mc L}_{\bf b}^k(\xi)$, $\breve{\mc T}_{\bf
b}^k(\xi)$, $\breve{\OO}_{\bf b}^{\ul k}$,
$\breve{\mf{k}}^k$, $\breve{\mf p}^k_{\bf b}$, where
$\xi\in \breve{X}^{k,+}$.

In an entirely analogous manner as in the definitions of ${\mf l}^{\bf b}_{\mu\la}(q)$ and $\breve{\mf l}^{\bf b}_{\mu^\natural\la^\natural}(q)$
in \eqref{eq:KLV}, we can define the {\em Kazhdan-Lusztig-Vogan
(KLV) polynomials} ${\mf l}^{\bf b,0}_{\mu\la}(q),$ for $\la,\mu
\in X^{k,+}$, and $\breve{\mf l}^{\bf
b,1}_{\xi\eta}(q)$,  for $\xi,\eta
\in \breve X^{k,+}$, in the categories $\OO_{\bf b}^{\ul k}$ and
$\breve{\OO}_{\bf b}^{\ul k}$, respectively.

\subsection{Truncation functors}
\label{subsec:trunc:mod}

Let $\la\in X^+$, and write
$\la=(\la_1,\la_2,\ldots,\la_{m+n};{^+\la})$, where
${^+\la}\in\mc{P}$. Recall the parabolic Verma $\G$-modules
$\mc{M}_{\bf b}(\la)$, $\breve{\mc{M}}_{\bf b}(\la)$, the
irreducible modules $\mc{L}_{\bf b}(\la)$, $\breve{\mc{L}}_{\bf
b}(\la)$, and the tilting modules  $\mc{T}_{\bf b}(\la)$,
$\breve{\mc{T}}_{\bf b}(\la)$ in the categories $\OO_{{\bf b}}$ and
$\breve{\OO}_{{\bf b}}$, respectively. Let $\mc{M}\in\OO_{{\bf b}}$
and $\breve{\mc M}\in\breve{\OO}_{{\bf b}}$. Then we have the weight
space decompositions
\begin{align*}
\mc{M}=\bigoplus_{\mu\in X} \mc{M}_{\mu},\qquad
\breve{\mc{M}}=\bigoplus_{\mu\in\breve{X}} \breve{\mc{M}}_{\mu}.
\end{align*}

We define an exact functor $\mf{tr}:\OO_{{\bf b}}\rightarrow \OO^{\ul
k}_{{\bf b}}$ by
\begin{align*}
\mf{tr}(\mc M):=\bigoplus_\mu \{\mc{M}_\mu \mid
(\mu,\delta_j)=0,\forall j\ge k+1\text{ and }j\in\N\}.
\end{align*}
Similarly, we define an exact functor
$\breve{\mf{tr}}:\breve{\OO}_{{\bf b}}\rightarrow \breve{\OO}^{\ul
k}_{{\bf b}}$ by
\begin{align*}
\breve{\mf{tr}} (\breve{\mc M}):=\bigoplus_\mu
\big\{\breve{\mc{M}}_\mu \mid (\mu,\delta_r)=0,\forall r> k\text{
and }r\in\hf+\Z_+ \big\}.
\end{align*}

We have the following.

\begin{prop} \label{prop:trunc:ML}
The functors $\mf{tr}:\OO_{{\bf b}}\rightarrow \OO^{\ul k}_{{\bf
b}}$ and $\breve{\mf{tr}}:\breve{\OO}_{{\bf b}}\rightarrow
\breve{\OO}^{\ul k}_{{\bf b}}$ satisfy the following: for
$\mc{Y}=\mc{M},\mc{L},\mc{T}$ and
$\la=(\la_1,\ldots,\la_{m+n};{^+\la})\in X^+$,
\begin{align*}
&\mf{tr}\left(\mc Y_{\bf b}(\la)\right)=
\begin{cases}
\mc Y^k_{{\bf b}}(\la),&\text{ if } \ell({^+\la})\le k,\\
0,&\text{ otherwise.}
\end{cases}\\
&\breve{\mf{tr}}\left(\breve{\mc Y}_{\bf b}(\la^\natural)\right)=
\begin{cases}
\breve{\mc Y}^k_{{\bf b}}(\la^\natural),
 &\text{ if } \ell({^+\la'})\le k,\\
0,&\text{ otherwise.}
\end{cases}
\end{align*}
Moreover, we have ${\mf l}^{\bf b}_{\mu\la}(q) ={\mf l}^{\bf
b,0}_{\mu\la}(q)$ for $\ell({^+\la})\le k$ and $\ell({^+\mu})\le k$, and $ \breve{\mf l}^{\bf
b}_{\mu^\natural\la^\natural}(q) =\breve{\mf l}^{\bf
b,1}_{\mu^\natural\la^\natural}(q)$ for $\ell({^+\la'})\le k$ and $\ell({^+\mu'})\le k.$
\end{prop}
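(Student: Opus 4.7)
The plan is to treat the two parallel statements for $\mf{tr}$ and $\breve{\mf{tr}}$ by the same strategy, and to handle the three module types $\mc Y = \mc M, \mc L, \mc T$ in that order, finally deducing the KLV polynomial identities from the $\mc Y = \mc L$ case. I will describe the argument for $\mf{tr}$; the $\breve{\mf{tr}}$ case is entirely analogous, with the only difference being that the partition condition $\ell({}^+\la) \leq k$ becomes $\ell({}^+\la') \leq k$ because of the conjugation built into $\la \mapsto \la^\natural$ and the way $\breve{\mf k}$ is built out of $\delta_{r}$ with $r \in \hf + \Z_+$.

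First I would handle the parabolic Verma modules. The key observation is that any weight $\mu$ of $\mc M_{\bf b}(\la) = {\rm Ind}_{\mf p_{\bf b}}^{\G} L^0(\la)$ has the form $\la - \gamma$ where $\gamma$ is a non-negative integral combination of simple roots of $\mc B_{\bf b}$ outside the Levi $\mf k$; these roots are of the form $\ep^{\bf b}_i - \delta_j$ or $\delta_j - \delta_{j+1}$. Tracking only the $\delta$-components, a weight $\mu$ of $\mc M_{\bf b}(\la)$ satisfies $(\mu,\delta_j) = 0$ for all $j > k$ if and only if ${}^+\la_j = 0$ for $j > k$ and only the generators of $\mf u_{\bf b}^- \cap \G^k$ and $\mf k \cap \G^k$ are applied. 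If $\ell({}^+\la) > k$, the highest weight itself is killed and $\mf{tr}(\mc M_{\bf b}(\la)) = 0$; if $\ell({}^+\la) \leq k$, the PBW theorem for $\G = \mf u_{\bf b}^- \oplus \mf k \oplus \mf u_{\bf b}$ together with the corresponding decomposition for $\G^k$ gives an isomorphism $\mf{tr}(\mc M_{\bf b}(\la)) \cong {\rm Ind}_{\mf p_{\bf b}^k}^{\G^k} L^{0,k}(\la) = \mc M_{\bf b}^k(\la)$, where one also uses that $\mf{tr}(L^0(\la)) = L^{0,k}(\la)$ by the same weight argument applied to the Levi $\mf k$.

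Next I would treat the irreducible modules. If $\ell({}^+\la) > k$, the highest weight $\la$ itself is truncated to zero and since every weight of $\mc L_{\bf b}(\la)$ lies below $\la$ in the simple-root ordering but cannot gain a smaller $\delta_j$-component for $j > k$, one gets $\mf{tr}(\mc L_{\bf b}(\la)) = 0$. If $\ell({}^+\la) \leq k$, the highest weight vector survives and generates a $\G^k$-highest weight submodule $N \subseteq \mf{tr}(\mc L_{\bf b}(\la))$. To see that $\mf{tr}(\mc L_{\bf b}(\la))$ equals $N$ and is irreducible, I would use the standard argument: any nonzero $\G^k$-submodule of $\mf{tr}(\mc L_{\bf b}(\la))$ generates, under the action of $\G$, a nonzero $\G$-submodule of $\mc L_{\bf b}(\la)$ which must then equal $\mc L_{\bf b}(\la)$, forcing the $\G^k$-submodule to exhaust $\mf{tr}(\mc L_{\bf b}(\la))$; applied to a maximal submodule of $N$ (extended to $\mf{tr}(\mc L_{\bf b}(\la))$) this yields irreducibility and the identification with $\mc L_{\bf b}^k(\la)$. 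This step is the most delicate point, but it parallels the arguments in \cite{CL, CLW1}.

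For the tilting modules, I would combine the Verma case with the characterization of tilting modules via Verma flags. Truncation is exact, so a $\bf b$-Verma flag of $\mc T_{\bf b}(\la)$ with sections $\mc M_{\bf b}(\gamma_i)$ is sent to a filtration whose sections are $\mc M_{\bf b}^k(\gamma_i)$ when $\ell({}^+\gamma_i) \leq k$ and $0$ otherwise. Since $\la$ sits at the bottom, $\ell({}^+\la) > k$ forces $\mf{tr}(\mc T_{\bf b}(\la)) = 0$, and $\ell({}^+\la) \leq k$ implies the truncation has $\mc M_{\bf b}^k(\la)$ at the bottom, is self-dual (truncation commutes with the duality $\tau$ since $\tau$ preserves weight-space decompositions), and is indecomposable because the endomorphism ring of the truncation injects into that of $\mc T_{\bf b}(\la)$ which is local. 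By the finite-rank analog of \lemref{lem:char:tilt}, this forces $\mf{tr}(\mc T_{\bf b}(\la)) \cong \mc T_{\bf b}^k(\la)$.

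Finally, for the KLV polynomial identification, I would note that the inclusion $\mf u_{\bf b}^{-,k} \hookrightarrow \mf u_{\bf b}^-$ combined with the weight-space restriction shows that, after applying $\mf{tr}$, the Chevalley--Eilenberg complex $\wedge^\bullet \mf u_{\bf b}^- \otimes \mc L_{\bf b}(\la)$ restricts to the Chevalley--Eilenberg complex for $\mf u_{\bf b}^{-,k}$ acting on $\mc L_{\bf b}^k(\la) = \mf{tr}(\mc L_{\bf b}(\la))$ on those weight strata with $\delta_j$-component zero for $j > k$. Since the $\mf k$-type $L^0(\mu)$ with $\ell({}^+\mu) \leq k$ lives entirely in that stratum and restricts to $L^{0,k}(\mu)$, taking $\text{Hom}_{\mf k}(L^0(\mu),-)$ agrees with $\text{Hom}_{\mf k^k}(L^{0,k}(\mu),-)$ on the truncated complex, yielding $\mf l^{\bf b}_{\mu\la}(q) = \mf l^{\bf b,0}_{\mu\la}(q)$ whenever $\ell({}^+\la), \ell({}^+\mu) \leq k$. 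The main obstacle in the whole proof is the irreducibility step in the $\mc L$ case, but once that is in place everything else is routine and parallels the finite-rank truncation arguments in earlier papers of the authors.
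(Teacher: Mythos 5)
Your overall route is the same as the paper's, which handles $\mc Y=\mc M$ by the weight/PBW observation you give, defers $\mc Y=\mc L$ to [CWZ, Lemma 3.5]/[CWbook, Prop. 6.7], defers $\mc Y=\mc T$ to [CW, Prop. 3.12], and obtains the KLV identity from the compatibility of $\mf{tr}$ with the $\mf u_{\bf b}^-$-homology complexes. However, two of the justifications you supply in place of those citations do not hold up as written. In the irreducibility step for $\mf{tr}(\mc L_{\bf b}(\la))$, from ``$U(\G)N=\mc L_{\bf b}(\la)$ for any nonzero $\G^k$-submodule $N$'' you conclude that $N$ exhausts $\mf{tr}(\mc L_{\bf b}(\la))$. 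That inference is a non sequitur: generating the whole $\G$-module says nothing, by itself, about whether the intersection of $U(\G)N$ with the truncated weight stratum is larger than $N$; one needs a PBW-reordering argument showing that this intersection equals $U(\G^k)N$. The standard short fix is the singular-vector argument: a $\mc{B}^k_{\bf b}$-singular vector $w$ in $\mf{tr}(\mc L_{\bf b}(\la))$ is automatically $\mc{B}_{\bf b}$-singular, since the remaining simple raising operators $E_{\delta_j-\delta_{j+1}}$ with $j\ge k$ would send $w$ to a vector of negative $\delta_{j+1}$-weight, which cannot occur in $\OO_{\bf b}$; irreducibility of $\mc L_{\bf b}(\la)$ then forces $w$ to have weight $\la$.

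Second, for indecomposability of $\mf{tr}(\mc T_{\bf b}(\la))$ you invoke an injection of $\text{End}(\mf{tr}(\mc T_{\bf b}(\la)))$ into the local ring $\text{End}(\mc T_{\bf b}(\la))$. The natural map goes the other way (restriction of an endomorphism to the truncated weight spaces); an algebra map in the direction you claim — extending an endomorphism of the truncation to one of $\mc T_{\bf b}(\la)$ — is exactly what would have to be constructed and is not automatic. What the cited argument actually needs is surjectivity of the restriction map on the relevant Hom-spaces, so that $\text{End}(\mf{tr}(\mc T_{\bf b}(\la)))$ becomes a quotient (hence local or zero) rather than a subring of $\text{End}(\mc T_{\bf b}(\la))$. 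The remaining steps — the parabolic Verma computation, the vanishing criterion $\ell({}^+\la)>k$, self-duality and the Verma-flag bookkeeping, and the KLV comparison via the truncated Koszul complex — are correct and agree with the paper's (cited) arguments.
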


\begin{proof}
For $\mc{Y}=\mc{M}$, this is easy. For $\mc{Y}=\mc{L}$ the argument
in \cite[Lemma 3.5, Corollary 3.6]{CWZ}, or, with greater details in
\cite[Proposition 6.7]{CWbook}, can be adapted easily to our
settings here. For $\mc{Y}=\mc{T}$, it follows by the same type of
arguments as in \cite[Proposition 3.12]{CW}.

The coincidence of KLV polynomials under the truncation functors is
an immediate consequence of the property that the truncation
functors commute with the differentials of the complexes for the
respective homology groups (cf.~\cite[Theorem 6.31]{CWbook}).
\end{proof}

\begin{rem}  \label{rem:notsurj}
Due to the conditions in the definitions \eqref{eq:X+} of $X^+$ and
\eqref{eq:Xk+} of $X^{k,+}$, the $\G^k$-modules $\mc Y_{\bf
b}^k(\la)$, for $\mc{Y}=\mc{M},\mc{L},\mc{T}$, appear as images of
$\mf{tr}$ if and only if $\la=(\la_1,\ldots,\la_{m+n}; \mu_1,
\ldots, \mu_k) \in X^{k,+}$ satisfies the additional condition that
$\mu_k \ge 0$. Similar remarks apply to the images of
$\breve{\mf{tr}}$.
\end{rem}

\section{Proof of Brundan-Kazhdan-Lusztig conjecture}
\label{sec:BKL}

In this section, we prove the Brundan-Kazhdan-Lusztig (BKL) conjecture
for the BGG category $\OO$ of $\glmn$-modules which is formulated in
terms of canonical and dual canonical bases on a Fock space $\mathbb
T^{\bf b}$, associated with a $0^m1^n$-sequence $\bf b$. Our proof is built
on a Fock space reformulation of classical Kazhdan-Lusztig theory
for type $A$ Lie algebras, the super duality, and a comparison of BKL conjecture
for adjacent Borel subalgebras.

\subsection{BKL conjecture}

Let ${\bf b}$ be a $0^m1^n$-sequence. Recall from
Section~\ref{sec:repn:prep} that the BGG category $\Omn=\Omn_{\bf
b}$ of $\gl(m|n)$-modules contains the $\bf b$-Verma module $M_{\bf
b}(\la)$, the $\bf b$-highest weight irreducible modules $L_{\bf
b}(\la)$, and the $\bf b$-tilting modules $T_{\bf b}(\la)$ for
$\la\in \wtl$. Recall also that $\mc{O}^{m|n,\Delta}_{\bf b}$ denotes the
full subcategory of $\Omn_{\bf b}$ consisting of objects that have
finite ${\bf b}$-Verma flags, and let $\big[\mc{O}^{m|n,\Delta}_{\bf
b}\big]$ denote its Grothendieck group.

Recall furthermore the Fock space $\mathbb T^{\bf b}$ and its $B$-completion
$\widehat{\mathbb T}^{\bf b}$ with respect to the Bruhat ordering
$\preceq_{\bf b}$ from Definition~\ref{def:completion:T}. Starting
with a $\Z[q,q^{-1}]$-lattice spanned by the standard monomial basis
for the $\Q(q)$-vector space $\mathbb T^{\bf b}$, we define by a
base change to $\Z$ the specialization at $q=1$ of $\mathbb T^{\bf
b}$, denoted by $\mathbb T^{\bf b}_\Z$. The $B$-completion
$\widehat{\mathbb T}^{\bf b}_\Z$ is defined as usual. For a
standard, canonical, or dual canonical basis element $u$ in $\mathbb
T^{\bf b}\wotimes\wedge^{\infty}\mathbb V$, we shall denote by
$u(1)$ the corresponding element in the specialization at $q=1$.
Similar remarks on specialization at $q=1$ and similar notations
apply below to other variants of Fock spaces.

Recall the bijection $\wtl \rightarrow \Z^{m+n}$ given by $\la \mapsto f^{\bf
b}_\la$ from \eqref{eq:bijXZ}. We have a natural $\Z$-linear
isomorphism $\psi_{\bf b}: \big[\mc{O}^{m|n,\Delta}_{\bf b}\big]
\longrightarrow \mathbb T^{\bf b}_\Z$ given by
$
[M_{\bf b}(\la)]  \mapsto M^{\bf b}_{f^{\bf b}_\la}(1).
$ 
We define a completion $\big{[}\big{[}{\mc{O}}^{m|n,\Delta}_{\bf
b}\big{]}\big{]}$ \ so that $\psi_{\bf b}$ extends to a $\Z$-linear
isomorphism between the two completions
$$
\psi_{\bf b}:
\big[\big[\mc{O}^{m|n,\Delta}_{\bf b}\big]\big] \longrightarrow
\widehat{\mathbb T}^{\bf b}_\Z, \qquad [M_{\bf b}(\la)]  \mapsto M^{\bf b}_{f^{\bf b}_\la}(1).
$$
We note that $[L_{\bf b}(\la)]\in
\big{[}\big{[}{\mc{O}}^{m|n,\Delta}_{\bf b}\big{]}\big{]}$, though
$L_{\bf b}(\la) \not \in {\mc{O}}^{m|n,\Delta}_{\bf b}$ in general.

We now formulate Brundan's Kazhdan-Lusztig type conjecture for
$\Omn_{\bf b}$, for an arbitrary $0^m1^n$-sequence $\bf b$. Recall
the BKL polynomials $\ell_{f^{\bf b}_\mu f^{\bf b}_\la}(q)$ and
$t_{f^{\bf b}_\mu f^{\bf b}_\la}(q)$ from
Proposition~\ref{prop:existence:can1}.

\begin{conjecture} [BKL conjecture]
 \label{BKL:conj:tilt}
Let ${\bf b}$ be an arbitrary $0^m1^n$-sequence.
\begin{enumerate}
\item
We have $\psi_{\bf b}([L_{\bf b}(\la)]) =L^{\bf b}_{f^{\bf
b}_\la}(1)$, for all $\la\in \wtl$. Equivalently, we have
\begin{align*}
[L_{\bf b}(\la)] =\sum_{\mu}\ell_{f^{\bf b}_\mu f^{\bf
b}_\la}(1)[M_{\bf b}(\mu)].
\end{align*}

\item

We have $\psi_{\bf b}([T_{\bf b}(\la)]) =T^{\bf b}_{f^{\bf
b}_\la}(1)$, for all $\la\in \wtl$. Equivalently, we have
\begin{align*}
[T_{\bf b}(\la)]=\sum_{\mu}t_{f^{\bf b}_\mu f^{\bf b}_\la}(1)[M_{\bf
b}(\mu)].
\end{align*}
\end{enumerate}
\end{conjecture}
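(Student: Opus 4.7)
The proof will follow the five-step inductive strategy outlined in Section~\ref{stra:of:proof}, carried out simultaneously for the irreducible and tilting parts of the conjecture. The base case $\texttt{BKL}(m|0)$ for all $m$ is equivalent to the classical Kazhdan-Lusztig conjecture for $\gl(m)$ (together with Soergel's tilting variant), established by Beilinson-Bernstein and Brylinski-Kashiwara. For the inductive step, assuming $\texttt{BKL}(m|n)$ for all $m$, one deduces $\texttt{BKL}(m|n+1)$ for all $m$ via the chain \eqref{ind:oddref}--\eqref{ind:trunc}.

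For Step~\eqref{ind:oddref}, starting from $\texttt{BKL}(m+k|n)$ for all $m,k$, one sweeps the $n$ copies of $\WW$ past the trailing $k$ copies of $\VV$ by iterating odd reflections through adjacent $0^{m+k}1^n$-sequences. Each adjacent swap is controlled on the representation-theoretic side by Lemma~\ref{lem:irred:borels} and Theorem~\ref{thm:tilt:relation} (which track how irreducibles and tilting modules transform under an odd reflection) together with Proposition~\ref{prop:sameO} (which ensures we remain in a single category), and on the combinatorial side by Theorems~\ref{S:L:to:L} and \ref{S:T:to:T} (which match canonical and dual canonical bases across adjacent Fock spaces via the isomorphisms $\mc R$ and $\mc R^u$). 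The identification of the parabolic monomial bases $N_f$ and $U_f$ with their representation-theoretic counterparts $N_{\bf b}(\la)$ and $U_{\bf b}(\la)$ of Lemma~\ref{expand:repn:M:N}, compatible with the character identities \eqref{eq:MNla}--\eqref{eq:UMla}, is the essential bridge; the result is $\texttt{BKL}(m|n|k)$ for all $m,k$.

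Step~\eqref{ind:para} then passes from the full BGG category to its parabolic analogue, using Propositions~\ref{thm:aux:11}--\ref{thm:aux:12} on the combinatorial side (relating BKL polynomials of $\mathbb T^{\bf b}\wotimes\mathbb V^{\otimes k}$ with those of $\mathbb T^{\bf b}\wotimes\wedge^k\mathbb V$) and the standard identification of parabolic Verma, irreducible and tilting modules on the representation side, which is a routine super generalization of Deodhar-Soergel-Brundan. Step~\eqref{ind:infty} then takes $k\to\infty$: the truncation functors of Section~\ref{subsec:trunc:mod} (Proposition~\ref{prop:trunc:ML}) intertwine with the truncation maps of Section~\ref{sec:compareCB} (Proposition~\ref{prop:trunc:fock}), so that the identifications at each finite $k$ assemble consistently to $\texttt{BKL}(m|n|\underline{\infty})$.

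Step~\eqref{ind:SD} is an application of the super duality equivalence (Theorem~\ref{thm:SD}) together with its combinatorial shadow (Theorem~\ref{TwedgeV:isom:TwedgeW}): the functor $\breve T$ matches irreducible and tilting modules, while $\natural_{\bf b}$ identifies canonical and dual canonical bases under $\wedge^\infty\VV\leftrightarrow\wedge^\infty\WW$, giving $\texttt{BKL}(m|n+\underline{\infty})$. Finally, Step~\eqref{ind:trunc} applies one more truncation, from the infinite-rank parabolic category headed by $\mathfrak{gl}(m|n+1)$ down to $\mc O^{m|n+1}$, matched via Proposition~\ref{prop:trunc:ML} with the Fock space truncation on $\mathbb T^{\bf b}\wotimes\wedge^\infty\WW$ down to $\mathbb T^{\bf b}\wotimes\WW$. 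The decisive new obstacle is Step~\eqref{ind:oddref}: as emphasized in Remark~\ref{rem:VWWV}, neither the $B$-completions alone nor the naive $\mathcal{R}$-matrix give a compatibility of dual canonical bases between adjacent Fock spaces, so one must introduce the parabolic monomial bases $N_f,U_f$, the refined orderings $\preceq_{\bf b,b'}$ and $\preceq_{\bf b,b'}^*$, and the $C$-completions, all calibrated so that Theorems~\ref{S:L:to:L} and \ref{S:T:to:T} succeed and match, via Propositions~\ref{LTsame} and \ref{L'T'same}, the full-category canonical bases across an odd reflection.
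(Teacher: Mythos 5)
Your proposal follows exactly the paper's own five-step inductive strategy, invoking the same results at each stage (classical KL as the base case, the adjacent-Borel comparison via Theorems~\ref{S:L:to:L} and \ref{S:T:to:T} together with Lemma~\ref{lem:irred:borels}, Theorem~\ref{thm:tilt:relation} and the $N$/$U$-modules, then parabolic passage, $k\to\infty$, super duality, and truncation), and it correctly singles out the change-of-Borel step with its $C$-completions and parabolic monomial bases as the decisive new ingredient. The only detail you gloss over is that in the final truncation step the functor $\breve{\mf{tr}}$ is not surjective (Remark~\ref{rem:notsurj}), so the paper must supplement it with the shift argument of \eqref{CD:shift}--\eqref{tensorStr} (tensoring with the supertrace representation) to reach all of $\breve X^{k,+}$; this is a routine fix rather than a gap.
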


\begin{rem}
Conjecture ~\ref{BKL:conj:tilt} for the standard ${0^m1^n}$-sequence
${\bf b}_{\text{st}}=({0}^m,{1}^n)$ is precisely \cite[Conjecture
4.32]{Br}, and the variants of Brundan's conjecture for general $\bf
b$ have been expected (cf. Kujawa's thesis \cite{Ku}), though the
completions of various Fock spaces and their canonical bases were
not formulated in {\em loc.~cit.}. Kujawa provided supporting
evidence for the BKL conjecture by showing the irreducible modules
in $\Omn$ form a crystal basis compatible with the one coming from
$\mathbb T^{\bf b}$. When $n=0$ or $m=0$, the BKL conjecture reduces
to a reformulation of classical Kazhdan-Lusztig conjecture (see
Theorem~\ref{thm:classical:KL} for $k=0$ below).
\end{rem}

The remainder of this paper is devoted to a proof of this
conjecture. We will follow closely the strategy of proof outlined in \S\ref{stra:of:proof}.

\subsection{Bijections}

Let $k\in\N\cup\{\infty\}$. Recall $f^{\bf b}_\la\in\Z^{m+n}$ from
\eqref{la:to:fla}, respectively. Also recall $\Z^k_+$ and $\Z^k_-$ from
\eqref{eq:Zk+} and \eqref{eq:Zk-}. The following maps
\begin{align} \label{eq:2bij}
 \begin{split}
X^{k,+} \longrightarrow \Z^{m+n}\times\Z^k_+, \qquad \la\mapsto
f^{{\bf b}0}_\la,
  \\
\breve{X}^{k,+} \longrightarrow \Z^{m+n}\times\Z^k_-, \qquad
\la\mapsto f^{{\bf b}1}_\la,
 \end{split}
\end{align}
are bijections, where $X^{\infty,+}$ and $\breve{X}^{\infty,+}$ are
understood to be $X^+$ and $\breve{X}^{+}$ in \eqref{eq:X+} and
\eqref{eq:X+2}, respectively. Here $f_{\la}^{{\bf b}0} \in
\Z^{m+n}\times\Z^k_+$ and $f_{\la}^{{\bf b}1} \in
\Z^{m+n}\times\Z^k_-$, for $\la=(\la_1,\ldots,\la_{m+n};{^+\la})$
with ${^+\la}=({^+\la}_1,\ldots,{^+\la}_k)$, are defined by setting
\begin{eqnarray*}
f_{\la}^{{\bf b}0}(i)
 =&f_{\la}^{\bf b}(i), & \text{ if }i\in [m+n],
 \\
f_{\la}^{{\bf b}0}(\ul{i})
 =&{^+\la}_i+1-i,& \text{ if }1\le i\le k,
 \\
f_{\la}^{{\bf b}1}(i)
 =& f_{\la}^{\bf b}(i), & \text{ if }i\in [m+n],
 \\
f_{\la}^{{\bf b}1}(\ul{i})
 =&i -{^+\la}_i, &\text{ if }1\le i\le k.
\end{eqnarray*}
The normalization $\rho_{\bf b}$ in \eqref{eq:2rho} used in the definition
of $f^{\bf b}_\la\in\Z^{m+n}$ in \eqref{la:to:fla} is compatible
with the above definitions in the sense that $f_{\la}^{{\bf b}0}$
and $f_{\la}^{{\bf b}1}$ correspond indeed to $\la+\rho$ for
suitably normalized Weyl vector $\rho$ for $\gl(m+k|n)$ and
$\gl(m|n+k)$, respectively.

\subsection{Classical KL theory}

In this subsection we consider the case when $n=0$ so that ${\bf
b}=(0^m)$.

In this case, $\G$ defined in \S\ref{Sec:GGG} and $\G^k=\gl(m+k)$ in
\S\ref{sec:finiterank} are Lie algebras. For $k\in\N
\cup\{\infty\}$, recall the parabolic BGG category $\OO^{\ul
k}_{{\bf b}}$ of $\G^k$-modules defined in \S\ref{SD:para:cat} and \S\ref{sec:finiterank}, and
let $\OO^{\ul k,\Delta}_{{\bf b}}$ denote the full subcategory of
$\OO^{\ul k}_{{\bf b}}$ consisting of objects that have finite
parabolic ${\bf b}$-Verma flags  (here and below it is understood
that $\G^\infty=\G, \OO^{\ul \infty}_{{\bf b}}=\OO_{{\bf b}}$,
$\OO^{\ul \infty,\Delta}_{{\bf b}}=\OO^\Delta_{{\bf b}}$ and so on).
Let $\big[\OO_{{\bf b}}^{\ul k,\Delta}\big]$ denote its Grothendieck
group.

Note that ${\mathbb T}^{\bf b} =\VV^{\otimes m}$ for ${\bf b}=(0^m)$.
Thanks to the bijection \eqref{eq:2bij} given by $\la\mapsto f^{{\bf
b}0}_\la$, the $\Z$-linear map
\begin{align*}
\Psi: \big[\OO^{\ul k,\Delta}_{{\bf b}}\big]\longrightarrow
{\mathbb T}^{\bf b}_\Z\otimes\wedge^{k}\mathbb V_\Z,
 \qquad
[\mc{M}^k_{\bf b}(\la)]\mapsto M^{{\bf b},0}_{f^{{\bf
b}0}_{\la}}(1),
\end{align*}
is an isomorphism, where ${\mathbb T}^{\bf
b}_\Z\otimes\wedge^{k}\mathbb V_\Z$ denotes the $q=1$ specialization.
We define the completion $\big{[}\big{[}{\OO}^{\ul k,\Delta}_{{\bf
b}}\big{]}\big{]}$ of $\big[\OO^{\ul k,\Delta}_{{\bf b}}\big]$ in the obvious way so that $\Psi$ extends to a $\Z$-linear isomorphism
 $\Psi:
\big{[}\big{[}\OO^{\ul k,\Delta}_{{\bf b}}\big{]}\big{]}
\longrightarrow {\mathbb T}^{\bf b}_\Z\wotimes\wedge^k\mathbb
V_\Z$.

Then, by Vogan's homological interpretation of the Kazhdan-Lusztig
polynomials \cite[Conjecture 3.4]{V} and \cite[Theorem 3.11.4]{BGS},
Theorem~\ref{thm:classical:KL} below (for $k$ finite) is a
well-known Fock space reformulation of the Kazhdan-Lusztig
conjectures for type $A$ Lie algebras \cite{KL, KL2} (proved in
\cite{BB, BK}, and the equivalent tilting module version in \cite{So2}).  Such a reformulation can be found in \cite{Br, Br4}
and \cite[Theorem~4.14]{CW} (also see the proof of
\cite[Theorem~5.4]{CWZ}). The case $k=\infty$ follows from the cases
for finite $k$ by Proposition~\ref{prop:trunc:fock}, once the
existence of tilting modules is established as in \cite[Theorem
4.16]{CW}. Recall the KLV polynomials $\mf l^{{\bf
b},0}_{\mu\la}(q)$ from \S\ref{sec:finiterank} and the BKL
polynomials $\ell^{{\bf b},0}_{f^{{\bf b}0}_{\mu} f^{{\bf
b}0}_{\la}}(q)$ from Proposition~\ref{prop:CBdcb}. Here we recall
our assumption that ${\bf b}=(0^m)$ in the theorem below, though
eventually it turns out to be valid for a general $0^m1^n$-sequence
${\bf b}$, and our formulation in this subsection makes sense for a
general $\bf b$.

\begin{thm}\label{thm:classical:KL}
Let $k\in\Z_+ \cup\{\infty\}$. Then the isomorphism $\Psi:
\big{[}\big{[}\OO^{\ul k,\Delta}_{{\bf b}}\big{]}\big{]}
\longrightarrow {\mathbb T}^{\bf b}_\Z\wotimes\wedge^k\mathbb V_\Z$ satisfies
$$
\Psi([\mc{L}_{\bf b}^k(\la)])=L^{{\bf b},0}_{f^{{\bf b}0}_{\la}}(1),
 \quad
\Psi([\mc{T}_{\bf b}^k(\la)]) =T^{{\bf b},0}_{f^{{\bf
b}0}_{\la}}(1),
 \qquad
\text{ for }\la\in X^{k,+}.
$$
\end{thm}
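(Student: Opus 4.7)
\medskip

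The plan is to handle the finite $k$ and $k=\infty$ cases separately, with the finite case reducing to the classical Kazhdan-Lusztig conjecture for type $A$ Lie algebras and the infinite case being obtained by a truncation argument.

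For finite $k$, the category $\OO^{\ul k}_{\bf b}$ is an honest parabolic BGG category of $\gl(m+k)$-modules with Levi $\mf{h}^m\oplus\gl(k)$, since we are assuming $n=0$. First I would invoke the parabolic version of the Kazhdan-Lusztig conjecture of Deodhar and Soergel, proved via \cite{BB,BK,BGS}, to express $[\mc L^k_{\bf b}(\la)]$ and $[\mc T^k_{\bf b}(\la)]$ as $\Z$-linear combinations of parabolic Verma classes with coefficients being values at $q=1$ of appropriate parabolic Kazhdan-Lusztig polynomials (using Soergel's character formula for tilting modules). Next I would invoke Vogan's homological interpretation of Kazhdan-Lusztig polynomials to identify these parabolic KL polynomials with the KLV polynomials $\mf l^{{\bf b},0}_{\mu\la}(q)$ defined through $\mf u^-_{\bf b}$-homology as in \eqref{eq:KLV} (adapted to the finite-rank setting of \S\ref{sec:finiterank}).

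The remaining step in the finite case is the Fock-space identification: under the isomorphism $\Psi$, the KLV polynomials $\mf l^{{\bf b},0}_{\mu\la}(q)$ must coincide with the BKL polynomials $\ell^{{\bf b},0}_{f^{{\bf b}0}_\mu f^{{\bf b}0}_\la}(q)$ of Proposition \ref{prop:CBdcb}, and similarly for the tilting coefficients $t^{{\bf b},0}_{f^{{\bf b}0}_\mu f^{{\bf b}0}_\la}(q)$. This is a purely combinatorial matching: the parabolic KL polynomials can be read off as matrix coefficients of the (dual) canonical basis in $\mathbb T^{\bf b}\wotimes\wedge^k\VV$ with respect to the standard monomial basis, using Jimbo-Schur duality (Lemma \ref{lem:commute}) together with Propositions \ref{thm:aux:11}--\ref{thm:aux:12} which relate canonical bases on $q$-wedges to those on full tensor products (cf.\ \cite[Theorem 4.14]{CW}, \cite[Theorem 5.4]{CWZ} and the discussion in \cite{Br,Br4}). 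This gives $\Psi([\mc L^k_{\bf b}(\la)])=L^{{\bf b},0}_{f^{{\bf b}0}_\la}(1)$ and $\Psi([\mc T^k_{\bf b}(\la)])=T^{{\bf b},0}_{f^{{\bf b}0}_\la}(1)$ for all $\la\in X^{k,+}$.

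For $k=\infty$, the plan is to bootstrap from the finite case by means of the truncation functors $\mathfrak{tr}:\OO_{\bf b}\to\OO^{\ul k}_{\bf b}$. By Proposition \ref{prop:trunc:ML}, $\mathfrak{tr}$ sends $\mc Y_{\bf b}(\la)$ to $\mc Y^k_{\bf b}(\la)$ for $\mc Y=\mc M,\mc L,\mc T$ whenever $\ell({^+\la})\le k$, and annihilates it otherwise; on the Fock side, Proposition \ref{prop:trunc:fock} states that the truncation map $\mathtt{Tr}:\mathbb T^{\bf b}\wotimes\wedge^\infty\VV\to\mathbb T^{\bf b}\wotimes\wedge^k\VV$ is compatible with $M$-, $L$-, and $T$-bases in precisely the parallel manner. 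Compatibility of $\mathfrak{tr}$ and $\mathtt{Tr}$ with the bijection $\la\mapsto f^{{\bf b}0}_\la$ and the isomorphism $\Psi$ is clear on Verma classes, and the finite-$k$ case of the theorem then forces the same compatibility on the $L$- and $T$-classes. Since every $\la\in X^+$ has finite $\ell({^+\la})$, letting $k\to\infty$ determines $\Psi([\mc L_{\bf b}(\la)])$ and $\Psi([\mc T_{\bf b}(\la)])$ uniquely as $L^{{\bf b},0}_{f^{{\bf b}0}_\la}(1)$ and $T^{{\bf b},0}_{f^{{\bf b}0}_\la}(1)$ respectively.

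The main obstacle, or rather the most delicate input, is twofold. First, one must know that the tilting modules $\mc T_{\bf b}(\la)$ exist in the infinite-rank category $\OO_{\bf b}$; this is noted in \S\ref{SD:para:cat} following the argument of \cite[\S5]{CLW2}, or alternatively via \cite[Theorem 4.16]{CW}. Second, one must be careful in the finite case that the relevant parabolic KL polynomials genuinely match the BKL polynomials $\ell^{{\bf b},0}$ and $t^{{\bf b},0}$ defined via the bar involution coming from Lusztig's quasi-$\mathcal R$-matrix on $\mathbb T^{\bf b}\wotimes\wedge^k\VV$; this matching follows by the uniqueness characterization of (dual) canonical bases (Lemma \ref{lem:lusztig}) once one checks that the bar involution on the Fock side agrees with the Vogan-Zuckerman duality on the representation side, which is exactly the content of the cited Fock-space reformulations.
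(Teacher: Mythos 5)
Your proposal is correct and follows essentially the same route as the paper: the finite-$k$ case is the known Fock-space reformulation of the (parabolic, type $A$) Kazhdan--Lusztig conjecture via Vogan's homological interpretation and the comparison of $q$-wedge versus tensor canonical bases, and the $k=\infty$ case is deduced by the compatible truncations $\mathfrak{tr}$ and $\texttt{Tr}$ (Propositions~\ref{prop:trunc:ML} and \ref{prop:trunc:fock}) once the existence of tilting modules in $\OO_{\bf b}$ is secured. The paper treats this theorem as a citation-level reformulation rather than giving a detailed argument, so your write-up simply fleshes out the same skeleton.
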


\subsection{Super duality and BKL}

Let $\bf b$ be an arbitrary $0^m1^n$-sequence. Just as in the previous section we define
$\big{[}\big{[}{\breve{\OO}}^{\Delta}_{{\bf b}}\big{]}\big{]}$ a similar completion of the Grothendieck group of the full subcategory
$\breve{\OO}^{\Delta}_{{\bf b}}$ of $\breve{\OO}_{{\bf b}}$
consisting of objects with parabolic $\bf b$-Verma flags.
Thanks to the bijection \eqref{eq:2bij}, we now have a $\Z$-linear
isomorphism on the completions:
\begin{align*}
\breve{\Psi}: \big[\big[\breve\OO^{\Delta}_{{\bf
b}}\big]\big]\longrightarrow {\mathbb T}^{\bf
b}_\Z\widehat{\otimes}\wedge^{\infty}\WW_\Z,
 \qquad
[\breve{\mc{M}}_{\bf b}(\la)]\mapsto M^{{\bf b},1}_{f^{{\bf
b}1}_{\la}}(1),
\end{align*}
which is induced by the corresponding isomorphism
$\big[\breve\OO^{\Delta}_{{\bf b}}\big]\cong {\mathbb T}^{\bf b}_\Z
\otimes\wedge^{\infty}\WW_\Z$. The following is a consequence of
super duality (see Theorem~\ref{TwedgeV:isom:TwedgeW} and
Theorem~\ref{thm:SD}).

\begin{thm}  \label{th:KL:SD}
Let $\bf b$ be a $0^m1^n$-sequence. Assume the statement in
Theorem~\ref{thm:classical:KL} is valid for $\bf b$. Then, the
isomorphism $\breve{\Psi}: \big{[}\big{[}\breve{\OO}^{\Delta}_{{\bf
b}}\big{]}\big{]} \longrightarrow {\mathbb T}^{\bf
b}_\Z\wotimes\wedge^\infty\WW_\Z$ satisfies
$$
\breve{\Psi}([\breve{\mc{L}}_{\bf b}^k(\la)])=L^{{\bf b},1}_{f^{{\bf
b}1}_{\la}}(1),
 \quad
\breve{\Psi}([\breve{\mc{T}}_{\bf b}^k(\la)]) =T^{{\bf b},1}_{f^{{\bf
b}1}_{\la}}(1),
 \qquad
\text{ for }\la\in \breve X^{+}.
$$
\end{thm}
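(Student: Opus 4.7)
The plan is to reduce Theorem~\ref{th:KL:SD} to the combination of the categorical super duality (Theorem~\ref{thm:SD}), the combinatorial super duality (Theorem~\ref{TwedgeV:isom:TwedgeW}), and the hypothesis that Theorem~\ref{thm:classical:KL} holds for $\bf b$. Since every $\xi\in\breve{X}^+$ can be written uniquely as $\xi=\la^\natural$ for some $\la\in X^+$, it suffices to show
\[
\breve\Psi\bigl([\breve{\mc L}_{\bf b}(\la^\natural)]\bigr)=L^{{\bf b},1}_{f^{{\bf b}1}_{\la^\natural}}(1), \qquad \breve\Psi\bigl([\breve{\mc T}_{\bf b}(\la^\natural)]\bigr)=T^{{\bf b},1}_{f^{{\bf b}1}_{\la^\natural}}(1).
\]

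The first step I would carry out is a bookkeeping verification of the label compatibility: for every $\la\in X^+$,
\[
(f^{{\bf b}0}_\la)^\natural = f^{{\bf b}1}_{\la^\natural},
\]
where on the left $\natural$ is the combinatorial bijection \eqref{naturalbi} and on the right we use the bijections \eqref{eq:2bij} together with the definition \eqref{eq:latheta} of $\la^\natural$. Both sides agree with $f^{\bf b}_\la$ on $[m+n]$ since $\la$ and $\la^\natural$ share the same $\ep^{\bf b}_i$-coordinates; for the $\delta$-part one checks that $f^{{\bf b}0}_\la(\ul i)={}^+\la_i+1-i$ encodes the partition ${}^+\la$ via $|{}^+\la\rangle$, so that $\natural$ sends it to $|({}^+\la)'_*\rangle$, i.e.~to the function $\ul i\mapsto i-({}^+\la)'_i$, which is precisely $f^{{\bf b}1}_{\la^\natural}(\ul i)$.

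Granted this compatibility, I would invoke Theorem~\ref{thm:SD}(2) to transfer Grothendieck-group identities from $[\mc{O}^{\Delta}_{\bf b}]$ to $[\breve{\mc{O}}^{\Delta}_{\bf b}]$: if $[\mc L_{\bf b}(\la)] = \sum_{\mu\in X^+} a_{\mu\la}\,[\mc M_{\bf b}(\mu)]$ then $[\breve{\mc L}_{\bf b}(\la^\natural)] = \sum_{\mu\in X^+} a_{\mu\la}\,[\breve{\mc M}_{\bf b}(\mu^\natural)]$, and analogously for the tilting modules. The assumed Theorem~\ref{thm:classical:KL} identifies $a_{\mu\la}=\ell^{{\bf b},0}_{f^{{\bf b}0}_\mu f^{{\bf b}0}_\la}(1)$ in the simple case (respectively $t^{{\bf b},0}_{f^{{\bf b}0}_\mu f^{{\bf b}0}_\la}(1)$ in the tilting case). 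Theorem~\ref{TwedgeV:isom:TwedgeW}(4), combined with the label compatibility, then gives
\[
\ell^{{\bf b},0}_{f^{{\bf b}0}_\mu f^{{\bf b}0}_\la}(q) = \ell^{{\bf b},1}_{f^{{\bf b}1}_{\mu^\natural} f^{{\bf b}1}_{\la^\natural}}(q), \qquad t^{{\bf b},0}_{f^{{\bf b}0}_\mu f^{{\bf b}0}_\la}(q) = t^{{\bf b},1}_{f^{{\bf b}1}_{\mu^\natural} f^{{\bf b}1}_{\la^\natural}}(q).
\]
Substituting these back through $\breve\Psi$ and using Proposition~\ref{prop:TbkW} to recognize the resulting expansions as the dual canonical and canonical basis elements completes the proof.

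The steps are essentially mechanical once the machinery is in place; the only point that requires real care is the first one, namely matching the three bijections (the representation-theoretic $\la\mapsto\la^\natural$, the Fock-space labels \eqref{eq:2bij}, and the combinatorial $\natural$ of \eqref{naturalbi}) so that the super duality on the categorical side is consistent with the isomorphism $\natural_{\bf b}:\mathbb T^{\bf b}\wotimes\wedge^\infty\VV\to\mathbb T^{\bf b}\wotimes\wedge^\infty\WW$ on the Fock-space side. No additional representation-theoretic input beyond the assumed validity of Theorem~\ref{thm:classical:KL} for $\bf b$ is needed.
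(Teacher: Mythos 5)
Your proposal is correct and follows essentially the same route as the paper: the authors likewise first record the label compatibility $\natural(f^{{\bf b}0}_\la)=f^{{\bf b}1}_{\la^\natural}$ and then combine Theorem~\ref{thm:SD}(2), Theorem~\ref{TwedgeV:isom:TwedgeW}, and the assumed Theorem~\ref{thm:classical:KL}, only packaging the argument as a commutative square of isomorphisms on standard objects rather than tracing the coefficients $a_{\mu\la}$ explicitly. The two presentations are interchangeable (the paper's use of Theorem~\ref{TwedgeV:isom:TwedgeW}(3) versus your use of part (4) is immaterial, since (3) implies (4)).
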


\begin{proof}
The bijections given in \eqref{eq:2bij} are compatible with the
bijection $\natural$ in \eqref{naturalbi}, so $\natural
(f_{\la}^{{\bf b}0}) =f_{\la^\natural}^{{\bf b}1}$. Combining the
isomorphism $\natural_{\bf b}:{\mathbb T}^{\bf
b}\wotimes\wedge^\infty\mathbb V\rightarrow {\mathbb T}^{\bf
b}\wotimes\wedge^\infty\mathbb W$ from
Theorem~\ref{TwedgeV:isom:TwedgeW}, super duality (SD) from
Theorem~\ref{thm:SD}, and the isomorphism $\Psi$ from
Theorem~\ref{thm:classical:KL}, we have the following commutative
diagram of isomorphisms on the left (the maps are defined in terms
of the standard objects on the right diagram):

\begin{eqnarray}  \label{CD:KL+SD}
\begin{CD}
\big{[}\big{[}\OO^{\Delta}_{{\bf b}}\big{]}\big{]}
 @>\Psi>>
{\mathbb T}^{\bf b}_\Z\wotimes\wedge^\infty \mathbb V_\Z  \\
 @V\text{SD}VV @V \natural_{\bf b} VV \\
\big[\big[\breve\OO^{\Delta}_{{\bf b}}\big]\big]
 @>\breve{\Psi}>>
{\mathbb T}^{\bf
b}_\Z\widehat{\otimes}\wedge^{\infty}\WW_\Z
  \end{CD}
\qquad \qquad
\begin{CD}
[\mc{M}_{\bf b}(\la)]
 @>\Psi>>
 M^{{\bf b},0}_{f^{{\bf b}0}_{\la}}(1)   \\
 @V\text{SD}VV @V \natural_{\bf b} VV \\
[\breve{\mc{M}}_{\bf b}(\la^\natural)]
  @>\breve{\Psi}>>
M^{{\bf b},1}_{f^{{\bf b}1}_{\la^\natural}}(1)  \end{CD}
\end{eqnarray}
Note that $\natural_{\bf b}$ preserves the (dual) canonical bases by
Theorem~\ref{TwedgeV:isom:TwedgeW}(3), super duality SD
preserves the simple and tilting modules by Theorem~\ref{thm:SD}(2), and
$\Psi$ preserves the $L$'s and $T$'s by assumption that the
statement in Theorem~\ref{thm:classical:KL} in valid. Therefore, we
have established the three sides (except the arrow on $\breve{\Psi}$) in
the following diagrams:
\begin{eqnarray*}
\begin{CD}
[\mc{L}_{\bf b}(\la)]
 @>\Psi>>
 L^{{\bf b},0}_{f^{{\bf b}0}_{\la}}(1)   \\
 @V\text{SD}VV @V \natural_{\bf b} VV \\
[\breve{\mc{L}}_{\bf b}(\la^\natural)]
  @>\breve{\Psi}>?>
L^{{\bf b},1}_{f^{{\bf b}1}_{\la^\natural}}(1)  \end{CD}
\qquad \qquad
\begin{CD}
[\mc{T}_{\bf b}(\la)]
 @>\Psi>>
 T^{{\bf b},0}_{f^{{\bf b}0}_{\la}}(1)   \\
 @V\text{SD}VV @V \natural_{\bf b} VV \\
[\breve{\mc{T}}_{\bf b}(\la^\natural)]
  @>\breve{\Psi}>?>
T^{{\bf b},1}_{f^{{\bf b}1}_{\la^\natural}}(1)  \end{CD}
\end{eqnarray*}
The arrows for the map $\breve{\Psi}$ in  $?$ in the above diagrams now follow from the
commutativity of \eqref{CD:KL+SD}.
%
\end{proof}

\subsection{Comparison of characters}
\label{char:comp:para:fin}

Let ${\bf b}$ be a fixed $0^m1^n$-sequence. For $k\in\N$ let $({\bf
b},{0^k})$ and $({\bf b},{1^k})$ denote the ${0^{m+k}1^n}$- and the
${0^m1^{n+k}}$-sequences obtained by adding $k$ ${0}$'s and $k$
${1}$'s to the end of the sequence ${\bf b}$, respectively. Recall
from \S\ref{sec:BGGmn} that $\OO^{m+k|n}_{({\bf b},{0^k})}$ and
$\OO^{m|n+k}_{({\bf b},{1^k})}$ are the full BGG categories of
$\gl(m+k|n)$- and $\gl(m|n+k)$-modules, respectively. In this
subsection, we compare the simple modules as well as tilting modules
in the parabolic category $\OO_{\bf b}^{\ul k}$ (and respectively,
$\breve\OO_{\bf b}^{\ul k}$) introduced in \S\ref{sec:finiterank}
with their counterparts in the full BGG category $\OO^{m+k|n}_{({\bf
b},{0^k})}$ (and respectively, $\OO^{m|n+k}_{({\bf b},{1^k})}$).
Also, note by \eqref{eq:Xk} and \eqref{eq:Xk+} that $X^{k,+}
\subseteq X^k$, $\breve X^{k,+} \subseteq \breve X^k$.

For $\la\in X^{k,+}$, we can
express $[{L}_{({\bf b},{0^k})}(\la)]$ in terms of Verma
modules:
\begin{align}\label{irred:char:verma}
[{L}_{({\bf b},{0^k})}(\la)]=\sum_{\mu\in X^k}a_{\mu\la}[M_{({\bf
b},{0^k})}(\mu)],\quad \text{ for } a_{\mu\la}\in\Z.
\end{align}
Since the simple objects in the parabolic category $\OO_{\bf b}^{\ul
k}$ defined in \S\ref{sec:finiterank} are the
$\G^k\equiv\gl(m+k|n)$-modules $\mc{L}_{\bf b}^k(\la)\equiv L_{({\bf
b},{0^k})}(\la)$, for $\la\in X^{k,+}$,
we can also express $[{L}_{({\bf b},{0^k})}(\la)]$
in terms of {\em parabolic} Verma modules in $\OO_{\bf b}^{\ul k}$:
\begin{align}\label{irred:char:para:verma}
[{L}_{({\bf b},{0^k})}(\la)]=\sum_{\nu\in
X^{k,+}}b_{\nu\la}[\mc{M}_{\bf b}^k(\nu)],\quad \text{ for }
b_{\nu\la}\in\Z.
\end{align}
Here we recall that $\mc{M}_{\bf b}^k(\nu)=\text{Ind}_{\mf p^k_{\bf
b}}^{\G^k}L^0(\nu)$, where $L^0(\nu)$ is the irreducible
$\left(\h^k+\gl(k)\right)$-module of highest weight $\nu \in X^{k,+}$. Applying the Weyl
character formula to $L^0(\nu)$ gives us
$ [\mc{M}_{\bf b}^k(\nu)] =\sum_{\sigma\in \mf{S}_k}
(-1)^{\ell(\sigma)}[M_{({\bf b},{0^k})}(\sigma\cdot \nu)],
 $
where as usual we have denoted the dot action of a Weyl group element $\sigma$ on $\nu$ by
$\sigma\cdot \nu=\sigma(\nu+\rho_{({\bf b},{0^k})})-\rho_{({\bf
b},{0^k})}$ and the Weyl vector $\rho_{({\bf b},{0^k})}$ is defined
in \S\ref{sec:super:Burhat}. Hence, \eqref{irred:char:para:verma}
can be rewritten as
\begin{align*}
[{L}_{({\bf b},{0^k})}(\la)]=\sum_{\nu\in X^{k,+}} \sum_{\sigma\in
\mf{S}_k} (-1)^{\ell(\sigma)}b_{\nu\la} [M_{({\bf
b},{0^k})}(\sigma\cdot \nu)], \quad \text{ for } \la\in X^{k,+}.
\end{align*}
Comparing this with \eqref{irred:char:verma} together with the
linear independence of the Verma characters show  that
$a_{\nu\la}=b_{\nu\la}$, for $\la,\nu\in X^{k,+}$. We summarize this
in the following.

\begin{prop}\label{prop:irred:char:para}
Let $\la\in X^{k,+}$. Retain the notation as in
\eqref{irred:char:verma}. Then
\begin{align*}
[{L}_{({\bf b},{0^k})}(\la)]=\sum_{\nu\in
X^{k,+}}a_{\nu\la}[\mc{M}_{\bf b}^k(\nu)].
\end{align*}
\end{prop}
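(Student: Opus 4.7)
The plan is to produce two different expansions of $[L_{(\bf b, 0^k)}(\la)]$ in the Grothendieck group of $\OO^{m+k|n}_{(\bf b, 0^k)}$ and match coefficients via the linear independence of Verma characters. The first expansion \eqref{irred:char:verma}, with coefficients $a_{\mu\la}\in\Z$ ranging over $\mu\in X^k$, already exists (and is unique) because $L_{(\bf b, 0^k)}(\la)$ has finite length in the full BGG category, just as noted in the proof of Proposition~\ref{prop:sameO}. The second expansion \eqref{irred:char:para:verma} in terms of parabolic Verma modules $[\mc M^k_{\bf b}(\nu)]$ with coefficients $b_{\nu\la}\in\Z$, $\nu\in X^{k,+}$, exists because $L_{(\bf b, 0^k)}(\la) = \mc L_{\bf b}^k(\la)$ is a simple object in $\OO_{\bf b}^{\ul k}$ and the $[\mc M_{\bf b}^k(\nu)]$ form a $\Z$-basis of the Grothendieck group of $\OO_{\bf b}^{\ul k, \Delta}$ in which all the $[\mc L_{\bf b}^k(\la)]$ may be written.

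The next step is to convert the parabolic Verma classes to ordinary Verma classes. Since $\mc M_{\bf b}^k(\nu) = \mathrm{Ind}_{\mf p_{\bf b}^k}^{\G^k} L^0(\nu)$ and $L^0(\nu)$ is the finite-dimensional irreducible $\gl(k)$-module of highest weight $\nu$, applying the Weyl character formula for $L^0(\nu)$ and then inducing gives
\begin{equation*}
[\mc M_{\bf b}^k(\nu)] = \sum_{\sigma\in\mf S_k} (-1)^{\ell(\sigma)} [M_{(\bf b, 0^k)}(\sigma\cdot\nu)],
\end{equation*}
where $\sigma\cdot\nu := \sigma(\nu + \rho_{(\bf b, 0^k)}) - \rho_{(\bf b, 0^k)}$. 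Substituting this into \eqref{irred:char:para:verma} rewrites $[L_{(\bf b, 0^k)}(\la)]$ as a $\Z$-combination of ordinary Verma classes indexed by $X^k$, and comparison with \eqref{irred:char:verma} together with the linear independence of Verma characters yields an identification of coefficients for every $\mu \in X^k$.

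To extract $a_{\nu\la} = b_{\nu\la}$ for $\nu\in X^{k,+}$, the key observation will be that for any $\nu\in X^{k,+}$ the shifted weight $\nu + \rho_{(\bf b, 0^k)}$ has strictly decreasing $\gl(k)$-part (since the $\gl(k)$-part of $\rho_{(\bf b, 0^k)}$ is a strict $\rho$-shift and $\nu$ is already non-strictly dominant there), so $\nu+\rho_{(\bf b, 0^k)}$ is $\mf S_k$-regular. Hence the weights $\{\sigma\cdot\nu \mid \sigma\in\mf S_k\}$ are pairwise distinct, and the only $\sigma$ for which $\sigma\cdot\nu$ lies in $X^{k,+}$ is $\sigma = 1$. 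Therefore the coefficient of $[M_{(\bf b, 0^k)}(\nu)]$ in the substituted expression is exactly $b_{\nu\la}$, while from \eqref{irred:char:verma} it is $a_{\nu\la}$, giving the proposition.

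The main obstacle is little more than organizing this bookkeeping; no genuinely new input is required beyond the Weyl character formula applied in the $\gl(k)$ Levi factor and the linear independence of Verma characters. The only subtle point is to ensure the dot-action calculation uses the correct Weyl vector and the regularity statement above, which follows directly from the definition \eqref{eq:Xk+} of $X^{k,+}$ together with the normalization of $\rho_{(\bf b, 0^k)}$ from Lemma~\ref{lem:rho}.
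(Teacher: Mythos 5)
Your proposal is correct and follows essentially the same route as the paper: two expansions of $[L_{({\bf b},0^k)}(\la)]$, the Weyl character formula expressing $[\mc M^k_{\bf b}(\nu)]$ as $\sum_{\sigma\in\mf S_k}(-1)^{\ell(\sigma)}[M_{({\bf b},0^k)}(\sigma\cdot\nu)]$, and comparison of coefficients via linear independence of Verma characters. Your only addition is to spell out the $\mf S_k$-regularity of $\nu+\rho_{({\bf b},0^k)}$ for $\nu\in X^{k,+}$, which the paper leaves implicit but which is indeed the point making the coefficient comparison work.
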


Similarly, the simple $\gl(m|n+k)$-module $\breve{\mc{L}}_{\bf
b}^k(\xi)$, for $\xi \in \breve X^{k,+}$, in the parabolic category
$\breve\OO_{\bf b}^{\ul k}$ (cf. \S\ref{sec:finiterank}) can be
identified with ${L}_{({\bf b},{1^k})}(\xi)$ in the full BGG
category $\OO^{m|n+k}_{({\bf b},{1^k})}$. We write that
\begin{align}  \label{irred:verma2}
[{L}_{({\bf b},{1^k})}(\xi)] =\sum_{\mu\in
\breve{X}^k}\breve{a}_{\eta\xi}
 [M_{({\bf b},{1^k})}(\eta)], \quad \text{ for }
\breve{a}_{\eta\xi}\in\Z, \; \xi \in \breve X^{k,+}.
\end{align}
By a parallel argument as above, we obtain the following.

\begin{prop}
Let $\xi \in \breve{X}^{k,+}$. Retain the notation as in
\eqref{irred:verma2}. Then
\begin{align*}
[{L}_{({\bf b},{1^k})}(\xi)] =\sum_{\eta\in
\breve{X}^{k,+}}\breve{a}_{\eta\xi}
[\breve{\mc{M}}_{\bf b}^k(\eta)].
\end{align*}
\end{prop}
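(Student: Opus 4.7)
The plan is to mimic the proof of Proposition~8.5 verbatim, substituting the $(\bf b, 1^k)$ and $\breve{\OO}_{\bf b}^{\ul k}$ setup for the $(\bf b, 0^k)$ and $\OO_{\bf b}^{\ul k}$ setup. The key point that must be checked is that the Levi of the parabolic subalgebra $\breve{\mf p}_{\bf b}^k$ relevant here has an \emph{even} $\gl(k)$-part, so that the Weyl character formula applies unchanged.

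First I would observe that although the tail of the sequence $(\bf b, 1^k)$ consists of $k$ odd indices, the parabolic Levi $\breve{\mf k}^k$ used in defining $\breve{\mc M}_{\bf b}^k(\eta)$ is generated by the root spaces for the even roots $\delta_{r} - \delta_{r+1}$ (with $r \in -\hf+\N$), since each $\delta_r$ has odd parity and their difference is therefore even. Hence the $\gl(k)$-factor of $\breve{\mf k}^k$ is an ordinary (even) Lie algebra, and $\breve L^0(\eta)$ is a finite-dimensional irreducible $\gl(k)$-module whose character obeys the Weyl character formula. Consequently
\[
[\breve{\mc M}_{\bf b}^k(\eta)]=\sum_{\sigma\in\mf S_k}(-1)^{\ell(\sigma)}\bigl[M_{(\bf b,1^k)}(\sigma\cdot\eta)\bigr],
\]
where $\sigma\cdot\eta=\sigma(\eta+\rho_{(\bf b,1^k)})-\rho_{(\bf b,1^k)}$.

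Next I would write $[L_{(\bf b,1^k)}(\xi)]$ in two ways. On the one hand, by \eqref{irred:verma2}, we have an expansion in ordinary Verma modules with coefficients $\breve a_{\eta\xi}$. On the other hand, since the simple objects in the parabolic category $\breve{\OO}_{\bf b}^{\ul k}$ are precisely $\breve{\mc L}_{\bf b}^k(\xi)\equiv L_{(\bf b,1^k)}(\xi)$ for $\xi\in\breve X^{k,+}$, and $\breve{\OO}_{\bf b}^{\ul k}$ is a highest weight category with respect to the parabolic Verma modules, there is a second expansion
\[
[L_{(\bf b,1^k)}(\xi)]=\sum_{\eta\in\breve X^{k,+}} \breve b_{\eta\xi}\,[\breve{\mc M}_{\bf b}^k(\eta)],\qquad \breve b_{\eta\xi}\in\Z.
\]
Substituting the Weyl-character expansion of $[\breve{\mc M}_{\bf b}^k(\eta)]$ into this second identity and then invoking the linear independence of Verma characters in the Grothendieck group, one equates coefficients of $[M_{(\bf b,1^k)}(\eta)]$ on both sides for each $\eta\in\breve X^{k,+}$. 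Since $\eta\in\breve X^{k,+}$ is dominant for the Levi $\gl(k)$, only the identity element of $\mf S_k$ contributes a term labeled by $\eta$ itself, yielding $\breve a_{\eta\xi}=\breve b_{\eta\xi}$ for all $\eta\in\breve X^{k,+}$.

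The only potential subtlety, as signaled in Remark~\ref{rem:notsurj}, is that $\breve X^{k,+}$ in \eqref{eq:Xk+} requires the $\delta$-coefficients to be weakly decreasing but not necessarily non-negative, so one must check that the comparison of coefficients is carried out over the correct index set; but this is purely bookkeeping and no new idea is needed. I expect no serious obstacle: the argument is a direct parallel of Proposition~\ref{prop:irred:char:para}, with the only non-routine verification being the evenness of the $\gl(k)$-Levi noted in the first paragraph.
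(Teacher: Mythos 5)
Your proposal is correct and is exactly the argument the paper intends: the paper proves this proposition with the single line ``by a parallel argument as above,'' referring to the proof of Proposition~\ref{prop:irred:char:para}, and your write-up carries out that parallel argument faithfully, including the one genuinely necessary check that the $\gl(k)$-factor of $\breve{\mf{k}}^k$ is even (the root vectors $E_{\ul{r}\,\ul{s}}$ with $r,s\in\hf+\Z_+$ have parity $\bar 1+\bar 1=\bar 0$), so the Weyl character formula applies to $\breve{L}^0(\eta)$. Your coefficient comparison is also right: since $\eta+\rho_{({\bf b},1^k)}$ is strictly dominant for the $\gl(k)$-part when $\eta\in\breve{X}^{k,+}$, only $\sigma=e$ can produce a Verma label in $\breve{X}^{k,+}$, giving $\breve{a}_{\eta\xi}=\breve{b}_{\eta\xi}$.
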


We now proceed to compare the characters of the tilting modules in a parabolic BGG
category with those in a full BGG category. In the full BGG
categories $\OO^{m+k|n}_{({\bf b},{0^k})}$ and $\OO^{m|n+k}_{({\bf
b},{1^k})}$, we write the following.
\begin{align}
[{T}_{({\bf b},{0^k})}(\la)]
 &=\sum_{\mu\in X^k}c_{\mu\la}[M_{({\bf b},{0^k})}(\mu)],\quad \text{ for }
c_{\mu\la}\in\Z, \; \la \in X^{k,+};
  \label{eq:TM1} \\
[{T}_{({\bf b},{1^k})}(\xi)]
 &=\sum_{\eta\in \breve{X}^k}\breve{c}_{\eta\xi} [M_{({\bf
b},{1^k})}(\eta)],\quad \text{ for }
\breve{c}_{\eta\xi}\in\Z, \; \xi \in \breve X^{k,+}.
 \label{eq:TM2}
\end{align}
Recall the tilting modules ${\mc T}_{\bf b}^k(\la)$, for $\la \in X^{k,+}$,
in  the parabolic category $\OO_{\bf
b}^{\ul k}$, and the tilting modules $\breve{\mc T}_{\bf b}^k(\xi)$ in  $\breve\OO_{\bf
b}^{\ul k}$, for $\xi \in \breve X^{k,+}$.

\begin{prop}\label{prop:tilting:char:para}
\begin{enumerate}
\item
Let $\la\in X^{k,+}$. Retain the notation in \eqref{eq:TM1}, and
write
\begin{align*}
[{\mc T}_{\bf b}^k(\la)] =\sum_{\nu\in
X^{k,+}}d_{\nu\la}[\mc{M}_{\bf b}^k(\nu)].
\end{align*}
Then
$
d_{\nu\la}=\sum_{\tau\in\mf{S}_k}(-1)^{\ell(\tau
w_0)}c_{\tau\cdot\nu,w_0\cdot\la}.
 $

\item Let $\xi\in \breve{X}^{k,+}$. Retain the notation in
\eqref{eq:TM2}, and write
\begin{align*}
[\breve{\mc T}_{\bf b}^k(\xi)] =\sum_{\eta\in
\breve{X}^{k,+}}\breve{d}_{\eta\xi}
[\breve{\mc{M}}_{\bf b}^k(\eta)].
\end{align*}
Then
$\breve{d}_{\eta\xi}=\sum_{\tau\in\mf{S}_k}(-1)^{\ell(\tau
w_0)}\breve{c}_{\tau\cdot\eta,w_0\cdot\xi}$.
\end{enumerate}
\end{prop}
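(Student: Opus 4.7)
Parts (1) and (2) are proved by entirely analogous character-level arguments; I focus on (1), with (2) obtained by swapping $({\bf b},0^k)$ for $({\bf b},1^k)$ throughout. The heart of the argument is the Grothendieck-group identity
\begin{align*}
[\mc T_{\bf b}^k(\la)]\;=\;(-1)^{\ell(w_0)}\,\mathrm{Asym}\bigl([T_{({\bf b},0^k)}(w_0\cdot\la)]\bigr)
\end{align*}
viewed inside $\big[\OO^{m+k|n,\Delta}_{({\bf b},0^k)}\big]$ via the natural embedding dictated by $[\mc M_{\bf b}^k(\nu)]=\sum_{\sigma\in\mf S_k}(-1)^{\ell(\sigma)}[M_{({\bf b},0^k)}(\sigma\cdot\nu)]$. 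Here $\mathrm{Asym}$ denotes the antisymmetrization on the $\gl(k)$-coordinates of weights under the dot action of $\mf S_k$, namely $\mathrm{Asym}([M_{({\bf b},0^k)}(\mu)]):=\sum_{\tau\in\mf S_k}(-1)^{\ell(\tau)}[M_{({\bf b},0^k)}(\tau\cdot\mu)]$.

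Granting the displayed identity, the formula for $d_{\nu\la}$ is recovered by a routine comparison: expand the left-hand side as $\sum_\nu d_{\nu\la}\sum_\sigma(-1)^{\ell(\sigma)}[M_{({\bf b},0^k)}(\sigma\cdot\nu)]$ using the parabolic-to-ordinary Verma relation; expand the right-hand side as $(-1)^{\ell(w_0)}\sum_{\tau,\mu}(-1)^{\ell(\tau)}c_{\mu,w_0\cdot\la}[M_{({\bf b},0^k)}(\tau\cdot\mu)]$ using \eqref{eq:TM1} and the definition of $\mathrm{Asym}$; then equate coefficients of $[M_{({\bf b},0^k)}(\sigma\cdot\nu)]$ for each $\nu\in X^{k,+}$ and each $\sigma\in\mf S_k$. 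A single change of variables $\tau\mapsto \tau w_0$ (combined with multiplicativity of signs in $\mf S_k$) converts the emerging factor $(-1)^{\ell(\tau)}(-1)^{\ell(w_0)}$ into $(-1)^{\ell(\tau w_0)}$ and yields exactly $d_{\nu\la}=\sum_{\tau\in\mf S_k}(-1)^{\ell(\tau w_0)}c_{\tau\cdot\nu,w_0\cdot\la}$.

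To prove the character identity itself, I would combine Soergel's formula \eqref{tilt:verma:irred} in the full BGG category with its parabolic analog
\begin{align*}
d_{\nu\la}\;=\;\bigl[\mc M_{\bf b}^k(-w_0\cdot\nu-2\rho^{\mathrm{par}}):\mc L_{\bf b}^k(-w_0\cdot\la-2\rho^{\mathrm{par}})\bigr],
\end{align*}
where $\rho^{\mathrm{par}}$ is the $\mf S_k$-invariant half-sum of roots in the nilradical of $\mf p^k_{\bf b}$; this parabolic version extends the super generalization of \cite[Theorem 6.4]{Br2} via the same parabolic induction argument used there. Resolving $\mc M_{\bf b}^k$ by ordinary Vermas in the Grothendieck group and noting $\mc L_{\bf b}^k(\xi)=L_{({\bf b},0^k)}(\xi)$ converts the right-hand side into a signed sum of composition multiplicities of ordinary Vermas, which \eqref{tilt:verma:irred} identifies with the $c_{*,*}$. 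The principal obstacle will be the careful bookkeeping of Weyl vectors: one must reconcile $\rho^{\mathrm{par}}$ in the parabolic formula with $\rho_{({\bf b},0^k)}=\rho^{\mathrm{par}}+\rho_{\mf k}$ in the full formula, where $\rho_{\mf k}$ denotes the Weyl vector of the Levi $\gl(k)$. The identities $w_0\rho^{\mathrm{par}}=\rho^{\mathrm{par}}$, $w_0\rho_{\mf k}=-\rho_{\mf k}$, and $w_0\cdot\eta=w_0(\eta)-2\rho_{\mf k}$ on the $\gl(k)$-coordinates reconcile the two shifts and produce the overall sign $(-1)^{\ell(w_0)}$ appearing in the identity.
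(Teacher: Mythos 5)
Your proposal is correct and takes essentially the same route as the paper: the paper also computes the composition multiplicity $[\mc M^k_{\bf b}(\la):L_{({\bf b},0^k)}(\mu)]$ twice — once by resolving the parabolic Verma module into ordinary Verma modules and applying Soergel reciprocity \eqref{tilt:verma:irred} in the full category, and once by applying the parabolic version of \cite[Theorem 6.4]{Br2} directly — and then reconciles the two via the identities $w_0\rho_{\mf u}=\rho_{\mf u}$ and $w_0\rho_{\mf k}=-\rho_{\mf k}$. Your intermediate antisymmetrization identity is just a repackaging of the final coefficient formula, and the Weyl-vector bookkeeping you flag is exactly the computation carried out at the end of the paper's proof.
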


\begin{proof}
We shall only prove (1), as (2) is analogous.

Set $\rho=\rho_{({\bf b},0^k)}$,
$\rho_{\mf{u}}=\hf\sum_{\alpha\in {\Phi}(\mf{u}^k_{\bf b})}(-1)^{|\alpha|}\alpha$, and $\rho_{\mf{k}}=\rho-\rho_{\mf u}$,
where $|\alpha|$ denotes the parity of the root $\alpha$  and ${\Phi}(\mf{u}^k_{\bf b})$ denotes the roots of the radical subalgebra $\mf{u}^k_{\bf b}$.
Furthermore, let $w_0=w_0^{(k)}$ be the longest element in $\mf
S_k$. Applying \cite[Theorem 6.7]{So2} and its super generalization
\cite[Theorem 6.4]{Br2} to the category $\OO^{m+k|n}_{({\bf
b},{0^k})}$, we compute, for $\la,\mu\in X^{k,+}$,
\begin{align*}
\left[\mc M^k_{{\bf b}}(\la):L_{({\bf b},0^k)}(\mu)\right]
&=\sum_{\tau\in\mf{S}_k} (-1)^{\ell(\tau)}
\left[M_{({\bf b},0^k)}(\tau\cdot\la):L_{({\bf b},0^k)}(\mu)\right]
 \\
&= \sum_{\tau\in\mf{S}_k} (-1)^{\ell(\tau)} \left(T_{({\bf
b},0^k)}(-2\rho-\mu):M_{({\bf b},0^k)}(-2\rho-\tau\cdot\la)\right)
 \\
&=\sum_{\tau\in\mf{S}_k} (-1)^{\ell(\tau)}c_{-2\rho-\tau\cdot\la,-2\rho-\mu}.
\end{align*}
On the other hand,  by applying \cite[Theorem 6.4]{Br2} to the
category $\OO_{\bf b}^{\ul k}$, we also have
\begin{align*}
\left[\mc M^k_{{\bf b}}(\la):L_{({\bf b},0^k)}(\mu)\right] &=
\left(\mc T^k_{\bf b}(-2\rho_{\mf u}-w_0\mu):
 \mc M^k_{{\bf b}}(-2\rho_{\mf u}-w_0\la)\right)
 \\
&=d_{-2\rho_{\mf u}-w_0\la,-2\rho_{\mf u}-w_0\mu}.
\end{align*}
A comparison of the above two identities and replacing $\tau$ by
$\tau w_0$ give us
\begin{align}  \label{d=c}
d_{-2\rho_{\mf u}-w_0\la,-2\rho_{\mf
u}-w_0\mu}=\sum_{\tau\in\mf{S}_k} (-1)^{\ell(\tau
w_0)}c_{-2\rho-\tau w_0\cdot\la,-2\rho-\mu}.
\end{align}
Set $\nu=-2\rho_{\mf u}-w_0\la$ and $\eta=-2\rho_{\mf
u}-w_0\mu$. We shall use repeatedly the following simple identities:
$$
\rho =\rho_{\mf{k}}+\rho_{\mf u},
 \quad \tau \rho_{\mf u} =w_0 \rho_{\mf u} =\rho_{\mf u},
 \quad w_0 \rho_{\mf{k}}=-\rho_{\mf{k}}.
$$
Now we compute
\begin{align*}
-2\rho-\tau w_0\cdot\la
 &=-\rho-\tau w_0(\la+\rho)
 =-\rho_{\mf u}-\tau w_0\la-\tau w_0\rho_{\mf k}-\rho
 \\
&=\tau(-2\rho_{\mf u}-w_0\la +\rho)-\rho = \tau\cdot\nu.
\end{align*}
Also, $w_0\cdot\eta=w_0(-2\rho_{\mf u}-w_0\mu+\rho)-\rho =
-\rho_{\mf u}-\mu-\rho_{\mf k}-\rho= -2\rho-\mu$. From these
computations, we see that \eqref{d=c} rewritten in terms
of $\nu$ and $\eta$ (then followed by a change of notation
$\eta$ to $\la$) is exactly what we want to prove in (1).
\end{proof}

\subsection{BKL for adjacent Borel subalgebras}

The following theorem is a key step in our proof of the BKL
Conjecture~\ref{BKL:conj:tilt}.

\begin{thm}\label{thm:BKL:equiv:borel}
Let ${\bf b}$ and ${\bf b}'$ be two adjacent $0^m1^n$-sequences. The
BKL Conjecture~\ref{BKL:conj:tilt} holds for ${\bf b}$ if and only
it holds for ${{\bf b}'}$.
\end{thm}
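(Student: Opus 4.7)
The plan is to translate both parts of the BKL conjecture into statements about the parabolic monomial bases $\{N_f\}$ and $\{U_f\}$, where the equivalence between ${\bf b}$ and ${\bf b}'$ becomes a transparent consequence of the comparisons already established in Theorems~\ref{S:L:to:L} and \ref{S:T:to:T} together with the odd-reflection identifications of simple and tilting modules. The crux is that all the pieces on the categorical side (odd reflection for $L$'s, odd reflection for $T$'s, and the $N$-/$U$-modules from Section~\ref{sec:repn:prep}) are set up to mirror exactly the Fock-space pieces of Section~\ref{sec:adjacent}.

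First, I would verify the dictionary between the Grothendieck group and the parabolic monomial bases. Extending the isomorphism $\psi_{\bf b}$ to a $\Z$-linear map on the completions, the character identities \eqref{eq:MNla} and \eqref{eq:UMla} in the category match the relations \eqref{eq:MNf} and \eqref{eq:UMf} in the Fock space after the specialization $q=1$, so
\[
\psi_{\bf b}([N_{\bf b}(\mu)])=N_{f^{\bf b}_\mu}(1),\qquad
\psi_{\bf b}([U_{\bf b}(\mu)])=U_{f^{\bf b}_\mu}(1),
\]
and similarly for ${\bf b}'$. A short computation from the definition \eqref{la:to:fla} together with $\rho_{{\bf b}'}=\rho_{\bf b}+\alpha$ (Lemma~\ref{lem:rho}) shows that $f^{{\bf b}'}_{\mu^{\mathbb L}}=(f^{\bf b}_\mu)^{\mathbb L}$ and $f^{{\bf b}'}_{\mu^{\mathbb U}}=(f^{\bf b}_\mu)^{\mathbb U}$, identifying the superscripts defined in \eqref{eq:fL}, \eqref{eq:fU} with those in \eqref{eq:laL}, \eqref{eq:laU}.

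Next, for the irreducible part, I would expand $\texttt{L}_{f^{\bf b}_\la}$ in the $N$-basis via the refined formula \eqref{DCB2}, which gives
\[
L^{\bf b}_{f^{\bf b}_\la}(1)=\sum_{g\preceq_{{\bf b},{\bf b}'} f^{\bf b}_\la}\check{\ell}_{g,f^{\bf b}_\la}(1)\,N_g(1).
\]
Applying $\psi_{\bf b}^{-1}$ and using \eqref{eq:MNla} shows that the BKL conjecture for $L_{\bf b}(\la)$ is equivalent to
\[
[L_{\bf b}(\la)]=\sum_{\mu}\check{\ell}_{f^{\bf b}_\mu,f^{\bf b}_\la}(1)\,[N_{\bf b}(\mu)].
\]
Now I invoke the categorical identifications $[L_{\bf b}(\la)]=[L_{{\bf b}'}(\la^{\mathbb L})]$ from Lemma~\ref{lem:irred:borels} and $[N_{\bf b}(\mu)]=[N_{{\bf b}'}(\mu^{\mathbb L})]$ from \eqref{NUadj}, and the combinatorial identity $\check{\ell}_{gf}(q)=\check{\ell}'_{g^{\mathbb L}f^{\mathbb L}}(q)$ from Corollary~\ref{cor:llsame}. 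Together with the dictionary above they translate the ${\bf b}$-version of BKL for $L$'s into the ${\bf b}'$-version written in the $N'$-basis, which (by the same reasoning applied in reverse) is equivalent to BKL for $L_{{\bf b}'}(\la^{\mathbb L})$.

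For the tilting part, the argument is entirely parallel, using \eqref{CB2} to expand $\texttt{T}_{f^{\bf b}_\la}$ in the $U$-basis, \eqref{eq:UMla} to translate between $U$-modules and Verma modules, Theorem~\ref{thm:tilt:relation} giving $[T_{\bf b}(\la)]=[T_{{\bf b}'}(\la^{\mathbb U})]$, \eqref{NUadj} giving $[U_{\bf b}(\mu)]=[U_{{\bf b}'}(\mu^{\mathbb U})]$, and Corollary~\ref{cor:samett} identifying $\check{t}_{gf}(q)=\check{t}'_{g^{\mathbb U}f^{\mathbb U}}(q)$. By symmetry of the construction in ${\bf b}$ and ${\bf b}'$, the resulting equivalence runs in both directions.

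The main (and really the only) delicate point is the bookkeeping in the first paragraph: one must be sure that the $q=1$ specializations of the two ``bar-symmetric'' branches $f(\ka)=f(\ka+1)$ and $f(\ka)\ne f(\ka+1)$ on the Fock space side are faithfully reflected by the two cases $(\la,\alpha)=0$ and $(\la,\alpha)\ne 0$ on the representation side, and that the refined Bruhat conditions $g\prec_{{\bf b},{\bf b}'}f$ and $g\prec^{*}_{{\bf b},{\bf b}'}f$ (which make the sums \eqref{DCB2} and \eqref{CB2'} finite upon specialization) are compatible with the finiteness of composition series and Verma flags needed to make sense of $[L_{\bf b}(\la)]$ and $[T_{\bf b}(\la)]$ in the completed Grothendieck group. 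Once this is checked, every remaining step is a direct translation via Theorems~\ref{S:L:to:L} and \ref{S:T:to:T}, and the theorem follows.
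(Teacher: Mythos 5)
Your proposal is correct and follows essentially the same route as the paper's own proof: translating both halves of the conjecture into the parabolic monomial bases $N$ and $U$, matching $[N_{\bf b}(\mu)]$ and $[U_{\bf b}(\mu)]$ with $N_{f^{\bf b}_\mu}(1)$ and $U_{f^{\bf b}_\mu}(1)$ via \eqref{eq:MNla}/\eqref{eq:MNf} and \eqref{eq:UMla}/\eqref{eq:UMf}, and then invoking Lemma~\ref{lem:irred:borels}, Theorem~\ref{thm:tilt:relation}, \eqref{NUadj}, and Corollaries~\ref{cor:llsame} and \ref{cor:samett} exactly as the paper does. The ``delicate points'' you flag at the end are indeed the ones the paper's setup (Sections~\ref{sec:adjacent} and \ref{sec:repn:prep}) is designed to handle, so no gap remains.
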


\begin{proof}
It suffices to prove that the validity of BKL Conjecture~
\ref{BKL:conj:tilt} for ${\bf b}$ implies its validity for ${{\bf
b}'}$. We shall follow the notations in \S\ref{sec:CB2} to denote
${\bf b}=({\bf b}^1,{0},{1},{\bf b}^2)$ and ${\bf b}':=({\bf
b}^1,{1},{0},{\bf b}^2)$. (The proof below goes through similarly
when switching $\bf b$ and $\bf b'$.)

(1) We first prove this for Part (1) of BKL
Conjecture~\ref{BKL:conj:tilt}. The idea of the proof is to switch
to the bases in notation $N$'s instead of the $M$'s for more
effective comparisons with the $L$'s, based on the results of
\S\ref{sec:CB2}, \S\ref{sec:adjCB} and Section~\ref{sec:repn:prep}.

Recall $\psi_{\bf b} ([M_{\bf b}(\la)])  = M_{f^{\bf b}_\la}(1)$,
for all $\la$. By comparing \eqref {eq:MNf} and \eqref{eq:MNla}, we
have
\begin{equation}  \label{eq:psiN}
\psi_{\bf b} ([N_{\bf b}(\mu)])  = N_{f^{\bf b}_\mu}(1), \qquad
\text{ for } \mu \in \wtl.
\end{equation}
By the assumption on the validity of the BKL
Conjecture~\ref{BKL:conj:tilt}(1) for $\bf b$, we have
\begin{equation}  \label{eq:psiL}
\psi_{\bf b} ([L_{\bf b}(\la)])  = L^{\bf b}_{f^{\bf b}_\la}(1),
\qquad \text{ for } \la \in \wtl.
\end{equation}
By \eqref{dual:canonical:in:N} and Proposition~\ref{LTsame}, we have
\begin{align}  \label{eq:LNbCB}
L^{\bf b}_f=\sum_{g}\check{\ell}_{gf}(q) N_g.
\end{align}
Since $\psi_{\bf b}$ is an isomorphism, it follows by
\eqref{eq:psiN}, \eqref{eq:psiL} and \eqref{eq:LNbCB} that
\begin{align}  \label{eq:LNb}
[L_{\bf b}(\la)]=\sum_{\mu}\check{\ell}_{f^{\bf b}_\mu f^{\bf
b}_\la}(1)\ [N_{\bf b}(\mu)].
\end{align}

Lemma~\ref{lem:irred:borels} states that $L_{\bf b}(\la) =L_{\bf
b'}(\la^{\mathbb L})$, \eqref{NUadj} states that $\text{ch}N_{\bf
b}(\mu)=\text{ch}N_{\bf b'}(\mu^{\mathbb L})$, while
Corollary~\ref{cor:llsame} states that $\check{\ell}_{gf}(q)
=\check{\ell}_{g^{\mathbb L}f^{\mathbb L}}'(q).$ These three
identities together with \eqref{eq:LNb} imply that
\begin{align}  \label{eq:LNb'}
[L_{\bf b'}(\la^{\mathbb L})]=\sum_{\mu}\check{\ell}_{f^{\bf
b'}_{\mu^{\mathbb L}} f^{\bf b'}_{\la^{\mathbb L}}}'(1)\ [N_{\bf
b'}(\mu^{\mathbb L})],
\end{align}
where we have identified $f^{\bf b'}_{\mu^{\mathbb L}} = (f^{\bf
b}_{\mu})^{\mathbb L}$ for all $\mu$ by the definitions of
\eqref{eq:fL} and \eqref{eq:laL}.

By Proposition~\ref{L'T'same} and \eqref{DCB2'}, we have
\begin{align}\label{DCB3'}
L_f^{\bf b'}=\sum_{g}
\check{\ell}_{gf}'(q) N_g'.
\end{align}
By definition, $\psi_{\bf b'}([M_{\bf b'}(\la)])  = M_{f^{\bf
b'}_\la}(1)$. By straightforward $\bf b'$-counterparts of \eqref
{eq:MNf} and \eqref{eq:MNla}, we have the following $\bf
b'$-counterpart of \eqref{eq:psiN}:
\begin{equation}  \label{eq:psiN'}
\psi_{\bf b'} ([N_{\bf b'}(\mu^{\mathbb L})])  = N_{f^{\bf
b'}_{\mu^{\mathbb L}}}'(1), \qquad \text{ for } \mu \in \wtl.
\end{equation}
Now applying $\psi_{\bf b'}$ to both sides of \eqref{eq:LNb'} and
using \eqref{eq:psiN'}, we obtain by a comparison with \eqref{DCB3'}
that $\psi_{\bf b'} ([L_{\bf b'}(\la^{\mathbb L})]) =L^{\bf
b'}_{\la^{\mathbb L}}(1)$. This proves the BKL
Conjecture~\ref{BKL:conj:tilt}(1) for $\bf b'$.

(2) We employ a similar strategy to prove that the validity of BKL
Conjecture~\ref{BKL:conj:tilt}(2) for ${\bf b}$ implies its validity
for ${{\bf b}'}$. The idea of the proof is to switch to the bases in
notation $U$'s instead of the $M$'s for more effective comparisons
with the $T$'s, based on the results of \S\ref{sec:CB2},
\S\ref{sec:adjCB} and Section~\ref{sec:repn:prep}.

By comparing \eqref{eq:UMf} and \eqref{eq:UMla}, we have
\begin{equation}  \label{eq:psiU}
\psi_{\bf b} ([U_{\bf b}(\mu)])  = U_{f^{\bf b}_\mu}(1), \qquad
\text{ for } \mu \in \wtl.
\end{equation}
By the assumption on the validity of the BKL
Conjecture~\ref{BKL:conj:tilt}(2) for $\bf b$, we have
\begin{equation}  \label{eq:psiT}
\psi_{\bf b} ([T_{\bf b}(\la)])  = T^{\bf b}_{f^{\bf b}_\la}(1),
\qquad \text{ for } \la \in \wtl.
\end{equation}
By \eqref{dual:canonical:in:U} and Proposition~\ref{LTsame}, we have
\begin{align}  \label{eq:TUbCB}
T^{\bf b}_f=\sum_{g} \check{t}_{gf}(q) U_g.
\end{align}
Since $\psi_{\bf b}$ is an isomorphism, it follows by
\eqref{eq:psiU}, \eqref{eq:psiT} and \eqref{eq:TUbCB} that
\begin{align}  \label{eq:TUb2}
[T_{\bf b}(\la)]=\sum_{\mu}\check{t}_{f^{\bf b}_\mu f^{\bf
b}_\la}(1)\ [U_{\bf b}(\mu)].
\end{align}

Theorem~\ref{thm:tilt:relation} states that $T_{\bf b}(\la)= T_{\bf
b'}(\la^{\mathbb U}),$ \eqref{NUadj} states that $\text{ch}U_{\bf
b}(\mu)=\text{ch}U_{\bf b'}(\mu^{\mathbb U})$, while
Corollary~\ref{cor:samett} states that $\check{t}_{gf}(q)
=\check{t}_{g^{\mathbb U}f^{\mathbb U}}'(q).$ These three identities
together with \eqref{eq:TUb2} imply that
\begin{align}  \label{eq:TUb2'}
[T_{\bf b'}(\la^{\mathbb U})]=\sum_{\mu}\check{t}_{f^{\bf
b'}_{\mu^{\mathbb U}} f^{\bf b'}_{\la^{\mathbb U}}}'(1)\ [U_{\bf
b'}(\mu^{\mathbb U})],
\end{align}
where we have identified $f^{\bf b'}_{\mu^{\mathbb U}} = (f^{\bf
b}_{\mu})^{\mathbb U}$ for all $\mu$ by the definitions of
\eqref{eq:fU} and \eqref{eq:laU}.

By Proposition~\ref{L'T'same} and \eqref{CB2'}, we have
\begin{align}\label{TUCB3'}
T_f^{\bf b'}=\sum_{g}  \check{t}_{gf}'(q) U_g'.
\end{align}
By definition, $\psi_{\bf b'}([M_{\bf b'}(\la)])  = M_{f^{\bf
b'}_\la}(1)$. We easily have the following $\bf b'$-counterpart of
\eqref{eq:psiU}:
\begin{equation}  \label{eq:psiU'}
\psi_{\bf b'} ([U_{\bf b'}(\mu^{\mathbb U})])  = U_{f^{\bf
b'}_{\mu^{\mathbb U}}}'(1), \qquad \text{ for } \mu \in \wtl.
\end{equation}
Now applying $\psi_{\bf b'}$ to both sides of \eqref{eq:TUb2'} and
using \eqref{eq:psiU'}, we obtain by a comparison with
\eqref{TUCB3'} that $\psi_{\bf b'} ([T_{\bf b'}(\la^{\mathbb U})])
=T^{\bf b'}_{\la^{\mathbb U}}(1)$. This proves BKL
Conjecture~\ref{BKL:conj:tilt}(2) for $\bf b'$.

The proof of the theorem is completed.
\end{proof}

\begin{rem}  \label{rem:allb}
Any two $0^m1^n$-sequences are connected via a sequence of
$0^m1^n$-sequences such that any two neighboring sequences are adjacent.
Therefore, Theorem~\ref{thm:BKL:equiv:borel} is equivalent to saying
that the validity of the BKL Conjecture~\ref{BKL:conj:tilt} for {\em
one} particular $0^m1^n$-sequence implies its validity for {\em all}
$0^m1^n$-sequences.
\end{rem}

\subsection{The proof}

The following theorem will provide the inductive step for proving
the BKL conjecture. We follow the outline of steps \eqref{ind:para}--\eqref{ind:trunc} in the Introduction.

\begin{thm}  \label{th:indstep}
Let $n\ge 0$ be fixed. The validity of the BKL conjecture for all
$0^m1^n$-sequences for every $m\ge 0$ implies the validity of the
BKL conjecture for some $0^m1^{n+1}$-sequence for every $m$.
\end{thm}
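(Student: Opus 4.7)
The plan is to execute in turn the five implications \eqref{ind:oddref}–\eqref{ind:trunc} displayed in the Introduction, threading together essentially all of the machinery built in Sections \ref{sec:Fock}–\ref{sec:SD}. Throughout, the hypothesis is that $\texttt{BKL}(m+k|n)$ holds for every $m,k\ge 0$, i.e., the BKL conjecture is valid for the standard $0^{m+k}1^n$-sequence for each $m,k$. By Theorem~\ref{thm:BKL:equiv:borel} and Remark~\ref{rem:allb}, equivalence of BKL across adjacent Borels propagates the hypothesis to \emph{every} $0^{m+k}1^n$-sequence, in particular to the sequence $(0^m,1^n,0^k)$, giving $\texttt{BKL}(m|n|k)$ for all $m,k\ge 0$. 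This executes step~\eqref{ind:oddref}.

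For step~\eqref{ind:para}, I would combine Propositions~\ref{prop:irred:char:para} and \ref{prop:tilting:char:para} (which re-express the Verma multiplicities of simples and tiltings in the full BGG category $\mc{O}^{m+k|n}_{({\bf b},0^k)}$ as parabolic Verma multiplicities in $\mc{O}^{\underline{k}}_{\bf b}$, with an extra alternating sum involving $\mf S_k$ for the tilting case) with Propositions~\ref{thm:aux:11} and \ref{thm:aux:12} (which give the exact same matching between $\ell^{({\bf b},0^k)}_{gf}$ and $\ell^{{\bf b},0}_{gf}$, and between $t^{({\bf b},0^k)}_{gf}$ and $t^{{\bf b},0}_{gf}$, with an identical alternating sum for the canonical basis side). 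The parallel structure on the categorical and Fock space sides implies that $\texttt{BKL}(m|n|k)$ is equivalent to $\texttt{BKL}(m|n|\underline{k})$, the BKL conjecture for the parabolic category $\mc{O}^{\underline{k}}_{\bf b}$, for all $m,k$.

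For step~\eqref{ind:infty}, pass to the limit $k\to\infty$ using Proposition~\ref{prop:trunc:ML}, which shows that the truncation functor $\mf{tr}:\OO_{\bf b}\to \OO^{\underline{k}}_{\bf b}$ preserves standard, irreducible, and tilting objects, and identifies the corresponding KLV polynomials. On the Fock space side the truncation map $\texttt{Tr}:\mathbb T^{\bf b}\wotimes\wedge^\infty\VV\to\mathbb T^{\bf b}\wotimes\wedge^k\VV$ preserves standard, canonical, and dual canonical bases by Proposition~\ref{prop:trunc:fock}. Since the BKL polynomials $\ell^{{\bf b},0}_{gf}$ and $t^{{\bf b},0}_{gf}$ stabilize along these truncations, the validity of $\texttt{BKL}(m|n|\underline{k})$ for all $k$ implies $\texttt{BKL}(m|n|\underline{\infty})$ for all $m$. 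Step~\eqref{ind:SD} now applies super duality: the commutative diagram \eqref{CD:KL+SD} in the proof of Theorem~\ref{th:KL:SD} transports the validity of BKL from the $\wedge^\infty\VV$ picture to the $\wedge^\infty\WW$ picture, because the categorical equivalence $\breve T$ of Theorem~\ref{thm:SD} matches parabolic Verma, simple, and tilting modules and the combinatorial isomorphism $\natural_{\bf b}$ of Theorem~\ref{TwedgeV:isom:TwedgeW} matches the bases. This yields $\texttt{BKL}(m|n+\underline{\infty})$ for all $m$.

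Finally, for step~\eqref{ind:trunc}, invoke once more the analogue of Proposition~\ref{prop:trunc:ML} with truncation functor $\breve{\mf{tr}}: \breve{\OO}_{\bf b}\to \breve{\OO}^{\underline{1}}_{\bf b}$, matched on the Fock space side with the truncation $\texttt{Tr}:\mathbb T^{\bf b}\wotimes\wedge^\infty\WW\to \mathbb T^{\bf b}\wotimes\WW$ from Proposition~\ref{prop:TbkW} (the $\wedge^1\WW=\WW$ case). Since $\mathbb T^{\bf b}\otimes\WW$ is a Fock space for a $0^m1^{n+1}$-sequence, this recovers $\texttt{BKL}(m|n+1)$ for the particular sequence $({\bf b},1)$, for every $m$. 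The main technical obstacle is the passage to the limit in step~\eqref{ind:infty}: one must verify that the BKL polynomials, the multiplicities, and the relevant bases all stabilize in a manner compatible under truncation, and the careful bookkeeping in Proposition~\ref{prop:trunc:fock} (together with its $\wedge^\infty\WW$ analogue) is precisely what makes this work. The remaining steps are then essentially formal consequences of the structural results already assembled.
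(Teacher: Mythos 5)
Your proposal follows the same overall route as the paper: the same five-step decomposition, the same key inputs (Propositions~\ref{thm:aux:11}, \ref{thm:aux:12}, \ref{prop:irred:char:para}, \ref{prop:tilting:char:para} for the parabolic comparison; Propositions~\ref{prop:trunc:fock} and \ref{prop:trunc:ML} for $k\to\infty$; Theorem~\ref{th:KL:SD} for super duality). However, there is a genuine gap in your final truncation step. The truncation functor $\breve{\mf{tr}}:\breve{\OO}_{\bf b}\to\breve{\OO}^{\ul 1}_{\bf b}$ is \emph{not} essentially surjective on the relevant standard objects: by Remark~\ref{rem:notsurj}, the modules $\breve{\mc Y}^{1}_{\bf b}(\la)$ arise as images of $\breve{\mf{tr}}$ only for those $\la$ whose last $\delta$-coordinate is non-negative, because the weights in $\breve X^{+}$ are parametrized by conjugate partitions while $\breve X^{1,+}=X(m|n+1)$ allows an arbitrary integer in that coordinate. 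So the argument you give only establishes the BKL conjecture for the sequence $({\bf b},1)$ on the sub-poset of weights $\la$ with $\big\langle\la,e^{({\bf b},1)}_{m+n+1,m+n+1}\big\rangle\ge 0$, not on all of $X(m|n+1)$.

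The paper closes this gap with a shift argument: tensoring with the one-dimensional supertrace representation on the categorical side corresponds, under $\breve\Psi^k$, to the shift map $\texttt{sh}$ on ${\mathbb T}^{\bf b}_\Z\wotimes\wedge^{k}\WW_\Z$ (the commutative diagram \eqref{CD:shift}); since $\texttt{sh}$ commutes with the bar involution it sends canonical and dual canonical basis elements to canonical and dual canonical basis elements (\eqref{shTL}), while $-\otimes{\rm Str}$ sends simples to simples and tiltings to tiltings (\eqref{tensorStr}). Iterating lowers the threshold on the last coordinate from $\ge 0$ to $\ge -1$, $\ge -2$, and so on, covering all of $\breve X^{k,+}$. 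Without some such device your step~\eqref{ind:trunc} does not yield the full BGG category statement. Note also that you locate the "main technical obstacle" in the passage $k\to\infty$ of step~\eqref{ind:infty}; that step is in fact unproblematic (the tail of any $\la\in X^+$ is a partition, so every weight is seen by $\mf{tr}$ for $k$ large), and the real residual difficulty is precisely the non-surjectivity issue in step~\eqref{ind:trunc} just described.
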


\begin{proof}
In this proof, we will regard $m$ and $n$ as fixed, and prove the
following reformulation: the validity of the BKL conjecture for all
$0^{m+k}1^n$-sequences for every $k\ge 0$ implies the validity of
the BKL conjecture for one particular $0^m1^{n+1}$-sequence.

Take an arbitrary $0^m1^n$-sequence ${\bf b}$, and form the
$0^{m+k}1^n$-sequence $({\bf b}, 0^k)$. The assumption above that
the BKL conjecture for all $0^{m+k}1^n$-sequences for every $k$
holds can be stated more precisely as follows.

(A) The isomorphism
\begin{align*}
\big{[}\big{[}\OO^{m+k|n,\Delta}_{({\bf b},0^k)}\big{]}\big{]}
\longrightarrow \widehat{\mathbb T}^{({\bf b},0^k)}_\Z,
 \qquad
[M_{({\bf b},0^k)}(\la)]\mapsto M^{({\bf b},0^k)}_{f^{({\bf
b},0^k)}_{\la}}(1),
\end{align*}
sends $[L_{({\bf b},0^k)}(\la)]$ to $L^{({\bf b},0^k)}_{f^{({\bf
b},0^k)}_{\la}}(1)$, and $[T_{({\bf b},0^k)}(\la)]$ to $T^{({\bf
b},0^k)}_{f^{({\bf b},0^k)}_{\la}}(1)$, for all $\la\in X(m+k|n).$

We proceed in 4 steps (i)-(iv) below, starting from (A). Note that
${\mathbb T}^{({\bf b},0^k)} ={\mathbb T}^{\bf b} \otimes
\VV^{\otimes k}$.

(i) \underline{Pass to a parabolic version.}

The isomorphism
\begin{align*}
\big{[}\big{[}\OO^{\ul k,\Delta}_{{\bf b}}\big{]}\big{]}
\longrightarrow {\mathbb T}^{\bf b}_\Z\wotimes\wedge^k\mathbb V_\Z,
 \qquad
[\mc{M}^k_{\bf b}(\la)]\mapsto M^{{\bf b},0}_{f^{{\bf
b}0}_{\la}}(1),
\end{align*}
sends $[\mc{L}_{\bf b}^k(\la)]$ to $L^{{\bf b},0}_{f^{{\bf
b}0}_{\la}}(1)$, and $[\mc{T}_{\bf b}^k(\la)])$ to $T^{{\bf
b},0}_{f^{{\bf b}0}_{\la}}(1)$ for $\la\in X^{k,+}.$

Indeed (i) follows from (A), by Propositions \ref{thm:aux:11} and
\ref{thm:aux:12} (which relate the BKL polynomials from the setting
of (A) to the current $q$-wedge setting), as well as
Propositions~\ref{prop:irred:char:para} and
\ref{prop:tilting:char:para} (which relate the simple and tilting
modules from the setting of (A) to the current parabolic setting).

(ii) \underline{Pass from finite $k$ to $\infty$.}

The isomorphism
\begin{align*}
\big{[}\big{[}\OO^{\Delta}_{{\bf b}}\big{]}\big{]} \longrightarrow
{\mathbb T}^{\bf b}_\Z\wotimes\wedge^\infty\mathbb V_\Z,
 \qquad
[\mc{M}_{\bf b}(\la)]\mapsto M^{{\bf b},0}_{f^{{\bf b}0}_{\la}}(1),
\end{align*}
sends $[\mc{L}_{\bf b}(\la)]$ to $L^{{\bf b},0}_{f^{{\bf
b}0}_{\la}}(1)$, and $[\mc{T}_{\bf b}(\la)])$ to $T^{{\bf
b},0}_{f^{{\bf b}0}_{\la}}(1)$ for $\la\in X^{+}.$

Indeed (ii) follows from (i) by Proposition~ \ref{prop:trunc:fock}
and Proposition~\ref{prop:trunc:ML}.

(iii) \underline{Super duality.}

Note that (i) and (ii) are exactly the statements formulated in
Theorem~\ref{thm:classical:KL}, now valid for a general $\bf b$.
Hence, the assumption in Theorem~\ref{th:KL:SD} is now valid. By
Theorem~\ref{th:KL:SD}, the isomorphism
$$
\breve\Psi: \big{[}\big{[}\breve{\OO}^{\Delta}_{{\bf
b}}\big{]}\big{]} \longrightarrow {\mathbb T}^{\bf
b}_\Z\wotimes\wedge^\infty\WW_\Z,
 \qquad
[\breve{\mc{M}}_{\bf b}(\la)]\mapsto M^{{\bf b},1}_{f^{{\bf
b}1}_{\la}}(1),
$$
sends $[\breve{\mc{L}}_{\bf b}(\la)]$ to $L^{{\bf b},1}_{f^{{\bf
b}1}_{\la}}(1)$, and $[\breve{\mc{T}}_{\bf b}(\la)]$ to $T^{{\bf
b},1}_{f^{{\bf b}1}_{\la}}(1)$, for $\la\in \breve X^{+}.$

(iv) \underline{Truncation.}

Now let $k\in\N$. Recall the category $\breve\OO^{\ul
k,\Delta}_{{\bf b}}$ of $\gl(m|n+k)$-modules from
\S\ref{sec:finiterank}, and recall the bijection $X^{k,+}
\rightarrow \Z^{m+n}\times\Z^k_+$ by sending $\la$ to $f_\la^{{\bf
b}1}$ from \eqref{eq:2bij}. Consider the isomorphism
\begin{align*}
\breve{\Psi}^k: \big[\big[\breve\OO^{\ul k,\Delta}_{{\bf
b}}\big]\big] \longrightarrow {\mathbb T}^{\bf
b}_\Z\widehat{\otimes}\wedge^{k}\WW_\Z,
 \qquad
[\breve{\mc{M}}^k_{\bf b}(\la)]\mapsto M^{{\bf b},1}_{f^{{\bf
b}1}_{\la}}(1),
\end{align*}
where $\big[\big[\breve\OO^{\ul k,\Delta}_{{\bf b}}\big]\big]$ is a
suitable completion of $\big[\breve\OO^{\ul k,\Delta}_{{\bf
b}}\big]$ as before. We have the following.

{\bf Claim.} For $\la\in\breve X^{k,+}$, we have
\begin{equation} \label{psikLT}
\breve{\Psi}^k([\breve{\mc{L}}_{\bf b}^k(\la)])=L^{{\bf b},1}_{f^{{\bf
b}1}_{\la}}(1),
 \quad
\breve{\Psi}^k([\breve{\mc{T}}_{\bf b}^k(\la)]) =T^{{\bf b},1}_{f^{{\bf
b}1}_{\la}}(1). 
\end{equation}

Let us specialize $k=1$. In this case, the parabolic category
$\breve\OO^{\ul 1}_{{\bf b}}$ is exactly the full BGG category
$\OO^{m|n+1}$, and $\breve X^{1,+}=X(m|n+1)$. Hence assuming the
claim,
 we have verified the BKL conjecture for the special
$0^m1^{n+1}$-sequence ${({\bf b},1)}$.

It remains to prove \eqref{psikLT} for $\la\in\breve
X^{k,+}$, using the truncation maps and truncation functors. We have
the following commutative diagram by a direct computation using the
basis $\{[\mc{M}_{\bf b}(\la)]\}$ for
$\big{[}\big{[}\breve\OO^{\Delta}_{{\bf b}}\big{]}\big{]}$:
\begin{eqnarray}  \label{CD:trunc}
\begin{CD}
\big{[}\big{[}\breve\OO^{\Delta}_{{\bf b}}\big{]}\big{]}
 @>\breve\Psi>>
{\mathbb T}^{\bf b}_\Z\wotimes\wedge^\infty \WW_\Z  \\
 @V\breve{\mf{tr}}VV @V \texttt{Tr} VV \\
\big[\big[\breve\OO^{\ul k,\Delta}_{{\bf b}}\big]\big]
 @>\breve{\Psi}^k>>
{\mathbb T}^{\bf b}_\Z\widehat{\otimes}\wedge^{k}\WW_\Z
  \end{CD}
\end{eqnarray}
It follows from (iii), \eqref{CD:trunc},
Propositions~\ref{prop:can:trunc} and \ref{prop:trunc:ML} that
\eqref{psikLT}  holds for those $\la\in\breve X^{k,+}$ satisfying
the condition $\big\langle\la,e^{({\bf
b},{1^k})}_{m+n+k,m+n+k}\big\rangle\ge 0$. This condition arises in
the parametrization set for the standard basis of the image of
$\breve{\mf{tr}}$ (which is not surjective); see
Remark~\ref{rem:notsurj}.

We have the following commutative diagram:
\begin{eqnarray}  \label{CD:shift}
\begin{CD}
\big[\big[\breve\OO^{\ul k,\Delta}_{{\bf b}}\big]\big]
 @>\breve{\Psi}^k>>
{\mathbb T}^{\bf b}_\Z\widehat{\otimes}\wedge^{k}\WW_\Z
  \\
 @V\otimes{\rm Str}VV @V \texttt{sh}VV \\
\big[\big[\breve\OO^{\ul k,\Delta}_{{\bf b}}\big]\big]
 @>\breve{\Psi}^k>>
{\mathbb T}^{\bf b}_\Z\widehat{\otimes}\wedge^{k}\WW_\Z
  \end{CD}
\end{eqnarray}
Here $\otimes{\rm Str}$ denotes the map induced from tensoring with
the $1$-dimensional supertrace representation (see \eqref{strace}
with $n$ therein replaced by $n+k$), and $\texttt{sh}$ denotes the
$\Z$-linear shift map which sends $M^{{\bf b},1}_{f}$ to $M^{{\bf
b},1}_{f+\texttt{1}_{m|(n+k)}}$, for each $f$; see \eqref{eq:1mn}
for notation $\texttt{1}_{m|(n+k)}$.

Just as in the proof of Proposition~\ref{prop:shift:can:p} where a
similar shift map has been used, $\texttt{sh}$ also commutes with
the bar map, and then
\begin{equation}  \label{shTL}
\texttt{sh}(T^{{\bf b},1}_{f})=T^{{\bf
b},1}_{f+\texttt{1}_{m|(n+k)}}, \qquad \texttt{sh}(L^{{\bf
b},1}_{f})=L^{{\bf b},1}_{f+\texttt{1}_{m|(n+k)}}, \quad \forall f.
\end{equation}
On the other hand, it is clear that
\begin{equation}  \label{tensorStr}
\breve{\mc{L}}_{\bf b}^k(\la) \otimes{\rm Str} =\breve{\mc{L}}_{\bf b}^k(\la+{\rm
Str}),
 \qquad
\breve{\mc{T}}_{\bf b}^k(\la) \otimes{\rm Str} =\breve{\mc{T}}_{\bf b}^k(\la+{\rm
Str}),
 \quad
\forall \la\in \breve X^{k,+}.
\end{equation}

It follows by \eqref{shTL}, \eqref{tensorStr} and the commutative
diagram \eqref{CD:shift} that \eqref{psikLT} holds for $\la\in\breve
X^{k,+}$ satisfying $\langle\la,e^{({\bf
b},{1^k})}_{m+n+k,m+n+k}\rangle\ge -1$. Repeatedly using
\eqref{CD:shift}, \eqref{shTL} and \eqref{tensorStr}, we conclude
that \eqref{psikLT} holds for all $\la\in\breve X^{k,+}$.

This proves the claim, and hence completes the proof of the theorem.
\end{proof}

Now we are ready to prove the main result of this paper.

\begin{thm} \label{th:BKL}
The BKL Conjecture ~\ref{BKL:conj:tilt} holds for an arbitrary
$0^m1^n$-sequence.
\end{thm}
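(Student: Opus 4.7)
The plan is to prove Theorem \ref{th:BKL} by induction on $n$, assembling the machinery that has been developed throughout the paper. All the non-trivial work has effectively been done; what remains is to combine Theorem \ref{thm:classical:KL} (base case), Theorem \ref{th:indstep} (inductive step for one sequence), and Theorem \ref{thm:BKL:equiv:borel} (propagation across adjacent Borels) in the right order.

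First I would dispose of the base case $n=0$. In this situation every $0^m1^0$-sequence is just ${\bf b}=(0^m)$, and the BKL Conjecture reduces to the classical Kazhdan-Lusztig conjecture for $\gl(m)$, together with its tilting counterpart. This is precisely the content of Theorem \ref{thm:classical:KL} with $k=0$ (so that $\wedge^0 \mathbb V$ is trivial and $\mathbb T^{\bf b}_{\mathbb Z}\wotimes\wedge^0\mathbb V_{\mathbb Z}=\mathbb T^{\bf b}_{\mathbb Z}$).

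Next, for the inductive step, I would assume that the BKL Conjecture \ref{BKL:conj:tilt} holds for every $0^{m}1^{n}$-sequence for every $m\ge 0$, and deduce it for every $0^m1^{n+1}$-sequence for every $m\ge 0$. The inductive hypothesis allows us to apply Theorem \ref{th:indstep}, which yields the BKL Conjecture for the particular $0^m1^{n+1}$-sequence $({\bf b},1)$, where ${\bf b}$ is any $0^m1^n$-sequence. In particular, this gives BKL for at least one $0^m1^{n+1}$-sequence for each $m$. To extend this to all $0^m1^{n+1}$-sequences, we invoke Theorem \ref{thm:BKL:equiv:borel} together with Remark \ref{rem:allb}: any two $0^m1^{n+1}$-sequences can be connected by a chain of adjacent sequences, and the validity of BKL propagates along such a chain. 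This completes the inductive step, and hence the induction.

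I do not expect any serious obstacle at this stage, since all the genuinely difficult ingredients have already been established: the geometric input (classical KL conjecture) enters in the base case via Theorem \ref{thm:classical:KL}; the Fock space combinatorics encoded in Propositions \ref{thm:aux:11}, \ref{thm:aux:12}, \ref{prop:trunc:fock}, and Theorem \ref{TwedgeV:isom:TwedgeW} together with the representation-theoretic super duality in Theorem \ref{thm:SD} provide the passage from parabolic to full BGG category and from $\wedge^\infty\mathbb V$ to $\wedge^\infty\mathbb W$; and the comparison of adjacent Borels in Theorem \ref{thm:BKL:equiv:borel} bridges the different $\bf b$-variants. The only subtlety worth flagging is to make sure that Theorem \ref{th:indstep} is applied with the correct fixed $n$, so that its input (BKL for all $0^{m+k}1^n$-sequences, for every $k\ge 0$ and every $m$) matches the induction hypothesis, and that its output (BKL for the sequence $({\bf b},1)$) is indeed a $0^m1^{n+1}$-sequence to which Theorem \ref{thm:BKL:equiv:borel} can then be iteratively applied.
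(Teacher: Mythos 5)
Your proposal is correct and follows essentially the same route as the paper: induction on $n$, with the base case $n=0$ handled by Theorem~\ref{thm:classical:KL} at $k=0$, the inductive step for one $0^m1^{n+1}$-sequence supplied by Theorem~\ref{th:indstep}, and the propagation to all $0^m1^{n+1}$-sequences via Theorem~\ref{thm:BKL:equiv:borel} and Remark~\ref{rem:allb}. No gaps.
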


\begin{proof}
We shall proceed by induction on $n$. The base case when $n=0$ is
Theorem~\ref{thm:classical:KL} (with $k=0$), which is a Fock space
reformulation of the classical Kazhdan-Lusztig conjecture. By
induction hypothesis, for a given $n$, the BKL conjecture holds for
all ${0^m1^{n}}$-sequences and for every $m$. By
Theorem~\ref{th:indstep}, the BKL conjecture holds for one
particular ${0^m1^{n+1}}$-sequence. Now by
Theorem~\ref{thm:BKL:equiv:borel} and Remark~\ref{rem:allb}, the BKL
conjecture holds for all ${0^m1^{n+1}}$-sequences. The induction is
completed.
\end{proof}

\begin{rem}
It follows from Theorem~\ref{th:BKL} that all the parabolic versions
with even standard Levi subalgebras of the BKL conjecture hold, via
similar comparisons as formulated in \S\ref{sec:comp:para:tensor}
and \S\ref{char:comp:para:fin}. Note however that our proof of
Theorem~\ref{th:BKL} uses in an essential way a distinguished
parabolic case which was established earlier in \cite{CL} via the
approach of super duality \cite{CWZ, CW}.
\end{rem}

\bigskip
\frenchspacing

\end{document}